\numberwithin{equation}{section}
\newtheorem{thm}{Theorem}[section]
\newtheorem{prop}[thm]{Proposition}
\newtheorem{lem}[thm]{Lemma}
\newtheorem{cor}[thm]{Corollary}
\theoremstyle{remark}
\newtheorem{rem}{Remark}[section]
\newtheorem{defn}{Definition}
\newtheorem{case}{Case}
\newtheorem{condition}{Condition}
\newcommand{\laplacian}{\Delta}
\newcommand{\R}{{\mathbb R}}
\newcommand{\Z}{{\mathbb Z}}
\newcommand{\N}{{\mathbb N}}
\newcommand{\C}{{\mathbb C}}
\newcommand{\LR}[1]{{\langle {#1} \rangle }}
\newcommand{\cross}{\times}
\newcommand{\e}{\varepsilon}
\newcommand{\F}{\mathcal{F}}
\newcommand{\ha}{\widehat}
\title[Well-posedness of the Zakharov-Kuznetsov equation]{Global Well-posedness\\
for the Cauchy problem of \\
the Zakharov-Kuznetsov equation 
in 2D
}
\author[S.  Kinoshita]{Shinya Kinoshita}
\address[Shinya Kinoshita]{Universit\"{a}t Bielefeld
Fakult\"{a}t f\"{u}r Mathematik
Postfach 10 01 31
33501 Bielefeld
Germany}
\email[Shinya Kinoshita]{kinoshita@math.uni-bielefeld.de}
\subjclass[2010]{35Q53, 35A01}
\keywords{well-posedness, Cauchy problem, low regularity, bilinear estimate, nonlinear Loomis-Whitney inequality}
\begin{document}

\begin{abstract}
This paper is concerned with the Cauchy problem of the $2$D Zakharov-Kuznetsov equation. 
We prove bilinear estimates which imply local in time well-posedness in the Sobolev space $H^s(\R^2)$ 
for $s > -1/4$, and these are optimal up to the endpoint. 
We utilize the nonlinear version of the classical Loomis-Whitney inequality and develop an almost orthogonal decomposition of the set of resonant frequencies. As a corollary, we obtain global well-posedness in $L^2(\R^2)$.
\end{abstract}
\maketitle
\setcounter{page}{001}


\section{Introduction}
We consider the Cauchy problem of the Zakharov-Kuznetsov equation
\begin{equation}
 \begin{cases}
  \partial_t u + \partial_{x_1} \laplacian u =  \partial_{x_1} (u^2), \quad (t,x_1, \cdots, x_n) \in [-T,T] \cross \R^n, \\
  u(0, \cdot) = u_0 \in H^s(\R^n),\label{ZK}
 \end{cases}
\end{equation}
where $u= u(t,x_1, \cdots, x_n)$ is a real valued function and $\laplacian = \partial_{x_1}^2 +
\cdots + \partial_{x_n}^2$ is the Laplacian.
The equation \eqref{ZK} was introduced by Zakharov and Kuznetsov in \cite{ZK74} as a model for the propagation of ion-sound waves in magnetic fields for $n=3$. See also \cite{LS82}. In \cite{LLS13}, Lannes, Linares and Saut derived \eqref{ZK} in dimensions $2$ and $3$ rigorously as a long-wave limit of the Euler-Poisson system.
The Zakharov-Kuznetsov equation can be seen as a multi-dimensional extension of the KdV equation
\begin{equation*}
\partial_t u + \partial_{x}^3 u + u  \partial_{x} u = 0, \quad (t,x) \in \R \cross \R.
\end{equation*}
In contrast to the KdV equation and the KP equation, which is another multi dimensional generalization of KdV equation, the Zakharov-Kuznetsov equation is not completely integrable and has only two conservation laws in $L^2$ and $H^1$.
\begin{equation*}
M(u) := \int_{\R^n} u^2 dx_1 \cdots d x_n, \quad
E(u) :=  \int_{\R^n} \frac{1}{2} |\nabla u|^2 + \frac{1}{3} u^3 dx_1 \cdots d x_n.
\end{equation*}

There are lots of works on the Cauchy problems of the Zakharov-Kuznetsov equation \eqref{ZK}.
In the $2$D case, Faminskii \cite{Fa95} established the local and global well-posedness in $H^1(\R^2)$.  In \cite{LP09}, this results was improved by Linares and Pastor.
They obtained the local well-posedness of \eqref{ZK} in $H^s (\R^2)$ for $s > 3/4$.
Gr\"{u}nrock and Herr \cite{GH14}, and Molinet and Pilod \cite{MP15} proved independently the well-posedness of \eqref{ZK} in $H^s(\R^2)$ for $s>1/2$ by using the Fourier restriction norm method.
In the three dimensional case $(n=3)$, Linares and Saut \cite{LS09} showed the local well-posedness in $H^s(\R^3)$ for $s > 9/8$. In \cite{RV12-ZK}, Ribaud and Vento proved the local well-posedness in $H^s(\R^3)$ for $s>1$ and $B^{1,1}_2(\R^3)$. By employing this local well-posedness result in $B^{1,1}_2(\R^3)$, Molinet and Pilod \cite{MP15} showed that \eqref{ZK} is actually globally well-posed in $H^s(\R^3)$ for $s>1$.

It should be noted that the Cauchy problem of the so-called generalized Zakharov-Kuznetsov equation
\[
\partial_t u + \partial_{x_1} \laplacian u =  \partial_{x_1} (u^{k+1}),  \quad (k \in \N).
\]
has been extensively studied as well as the original Zakharov-Kuznetsov equation.
For $k=2$, this equation, called the modified
Zakharov-Kuznetsov equation, may be seen as a multi-dimensional extension of the modified KdV equation.
Regarding the modified Zakharov-Kuznetsov equation, in the $2$ dimensional case,
Biagioni and Linares \cite{BL03} established the global well-posedness in $H^1(\R^2)$.
In \cite{LP09}, Linares and Pastor proved the local well-posedness in $H^s(\R^2)$ for $s>3/4$, and in \cite{LP11}, they showed the global well-posedness for $s>53/66$. After that, Ribaud and Vento \cite{RV12-gZK} improved the necessary regularity condition by showing the local well-posedness in $H^s(\R^2)$ for $s>1/4$.
In the $3$D and higher dimensional case, Gr\"{u}nrock \cite{Gru14} proved that the Cauchy problem of the modified Zakharov-Kuznetsov equation is locally well-posed in $H^s(\R^n)$ if $s>n/2-1$, which is optimal up to the endpoint.
We refer to the papers \cite{FLP12}, \cite{Gru15arxiv}, \cite{LP11}, \cite{RV12-gZK} for the case $k \geq 3$.

The aim of the paper is to establish well-posedness of \eqref{ZK} in $2$ dimensions for low regularity initial data.
So far, the best known result is the local well-posedenss in $H^s(\R^2)$ for $s>1/2$ due to Gr\"{u}nrock and Herr \cite{GH14}, and Molinet and Pilod \cite{MP15}.
The scaling critical index $s=-1$ of \eqref{ZK} for $n=2$ suggests well-posedness in the range $-1 \leq s$.
In particular, in view of the conservation of mass, it is a natural question whether $L^2$-well-posedness holds true.
Before stating the main result, we introduce the symmetrized equation of \eqref{ZK} by performing a linear change of variables as in \cite{GH14}. Put $x= 4^{-1/3}x_1 + \sqrt{3} 4^{-1/3} x_2$,
$y= 4^{-1/3}x_1 - \sqrt{3} 4^{-1/3} x_2$ and $v(t,x,y) := u(t,x_1,x_2)$,
$v_0(x,y) := u_0(x_1,x_2)$.
Clearly, the linear transformation $(x_1,x_2) \to (x,y)$ is invertible as a mapping $\R^2 \to \R^2$
and we see that
\begin{align*}
& \partial_{x_1} u(x_1,x_2)  = 4^{-\frac{1}{3}} (\partial_x + \partial_y) v(x,y), \\
& \partial_{x_2} u(x_1,x_2)  = \sqrt{3} 4^{-\frac{1}{3}} (\partial_x - \partial_y) v(x,y),\\
& \partial_{x_1} (\partial_{x_1}^2+ \partial_{x_2}^2) u (x_1,x_2)  =
(\partial_x^3 + \partial_y^3) v(x,y).
\end{align*}
Therefore, \eqref{ZK} for $d=2$ can be rewritten as
\begin{equation}
 \begin{cases}
  \partial_t v + (\partial_x^3 + \partial_y^3) v = 4^{-\frac{1}{3}}
(\partial_x+ \partial_y) (v^2), \quad (t,x,y) \in [-T,T] \cross \R^2, \\
  v(0, \cdot) = v_0 \in H^s(\R^2).\label{ZK'}
 \end{cases}
\end{equation}
Since it is slightly more convenient, we consider the symmetrized equation \eqref{ZK'} instead of \eqref{ZK} hereafter.
We now state our main result.
\begin{thm}  \label{mth}
Let $s > -1/4$. Then the Cauchy problem \eqref{ZK'} is locally well-posed in $H^{s}(\R^2)$.
\end{thm}
By making use of the conservation law in $L^2$, this theorem immediately yields the following global result.
\begin{cor}
The Cauchy problem \eqref{ZK'} is globally well-posed in $L^2(\R^2)$.
\end{cor}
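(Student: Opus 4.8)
The plan is to derive the corollary from Theorem~\ref{mth} by the usual globalization argument, so the substantive work is to prove Theorem~\ref{mth}, and for that I would use the Fourier restriction norm method. Introduce the Bourgain spaces $X^{s,b}$ adapted to the symbol of \eqref{ZK'}, i.e. built on the weight $\langle \tau - \xi^3 - \eta^3 \rangle$, together with their time-localized versions $X^{s,b}_T$ (and, to reach the endpoint $b=1/2$, a Besov-type $\ell^1$ refinement in the modulation variable). Writing \eqref{ZK'} in Duhamel form and running a contraction argument in a ball of $X^{s,b}_T \cap C([-T,T];H^s)$, everything reduces to the bilinear estimate
\[
\big\| \partial_x(vw) \big\|_{X^{s,-1/2}_T} + \big\| \partial_y(vw) \big\|_{X^{s,-1/2}_T} \;\lesssim\; T^{\theta}\,\|v\|_{X^{s,1/2}_T}\,\|w\|_{X^{s,1/2}_T}
\]
for some $\theta>0$ and all $s>-1/4$ (in the appropriately refined spaces); existence, uniqueness in the iteration space, and real-analytic dependence on $v_0$ then follow routinely.

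The core is this bilinear estimate. After a Littlewood--Paley decomposition and duality it becomes a family of trilinear convolution bounds for $L^2$ densities localized near the characteristic surface $\{\tau = \xi^3+\eta^3\}$. The relevant resonance function is
\[
\sum_{j=1}^3 (\xi_j^3 + \eta_j^3) \;=\; 3\big(\xi_1\xi_2\xi_3 + \eta_1\eta_2\eta_3\big) \qquad\text{whenever } \sum_{j=1}^3 (\xi_j,\eta_j) = 0 .
\]
When one of the three modulations dominates $|\xi_1\xi_2\xi_3+\eta_1\eta_2\eta_3|$, the $X^{s,b}$ weights absorb the derivative loss and the estimate closes via elementary $L^2$ bilinear (Strichartz-type) bounds. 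The genuinely hard region is the resonant one, where all modulations are small relative to the resonance; there I would invoke the nonlinear Loomis--Whitney inequality, i.e. the trilinear convolution estimate for surface-supported $L^2$ densities on three transversal pieces of $\{\tau = \xi^3+\eta^3\}$, whose constant is controlled by a power of the Gram determinant of the three unit normals, a Vandermonde-type quantity in the frequencies.

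The main obstacle is precisely that this transversality constant degenerates on a nontrivial part of the resonant set (roughly where the $\xi_j$, or the $\eta_j$, cluster while the resonance itself stays large), so a single application of Loomis--Whitney is too lossy. This is where the almost-orthogonal decomposition of the resonant frequencies enters: I would partition the resonant region into countably many almost orthogonal boxes, adapted to the level sets of the resonance and to the curvature directions, on each of which a nonlinear Loomis--Whitney bound holds with a uniformly controlled constant, and then reassemble by almost orthogonality together with Cauchy--Schwarz in the dyadic and angular parameters. Balancing the number of pieces against the gain per piece is what forces the threshold $s>-1/4$, and carrying out this decomposition and its summation — in particular handling the high-high-to-low frequency interaction, which is the truly two-dimensional difficulty — is the principal technical point.

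Finally, the corollary. Taking $s=0$ in Theorem~\ref{mth} gives local well-posedness in $L^2(\R^2)$; moreover, since $L^2$ lies strictly above the scaling-critical space $\dot H^{-1}$, the contraction argument furnishes an existence time $T = T(\|v_0\|_{L^2})$ depending only on the $L^2$-norm of the datum. For real-valued solutions the mass $M(v(t)) = \|v(t)\|_{L^2(\R^2)}^2$ is conserved — first for smooth solutions, then for $L^2$ data by approximation and the continuous dependence supplied by Theorem~\ref{mth}. Since the conserved quantity controls the local existence time, iterating the local result on consecutive time intervals of length $T$ extends the solution to all of $\R$, which is global well-posedness in $L^2(\R^2)$.
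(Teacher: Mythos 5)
Your globalization argument for the corollary is exactly the paper's (one-line) proof: take $s=0$ in Theorem~\ref{mth}, note that the subcritical local theory gives an existence time depending only on $\|v_0\|_{L^2}$, and iterate using conservation of mass. Your accompanying sketch of Theorem~\ref{mth} (Strichartz bounds for dominant modulation, nonlinear Loomis--Whitney plus an almost orthogonal decomposition of the degenerate resonant set) also matches the paper's strategy, so there is nothing to flag.
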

The subcritical threshold $-1/4$ is optimal for the Picard iteration approach, as the following theorem shows.
\begin{thm}\label{not-c2}
Let $s < - 1/4$. Then for any $T>0$, the data-to-solution map
$ u_0 \mapsto u$
of \eqref{ZK'}, as a map from the unit ball in
$H^s(\R^2)$ to
$C([0,T]; H^{s})$ fails to be $C^2$.
\end{thm}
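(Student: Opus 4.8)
The plan is to run the standard second-iterate obstruction (Bourgain; Molinet--Saut--Tzvetkov): if the data-to-solution map $u_0\mapsto u$ of \eqref{ZK'} were $C^2$ from a neighbourhood of the origin in $H^s(\R^2)$ into $C([0,T];H^{s})$, then its Taylor expansion at $u_0=0$ would force the boundedness (as a quadratic map $H^s\times H^s\to C([0,T];H^s)$) of the bilinear operator obtained from the first Duhamel iterate,
\[
 B(v_0,v_0)(t):=4^{-\frac13}\int_0^t W(t-t')(\partial_x+\partial_y)\big[(W(t')v_0)^2\big]\,dt',
\]
where $W(t)$ is the free evolution, $\widehat{W(t)f}(\xi,\eta)=e^{it(\xi^3+\eta^3)}\widehat f(\xi,\eta)$. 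Hence it suffices to show that no estimate $\sup_{t\le T}\|B(v_0,v_0)(t)\|_{H^s}\lesssim_T\|v_0\|_{H^s}^2$ can hold. Writing $\phi(\xi,\eta)=\xi^3+\eta^3$, $\zeta_j=(\xi_j,\eta_j)$, and introducing the resonance function
\[
 \Omega(\zeta_1,\zeta_2):=\phi(\zeta_1)+\phi(\zeta_2)-\phi(\zeta_1+\zeta_2)=-3\xi_1\xi_2(\xi_1+\xi_2)-3\eta_1\eta_2(\eta_1+\eta_2),
\]
one computes, for a harmless constant $c\neq0$,
\[
 \widehat{B(v_0,v_0)(t)}(\zeta)=c\,(\xi+\eta)\,e^{it\phi(\zeta)}\!\int_{\R^2}\frac{e^{it\Omega(\zeta_1,\zeta-\zeta_1)}-1}{\Omega(\zeta_1,\zeta-\zeta_1)}\,\widehat{v_0}(\zeta_1)\widehat{v_0}(\zeta-\zeta_1)\,d\zeta_1 .
\]

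For a fixed $s<-1/4$ I would then exhibit real-valued Schwartz data $(v_0^N)_{N\gg1}$ with $\|v_0^N\|_{H^s}\sim1$ but $\sup_{t\le T}\|B(v_0^N,v_0^N)(t)\|_{H^s}\to\infty$. Take $\widehat{v_0^N}$ to be a normalising constant $\kappa$ times the characteristic function of a union of a few small boxes $R_1,R_2$ (together with their reflections $-R_1,-R_2$, forced by reality), chosen so that: (i) on the product $R_1\times R_2$ one has $|\Omega|\lesssim T^{-1}$, so that $\big|\tfrac{e^{it\Omega}-1}{\Omega}\big|\gtrsim t$ uniformly for $t\le T$ and no cancellation occurs in the Duhamel integral; (ii) the output frequencies $\zeta=\zeta_1+\zeta_2$ fill a region on which $|\xi+\eta|$ and $\langle\zeta\rangle^{2s}$ are essentially constant and on which $|\xi+\eta|$ is bounded below by a positive power of $N$; (iii) subject to (i)--(ii) the boxes are as large as possible, and — to maximise the autocorrelation below — are taken congruent and parallel. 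Because for the ZK dispersion the genuinely resonant locus $\{\Omega=0\}$ meets the zero set of the output multiplier $\xi+\eta$ (unlike KdV), the boxes must be placed a controlled distance off this degenerate stratum, with anisotropic shapes adapted to the directions in which $\nabla\Omega$ degenerates and to the second-order behaviour of $\Omega$ along them; this is exactly what fixes the scales of $|R_j|$, of the output frequency, and of $|\xi+\eta|$ as powers of $N$.

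Given such $v_0^N$, one estimates below
\[
 \|B(v_0^N,v_0^N)(t)\|_{H^s}^2\gtrsim\int_{R_1+R_2}\langle\zeta\rangle^{2s}|\xi+\eta|^2\,t^2\,\kappa^4\,\big|\mathbf 1_{R_1}*\mathbf 1_{R_2}(\zeta)\big|^2\,d\zeta,
\]
uses $\|\mathbf 1_{R_1}*\mathbf 1_{R_2}\|_{L^2}\sim|R_1|^{3/2}$ (maximal precisely for congruent parallel boxes), and divides by $\|v_0^N\|_{H^s}^2\sim\kappa^2\langle N\rangle^{2s}|R_1|$. The ratio becomes a single monomial $t\,N^{\beta(s)}$, and optimising the remaining free parameters (the off-resonance distance and the two box side-lengths) against the constraints (i)--(iii) makes $\beta(s)>0$ exactly when $s<-1/4$, so the ratio diverges as $N\to\infty$. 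Since the ratio grows linearly in $t$, the supremum over $t\in[0,T]$ is comparable to its value at $t=T$, so any fixed $T>0$ works, and Theorem~\ref{not-c2} follows.

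\emph{The main obstacle} is the tension inside (i)--(iii): for ZK the resonance function is small exactly where $\xi+\eta$ is small, so one cannot simultaneously sit on $\{\Omega=0\}$ and keep the derivative symbol large — this is precisely why the threshold is $-1/4$ rather than the KdV value $-3/4$. Recovering the sharp range $s<-1/4$ (and not a strictly smaller interval) requires choosing the off-resonant scale and the anisotropic box dimensions essentially optimally; a cruder choice — isotropic boxes, or boxes placed naively on the resonant set — gives only a smaller range of $s$. A secondary but necessary point is to verify that $\langle\zeta\rangle^{2s}$ and $|\xi+\eta|$ are genuinely roughly constant over $R_1+R_2$, so that the displayed lower bound reads off as a clean power of $N$.
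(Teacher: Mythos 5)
Your general framework is sound and matches the paper's: reduce to unboundedness of the second Picard iterate, concentrate $\widehat{v_0}$ on two congruent parallel anisotropic rectangles placed near the resonant locus (plus reflections for reality), and use the $|R|^{3/2}$ autocorrelation. You also correctly identify that the rectangle shapes must be adapted to the degenerate direction of the resonance function $\Phi$ and to its second-order behaviour there. However, your diagnosis of the ``main obstacle'' is wrong in a way that would derail the quantitative part of the argument.

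You assert that ``for ZK the resonance function is small exactly where $\xi+\eta$ is small, so one cannot simultaneously sit on $\{\Omega=0\}$ and keep the derivative symbol large.'' This is false, and the paper's construction is an explicit counterexample: with $\alpha=(\sqrt[3]{2}N,\sqrt[3]{75}N)$ and $\beta=(-3\sqrt[3]{2}N,-\tfrac{1}{5}\sqrt[3]{75}N)$ one has $\Phi(\alpha,\beta)=12N^3-12N^3=0$ exactly, while the output $\gamma=\alpha+\beta=(-2\sqrt[3]{2}N,\tfrac45\sqrt[3]{75}N)$ has $|\xi+\eta|\sim N$ at full size. The zero set of $\Phi$ is a large hypersurface that is by no means contained in (nor close to) the hyperplane $\{\xi+\eta=0\}$; it merely intersects it. Your consequent plan of placing the rectangles ``a controlled distance off this degenerate stratum'' to recover a large multiplier is therefore off-target and, if carried out, would yield a strictly smaller range of $s$ than $s<-1/4$, because moving off $\{\Phi=0\}$ needlessly shrinks the admissible box dimensions.

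What is actually needed --- and what you have not identified --- is a \emph{second} vanishing condition, namely $F(\alpha,\beta)=0$ with $F(\zeta_1,\zeta_2)=\xi_1\eta_2+\xi_2\eta_1+2(\xi_1\eta_1+\xi_2\eta_2)$, the same auxiliary function that governs the transversality in the positive bilinear estimates of Section~3. As the paper's remark explains, $F(\alpha,\beta)=0$ is precisely equivalent to $\nabla_{\zeta_1}\Phi(\alpha,\beta)$ being parallel to $\nabla_{\zeta_2}\Phi(\alpha,\beta)$, which is what allows the two anisotropic rectangles (short side $N^{-2}$ along the gradient, long side $N^{-1/2}$ along the critical direction) to be taken parallel and congruent; only then does the autocorrelation reach the maximal size $|R|^{3/2}$. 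Your proposal lists ``congruent and parallel'' as a desideratum but does not explain why this is achievable nor what constrains the choice of $\alpha,\beta$. The correct constraint is $\Phi(\alpha,\beta)=F(\alpha,\beta)=0$, and the key point is that this two-dimensional condition is still compatible with $|\gamma|\sim N$ and $|\xi+\eta|\sim N$ at $\gamma$. Once this is understood, the rest of your computation ($\mu(R)\sim N^{-5/2}$, $\kappa\sim N^{-s+5/4}$, lower bound $\sim t\,N^{-s-1/4}$) goes through exactly as in the paper.
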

The strategy of the proof of Theorem \ref{mth} is the Fourier restriction norm method which is, roughly speaking, a contraction mapping argument in the Fourier restriction space
$X^{s,b}$ to be defined in the next section.
The biggest difficulty in the proof appears when we treat resonant interactions.
The geometry of this set is very complicated due to the linear part of Zakharov-Kuznetsov equation.
To overcome this, we employ a convolution estimate on hypersurfaces which was introduced by Bennet, Carbery, and Wright in \cite{BCW05} and generalized by Bejenaru, Herr, and Tataru in \cite{BHT10}.
This estimate, called the nonlinear version of the classical Loomis-Whitney inequality (see \cite{LW49} for the classical Loomis-Whitney inequality) and was applied
to the Cauchy problem of the Zakharov system on
$\R^2$ and $\R^3$ by Bejenaru, Herr, Holmer, and Tataru in \cite{BHHT09}
and by Bejenaru and Herr in \cite{BH11}, respectively.
The nonlinear Loomis-Whitney inequality is based on the transversality of three characteristic hypersurfaces. For the Zakharov system, it was found in \cite{BHHT09} and \cite{BH11} that
the transversality depends only on the size of the angle between two frequencies of the waves
which cause a resonant interaction.
In contrast, for the Zakharov-Kuznetsov equation, the transversality depends not only on
the relation of the angles but also the sizes of frequencies of the two waves.
Therefore, we need to modify the argument which was performed for the Zakharov system in a suitable manner.
In particular, we introduce a new almost orthogonal decomposition which allows to capture this transversality in a precise form.

The paper is organized as follows. In Section 2, we introduce the solution spaces and some fundamental estimates as preliminary. Section 3 is devoted to the proof of the key bilinear estimate which immediately provides the main theorem by a standard contraction mapping argument.
In Section 4, we show Theorem \ref{not-c2}. In Section 3, we will skip the proof of
Lemma \ref{lemma3.8}. Thus, lastly as Appendix, we completes the proof of Lemma \ref{lemma3.8}.
\section{Preliminaries}
In this section, we introduce estimates which will be utilized for the proof of the key bilinear estimate.
Throughout the paper, we use the following notations.
$A{\ \lesssim \ } B$ means that there exists $C>0$ such that $A \le CB.$
Also, $A\sim B$ means $A{\ \lesssim \ } B$ and $B{\ \lesssim \ } A.$
Let $u=u(t,x,y).\ \F_t u,\ \F_{x,y} u$ denote the Fourier transform of $u$ in time, space, respectively.
$\F_{t, x,y} u = \ha{u}$ denotes the Fourier transform of $u$ in space and time.
Let $N$, $L \geq 1$ be dyadic numbers, i.e. there exist $n_1$, $n_2 \in \N_{0}$ such that
$N= 2^{n_1}$ and $L=2^{n_2}$, and $\psi \in C^{\infty}_{0}((-2,2))$ be an even, non-negative function which satisfies $\psi (t)=1$ for $|t|\leq 1$ and letting
$\psi_N (t):=\psi (t N^{-1})-\psi (2t N^{-1})$,
$\psi_1(t):=\psi (t)$,
the equality $\displaystyle{\sum_{N}\psi_{N}(t)=1}$ holds.
Here we used $\displaystyle{\sum_{N}= \sum_{N \in 2^{\N_0}}}$ for simplicity.
We also use the notations $\displaystyle{\sum_{L}= \sum_{L \in 2^{\N_0}}}$ and
 $\displaystyle{\sum_{N,L}= \sum_{N,L \in 2^{\N_0}}}$ throughout the paper.
We define frequency and modulation projections $P_N$, $Q_L$ as
\begin{align*}
(\F_{x,y} P_{N}u )(\xi, \eta ):= & \psi_{N}(|(\xi, \eta)| )(\F_{x,y}{u})(\xi,\eta ),\\
\widehat{Q_{L} u}(\tau ,\xi, \eta ):= & \psi_{L}(\tau - \xi^3 - \eta^3)\widehat{u}(\tau ,\xi, \eta ).
\end{align*}
We now define $X^{s,b}(\R^3)$ spaces.
Let $s, b \in \R$.
\begin{align*}
&  X^{s,\,b} (\R^3) :=\{ f \in \mathcal{S}'(\R^3) \ | \ \|f \|_{X^{s,\,b}} < \infty \},\\
& \|f  \|_{X^{s,\, b}} := \Bigl( \sum_{N,\, L} N^{2s} L^{2b} \|  P_N Q_L f \|^2_{L_{x, t}^{2}} \Bigr)^{1/2}.
\end{align*}
For convenience, we define the set in frequency as
\begin{equation*}
G_{N, L} := \{ (\tau, \xi, \eta) \in \R^3 \, | \, \psi_{L}(\tau - \xi^3 - \eta^3) \psi_{N}(|(\xi, \eta)| ) \not= 0 \}.
\end{equation*}
Next we observe some fundamental properties of $X^{s,\,b}$.
A simple calculation gives the following.
\begin{equation*}
(i)  \ \
\overline{{X}^{s,\,b}} = X^{s,\,b} , \qquad
(ii)  \ \ (X^{s,\,b})^* =X^{-s,\,-b},
\end{equation*}
for $s$, $b \in \R$.

Recall the Strichartz estimates for the unitary group $\{ e^{-t (\partial_x^3+ \partial_y^3)}\}$.
\begin{lem}[Theorem 3.1. \cite{KPV91}]\label{thm2.1}
Let $\varphi \in L^2(\R^2)$. Then we have
\begin{align}
\| |\nabla_x|^{\frac{1}{2p}} |\nabla_y|^{\frac{1}{2p}} e^{-t (\partial_x^3+ \partial_y^3)} \varphi
\|_{L_t^p L_{x,y}^q} & \lesssim
\|\varphi\|_{L^2_{x,y}}, \quad
\textit{if} \ \ \frac{2}{p} + \frac{2}{q} = 1, \ p >2,\label{Strichartz-01}\\
\|  e^{-t (\partial_x^3+ \partial_y^3)} \varphi  \|_{L_t^p L_{x,y}^q} & \lesssim \|\varphi\|_{L^2_{x,y}}, \quad
\textit{if} \ \ \frac{3}{p} + \frac{2}{q} = 1, \ p >3,\label{Strichartz-02}
\end{align}
where $|\nabla_x|^s := \F^{-1}_x |\xi|^s \F_x$ and $|\nabla_y|^s :=  \F^{-1}_y |\eta|^s \F_y$ denote the Riesz potential operators with respect to $x$ and $y$, respectively.
\end{lem}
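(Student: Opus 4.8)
The plan is to exploit the tensor--product structure of the propagator: with $W(t):=e^{-t\partial_x^3}$ the one--dimensional Airy group, one has $e^{-t(\partial_x^3+\partial_y^3)}=W_x(t)W_y(t)$, the two commuting factors acting on the $x$-- and $y$--variables. The first step I would carry out is a scaling analysis, since tracking how $\||\nabla_x|^{a}|\nabla_y|^{a}e^{-t(\partial_x^3+\partial_y^3)}f\|_{L^p_tL^q_{x,y}}$ transforms under $f(x,y)\mapsto f(\lambda x,\lambda y)$ already forces both admissibility relations --- $\tfrac2p+\tfrac2q=1$ for $a=\tfrac1{2p}$ and $\tfrac3p+\tfrac2q=1$ for $a=0$ --- and pins down the exact power $\tfrac1{2p}$ of each Riesz potential in \eqref{Strichartz-01}.

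For \eqref{Strichartz-02} I would argue from fixed--time dispersion. The convolution kernel of $W(t)$ equals $c\,t^{-1/3}\mathrm{Ai}(c'xt^{-1/3})$ for $t>0$, hence $\|W(t)\|_{L^1\to L^\infty}\lesssim|t|^{-1/3}$ by boundedness of the Airy function; since the kernel of $e^{-t(\partial_x^3+\partial_y^3)}$ is a product of two such factors,
\begin{equation*}
\|e^{-t(\partial_x^3+\partial_y^3)}\varphi\|_{L^\infty_{x,y}}\lesssim|t|^{-2/3}\|\varphi\|_{L^1_{x,y}}.
\end{equation*}
Interpolating with the $L^2$--isometry bound and running the standard $TT^*$ argument together with the Hardy--Littlewood--Sobolev inequality --- no endpoint is needed, as $p>3$ --- yields \eqref{Strichartz-02} on the whole line $\tfrac3p+\tfrac2q=1$.

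The estimate \eqref{Strichartz-01} is the substantial one. The global decay $|t|^{-2/3}$ is too weak to reach $\tfrac2p+\tfrac2q=1$ because the Hessian of the phase $\xi^3+\eta^3$ is $6\,\mathrm{diag}(\xi,\eta)$, which degenerates on the coordinate axes; the factors $|\nabla_x|^{\frac1{2p}}|\nabla_y|^{\frac1{2p}}$ are exactly what compensates. I would decompose $\varphi=\sum_{M,K}\varphi_{M,K}$ into Littlewood--Paley pieces with $|\xi|\sim M$, $|\eta|\sim K$ ($M,K$ dyadic), and prove the frequency--localized refined dispersive bound
\begin{equation*}
\|e^{-t(\partial_x^3+\partial_y^3)}\varphi_{M,K}\|_{L^\infty_{x,y}}\lesssim|t|^{-1}(MK)^{-1/2}\|\varphi_{M,K}\|_{L^1_{x,y}}
\end{equation*}
on blocks away from the axes, using the one--dimensional van der Corput estimate in $\xi$ and in $\eta$ separately (the second $\xi$--derivative of the phase is $\sim|t|M$ on $|\xi|\sim M$, and symmetrically in $\eta$; the blocks meeting the axes are handled with the order--three van der Corput estimate in the degenerate direction). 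This is a Schr\"odinger--type $|t|^{-1}$ decay at the price of an $(MK)^{-1/2}$ loss, precisely the loss that $|\nabla_x|^{\frac1{2p}}|\nabla_y|^{\frac1{2p}}$ --- which acts as $\sim(MK)^{\frac1{2p}}$ on the block --- is tuned to absorb: applying $TT^*$ and Hardy--Littlewood--Sobolev to this block bound and inserting the weight gives a constant $(MK)^{\frac1{2p}-\frac14+\frac1{2q}}$, and the exponent $\tfrac12(\tfrac1p+\tfrac1q)-\tfrac14$ vanishes exactly on the line $\tfrac2p+\tfrac2q=1$, so $\||\nabla_x|^{\frac1{2p}}|\nabla_y|^{\frac1{2p}}e^{-t(\partial_x^3+\partial_y^3)}\varphi_{M,K}\|_{L^p_tL^q_{x,y}}\lesssim\|\varphi_{M,K}\|_{L^2}$. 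Summing over $M,K$ is then finished with a Littlewood--Paley square--function estimate in $L^p_tL^q_{x,y}$ (legitimate since $2<p,q<\infty$ for the non--trivial pairs) and the $L^2$ almost--orthogonality of $\{\varphi_{M,K}\}$. Alternatively, and closest to the source, \eqref{Strichartz-01} is contained in the oscillatory--integral estimates of \cite{KPV91}, whose proof handles such product--type, axis--degenerate phases directly.

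I expect the axis--degeneracy of $\xi^3+\eta^3$ to be the main obstacle: one cannot run a single global dispersive estimate and must instead localize in frequency --- where a genuine $|t|^{-1}$ decay becomes available --- and then reassemble the dyadic pieces, the reassembly relying on the fact that the exponent $\tfrac1{2p}$ is not a free parameter but is forced by scale--invariance so that the block estimates close upon summation.
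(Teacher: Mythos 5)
The paper offers no proof of this lemma at all: it is quoted verbatim as Theorem 3.1 of \cite{KPV91}, so there is no argument in the text to compare yours against. Your reconstruction is, however, essentially the correct one and matches the Kenig--Ponce--Vega mechanism. The scaling computation correctly forces both admissibility lines and the exponent $\tfrac{1}{2p}$; the $|t|^{-1/3}$ Airy bound, tensorization to $|t|^{-2/3}$, and the $TT^*$/Hardy--Littlewood--Sobolev argument give \eqref{Strichartz-02} exactly as you describe; and for \eqref{Strichartz-01} the frequency-localized bound $\|e^{-t(\partial_x^3+\partial_y^3)}\varphi_{M,K}\|_{L^\infty}\lesssim |t|^{-1}(MK)^{-1/2}\|\varphi_{M,K}\|_{L^1}$ follows from two applications of the second-order van der Corput lemma, and your exponent bookkeeping $(MK)^{\frac{1}{2p}-\frac14+\frac{1}{2q}}=(MK)^0$ on the line $\tfrac2p+\tfrac2q=1$ is right. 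Two small points. First, with a \emph{homogeneous} Littlewood--Paley decomposition $|\xi|\sim M$, $|\eta|\sim K$ there are no ``blocks meeting the axes'': on every block $|\partial_\xi^2(t\xi^3)|=6|t||\xi|\sim |t|M$ uniformly, so the order-three van der Corput estimate is never needed; invoking it suggests you had an inhomogeneous decomposition in mind, which would clash with the homogeneous Riesz potentials $|\nabla_x|^{1/(2p)}$. Second, the reassembly step deserves one more sentence: you need the reverse square-function inequality in the mixed norm $L^p_tL^q_{x,y}$ followed by Minkowski (using $p,q\ge 2$) to pull the $\ell^2_{M,K}$ sum outside, and then almost-orthogonality of $\{\varphi_{M,K}\}$ in $L^2$; as you note, the only admissible pair with $q=\infty$ or $q=2$ is degenerate, so this is legitimate. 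With those two touches your sketch is a complete and standard proof of the quoted result.
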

The above Strichartz estimates provide the following.
\begin{align}
\| |\nabla_x|^{\frac{1}{2p}} |\nabla_y|^{\frac{1}{2p}} Q_L u \|_{L_t^p L_{x,y}^q} & \lesssim L^{\frac{1}{2}}
\| Q_L u\|_{L^2_{x,y,t}}, \quad
\textnormal{if} \ \ \frac{2}{p} + \frac{2}{q} = 1, \ p >2,\label{Strichartz-1}\\
\|  Q_L u \|_{L_t^p L_{x,y}^q} & \lesssim L^{\frac{1}{2}} \| Q_L u\|_{L^2_{x,y,t}}, \quad
\textnormal{if} \ \ \frac{3}{p} + \frac{2}{q} = 1, \ p >3.\label{Strichartz-2}
\end{align}
See the proof of Lemma 2.3 in \cite{GTV97}. By interpolation \eqref{Strichartz-2} with the trivial equation
$\| Q_L u \|_{L_{x,y,t}^2} = \| Q_L u \|_{L_{x,y,t}^2}$, we get the following.
\begin{equation}
\| Q_L u \|_{L_t^p L_{x,y}^q} \lesssim L^{\frac{2}{3 p} + \frac{1}{q}} \|Q_L u\|_{L_{x,y,t}^2}, \quad
\textnormal{if} \ \ \frac{2}{p} + \frac{2}{q} = 1, \ p \geq 4.\label{Strichartz-3}
\end{equation}
\begin{rem}
Since the estimates \eqref{Strichartz-1}-\eqref{Strichartz-3} are almost equivalent to \eqref{Strichartz-01} and \eqref{Strichartz-02}, we frequently call \eqref{Strichartz-1}-\eqref{Strichartz-3} Strichartz estimates in the paper.
\end{rem}
\section{Proof of the Key estimate}
In this section, we establish the key estimate which gives Theorem \ref{mth} by the standard iteration argument, see \cite{GTV97}, \cite{KPV96}, and \cite{Tao06}.
In this paper, we omit the details of the proof of Theorem \ref{mth} and focus on showing the following key estimate.
\begin{thm}\label{nonlinearity-estimate}
For any $s > -1/4$, there exist $b \in (1/2, 1)$, $\e >0$ and $C>0$ such that
\begin{equation}
\|(\partial_x + \partial_y) (uv) \|_{X^{s,\,b-1+\e}} \quad  \leq C \|u \|_{X^{s,\,b}} \|v \|_{X^{s,\,b}} \label{goal-1-1}.
\end{equation}
\end{thm}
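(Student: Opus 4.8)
The plan is to run a standard dyadic Littlewood-Paley decomposition on the bilinear estimate \eqref{goal-1-1}, reducing it to frequency-localized pieces, and then to estimate each piece by duality together with $L^2$-based convolution estimates. Writing $u=\sum_{N_1,L_1}P_{N_1}Q_{L_1}u$ and $v=\sum_{N_2,L_2}P_{N_2}Q_{L_2}v$, and testing the output against $P_{N_0}Q_{L_0}w$, I would reduce \eqref{goal-1-1} to a summable bound for the trilinear form
\begin{equation*}
\Big| \int (P_{N_0}Q_{L_0}w)\, \overline{(P_{N_1}Q_{L_1}u)(P_{N_2}Q_{L_2}v)}\, dx\,dt \Big|,
\end{equation*}
where, by the usual convolution constraint $\tau_0=\tau_1+\tau_2$, $(\xi_0,\eta_0)=(\xi_1,\eta_1)+(\xi_2,\eta_2)$, one records the resonance identity
\begin{equation*}
(\tau_0-\xi_0^3-\eta_0^3)-(\tau_1-\xi_1^3-\eta_1^3)-(\tau_2-\xi_2^3-\eta_2^3) = 3(\xi_1+\xi_2)\xi_1\xi_2 + 3(\eta_1+\eta_2)\eta_1\eta_2.
\end{equation*}
Hence $\max(L_0,L_1,L_2) \gtrsim |3(\xi_0)\xi_1\xi_2 + 3(\eta_0)\eta_1\eta_2|$, which is the gain that compensates the derivative loss $N_0$ from $(\partial_x+\partial_y)$ and the negative Sobolev weight. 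As always, one splits into the high-modulation regime (where the large modulation directly supplies enough powers of $L$ via the Strichartz estimates \eqref{Strichartz-1}--\eqref{Strichartz-3} and Hölder) and the low-modulation, near-resonant regime, which is the crux.

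In the non-resonant and moderately resonant cases I would use \eqref{Strichartz-3} with $p=q=4$ (or the sharper \eqref{Strichartz-1}, \eqref{Strichartz-2} when it helps to place derivatives on the smallest frequency) to bound the trilinear form by $\prod_j L_j^{1/2+}N_{\min}^{0+}\|P_{N_j}Q_{L_j}\cdot\|_{L^2}$ and then absorb the loss using $\max L_j \gtrsim N_0 N_{\min}^2$ or a comparable lower bound; the summation over the four dyadic parameters then closes because $b>1/2$ and $s>-1/4$ leave a small positive margin. The genuinely hard case is when all three modulations are small compared to the resonance function yet the frequencies are arranged so that the above cubic lower bound degenerates — i.e. genuine resonant interactions. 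Here the Strichartz estimates are too lossy, and instead I would invoke the nonlinear Loomis-Whitney inequality of \cite{BHT10} on the three characteristic hypersurfaces $\tau=\xi^3+\eta^3$ (suitably translated): it yields a bound by the product of $L^2$ norms with a constant governed by the transversality of the three surface normals. The normals here are $(1,-3\xi_j^2,-3\eta_j^2)$, so the relevant transversality determinant is $\sim |(\xi_1-\xi_2)(\eta_1^2-\eta_2^2)-(\xi_1^2-\xi_2^2)(\eta_1-\eta_2)|$ up to permutations, and — unlike in the Zakharov case — this vanishes not merely when two frequencies are parallel but on a larger set depending jointly on angles and magnitudes.

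The main obstacle, therefore, is organizing the resonant region into pieces on which the Loomis-Whitney transversality constant is effectively constant and whose overlaps are controlled. My plan is to introduce an almost-orthogonal decomposition of frequency space adapted to the level sets of the transversality determinant above: partition the angular variable into sectors and the radial variables into dyadic (or finer, transversality-dependent) blocks, so that on each block the three hypersurfaces meet transversally with a quantitative lower bound, apply the nonlinear Loomis-Whitney estimate block by block, and then sum. One must check that for fixed output block the number of contributing input blocks is $O(1)$ (or grows slowly enough to be absorbed), that the geometric hypotheses of \cite{BHT10} — curvature and $C^{1,1}$ bounds on the localized surfaces, plus the transversality bound — hold uniformly on each block after rescaling, and that the resulting powers of the transversality parameter, together with the residual $N_0 N_{\min}^2$ from the resonance identity in the remaining directions, beat the derivative and Sobolev losses with room to spare for $s>-1/4$. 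I expect the bookkeeping of this decomposition, and the verification that the geometric constants in the nonlinear Loomis-Whitney inequality transform correctly under the rescalings, to be the most delicate part; the proof of the needed parabolic/rescaling lemma (here Lemma \ref{lemma3.8}) I would defer to an appendix, as the authors announce.
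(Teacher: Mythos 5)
Your overall architecture --- duality, dyadic decomposition, the resonance identity, Strichartz estimates for the high-modulation regime, and the nonlinear Loomis--Whitney inequality applied blockwise on an almost-orthogonal decomposition adapted to the transversality determinant --- is exactly the paper's. But the two points you defer to ``bookkeeping'' are where the actual content of the proof lies, and as stated your plan fails there. First, once the convolution constraint is imposed the determinant factors as $|\xi_1\eta_2-\xi_2\eta_1|\,|\xi_1\eta_2+\xi_2\eta_1+2(\xi_1\eta_1+\xi_2\eta_2)|$, not as the expression you wrote (yours vanishes already when $\xi_1=\xi_2$, which is no resonance). The decomposition must therefore track the joint level sets of the resonance function $\Phi$ and of $F=\xi_1\eta_2+\xi_2\eta_1+2(\xi_1\eta_1+\xi_2\eta_2)$, and your key claim that ``for fixed output block the number of contributing input blocks is $O(1)$'' is \emph{false} for the natural square-tile Whitney decomposition precisely where the level curve of $\Phi$ and the hyperbola $F=\mathrm{const}$ are tangent. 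This tangency occurs along explicit exceptional directions ($\eta_1\sim(\sqrt2\pm1)^{4/3}\xi_1$ and their images under the symmetries), and the paper must excise these regions and build a second, genuinely different decomposition there, into curved cells whose radial width shrinks like $N_1/\sqrt{nA}$, together with its own counting lemma (Lemma \ref{lemma3.8} --- which is an almost-orthogonality statement, not the ``parabolic/rescaling lemma'' you describe). Nothing in your proposal produces this, and without it the sum over blocks does not close.

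Second, the near-parallel interactions are not just another resonant block to be fed into the same machine. There the angular factor $|\xi_1\eta_2-\xi_2\eta_1|$ of the determinant degenerates, so the transversality $d$ becomes much smaller than the cell diameter and the hypothesis $\mathrm{diam}(S_i)\lesssim d$ of Proposition \ref{prop2.7} fails for isotropic tiles; moreover near the anti-diagonal $\xi+\eta=0$ the resonance function vanishes identically and no modulation gain is available at all. The paper survives this only because the derivative symbol $\xi+\eta$ vanishes there as well --- a null-form cancellation your proposal never invokes --- and because an anisotropic rescaling $(\tau,\xi,\eta)\mapsto(\tau,\xi,K^{1/2}\eta)$ restores the diameter/transversality balance before Loomis--Whitney is applied. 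These two ingredients (the exceptional-direction decomposition and the null structure with anisotropic scaling in the parallel regime) are the missing ideas; the rest of your outline is sound.
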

By a duality argument, \eqref{goal-1-1} is implied by
\[
\Bigl| \int{ w (\partial_x + \partial_y) (uv)} dtdxdy \Bigr| \lesssim \|u \|_{X^{s,\,b}} \|v \|_{X^{s,\,b}}
\|w \|_{X^{-s,\,1-b-\e}}.
\]
Furthermore, by dyadic decompositions, it suffices to show
\begin{equation}\label{2018-06-05-01}
\begin{split}
\sum_{{{\substack{N_j, L_j\\ \tiny{(j=0,1,2)}}}}}
 \Bigl|\int{ \left((\partial_x + \partial_y)( Q_{L_0} P_{N_0}w )\right)
(Q_{L_1} P_{N_1} u) (Q_{L_2} P_{N_2}v)
}
dt dx dy & \Bigr|\\
\lesssim \|u \|_{X^{s,\,b}} \|v \|_{X^{s,\,b}}  &
\|w \|_{X^{-s,\,1-b-\e}}.
\end{split}
\end{equation}
Thus, we focus on establishing \eqref{2018-06-05-01} in this section.
For simplicity, throughout the section, we use the notations
\begin{align*}
& L_{012}^{\max} := \max (L_0, L_1, L_2), \quad
N_{012}^{\max} := \max (N_0, N_1, N_2),\\
& w_{N_0, L_0} := Q_{L_0} P_{N_0}w, \ u_{N_1, L_1} := Q_{L_1} P_{N_1} u, \
v_{N_2, L_2} := Q_{L_2} P_{N_2}v.
\end{align*}
We first note that if $N_0 \sim N_1 \sim N_2 \sim 1$ we easily obtain \eqref{2018-06-05-01} by using Strichartz estimates. Thus we assume $1 \ll N_{012}^{\max}$ hereafter.

We divide the proof into the following three cases:

Case 1: high modulation,

Case 2: low modulation, non-parallel interactions,

Case 3: low modulation, parallel interactions.\\
Cases 1, 2, and 3 are treated in Subsections \ref{Case 1}, \ref{Case 2}, and \ref{Case 3}, respectively.
We will introduce the conditions of each case and sketch the outline of the proof of
\eqref{2018-06-05-01} at the beginning of each subsection.
\subsection{Case 1: high modulation}\label{Case 1}
In this subsection we will show \eqref{2018-06-05-01} under the condition $L_{012}^{\max} \gtrsim (N_{012}^{\max})^3$. The proof is quite standard. We just use the Strichartz estimates.
\begin{prop}\label{high-modulations}
Let $L_{012}^{\max} \gtrsim (N_{012}^{\max})^3$. Then \eqref{2018-06-05-01} holds true.
\end{prop}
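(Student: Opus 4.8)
The plan is to estimate the trilinear integral in \eqref{2018-06-05-01} directly by Hölder's inequality and the Strichartz estimates \eqref{Strichartz-1}--\eqref{Strichartz-3}, exploiting the large-modulation gain $L_{012}^{\max} \gtrsim (N_{012}^{\max})^3$. First I would record the standard consequence of the dispersive relation: since $\tau_0 + \tau_1 + \tau_2 = 0$ and $\xi_0+\xi_1+\xi_2 = \eta_0+\eta_1+\eta_2=0$ on the support of the integrand (the $w$ factor being the output), the resonance identity
\begin{equation*}
(\tau_0 - \xi_0^3 - \eta_0^3) + (\tau_1 - \xi_1^3 - \eta_1^3) + (\tau_2 - \xi_2^3 - \eta_2^3) = -(\xi_0^3 + \xi_1^3 + \xi_2^3) - (\eta_0^3 + \eta_1^3 + \eta_2^3)
\end{equation*}
holds, but in this case we do not even need the finer structure of the right-hand side; the hypothesis $L_{012}^{\max} \gtrsim (N_{012}^{\max})^3$ is assumed outright. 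The derivative $(\partial_x + \partial_y)$ falling on $w_{N_0,L_0}$ contributes at most a factor $N_0 \lesssim N_{012}^{\max}$.

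Next I would split into subcases according to which of $L_0, L_1, L_2$ realizes the maximum, and also according to the frequency configuration (two comparable large frequencies, the third small; or all three comparable). By the symmetry of the expression under permuting the roles of the three factors (up to harmless complex conjugation, using property (i) of $X^{s,b}$), it suffices to treat, say, $L_0 = L_{012}^{\max}$ and the generic frequency configuration $N_0 \lesssim N_1 \sim N_2 \sim N_{012}^{\max}$; the remaining arrangements are analogous. For the two factors not carrying the maximal modulation I would apply an $L^4_t L^4_{x,y}$ Strichartz estimate from \eqref{Strichartz-3} (with $p=q=4$, giving the loss $L^{1/6 + 1/4} = L^{5/12}$ per factor), and put the maximal-modulation factor in $L^2_{t,x,y}$, after inserting Bernstein in the low-frequency variable if needed; then
\begin{equation*}
\left| \int \big((\partial_x+\partial_y) w_{N_0,L_0}\big) u_{N_1,L_1} v_{N_2,L_2}\, dt\,dx\,dy \right|
\lesssim N_0 \, \|w_{N_0,L_0}\|_{L^2_{t,x,y}} \, L_1^{5/12} \|u_{N_1,L_1}\|_{L^2} \, L_2^{5/12} \|v_{N_2,L_2}\|_{L^2}.
\end{equation*}
One then trades the powers of $N_0$ and $N_1 \sim N_2$ against the modulation gain: writing $N_0 \lesssim (L_{012}^{\max})^{1/3}$ and absorbing the Sobolev weights $N_j^{s}$ (which is where $s > -1/4$ enters, to keep the sum over $N_1\sim N_2$ and over $N_0 \leq N_1$ summable), the factor $(L_{012}^{\max})^{1/3}$ beats the exponent $1 - b - \e$ needed on the $w$ side provided $b$ is taken sufficiently close to $1/2$ and $\e$ small. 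Finally I would sum the resulting geometric series in $L_0, L_1, L_2$ and in the dyadic frequencies, using that each summation produces only a bounded factor once a genuine positive power of the relevant parameter has been extracted; choosing $b - 1/2$ and $\e$ small but positive makes every exponent strictly of the right sign.

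The main obstacle, such as it is, is purely bookkeeping: making sure that after distributing the modulation gain $(L_{012}^{\max})^{1/3}$ — which is shared among the three factors according to which index attains the maximum — every dyadic sum over $N_0, N_1, N_2, L_0, L_1, L_2$ converges, and that the negative Sobolev exponent $s$ does not destroy summability in the low-frequency factor. Since $s > -1/4$ and the Strichartz losses are strictly below $L^{1/2}$, there is room to spare, so no delicate cancellation or geometry of resonant sets is required here; this is why the authors describe the proof as "quite standard."
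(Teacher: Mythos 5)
Your proposal is correct and takes essentially the same route as the paper: put the factor carrying the maximal modulation in $L^2$, the other two in $L^4$ via the Strichartz estimate \eqref{2018-06-05-03}, trade the hypothesis $L_{012}^{\max}\gtrsim (N_{012}^{\max})^3$ against a negative power of $N_{012}^{\max}$, and then sum dyadically using $b>1/2$, small $\e$, and $s>-1/4$. The only (harmless) deviations are that the paper neither records the resonance identity nor invokes Bernstein, packaging the Hölder--Strichartz step as the single clean bound \eqref{2018-06-05-02}.
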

\begin{proof}
It suffices to show
\begin{equation}
\Bigl|\int{ w_{N_0, L_0}
u_{N_1, L_1}  v_{N_2, L_2}
}
dt dx dy \Bigr|
 \lesssim  (N_{012}^{\max})^{-\frac{5}{4}}  (L_0 L_1 L_2)^{\frac{5}{12}}
\|u \|_{L^2} \|v \|_{L^2}
\|w \|_{L^2}.\label{2018-06-05-02}
\end{equation}
By the Strichartz estimate \eqref{Strichartz-3} with $p=q=4$, we get
\begin{equation}
\|  Q_L  f \|_{L_t^4 L_{x,y}^4} \lesssim L^{\frac{5}{12}}\|f\|_{L^2}. \label{2018-06-05-03}
\end{equation}
If $L_0 = L_{012}^{\max} \gtrsim (N_{012}^{\max})^3$, by H\"{o}lder's inequality and \eqref{2018-06-05-03}, we have
\begin{align*}
\Bigl|\int{ w_{N_0, L_0}
u_{N_1, L_1}  v_{N_2, L_2}
}
dt dx dy \Bigr| & \lesssim \|w_{N_0, L_0} \|_{L^2} \|u_{N_1, L_1} \|_{L^4} \| v_{N_2, L_2} \|_{L^4} \\
 \lesssim & (N_{012}^{\max})^{-\frac{5}{4}}  (L_0 L_1 L_2)^{\frac{5}{12} } \|u \|_{L^2} \|v \|_{L^2}
\|w \|_{L^2}.
\end{align*}
Similarly, we can get \eqref{2018-06-05-02} for $L_1 = L_{012}^{\max}$ or $L_2 = L_{012}^{\max}$.
\end{proof}

\subsection{Case 2: low modulation, non-parallel interactions}\label{Case 2}
Proposition \ref{high-modulations} implies that we only need to consider the case
$L_{012}^{\max} \ll (N_{012}^{\max})^3.$
By Plancherel's theorem, the key estimate \eqref{2018-06-05-01} holds if we show
\[
 \sum_{{{\substack{N_j, L_j\\ \tiny{(j=0,1,2)}}}}}
 \Bigl|\int{  N_0 \, \ha{w}_{{N_0, L_0}}
(
\ha{u}_{N_1, L_1} * \ha{v}_{N_2, L_2}
)}
d\tau d\xi d\eta \Bigr| \lesssim \|u \|_{X^{s,\,b}} \|v \|_{X^{s,\,b}}
\|w \|_{X^{-s,\,1-b-\e}}.
\]
Since $s>-1/4$, it suffices to show
\begin{equation}
\begin{split}
 & \Bigl|\int_{*}{  \ha{w}_{{N_0, L_0}}(\tau, \xi, \eta)
\ha{u}_{N_1, L_1}(\tau_1, \xi_1, \eta_1)  \ha{v}_{N_2, L_2}(\tau_2, \xi_2, \eta_2)
}
d\sigma_1 d\sigma_2 \Bigr| \\
& \qquad \qquad \qquad  \lesssim   (N_{012}^{\max})^{-\frac{5}{4}}  (L_0 L_1 L_2)^{\frac{5}{12}} \|\ha{u}_{N_1, L_1} \|_{L^2} \| \ha{v}_{N_2, L_2} \|_{L^2}
\|\ha{w}_{{N_0, L_0}} \|_{L^2}.\label{desired-est-11-17}
\end{split}
\end{equation}
where $d \sigma_j = d\tau_j d \xi_j d \eta_j$ and $*$ denotes $(\tau, \xi, \eta) = (\tau_1 + \tau_2, \xi_1+ \xi_2, \eta_1 + \eta_2).$
In this subsection, we focus on showing the estimate \eqref{desired-est-11-17} under the following conditions.
\setcounter{case}{1}
\begin{case}[low modulation, non-parallel interactions]\label{ass-1} $\quad$\\
(i) $ \, \, $ $L_{012}^{\max} \leq 2^{-100} (N_{012}^{\max})^3$,\\
(ii) $ \, $ $ N_{012}^{\max} \leq 2^{22} \min(N_0, N_1, N_2) $,\\
(iii) $| \sin \angle \left( (\xi_1, \eta_1), (\xi_2, \eta_2) \right)| \geq 2^{-22}$,\\
where $\angle \left( (\xi_1, \eta_1), (\xi_2, \eta_2) \right) \in [0, \ \pi]$ is the angle between $(\xi_1, \eta_1)$ and $(\xi_2, \eta_2)$.
\end{case}

We remark on the above conditions. First condition is natural since we already saw \eqref{2018-06-05-01}
holds if $L_{012}^{\max} \gtrsim(N_{012}^{\max})^3$. The second and third conditions imply that space frequencies $N_0$, $N_1$, $N_2$ are all high and we do not treat (near) parallel interactions.
In other words, we here consider interactions of three waves which propagate in different directions. See Figure \ref{fig:int} below.
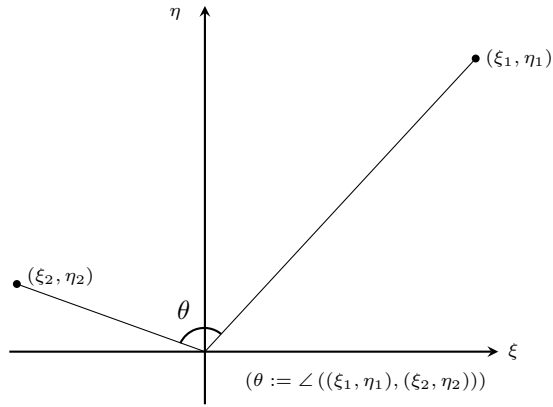
\begin{figure}[H]
\caption{Condition (iii) in \textit{Case} \textnormal{\ref{ass-1}}.}
\label{fig:int}
\centering
\begin{tikzpicture}[thick]
  \useasboundingbox (0,0) rectangle (6.5,6);
   \draw [thick, -stealth](0,0.7)--(6.5,0.7) node [anchor=west, font=\scriptsize]{$\xi$};
   \draw [thick, -stealth](2.6,0)--(2.6,5.3) node [anchor=west, font=\scriptsize]{};
\node [anchor=west, font=\scriptsize] at(2,5.2){$\eta$};
\draw [thin](2.6,0.7)--(6.2,4.6);
\fill (6.2,4.6) circle (1.5pt);
\node [anchor=west, font=\scriptsize] at(6.2,4.6){$(\xi_1,\eta_1)$};
\draw [thin](2.6,0.7)--(0.1,1.6);
\fill (0.1,1.6) circle (1.5pt);
\node [anchor=west, font=\scriptsize] at(0.1,1.7) {$(\xi_2,\eta_2)$};
\draw (2.6+0.36*0.6, 0.7+0.39*0.6) arc (50.1:155.1:0.35cm);
\node [anchor=west] at(2.1,1.25){$\theta$};
\node [anchor=west, font=\scriptsize] at(3,0.3){ $(\theta:= \angle
\left( (\xi_1, \eta_1), (\xi_2, \eta_2) \right))$};
\end{tikzpicture}
\end{figure}


We roughly sketch the outline of the proof.
As we mentioned in the introduction, the convolution estimate on hypersurfaces, which is called the
nonlinear Loomis-Whitney inequality, plays a crucial role. The key ingredient when we apply the
nonlinear Loomis-Whitney is a transversality condition (see $d$ in Proposition \ref{prop2.7} (iii) below) and we will find that it depends on the size of product
\begin{equation*}
|\xi_1 \eta_2 - \xi_2 \eta_1| \, |\xi_1 \eta_2 +  \xi_2 \eta_1 + 2 (\xi_1 \eta_1 + \xi_2 \eta_2)|.
\end{equation*}
The former function $|\xi_1 \eta_2 - \xi_2 \eta_1|$ is comparable to
$N_1^2 | \sin \angle \left( (\xi_1, \eta_1), (\xi_2, \eta_2) \right)|$ if $1 \ll N_1 \sim N_2$,
which implies that, in this subsection, the function depends only on the size of the angle between $(\xi_1, \eta_1)$ and $(\xi_2, \eta_2)$. In \cite{BHHT09}, for the $2$D Zakharov system case, it was found that the transversality condition is comparable to the function $|\xi_1 \eta_2 - \xi_2 \eta_1|$ and therefore the nonlinear Loomis-Whitney inequality could be applied to the non-parallel interactions without difficulties.
Here, however, we need to treat the latter function $|\xi_1 \eta_2 +  \xi_2 \eta_1 + 2 (\xi_1 \eta_1 + \xi_2 \eta_2)|$, which makes the proof complicated.
In this subsection, by the assumptions in \textit{Case} \ref{ass-1}, the former function is harmless.
Thus, the key point in the proof is to handle the latter function in a suitable way.
\begin{figure}[H]
\caption{Square tile decomposition.}
\label{fig:tile}
\centering
\begin{tikzpicture}[thick,xshift=-7]
  \useasboundingbox (7,-0.8) rectangle (14,6);
\draw[step=6mm,gray,very thin] (7.5,2) grid (11.6,5.7);
   \draw [thick, -stealth](7.5,2.4)--(11.6,2.4) node [anchor=west, font=\scriptsize]{};
   \draw [thick, -stealth](8.4,2)--(8.4,5.7) node [anchor=west, font=\scriptsize]{};
\filldraw[fill=gray, draw=black, fill opacity=0.5] (9.6,3.6) rectangle (10.2,4.2);
   \draw [thick, dashed](9.6,3.6)--(11.8,0.2);
   \draw [thick, dashed](10.2,4.2)--(14,2.4);
\filldraw[fill=gray, draw=black, fill opacity=0.5] (11.8,0.2) rectangle (14,2.4);
\node [anchor=west, font=\Large] at(12.4,1.2){$\mathcal{T}_k^A$};
\draw (11.8,0.2) to [out=310,in=175] (12.25,0);
\draw (14,0.2) to [out=230,in=5] (13.55,0);
\node [anchor=west] at(12.2,-0.07){$A^{-1}N_1$};
\end{tikzpicture}
\end{figure}
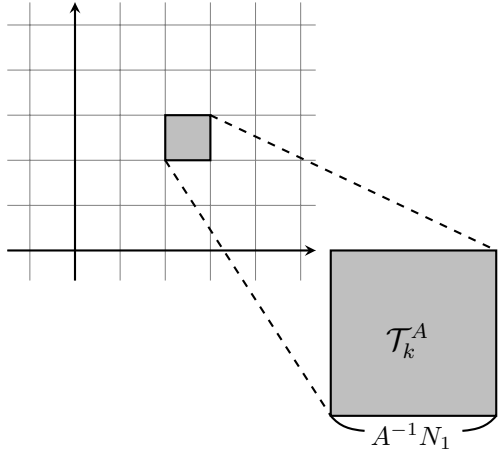
To do so, we first decompose $\R^2$ into square tiles. The decompositions are quite simple. See Figure \ref{fig:tile} above.
\begin{defn}
Let $A \geq 2^{100}$ be a dyadic number and $k = ( k_{(1)}, k_{(2)} ) \in \Z^2$. We define square-tiles
$\{ \mathcal{T}_k^A\}_{k \in \Z^2}$ whose side length is $A^{-1} N_1 $ and prisms $\{ \tilde{\mathcal{T}}_k^A\}_{k \in \Z^2}$ as follows:
\begin{align*}
& \mathcal{T}_k^A : = \{ (\xi,\eta) \in \R^2 \ | \ (\xi,\eta) \in A^{-1} N_1 \bigl(
[ k_{(1)}, k_{(1)} + 1) \times [ k_{(2)}, k_{(2)} + 1) \bigr) \},\\
& \tilde{\mathcal{T}}_k^A  : = \R \times \mathcal{T}_k^A.
\end{align*}
\end{defn}
\begin{rem}
Clearly, we have
\begin{equation*}
\R^2 = \bigcup_{k \in \Z^2} \mathcal{T}_k^A \quad \textnormal{and} \quad  k_1 \not= k_2 \iff
\mathcal{T}_{k_1}^A \cap \mathcal{T}_{k_2}^A = \emptyset,
\end{equation*}
for any $A$.
\end{rem}
The proof of \eqref{2018-06-05-01} in \textit{Case} \ref{ass-1} will be verified by utilizing the two Propositions \ref{prop3.3} and \ref{prop3.4} below with the suitable decomposition which is called the Whitney type decomposition. First, we introduce the bilinear Strichartz estimates. $\chi_A$ denotes the characteristic function of a set $A$.
\begin{prop}\label{prop3.2}
Assume \textnormal{(i)-(iii)} in \textit{Case} \textnormal{\ref{ass-1}}. Let $A \geq 2^{100}$ be dyadic and $k_1$, $k_2 \in \Z^2$. Then we have
\begin{align}
& \Bigl\| \chi_{G_{N_0, L_0}} \int \bigl( \chi_{\tilde{\mathcal{T}}_{k_1}^A}\ha{u}_{N_1, L_1} \bigr)(\tau_1, \xi_1, \eta_1) \bigl( \chi_{\tilde{\mathcal{T}}_{k_2}^A} \ha{v}_{N_2, L_2} \bigr) (\tau- \tau_1, \xi-\xi_1, \eta- \eta_1) d\sigma_1 \Bigr\|_{L_{\xi, \eta, \tau}^2} \notag \\
& \qquad \qquad \qquad \qquad \qquad
\lesssim (AN_1 )^{-\frac{1}{2}} (L_1 L_2)^{\frac{1}{2}} \|\chi_{\tilde{\mathcal{T}}_{k_1}^A}\ha{u}_{N_1, L_1} \|_{L^2}
\|\chi_{\tilde{\mathcal{T}}_{k_2}^A} \ha{v}_{N_2, L_2} \|_{L^2},\label{bilinearStrichartz-1}\\
& \Bigl\| \chi_{G_{N_1, L_1} \cap \tilde{\mathcal{T}}_{k_1}^A} \int \bigl( \chi_{\tilde{\mathcal{T}}_{k_2}^A} \ha{v}_{N_2, L_2}
\bigr)
(\tau_2, \xi_2, \eta_2)
\ha{w}_{N_0, L_0} (\tau_1+ \tau_2, \xi_1+\xi_2, \eta_1+ \eta_2) d\sigma_2 \Bigr\|_{L_{\xi_1, \eta_1, \tau_1}^2}
\notag \\
& \qquad \qquad \qquad \qquad \qquad
\lesssim (AN_1)^{-\frac{1}{2}} (L_0 L_2)^{\frac{1}{2}}
\|\chi_{\tilde{\mathcal{T}}_{k_2}^A} \ha{v}_{N_2, L_2} \|_{L^2}
\|\ha{w}_{N_0, L_0}\|_{L^2}, \label{bilinearStrichartz-2}\\
& \Bigl\| \chi_{G_{N_2,L_2} \cap \tilde{\mathcal{T}}_{k_2}^A} \int \ha{w}_{N_0, L_0} (\tau_1+ \tau_2, \xi_1+\xi_2, \eta_1+ \eta_2)  \bigl( \chi_{\tilde{\mathcal{T}}_{k_1}^A}\ha{u}_{N_1, L_1} \bigr)(\tau_1, \xi_1, \eta_1) d \sigma_1 \Bigr\|_{L_{\xi_2, \eta_2, \tau_2}^2} \notag \\
& \qquad \qquad \qquad \qquad \qquad
\lesssim (AN_1)^{-\frac{1}{2}} (L_0 L_1)^{\frac{1}{2}} \|\ha{w}_{N_0, L_0} \|_{L^2}
\|\chi_{\tilde{\mathcal{T}}_{k_1}^A}\ha{u}_{N_1, L_1} \|_{L^2} .\label{bilinearStrichartz-3}
\end{align}
\end{prop}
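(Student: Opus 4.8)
\textbf{Proof plan for Proposition \ref{prop3.2}.}
The three estimates \eqref{bilinearStrichartz-1}--\eqref{bilinearStrichartz-3} are the same bilinear $L^2$-convolution bound viewed from the three dual perspectives, so it suffices to establish one of them, say \eqref{bilinearStrichartz-1}, and obtain the other two by relabelling the frequencies $(\tau,\xi,\eta)\to(\tau_j,\xi_j,\eta_j)$ and exchanging the roles of $\ha u_{N_1,L_1}$, $\ha v_{N_2,L_2}$, $\ha w_{N_0,L_0}$; this symmetry works because each factor is supported in a set $G_{N,L}$ cut down to a prism $\tilde{\mathcal T}^A_k$, and the quantitative conclusion only depends on $A N_1$ and the modulation sizes. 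The plan is therefore to prove \eqref{bilinearStrichartz-1} by the standard duality reformulation: by Plancherel and Cauchy--Schwarz it is equivalent to showing that for fixed $(\tau,\xi,\eta)$ the measure of the set of $(\tau_1,\xi_1,\eta_1)$ for which both $(\tau_1,\xi_1,\eta_1)$ lies in $G_{N_1,L_1}\cap\tilde{\mathcal T}^A_{k_1}$ and $(\tau-\tau_1,\xi-\xi_1,\eta-\eta_1)$ lies in $G_{N_2,L_2}\cap\tilde{\mathcal T}^A_{k_2}$ is $\lesssim (AN_1)^{-1}L_1L_2$.

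To bound that sublevel set measure I would integrate out the temporal variable first: for fixed spatial $(\xi_1,\eta_1)$, the constraint $|\tau_1-\xi_1^3-\eta_1^3|\lesssim L_1$ restricts $\tau_1$ to an interval of length $\sim L_1$, and simultaneously $|\tau-\tau_1-(\xi-\xi_1)^3-(\eta-\eta_1)^3|\lesssim L_2$ restricts it further, so the $\tau_1$-measure is $\lesssim\min(L_1,L_2)\le (L_1L_2)^{1/2}$. It then remains to estimate the area of the set of $(\xi_1,\eta_1)\in\mathcal T^A_{k_1}$ with $\xi-\xi_1\in\mathcal T^A_{k_2}$ satisfying the two cubic constraints; the first factor already forces $(\xi_1,\eta_1)$ into a tile of side $A^{-1}N_1$, hence area $(A^{-1}N_1)^2$, and the additional modulation relations cut this down by a further factor $(AN_1)^{-1}\cdot(A^{-1}N_1)^{-2}=(A N_1)^{-1}(A^{-1}N_1)^{-2}$... more precisely I expect the surviving area to be $\lesssim (AN_1)^{-1}$ once the non-parallelism from (iii) in \textit{Case} \ref{ass-1} is invoked, because the gradients of the two phase functions $\xi_1^3+\eta_1^3$ and $(\xi-\xi_1)^3+(\eta-\eta_1)^3$ differ by a vector whose size is controlled below by $N_1^2|\sin\angle((\xi_1,\eta_1),(\xi_2,\eta_2))|\gtrsim N_1^2$ under assumptions (ii)--(iii), giving the requisite transversality that converts the two one-dimensional modulation strips into a region of area $\lesssim L_1^{1/2}L_2^{1/2}/N_1^2$; combined with the tile localization this yields the factor $(AN_1)^{-1}$ rather than merely $N_1^{-2}$.

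The main obstacle is getting the dependence on $A$ correct: naively the tile restriction gives area $(A^{-1}N_1)^2$ for each of the two factors, which would be far too large; the gain must come from simultaneously using the tile localization of \emph{both} $(\xi_1,\eta_1)$ and $(\xi_2,\eta_2)=(\xi-\xi_1,\eta-\eta_1)$ together with the cubic modulation constraints, and tracking that these combine to leave an effective region whose measure scales like $(AN_1)^{-1}$ times the modulation factor. I would handle this by a change of variables adapted to the level sets of $\xi_1^3+\eta_1^3$: write one coordinate along the gradient direction (which the modulation constraint pins down to width $\sim L_1/N_1^2$) and the transverse coordinate along the tile, whose length is at most $A^{-1}N_1$; since the second modulation constraint, by transversality, pins the transverse coordinate to width $\lesssim L_2/N_1^2$ as well, the total area is $\lesssim \min(A^{-1}N_1, L_1/N_1^2)\cdot\min(A^{-1}N_1,L_2/N_1^2)$, and in the regime $L_{012}^{\max}\ll (N_{012}^{\max})^3$ of \textit{Case} \ref{ass-1}(i) one checks this is $\lesssim (AN_1)^{-1}(L_1L_2)^{1/2}$ after also using $A N_1 \cdot L_1/N_1^2 \le A^{-1} N_1 \cdot$ (something), i.e. one of the two factors loses the tile and the other keeps it. The remaining steps — reassembling via Cauchy--Schwarz to recover the $L^2$ norms of the individual truncated pieces, and verifying the two permuted versions \eqref{bilinearStrichartz-2}--\eqref{bilinearStrichartz-3} — are routine once \eqref{bilinearStrichartz-1} is in hand.
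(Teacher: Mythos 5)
Your reduction to the sublevel-set bound $\sup_{(\tau,\xi,\eta)}|E(\tau,\xi,\eta)|\lesssim (AN_1)^{-1}L_1L_2$ and the duality/relabelling remark for \eqref{bilinearStrichartz-2}--\eqref{bilinearStrichartz-3} match the paper, but the two key inputs you use to estimate $|E|$ are both unjustified, and one is false as stated. First, the transversality: you claim the difference of the spatial gradients, i.e. the vector $\bigl(\xi_1^2-(\xi-\xi_1)^2,\ \eta_1^2-(\eta-\eta_1)^2\bigr)$ up to constants, has size $\gtrsim N_1^2|\sin\angle((\xi_1,\eta_1),(\xi_2,\eta_2))|\gtrsim N_1^2$ using only (ii)--(iii). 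This is false: for $(\xi_1,\eta_1)=(N,N)$ and $(\xi_2,\eta_2)=(N,-N)$ the angle is $\pi/2$ yet both components vanish identically. The correct statement is \eqref{est-space-modu}, $\max\bigl(|\xi_1^2-(\xi-\xi_1)^2|,|\eta_1^2-(\eta-\eta_1)^2|\bigr)\gtrsim N_1^2$, and excluding configurations like the one above genuinely requires the low-modulation hypothesis (i) of \textit{Case} \ref{ass-1}: in the paper's cases (2) and (3) one shows $|\xi\xi_1(\xi-\xi_1)+\eta\eta_1(\eta-\eta_1)|\gtrsim N_1^3$, which contradicts $L_{012}^{\max}\ll N_1^3$. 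Your plan never uses (i) at this step, so the needed derivative lower bound has no proof.

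Second, the area bookkeeping in $(\xi_1,\eta_1)$ is wrong. Once $\tau_1$ is integrated out (measure $\lesssim\min(L_1,L_2)$), the two modulation conditions collapse to a \emph{single} scalar constraint $|(\tau-\xi^3-\eta^3)+3(\xi\xi_1(\xi-\xi_1)+\eta\eta_1(\eta-\eta_1))|\lesssim\max(L_1,L_2)$, i.e. one strip around a level curve, not two transversal strips of widths $L_1/N_1^2$ and $L_2/N_1^2$. Your claimed area bound $\min(A^{-1}N_1,L_1N_1^{-2})\cdot\min(A^{-1}N_1,L_2N_1^{-2})$ is therefore false in general: taking $L_1=L_2=L$ with $LN_1^{-2}\ll A^{-1}N_1$, the strip can run across the whole tile and the set has area $\sim LN_1^{-2}\cdot A^{-1}N_1$, far exceeding $(LN_1^{-2})^2$. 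The correct count is one strip of width $\max(L_1,L_2)N_1^{-2}$ in the coordinate where the derivative is $\gtrsim N_1^2$ (estimate \eqref{est2-prop3.2}), times the tile width $A^{-1}N_1$ in the remaining coordinate (estimate \eqref{est3-prop3.2}); together with the $\tau_1$-factor $\min(L_1,L_2)$ this gives exactly $(AN_1)^{-1}L_1L_2$. Relatedly, replacing $\min(L_1,L_2)$ by $(L_1L_2)^{1/2}$ at the start discards the $\min/\max$ pairing you need; keep them separate so that $\min\cdot\max=L_1L_2$.
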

\begin{proof}
We consider only \eqref{bilinearStrichartz-1} since \eqref{bilinearStrichartz-2} and \eqref{bilinearStrichartz-3} follow by duality. First we observe that (i)-(iii) in \textit{Case} \textnormal{\ref{ass-1}} give
\begin{equation}
\max \left( |\xi_1^2 - (\xi - \xi_1)^2|, \, |\eta_1^2 - (\eta - \eta_1)^2| \right)  \geq 2^{-200} N_1^2.
\label{est-space-modu}
\end{equation}
Indeed, if \eqref{est-space-modu} does not hold, clearly we may assume one of the following:
\begin{align*}
(1) \quad |\xi_1 - (\xi - \xi_1) | \leq 2^{-100} N_1  \quad \textnormal{and} \quad
|\eta_1 - (\eta - \eta_1) | \leq 2^{-100} N_1,\\
(2) \quad |\xi_1 - (\xi - \xi_1) | \leq 2^{-100} N_1  \quad \textnormal{and} \quad
|\eta_1 + (\eta - \eta_1) | \leq 2^{-100} N_1,\\
(3) \quad |\xi_1 + (\xi - \xi_1) | \leq 2^{-100} N_1  \quad \textnormal{and} \quad
|\eta_1 - (\eta - \eta_1) | \leq 2^{-100} N_1,\\
(4) \quad |\xi_1 + (\xi - \xi_1) | \leq 2^{-100} N_1  \quad \textnormal{and} \quad
|\eta_1 + (\eta - \eta_1) | \leq 2^{-100} N_1.
\end{align*}
It is obvious that both (1) and (4) contradict the assumption (iii). We show (2) contradicts at least one of (i)-(iii) in \textit{Case} \ref{ass-1}. We first observe that
$\max( |\xi_1|, |\xi- \xi_1|) \geq 2^{-30} N_1$, otherwise (iii) will not hold again.
Thus, without loss of generality, we assume $|\xi_1| \geq 2^{-30} N_1$. Since $ |\xi_1 - (\xi - \xi_1) | \leq 2^{-100}  N_1$ in (2), we can see $\min (|\xi|, |\xi- \xi_1|) \geq 2^{-31} N_1$. This and
$|\eta| =  |\eta_1 + (\eta - \eta_1) | \leq 2^{-100} N_1$ in (2) yield
\begin{align*}
 3 \max &  \bigl(|\tau - \xi^3 - \eta^3|, |\tau_1 - \xi_1^3 - \eta_1^3|,
|\tau-\tau_1 - (\xi- \xi_1)^3 - (\eta-\eta_1)^3 |\bigr)\\
& \geq
|\xi \xi_1 (\xi-\xi_1) + \eta \eta_1 (\eta- \eta_1)|\\
& \geq
|\xi \xi_1 (\xi-\xi_1)| - | \eta \eta_1 (\eta- \eta_1)|\\
& \geq 2^{-92} N_1^3 - 2^{-98} N_1^3 \geq 2^{-93} N_1^3
\end{align*}
which contradicts (i).
Similarly, we can show that (3) contradicts at least one of (i)-(iii) in \textit{Case} \ref{ass-1}.
Thus \eqref{est-space-modu} always holds. By symmetry, we may assume $|\xi_1^2 - (\xi - \xi_1)^2|\geq 2^{-200} N_1^2.$

Now we turn to prove \eqref{bilinearStrichartz-1}. By following a standard argument, we get
\begin{align*}
& \Bigl\| \chi_{G_{N_0, L_0}} \int \bigl( \chi_{\tilde{\mathcal{T}}_{k_1}^A}\ha{u}_{N_1, L_1} \bigr)(\tau_1, \xi_1, \eta_1) \bigl( \chi_{\tilde{\mathcal{T}}_{k_2}^A} \ha{v}_{N_2, L_2} \bigr) (\tau- \tau_1, \xi-\xi_1, \eta- \eta_1) d\sigma_1 \Bigr\|_{L_{\xi, \eta, \tau}^2} \\
\leq & \Bigl\| \chi_{G_{N_0, L_0}}   \Bigl(\bigl|
 \chi_{\tilde{\mathcal{T}}_{k_1}^A}\ha{u}_{N_1, L_1} \bigr|^2 *
\bigl| \chi_{\tilde{\mathcal{T}}_{k_2}^A} \ha{v}_{N_2, L_2} \bigr|^2
 \Bigr)^{1/2} |E(\tau, \xi, \eta)|^{1/2} \Bigr\|_{L_{\xi, \eta, \tau}^2} \\
\leq & \sup_{(\tau, \xi, \eta) \in G_{N_0, L_0}} |E(\tau, \xi, \eta)|^{1/2}
 \Bigl\| \bigl|
 \chi_{\tilde{\mathcal{T}}_{k_1}^A}\ha{u}_{N_1, L_1} \bigr|^2 *
\bigl| \chi_{\tilde{\mathcal{T}}_{k_2}^A} \ha{v}_{N_2, L_2} \bigr|^2
\Bigr\|_{L^1_{\xi,\eta,\tau}}^{1/2}\\
\leq & \sup_{(\tau, \xi, \eta) \in G_{N_0, L_0}} |E(\tau, \xi, \eta)|^{1/2}
\|\chi_{\tilde{\mathcal{T}}_{k_1}^A}\ha{u}_{N_1, L_1} \|_{L^2}
\|\chi_{\tilde{\mathcal{T}}_{k_2}^A} \ha{v}_{N_2, L_2} \|_{L^2},
\end{align*}
where $E(\tau, \xi, \eta) \subset \R^3$ is defined by
\begin{equation*}
E(\tau, \xi, \eta) := \{ (\tau_1, \xi_1, \eta_1) \in G_{N_1, L_1} \cap \tilde{\mathcal{T}}_{k_1}^A
\, | \, (\tau-\tau_1, \xi- \xi_1, \eta-\eta_1) \in G_{N_2,L_2} \cap \tilde{\mathcal{T}}_{k_2}^A \}.
\end{equation*}
Thus, it suffices to show
\begin{equation}
\sup_{(\tau, \xi, \eta) \in G_{N_0, L_0}} |E(\tau, \xi, \eta)| \lesssim (A N_1)^{-1} L_1 L_2.\label{est-prop3.2}
\end{equation}
For fixed $(\xi_1, \eta_1)$, we obtain
\begin{equation}
\sup_{(\tau, \xi, \eta) \in G_{N_0, L_0}} | \{ \tau_1 \, | \, (\tau_1, \xi_1, \eta_1) \in E(\tau, \xi, \eta) \}|
\lesssim \min(L_1, L_2).\label{est1-prop3.2}
\end{equation}
Next, since
\begin{align*}
\max (L_1, L_2) &
\gtrsim |(\tau_1 - \xi_1^3 - \eta_1^3) + (\tau- \tau_1) - (\xi- \xi_1)^3 - (\eta- \eta_1)^3|\\
& = |(\tau- \xi^3 -\eta^3) + 3(\xi \xi_1 (\xi-\xi_1) + \eta \eta_1 (\eta- \eta_1))|
\end{align*}
and $ |\partial_{\xi_1} \left( \xi \xi_1 (\xi-\xi_1) \right)| = |\xi_1^2 - (\xi - \xi_1)^2| \gtrsim N_1^{2}$, for fixed $\eta_1$, we get
\begin{equation}
\sup_{(\tau, \xi, \eta) \in G_{N_0, L_0}} | \{ \xi_1 \, | \, (\tau_1, \xi_1, \eta_1) \in E(\tau, \xi, \eta) \}|
\lesssim N_1^{-2} \max(L_1, L_2).\label{est2-prop3.2}
\end{equation}
Lastly, $(\tau_1, \xi_1, \eta_1) \in \tilde{\mathcal{T}}_{k_1}^A$ gives
\begin{equation}
\sup_{(\tau, \xi, \eta) \in G_{N_0, L_0}} | \{ \eta_1 \, | \, (\tau_1, \xi_1, \eta_1) \in E(\tau, \xi, \eta) \}|
\lesssim N_1A^{-1}.\label{est3-prop3.2}
\end{equation}
The estimates \eqref{est1-prop3.2}-\eqref{est3-prop3.2} complete the proof of \eqref{est-prop3.2}.
\end{proof}
By Proposition \ref{prop3.2}, we immediately obtain the following estimate.
\begin{prop}\label{prop3.3}
Assume \textnormal{(i)-(iii)} in \textit{Case} \textnormal{\ref{ass-1}}. Let $A \geq 2^{100}$ be dyadic. Suppose that $k_1$, $k_2 \in \Z^2$ satisfy
\begin{equation*}
| \xi_1 \xi_2 (\xi_1+\xi_2) +  \eta_1 \eta_2 (\eta_1+\eta_2)| \geq A^{-1} N_1^3 \quad \textnormal{for any } \ (\xi_j, \eta_j) \in
\mathcal{T}_{k_j}^A
\end{equation*}
where $j=1,2$. Then we have
\begin{align*}
& \Bigl|\int_{*}{  \ha{w}_{{N_0, L_0}}(\tau, \xi, \eta)
\ha{u}_{N_1, L_1}|_{\tilde{\mathcal{T}}_{k_1}^A}(\tau_1, \xi_1, \eta_1)  \ha{v}_{N_2, L_2}|_{\tilde{\mathcal{T}}_{k_2}^A}(\tau_2, \xi_2, \eta_2)
}
d\sigma_1 d\sigma_2 \Bigr|\\
& \qquad \qquad \lesssim N_1^{-2} (L_0 L_1 L_2)^{1/2} \|\ha{u}_{N_1, L_1}|_{\tilde{\mathcal{T}}_{k_1}^A} \|_{L^2}
\| \ha{v}_{N_2, L_2}|_{\tilde{\mathcal{T}}_{k_2}^A} \|_{L^2}
\|\ha{w}_{{N_0, L_0}} \|_{L^2}
\end{align*}
where $d \sigma_j = d\tau_j d \xi_j d \eta_j$ and $*$ denotes $(\tau, \xi, \eta) = (\tau_1 + \tau_2, \xi_1+ \xi_2, \eta_1 + \eta_2).$
\end{prop}
\begin{proof}
We see that
\begin{align*}
3 L_{012}^{\max} & \geq | \xi_1 \xi_2 (\xi_1+\xi_2) +  \eta_1 \eta_2 (\eta_1+\eta_2)| \\
& \geq A^{-1}N_1^3.
\end{align*}
For $L_0 = L_{012}^{\max}$, by using \eqref{bilinearStrichartz-1} in Proposition \ref{prop3.2}, we get
\begin{align*}
& \Bigl|\int_{*}{  \ha{w}_{{N_0, L_0}}(\tau, \xi, \eta)
\ha{u}_{N_1, L_1}|_{\tilde{\mathcal{T}}_{k_1}^A}(\tau_1, \xi_1, \eta_1)  \ha{v}_{N_2, L_2}|_{\tilde{\mathcal{T}}_{k_2}^A}(\tau_2, \xi_2, \eta_2)
}
d\sigma_1 d\sigma_2 \Bigr|\\
& \leq ( \textnormal{(LHS) of \eqref{bilinearStrichartz-1}}) \|\ha{w}_{{N_0, L_0}} \|_{L^2}   \\
& \lesssim (N_1 A)^{-\frac{1}{2}} (L_1 L_2)^{\frac{1}{2}} \|\ha{u}_{N_1, L_1}|_{\tilde{\mathcal{T}}_{k_1}^A} \|_{L^2}
\| \ha{v}_{N_2, L_2}|_{\tilde{\mathcal{T}}_{k_2}^A} \|_{L^2}  \|\ha{w}_{{N_0, L_0}} \|_{L^2}   \\
& \lesssim N_1^{-2} (L_0 L_1 L_2)^{\frac{1}{2}} \|\ha{u}_{N_1, L_1}|_{\tilde{\mathcal{T}}_{k_1}^A} \|_{L^2}
\| \ha{v}_{N_2, L_2}|_{\tilde{\mathcal{T}}_{k_2}^A} \|_{L^2}
\|\ha{w}_{{N_0, L_0}} \|_{L^2}.
\end{align*}
Similarly, in the cases $L_1= L_{012}^{\max}$ and $L_2= L_{012}^{\max}$,
by utilizing \eqref{bilinearStrichartz-2}
and \eqref{bilinearStrichartz-3}, respectively, we obtain the desired estimate.
\end{proof}
The following proposition enables us to deal with near-resonant interactions.
\begin{prop}\label{prop3.4}
Assume \textnormal{(i)-(iii)} in \textit{Case} \textnormal{\ref{ass-1}}. Let $A \geq 2^{100}$ be dyadic. Suppose that $k_1$, $k_2 \in \Z^2$ satisfy
\begin{equation*}
| \xi_1 \eta_2 +  \xi_2 \eta_1 + 2 (\xi_1 \eta_1 + \xi_2 \eta_2)| \geq A^{-1} N_1^2 \quad \textnormal{for any } \ (\xi_j, \eta_j) \in
{\mathcal{T}}_{k_j}^A
\end{equation*}
where $j=1,2$. Then we have
\begin{align*}
& \Bigl|\int_{*}{  \ha{w}_{{N_0, L_0}}(\tau, \xi, \eta)
\ha{u}_{N_1, L_1}|_{\tilde{\mathcal{T}}_{k_1}^A}(\tau_1, \xi_1, \eta_1)  \ha{v}_{N_2, L_2}|_{\tilde{\mathcal{T}}_{k_2}^A}(\tau_2, \xi_2, \eta_2)
}
d\sigma_1 d\sigma_2 \Bigr|\\
& \qquad \qquad \lesssim A^{\frac{1}{2}} N_1^{-2} (L_0 L_1 L_2)^{\frac{1}{2}} \|\ha{u}_{N_1, L_1}|_{\tilde{\mathcal{T}}_{k_1}^A} \|_{L^2}
\| \ha{v}_{N_2, L_2}|_{\tilde{\mathcal{T}}_{k_2}^A} \|_{L^2}
\|\ha{w}_{{N_0, L_0}} \|_{L^2}
\end{align*}
where $d \sigma_j = d\tau_j d \xi_j d \eta_j$ and $*$ denotes $(\tau, \xi, \eta) = (\tau_1 + \tau_2, \xi_1+ \xi_2, \eta_1 + \eta_2).$
\end{prop}
For the proof of the above proposition, we now recall the nonlinear version of the classical Loomis-Whitney inequality. See \cite{BH11} for a more general version.
\begin{prop}[\cite{BHT10} Corollary 1.5] \label{prop2.7}
Assume that the surface $S_i$ $(i=1,2,3)$
is an open and bounded subset of $S_i^*$ which
satisfies the following conditions \textnormal{(Assumption 1.1 in \cite{BHT10})}.

\textnormal{(i)} $S_i^*$ is defined as
\begin{equation*}
S_i^* = \{ {\lambda_i} \in U_i \ | \ \Phi_i({\lambda_i}) = 0 , \nabla \Phi_i \not= 0, \Phi_i \in C^{1,1} (U_i) \},
\end{equation*}
for a convex $U_i \subset \R^3$ such that \textnormal{dist}$(S_i, U_i^c) \geq$ \textnormal{diam}$(S_i)$;

\textnormal{(ii)} the unit normal vector field $\mathfrak{n}_i$ on $S_i^*$ satisfies the H\"{o}lder condition
\begin{equation*}
\sup_{\lambda, \lambda' \in S_i^*} \frac{|\mathfrak{n}_i(\lambda) -
\mathfrak{n}_i(\lambda')|}{|\lambda - \lambda'|}
+ \frac{|\mathfrak{n}_i(\lambda) ({\lambda} - {\lambda}')|}{|{\lambda} - {\lambda}'|^2} \lesssim 1;
\end{equation*}

\textnormal{(iii)} there exists $d >0$ such that the matrix ${N}({\lambda_1}, {\lambda_2}, {\lambda_3}) = ({\mathfrak{n}_1}
({\lambda_1}), {\mathfrak{n}_2}({\lambda_2}), {\mathfrak{n}_3}({\lambda_3}))$
satisfies the transversality condition
\begin{equation*}
d \leq |\textnormal{det} {N}({\lambda_1}, {\lambda_2}, {\lambda_3}) | \leq 1
\end{equation*}
for all $({\lambda_1}, {\lambda_2}, {\lambda_3}) \in {S_1^*} \cross {S_2^*} \cross {S_3^*}$.

We also assume \textnormal{diam}$({S_i}) \lesssim d$.
Then for functions $f \in L^2 (S_1)$ and $g \in L^2 (S_2)$, the restriction of the convolution $f*g$ to
$S_3$ is a well-defined $L^2(S_3)$-function which satisfies
\begin{equation*}
\| f *g \|_{L^2(S_3)} \lesssim \frac{1}{\sqrt{d}} \| f \|_{L^2(S_1)} \| g\|_{L^2(S_2)}.
\end{equation*}
\end{prop}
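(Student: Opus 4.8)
This is the nonlinear Loomis--Whitney inequality of \cite{BKW} and \cite{BHT10}; the plan is to reduce it, by an affine change of variables that flattens the three surfaces, to the classical Loomis--Whitney inequality for the coordinate planes in $\R^3$ (whose sharp constant is $1$), with the factor $d^{-1/2}$ emerging as the Jacobian of that map. The hypothesis $\mathrm{diam}(S_i)\lesssim d$ is exactly what allows the curvature of the $S_i$ to be treated as a lower-order perturbation of the flat model.

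\emph{The flat model.} Suppose first each $S_i$ lies in an affine plane $P_i=\lambda_i^0+\mathfrak{n}_i^{\perp}$ with unit normal $\mathfrak{n}_i$ and $|\det(\mathfrak{n}_1,\mathfrak{n}_2,\mathfrak{n}_3)|\ge d>0$. Choose the affine map $M$ of $\R^3$ whose linear part carries each plane $\mathfrak{n}_i^{\perp}$ onto the coordinate plane $e_i^{\perp}$ and, in addition, preserves $2$-dimensional measure on each $\mathfrak{n}_i^{\perp}$; this is possible because the $\mathfrak{n}_i$ are linearly independent, and the area-distortion identity $\mathrm{Jac}(M|_{\nu^{\perp}})=|\det M|\,|M^{-T}\nu|$ (for unit $\nu$) then forces $|\det M|^2=|\det(\mathfrak{n}_1,\mathfrak{n}_2,\mathfrak{n}_3)|^{-1}\le d^{-1}$. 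Under $M$ the norms $\|\cdot\|_{L^2(S_i)}$ are preserved, convolution commutes with the pushforward, and the density of $(f\,d\sigma_1)*(g\,d\sigma_2)$ transforms by the scalar $|\det M|^{-1}$; applying the classical Loomis--Whitney inequality to the three coordinate planes and undoing $M$ yields
\begin{equation*}
\|f*g\|_{L^2(S_3)}\;\le\;|\det M|\,\|f\|_{L^2(S_1)}\|g\|_{L^2(S_2)}\;\lesssim\;\frac{1}{\sqrt{d}}\,\|f\|_{L^2(S_1)}\|g\|_{L^2(S_2)}.
\end{equation*}

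\emph{The curved case.} Since $\mathrm{diam}(S_i)\lesssim d\le 1$, each $S_i$ lies within $O(d)$ of a base point $\lambda_i^0$, and by the $C^{1,1}$ regularity together with the second-order bound $|\mathfrak{n}_i(\lambda)(\lambda-\lambda')|\lesssim|\lambda-\lambda'|^2$ in hypothesis (ii) it is the graph, over its tangent plane $P_i^0=T_{\lambda_i^0}S_i$, of a function $\varphi_i$ with $|\varphi_i|\lesssim d^2$ and $|\nabla\varphi_i|\lesssim d$. I would then apply the affine map $M$ of the flat model built from the $P_i^0$ and dilate by a factor $\sim|\det M|$: the rescaled surfaces become $C^{1,1}$ graphs over regions of size $\sim1$ whose normals are within $O(d)$ of the coordinate directions, hence (choosing the implicit constant in $\mathrm{diam}\lesssim d$ small) still uniformly transverse, and one runs the iterated Cauchy--Schwarz proof of the classical inequality in the curvilinear coordinate system obtained by extending $f$, $g$, $h$ constantly along the normal lines of $S_1$, $S_2$, $S_3$ respectively. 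Transversality makes this change of variables nondegenerate with Jacobian $\sim1$ on the relevant neighbourhood, the induced surface measures are comparable to the flat ones, and the $C^{1,1}$ bounds absorb all the error terms coming from replacing $\varphi_i$ by $0$; undoing the scaling reproduces the factor $d^{-1/2}$.

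\emph{Main obstacle.} The hard part is the curved case: verifying, under only $C^{1,1}$ regularity, that the normal-coordinate map is a bi-Lipschitz diffeomorphism with Jacobian bounded above and below on the neighbourhood in play — this is precisely where $\mathrm{diam}(S_i)\lesssim d$ enters, keeping the normal exponential maps free of focal points — and that the coarea weight implicit in the very definition of the restricted convolution $(f*g)|_{S_3}$ is, uniformly on the resonance set $\{\lambda_1+\lambda_2=\lambda_3\}\cap(S_1\times S_2\times S_3)$, comparable to the transversality determinant, which is what makes $(f*g)|_{S_3}$ a genuine $L^2(S_3)$ function. Once these geometric facts are in place, the reduction to the classical Loomis--Whitney inequality and the bookkeeping of the $d^{-1/2}$ are routine; the complete argument is that of \cite{BHT10} (a broader family of such inequalities is treated in \cite{BKW}).
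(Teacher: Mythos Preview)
The paper does not prove this proposition at all: it is quoted verbatim as Corollary~1.5 of \cite{BHT10} and used as a black box, so there is no ``paper's own proof'' to compare against. Your sketch is a faithful high-level outline of the argument in \cite{BHT10} (building on \cite{BKW}): the affine reduction of the flat case to the classical Loomis--Whitney inequality, with the $d^{-1/2}$ appearing as the Jacobian of the area-preserving map, is exactly right, and your identification of the main obstacle---controlling the normal-coordinate change of variables under only $C^{1,1}$ regularity and the diameter constraint $\mathrm{diam}(S_i)\lesssim d$---is the genuine content of the cited result. If anything, the curved case in \cite{BHT10} is handled not quite by the ``iterated Cauchy--Schwarz in curvilinear coordinates'' you describe but by a more careful foliation-and-slicing argument that reduces to a one-dimensional convolution estimate; your version is morally equivalent but would need some care to make rigorous at the stated regularity.
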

\begin{proof}[Proof of Proposition \ref{prop3.4}]
Since $\ha{u}_{N_1, L_1}|_{\tilde{\mathcal{T}}_{k_1}^A}$ and $\ha{v}_{N_2, L_2}|_{\tilde{\mathcal{T}}_{k_2}^A}$ are supported in
$\tilde{\mathcal{T}}_{k_1}^A$ and ${\tilde{\mathcal{T}}}_{k_2}^A$, respectively, we may assume that there exists $k_3 \in \Z^2$ such that $\ha{w}_{{N_0, L_0}}$ is supported in $\tilde{\mathcal{T}}_{k_3}^A$.
In addition, by performing a harmless decomposition, $\ha{u}_{N_1, L_1}$, $\ha{v}_{N_2, L_2}$ and
$\ha{w}_{{N_0, L_0}}$ are supported in square prisms whose square's side length is $2^{-100} N_1 A^{-1}$, respectively.
Let $A' = 2^{-100}A$ and put $f$, $g$, $h \in L^2(\R^3)$ which satisfy
\begin{equation*}
\operatorname{supp} f \subset G_{N_1, L_1} \cap \tilde{\mathcal{T}}_{k_1}^{A'}, \quad
\operatorname{supp} g \subset G_{N_2, L_2} \cap \tilde{\mathcal{T}}_{k_2}^{A'}, \quad
\operatorname{supp} h \subset G_{N_0, L_0} \cap \tilde{\mathcal{T}}_{k_3}^{A'}.
\end{equation*}
By the above observation, it suffices to show that if
\begin{align}
| \sin \angle \left( (\xi_1, \eta_1), (\xi_2, \eta_2) \right)| & \geq 2^{-23},\label{ang-condition}\\
| \xi_1 \eta_2 +  \xi_2 \eta_1 + 2 (\xi_1 \eta_1 + \xi_2 \eta_2)| & \geq A^{-1} N_1^2\label{trans-condition},
\end{align}
for any $(\xi_j, \eta_j) \in \mathcal{T}_{k_j}^{A'}$ with $j=1,2$, then the following estimate holds
\begin{equation}
\begin{split}
& \Bigl|\int_{\R^3 \times \R^3} { h (\tau_1+\tau_2, \xi_1+\xi_2, \eta_1+\eta_2)
f(\tau_1, \xi_1, \eta_1)  g(\tau_2, \xi_2, \eta_2)
}
d\sigma_1 d\sigma_2 \Bigr|\\
& \qquad \qquad \qquad \qquad \lesssim A^{\frac{1}{2}} N_1^{-2} (L_0 L_1 L_2)^{\frac{1}{2}} \|f \|_{L^2}
\|g\|_{L^2}
\| h \|_{L^2}.
\end{split}\label{desired-est-prop3.4}
\end{equation}
We apply the same strategy as
that of the proof of
Proposition 4.4 in \cite{BHHT09}.
Applying the transformation $\tau_1 = \xi_1^3 + \eta_1^3 + c_1$ and $\tau_2 = \xi_2^3 + \eta_2^3 + c_2$ and Fubini's theorem, we find that it suffices to prove
\begin{equation}
\begin{split}
& \Bigl| \int h (\phi_{c_1} (\xi_1, \eta_1) + \phi_{c_2} (\xi_2, \eta_2))  f (\phi_{c_1} (\xi_1, \eta_1) ) g (\phi_{c_2}(\xi_2, \eta_2)) d \xi_1d \eta_1 d\xi_2 d\eta_2 \Bigr| \\
&\qquad \qquad \qquad \qquad \qquad \qquad  \lesssim  A^{\frac{1}{2}}N_1^{-2} \| f \circ \phi_{c_1}\|_{L_{\xi, \eta}^2} \|g \circ \phi_{c_2} \|_{L_{\xi, \eta}^2} \|h \|_{L_{\xi, \eta, \tau}^2}, \label{est01-prop3.4}
\end{split}
\end{equation}
where $h(\tau, \xi, \eta)$ is supported in $c_0 \leq \tau - \xi^3 - \eta^3 \leq c_0 +1$ and
\begin{equation*}
\phi_{c_j} (\xi) = (\xi^3 + \eta^3 + c_k, \, \xi, \, \eta) \quad \textnormal{for} \ j=1,2.
\end{equation*}
We use the scaling $(\tau, \, \xi, \, \eta) \to (N_1^3 \tau , \, N_1 \xi, \, N_1 \eta)$ to define
\begin{align*}
 \tilde{f} (\tau_1, \xi_1 , \eta_1) & = f (N_1^3 \tau_1 , N_1 \xi_1, N_1 \eta_1), \\
 \tilde{g} (\tau_2, \xi_2, \eta_2) & = g (N_1^3 \tau_2, N_1 \xi_2, N_1 \eta_2), \\
 \tilde{h} (\tau, \xi, \eta) & = h (N_1^3 \tau, N_1 \xi, N_1 \eta).
\end{align*}
If we set $\tilde{c_j} = N_1^{-3} c_j$, inequality (\ref{est01-prop3.4}) reduces to
\begin{equation*}
\begin{split}
& \Bigl| \int \tilde{h} (\phi_{\tilde{c_1}} (\xi_1, \eta_1) + \phi_{\tilde{c_2}} (\xi_2, \eta_2))  \tilde{f}
 (\phi_{\tilde{c_1}} (\xi_1, \eta_1) ) \tilde{g} (\phi_{\tilde{c_2}}(\xi_2, \eta_2)) d \xi_1d \eta_1 d\xi_2 d\eta_2 \Bigr| \\
&\qquad \qquad \qquad \qquad \qquad \qquad  \lesssim  A^{\frac{1}{2}}N_1^{-\frac{3}{2}} \| \tilde{f}
 \circ \phi_{\tilde{c_1}}\|_{L_{\xi, \eta}^2}
\| \tilde{g} \circ \phi_{\tilde{c_2}} \|_{L_{\xi, \eta}^2} \|\tilde{h} \|_{L_{\xi, \eta, \tau}^2},
\end{split}
\end{equation*}
Note that $\operatorname{supp} \tilde{f} \subset {\tilde{\mathcal{T}}}_{\tilde{k_1}}^{N_1^{-1}A'}$,
$\operatorname{supp} \tilde{g} \subset {\tilde{\mathcal{T}}}_{\tilde{k_2}}^{N_1^{-1}A'}$, and $\tilde{h}$ is supported in $S_3 (N_1^{-3})$ where
\begin{equation*}
S_3 (N_1^{-3}) = \Bigl\{ (\tau, \xi, \eta) \in \tilde{\mathcal{T}}_{\tilde{k_3}}^{N_1^{-1}A'}
\ | \  \xi^3 + \eta^3 + \frac{c_0}{N_1^{3}}  \leq \tau \leq \xi^3 + \eta^3 + \frac{c_0+1}{N_1^{3}} \Bigr\}
\end{equation*}
where $\tilde{k_i} = k_i/N_1$ with $i = 1,2,3$.
By density and duality, it suffices to show for continuous
$\tilde{f}$ and $\tilde{g}$ that
\begin{equation}
\| \tilde{f} |_{S_1} * \tilde{g} |_{S_2} \|_{L^2(S_3 (N_1^{-3}))} \lesssim A^{\frac{1}{2}} N_1^{-\frac{3}{2}}
\| \tilde{f} \|_{L^2(S_1)} \| \tilde{g} \|_{L^2(S_2)}\label{est03-prop3.4}
\end{equation}
where $S_1$, $S_2$ denote the following surfaces
\begin{align*}
S_1 =&  \bigl\{ \phi_{\tilde{c_1}} (\xi_1, \eta_1) \in \R^3 \ | \
(\xi_1, \eta_1) \in \mathcal{T}_{\tilde{k_1}}^{N_1^{-1}A'} \bigr\}, \\
S_2 =&  \bigl\{ \phi_{\tilde{c_2}} (\xi_2, \eta_2) \in \R^3 \ | \ (\xi_2, \eta_2) \in \mathcal{T}_{\tilde{k_2}}^{N_1^{-1}A'} \bigr\}.
\end{align*}
(\ref{est03-prop3.4}) is immediately established by the following.
\begin{equation}
\| \tilde{f} |_{S_1} * \tilde{g} |_{S_2} \|_{L^2(S_3)} \lesssim A^{\frac{1}{2}}
\| \tilde{f} \|_{L^2(S_1)} \| \tilde{g} \|_{L^2(S_2)}\label{est04-prop3.4}
\end{equation}
where
\begin{equation*}
S_3 = \Bigl\{ (\psi (\xi,\eta), \xi, \eta) \in \R^3  \ | \  (\xi, \eta) \in \mathcal{T}_{\tilde{k_3}}^{N_1^{-1}A'},
\  \psi (\xi,\eta) =  \xi^3 + \eta^3 + \frac{c_0'}{N_1^{3}} \Bigr\},
\end{equation*}
for any fixed $c_0' \in [c_0, \, c_0 +1]$.
We deduce from the assumption (ii) in \textit{Case} \textnormal{\ref{ass-1}} that
\begin{equation}
\textnormal{diam} (S_i) \leq 2^{-80} A^{-1} \qquad \textnormal{for} \ \, i=1,2,3.\label{diam-prop3.4}
\end{equation}
For any $\lambda_i \in S_i$, there exist $(\xi_1, \eta_1)$, $(\xi_2, \eta_2)$, $(\xi, \eta)$ such that
\begin{equation*}
\lambda_1=\phi_{\tilde{c_1}} (\xi_1, \eta_1), \quad \lambda_2 =  \phi_{\tilde{c_2}} (\xi_2,\eta_2), \quad
\lambda_3 = (\psi (\xi,\eta), \xi,\eta),
\end{equation*}
and the unit normals ${\mathfrak{n}}_i$ on $\lambda_i$ are written as
\begin{align*}
& {\mathfrak{n}}_1(\lambda_1) = \frac{1}{\sqrt{1+ 9 \xi_1^4 + 9 \eta_1^4}}
\left(-1, \ 3  \xi_1^2, \ 3 \eta_1^2 \right), \\
& {\mathfrak{n}}_2(\lambda_2) = \frac{1}{\sqrt{1+ 9 \xi_2^4 + 9 \eta_2^4}}
\left(-1, \ 3  \xi_2^2, \ 3 \eta_2^2 \right),\\
& {\mathfrak{n}}_3(\lambda_3) = \frac{1}{\sqrt{1+ 9 \xi^4 + 9 \eta^4}}
\left(-1, \ 3  \xi^2, \ 3 \eta^2 \right).
\end{align*}
Clearly, the surfaces $S_1$, $S_2$, $S_3$ satisfy the following
H\"{o}lder condition.
\begin{equation}
\sup_{\lambda_i, \lambda_i' \in S_i} \frac{|\mathfrak{n}_i(\lambda_i) -
\mathfrak{n}_i(\lambda_i')|}{|\lambda_i - \lambda_i'|}
+ \frac{|\mathfrak{n}_i(\lambda_i) (\lambda_i - \lambda_i')|}{|\lambda_i - \lambda_i'|^2} \leq 2^3.\label{normals00-prop3.4}
\end{equation}
We may assume that there exist $(\xi_1', \eta_1')$, $(\xi_2', \eta_2')$, $(\xi', \eta')$ such that
\begin{align*}
& (\xi_1', \eta_1') + (\xi_2', \eta_2') = (\xi', \eta'), \\
\phi_{\tilde{c_1}} (\xi_1', \eta_1') & \in S_1, \ \ \phi_{\tilde{c_2}} (\xi_2', \eta_2')\in S_2, \ \
(\psi (\xi',\eta'), \xi', \eta') \in S_3,
\end{align*}
otherwise the left-hand side of \eqref{est04-prop3.4} vanishes.
Let $\lambda_1' = \phi_{\tilde{c_1}} (\xi_1', \eta_1')$, $\lambda_2' = \phi_{\tilde{c_2}} (\xi_2', \eta_2')$,
$\lambda_3' = (\psi (\xi',\eta'), \xi',\eta')$.
For any $\lambda_1 = \phi_{{{\tilde{c_1}}}}(\xi_1, \eta_1) \in S_1$,
we deduce from $\lambda_1$, $\lambda_1' \in S_1$
and \eqref{diam-prop3.4} that
\begin{equation}
|{\mathfrak{n}}_1(\lambda_1) - {\mathfrak{n}}_1(\lambda_1')| \leq 2^{-70} A^{-1}.\label{normal01-prop3.4}
\end{equation}
Similarly, for any $\lambda_2 \in S_2$ and $\lambda_3 \in S_3$ we have
\begin{align}
& |{\mathfrak{n}}_2(\lambda_2) - {\mathfrak{n}}_2(\lambda_2')| \leq 2^{-70} A^{-1}.\label{normal02-prop3.4}\\
& |{\mathfrak{n}}_3(\lambda_3) - {\mathfrak{n}}_3(\lambda_3')| \leq 2^{-70} A^{-1}.\label{normal03-prop3.4}
\end{align}
From \eqref{diam-prop3.4} and \eqref{normals00-prop3.4},
once the following transversality condition \eqref{trans-prop3.4} is verified, we obtain the desired estimate \eqref{est04-prop3.4}  by applying Proposition \ref{prop2.7} with $d = 2^{-70} A^{-1}$.
\begin{equation}
2^{-70} A^{-1} \leq |\textnormal{det} N(\lambda_1, \lambda_2, \lambda_3)| \quad
\textnormal{for any} \ \lambda_i \in S_i.\label{trans-prop3.4}
\end{equation}
From \eqref{normal01-prop3.4}-\eqref{normal03-prop3.4}, it suffices to show
\begin{equation*}
2^{-65} A^{-1} \leq |\textnormal{det} N(\lambda_1', \lambda_2', \lambda_3')| .
\end{equation*}
We deduce from $\lambda_1' =  \phi_{\tilde{c_1}} (\xi_1', \eta_1')$, $\lambda_2' = \phi_{\tilde{c_2}} (\xi_2', \eta_2')$, $\lambda_3' =  (\psi (\xi',\eta'), \xi',\eta') $ and
$(\xi_1', \eta_1') + (\xi_2', \eta_2') = (\xi', \eta')$ that
\begin{align*}
|\textnormal{det} N(\lambda_1', \lambda_2', \lambda_3')| \geq &
2^{-25} \frac{1}{\LR{(\xi_1, \eta_1) }^2 \LR{(\xi_2, \eta_2)}^2} \left|\textnormal{det}
\begin{pmatrix}
-1 & -1 & - 1 \\
 3 (\xi_1')^2  &  3 (\xi_2')^2  & 3 (\xi')^2 \\
 3 (\eta_1')^2   & 3 (\eta_2')^2  & 3 (\eta')^2
\end{pmatrix} \right| \notag \\
\geq & 2^{-25}\frac{|\xi_1' \eta_2' - \xi_2' \eta_1' |}{\LR{(\xi_1, \eta_1) }^2 \LR{(\xi_2, \eta_2)}^2}
| \xi_1' \eta_2' +  \xi_2' \eta_1' + 2 (\xi_1' \eta_1' + \xi_2' \eta_2')|
\\
\geq & 2^{-25} \frac{|(\xi_1', \eta_1')| \, |(\xi_2', \eta_2')|}{\LR{(\xi_1', \eta_1') }^2 \LR{(\xi_2', \eta_2')}^2}
\frac{|\xi_1' \eta_2' - \xi_2' \eta_1' |}{|(\xi_1', \eta_1')| \, |(\xi_2', \eta_2')|}A^{-1}\\
\geq & 2^{-65}A^{-1}.
\end{align*}
Here we used \eqref{ang-condition}, \eqref{trans-condition} and
\begin{equation*}
\left| \sin \angle \left( (\xi_1', \eta_1'), (\xi_2', \eta_2') \right) \right|
= \frac{|\xi_1' \eta_2' - \xi_2' \eta_1' |}{|(\xi_1', \eta_1')| \, |(\xi_2', \eta_2')|}.
\end{equation*}
\end{proof}
In Propositions \ref{prop3.3} and \ref{prop3.4}, we assume that the supports of $\ha{u}_{N_1, L_1}$ and
$\ha{v}_{N_2, L_2}$ are restricted to the square prisms $\tilde{\mathcal{T}}_{k_1}^A$ and ${\tilde{\mathcal{T}}}_{k_2}^A$, respectively. If we simply try to sum up the prisms, the regularity loss which depends on $A$ appears.
Thus, we introduce a suitable decomposition which is called the Whitney type decomposition.
This decomposition depends on the two functions
$\xi_1 \xi_2 (\xi_1+\xi_2) +  \eta_1 \eta_2 (\eta_1+\eta_2)$ and
$\xi_1 \eta_2 +  \xi_2 \eta_1 + 2 (\xi_1 \eta_1 + \xi_2 \eta_2)$ which appear in Propositions \ref{prop3.3} and \ref{prop3.4}, respectively.
As we mentioned, this is a similar strategy to that for the Zakharov system.
In \cite{BHHT09}, the Whitney decomposition of angular variables was performed.
\begin{defn}[Whitney type decomposition]
 Let $A \geq 2^{100}$ be dyadic and
\begin{align*}
\Phi (\xi_1, \eta_1, \xi_2 ,\eta_2) & = \xi_1 \xi_2(\xi_1 + \xi_2) + \eta_1 \eta_2 (\eta_1 + \eta_2), \\
F (\xi_1, \eta_1, \xi_2 ,\eta_2) & = \xi_1 \eta_2 +  \xi_2 \eta_1 + 2 (\xi_1 \eta_1 + \xi_2 \eta_2).
\end{align*}
We define
\begin{align*}
Z_A^1 & = \{ (k_1, k_2) \in \Z^2 \times \Z^2 \, | \,
|\Phi(\xi_1, \eta_1, \xi_2, \eta_2)| \geq A^{-1} N_1^3  \ \ \textnormal{for any} \ (\xi_j, \eta_j) \in
\mathcal{T}_{k_j}^A \},\\
Z_A^2  & = \{ (k_1, k_2) \in \Z^2 \times \Z^2 \, | \,
| F (\xi_1, \eta_1, \xi_2 ,\eta_2)| \geq A^{-1} N_1^2  \ \ \textnormal{for any} \ (\xi_j, \eta_j) \in
\mathcal{T}_{k_j}^A \},\\
Z_A & = Z_A^1 \cup Z_A^2 \subset \Z^2 \times \Z^2,
\qquad R_A = \bigcup_{(k_1, k_2) \in Z_A} \mathcal{T}_{k_1}^A \times
\mathcal{T}_{k_2}^A \subset \R^2 \times \R^2.
\end{align*}
It is clear that $A_1 \leq A_2 \Longrightarrow R_{A_1} \subset R_{A_2}$.
Further, we define
\begin{equation*}
Q_A =
\begin{cases}
R_A \setminus R_{\frac{A}{2}} \quad \textnormal{for} \  A \geq 2^{101},\\
 \ R_{2^{100}}  \ \qquad \textnormal{for} \  A = 2^{100},
\end{cases}
\end{equation*}
and a set of pairs of integer pair $Z_A' \subset Z_A$ as
\begin{equation*}
\bigcup_{(k_1, k_2) \in Z_A'} \mathcal{T}_{k_1}^A \times
\mathcal{T}_{k_2}^A = Q_A.
\end{equation*}
We easily see that $Z_A'$ is uniquely defined and
\begin{equation*}
A_1 \not= A_2 \Longrightarrow Q_{A_1} \cap Q_{A_2} = \emptyset, \quad
\bigcup_{2^{100} \leq A \leq A_0} Q_{A} = R_{A_0}
\end{equation*}
where $A_0 \geq 2^{100}$ is dyadic. Thus, we can decompose $\R^2 \times \R^2$ as
\begin{equation*}
\R^2 \times \R^2 = \biggl( \bigcup_{2^{100} \leq A \leq A_0} Q_{A}\biggr) \cup (R_{A_0})^c.
\end{equation*}
Lastly, we define
\begin{align*}
\mathcal{A}  & = \{ (\tau_1, \xi_1, \eta_1) \times (\tau_2, \xi_2, \eta_2) \in \R^3 \times \R^3 \, | \,
| \sin \angle \left( (\xi_1, \eta_1), (\xi_2, \eta_2) \right)| \geq 2^{-22}  \},\\
\tilde{Z}_{A} & = \{ (k_1, k_2) \in Z_A' \, | \,
\bigl( \tilde{\mathcal{T}}_{k_1}^A \times \tilde{\mathcal{T}}_{k_2}^A\bigr) \cap \left(
G_{N_1, L_1} \times G_{N_2, L_2} \right) \cap \mathcal{A} \not=
\emptyset \}.
\end{align*}
Here we assume that $N_1$, $L_1$, $N_2$, $L_2$ satisfy (i) and (ii) in \textit{Case} \textnormal{\ref{ass-1}}.
\end{defn}
To sum up tiles without loss, the following condition, the almost orthogonality of $k_1$ and $k_2$ such that
$(k_1, k_2) \in \tilde{Z}_{A}$ seems to be necessary.
\begin{condition}\label{condition-1} $\quad$\\
For fixed $k_1 \in \Z^2$, the number of $k_2 \in \Z^2$ such that
$(k_1, k_2) \in \tilde{Z}_{A}$ is less than $2^{1000}$.
\end{condition}
\begin{rem}
By symmetry, one can see that for fixed  $k_2 \in \Z^2$, the number of $k_1 \in \Z^2$ such that
$(k_1, k_2) \in \tilde{Z}_{A}$ is less than $2^{1000}$. Thus we can say that $k_1$ and $k_2$ which satisfy $(k_1, k_2) \in \tilde{Z}_{A}$ are almost one-to-one correspondence.
\end{rem}
Unfortunately, \textit{Condition} \ref{condition-1} does not hold true for certain pairs of tiles,
which will be observed in \textit{Remark} \ref{pairs-of-tiles} below.
Thus, we first exclude such pairs of tiles and, to do so, we divide
$\R^2 \times \R^2$.
\begin{defn}
Let $\mathcal{K}_0$, $\mathcal{K}_1$, $\mathcal{K}_2$, $\mathcal{K}_0'$, $\mathcal{K}_1'$,
$\mathcal{K}_2' \subset \R^2$ and  $\tilde{\mathcal{K}}_0$, $\tilde{\mathcal{K}}_1$,
$\tilde{\mathcal{K}}_2$,
$\tilde{\mathcal{K}}_0'$,
$\tilde{\mathcal{K}}_1'$, $\tilde{\mathcal{K}}_2' \subset \R^3$ be defined as follows:
\begin{align*}
\mathcal{K}_0 & = \left\{ (\xi, \eta) \in \R^2 \, | \, \left|
\eta -(\sqrt{2} - 1)^{\frac{4}{3}} \xi \right|
\leq 2^{-20} N_1 \right\},\\
\mathcal{K}_1 & = \left\{ (\xi, \eta) \in \R^2 \, | \, \left|
\eta - ( \sqrt{2}+ 1 )^{\frac{2}{3}} (\sqrt{2} + \sqrt{3} ) \xi \right|
\leq 2^{-20} N_1 \right\},\\
\mathcal{K}_2 & = \left\{ (\xi, \eta) \in \R^2 \, | \, \left|
\eta + ( \sqrt{2}+ 1 )^{\frac{2}{3}} (\sqrt{3} - \sqrt{2} ) \xi \right|
\leq 2^{-20} N_1 \right\},\\
\mathcal{K}_0' & =  \left\{ (\xi, \eta) \in \R^2 \ | \
 (\eta, \xi) \in \mathcal{K}_0  \right\},\\
\mathcal{K}_1' & = \left\{ (\xi, \eta) \in \R^2 \ | \ (\eta, \xi) \in \mathcal{K}^1 \right\},\\
\mathcal{K}_2' & = \left\{ (\xi, \eta) \in \R^2 \ | \ (\eta, \xi) \in \mathcal{K}^2 \right\},\\
\tilde{\mathcal{K}}_j & = \R \times \mathcal{K}_j, \quad \tilde{\mathcal{K}}_j' = \R \times \mathcal{K}_j'
 \ \ \textnormal{for} \ j = 0, 1,2.
\end{align*}
We define the subsets of $\R^2 \times \R^2$ and $\R^3 \times \R^3$ as
\begin{align*}
\mathcal{K} \, = & ( \mathcal{K}_0 \times ( \mathcal{K}_1\cup \mathcal{K}_2 ) ) \cup
( ( \mathcal{K}_1\cup \mathcal{K}_2 ) \times  \mathcal{K}_0 ) \subset \R^2 \times \R^2,\\
\tilde{\mathcal{K}} \, = & ( \tilde{\mathcal{K}}_0 \times ( \tilde{\mathcal{K}}_1
\cup \tilde{\mathcal{K}}_2 ) ) \cup
(
( \tilde{\mathcal{K}}_1\cup \tilde{\mathcal{K}}_2 ) \times  \tilde{\mathcal{K}}_0 ) \subset \R^3 \times \R^3,\\
\mathcal{K}' = & \left( \mathcal{K}_0' \times \left( \mathcal{K}_1' \cup \mathcal{K}_2' \right) \right) \cup
\left(
\left( \mathcal{K}_1' \cup \mathcal{K}_2' \right) \times  \mathcal{K}_0' \right) \subset \R^2 \times \R^2,\\
\tilde{\mathcal{K}}'  = & ( \tilde{\mathcal{K}}_0' \times ( \tilde{\mathcal{K}}_1'
\cup \tilde{\mathcal{K}}_2' ) ) \cup
(
( \tilde{\mathcal{K}}_1'\cup \tilde{\mathcal{K}}_2' ) \times  \tilde{\mathcal{K}}_0' ) \subset \R^3 \times \R^3,
\end{align*}
and those complementary sets as
\begin{align*}
(\mathcal{K})^c & =  (\R^2 \times \R^2) \setminus \mathcal{K} , \quad \
(\tilde{\mathcal{K}})^c =  (\R^3 \times \R^3) \setminus \tilde{\mathcal{K}}\\
(\mathcal{K}')^c & = (\R^2 \times \R^2) \setminus \mathcal{K}', \quad
(\tilde{\mathcal{K}}')^c = (\R^3 \times \R^3) \setminus \tilde{\mathcal{K}}'.
\end{align*}
Lastly, we define
\begin{equation*}
\widehat{Z}_{A} = \{ (k_1, k_2) \in \tilde{Z}_{A} \, | \,
\left( \mathcal{T}_{k_1}^A \times \mathcal{T}_{k_2}^A\right) \cap \left(
(\mathcal{K})^c \cap (\mathcal{K}')^c \right) \not=
\emptyset \},
\end{equation*}
and $\overline{Z}_{A}$ as the collection of $(k_1, k_2) \in \Z^2 \times \Z^2$ which satisfies
\begin{align*}
\mathcal{T}_{k_1}^{A} & \times \mathcal{T}_{k_2}^{A} \not\subset \bigcup_{2^{100} \leq A' \leq A}
\bigcup_{(k_1', k_2') \in \widehat{Z}_{A}} \bigl( \mathcal{T}_{k_1'}^{A'} \times \mathcal{T}_{k_2'}^{A'} \bigr),
\\
 \bigl( \tilde{\mathcal{T}}_{k_1}^{A} \times \tilde{\mathcal{T}}_{k_2}^{A} \bigr) & \cap \left(
G_{N_1, L_1} \times G_{N_2, L_2} \right) \cap \mathcal{A}  \cap \bigl(
(\tilde{\mathcal{K}})^c \cap (\tilde{\mathcal{K}}')^c \bigr) \not=
\emptyset.
\end{align*}
\end{defn}
\begin{lem}\label{lemma3.6}
For fixed $k_1 \in \Z^2$, the number of $k_2 \in \Z^2$ such that
$(k_1, k_2) \in \widehat{Z}_{A}$ is less than $2^{1000}$. Furthermore, the same claim holds true if we replace $\widehat{Z}_{A}$ by $\overline{Z}_{A}$.
\end{lem}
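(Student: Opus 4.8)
The plan is to prove the sharper fact that, for each fixed $k_1$, the admissible $k_2$ are covered by $O(1)$ tiles $\mathcal{T}_{k_2}^A$; the constant $2^{1000}$ then merely absorbs all implied constants. Write $\Phi=\xi_1\xi_2(\xi_1+\xi_2)+\eta_1\eta_2(\eta_1+\eta_2)$ and $F=\xi_1\eta_2+\xi_2\eta_1+2(\xi_1\eta_1+\xi_2\eta_2)$ as in the Whitney type decomposition, and set
\[
J=\partial_{\xi_2}\Phi\,\partial_{\eta_2}F-\partial_{\eta_2}\Phi\,\partial_{\xi_2}F=\xi_1(\xi_1+2\xi_2)^2-\eta_1(\eta_1+2\eta_2)^2,
\]
the Jacobian of $(\xi_2,\eta_2)\mapsto(\Phi,F)$ with $(\xi_1,\eta_1)$ frozen. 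The first step is to extract pointwise bounds on a tile-product from membership in $\widehat{Z}_A$. If $A\ge 2^{101}$ and $(k_1,k_2)\in\widehat{Z}_A\subset Z_A'$, then $\mathcal{T}_{k_1}^A\times\mathcal{T}_{k_2}^A\subset Q_A=R_A\setminus R_{A/2}$, so the unique scale-$(A/2)$ tile-product containing it does not belong to $Z_{A/2}$; hence there are points on it at which $|\Phi|<2A^{-1}N_1^3$ and at which $|F|<2A^{-1}N_1^2$. On the region $\{|(\xi_j,\eta_j)|\sim N_1\}$, which is forced by (ii) of \textit{Case} \ref{ass-1} together with $(\tilde{\mathcal{T}}_{k_1}^A\times\tilde{\mathcal{T}}_{k_2}^A)\cap(G_{N_1,L_1}\times G_{N_2,L_2})\neq\emptyset$, one has $|\nabla\Phi|\lesssim N_1^2$, $|\nabla F|\lesssim N_1$, $|\nabla J|\lesssim N_1^2$, and the tile-product has diameter $\lesssim A^{-1}N_1$, so the mean value theorem gives $|\Phi|\lesssim A^{-1}N_1^3$ and $|F|\lesssim A^{-1}N_1^2$ on all of $\mathcal{T}_{k_1}^A\times\mathcal{T}_{k_2}^A$. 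For $\overline{Z}_A$ with $A\ge 2^{101}$ the same two bounds hold: since the scale-$A$ prism-product meets $G_{N_1,L_1}\times G_{N_2,L_2}\cap\mathcal{A}\cap(\tilde{\mathcal{K}})^c\cap(\tilde{\mathcal{K}}')^c$, any scale-$A'$ tile-product of $R_{A'}$ (with $A'\le A$) containing $\mathcal{T}_{k_1}^A\times\mathcal{T}_{k_2}^A$ would lie in $\widehat{Z}_{A'}$, contradicting the first defining property of $\overline{Z}_A$; hence $\mathcal{T}_{k_1}^A\times\mathcal{T}_{k_2}^A\not\subset R_A$, i.e.\ $(k_1,k_2)\notin Z_A$, and the same mean value argument applies. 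Finally, the case $A=2^{100}$ is trivial: by (ii) any admissible $\mathcal{T}_{k_2}^A$ meets the annulus $\{|(\xi_2,\eta_2)|\sim N_2\}$ with $N_2\le 2^{22}N_1$, so the number of admissible $k_2$ is at most the number of scale-$2^{100}$ tiles meeting that annulus, which is $\lesssim 2^{244}<2^{1000}$.

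The heart of the matter is the following transversality claim: there is an absolute $\delta\in(0,1)$ such that if $|(\xi_1,\eta_1)|\sim|(\xi_2,\eta_2)|\sim N_1$, $|\sin\angle((\xi_1,\eta_1),(\xi_2,\eta_2))|\ge 2^{-23}$, $|\Phi|\le\delta N_1^3$, $|F|\le\delta N_1^2$, and $((\xi_1,\eta_1),(\xi_2,\eta_2))$ lies outside the strips $\mathcal{K},\mathcal{K}'$ shrunk by $2^{-100}N_1$, then $|J|\ge\delta N_1^3$. By homogeneity one reduces to $N_1=1$ and to a compact set of frequencies; if the claim failed, compactness would produce a non-parallel pair, lying off the (closed, shrunk) strips, with $\Phi=F=J=0$. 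I would finish by resolving this polynomial system explicitly and checking that every non-parallel solution lies on one of the lines $\eta=c\xi$ or $\xi=c\eta$ with $c\in\{(\sqrt2-1)^{4/3},\ (\sqrt2+1)^{2/3}(\sqrt2+\sqrt3),\ -(\sqrt2+1)^{2/3}(\sqrt3-\sqrt2)\}$, i.e.\ in $\mathcal{K}\cup\mathcal{K}'$ --- contradicting the hypothesis. This explicit resolution of $\{\Phi=F=J=0\}$, matching its solution directions to the algebraic numbers defining $\mathcal{K}_0,\mathcal{K}_1,\mathcal{K}_2$, is the step I expect to be the main obstacle; it is precisely where the transversality of the Zakharov--Kuznetsov characteristic surfaces departs from the Zakharov case and dictates the particular choice of the $\mathcal{K}_i$. (Since $\delta$ is geometric and the constants in $|\Phi|\lesssim A^{-1}N_1^3$, $|F|\lesssim A^{-1}N_1^2$ are absolute, the hypotheses $|\Phi|\le\delta N_1^3$, $|F|\le\delta N_1^2$ become automatic once $A\ge A_*$ for some absolute $A_*$; for $2^{101}\le A<A_*$ the crude tile count of the first step again bounds the number of admissible $k_2$ by $\lesssim 2^{44}A_*^2<2^{1000}$.)

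It remains to carry out the counting for $A\ge A_*$. Fix $k_1$ and a base point $(\xi_1^0,\eta_1^0)\in\mathcal{T}_{k_1}^A$ in the annulus. For each admissible $k_2$ I would choose $(\xi_2^{(k_2)},\eta_2^{(k_2)})\in\mathcal{T}_{k_2}^A$ together with the bounds from the first step and a nearby point of the tile-product lying in $\mathcal{A}$ and in $(\tilde{\mathcal{K}})^c\cap(\tilde{\mathcal{K}}')^c$; since the tile-product has diameter $\lesssim A^{-1}N_1\le 2^{-100}N_1$, the mean value theorem transfers all of this to the base point, so that at $(\xi_2^{(k_2)},\eta_2^{(k_2)})$ one has $|\Phi(\xi_1^0,\eta_1^0,\cdot)|\lesssim A^{-1}N_1^3$, $|F(\xi_1^0,\eta_1^0,\cdot)|\lesssim A^{-1}N_1^2$, $|\sin\angle|\ge 2^{-23}$, and the point lies off the strips shrunk by $2^{-100}N_1$. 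The transversality claim then gives $|J(\xi_1^0,\eta_1^0,\cdot)|\gtrsim N_1^3$ there; from the explicit form of $J$ this forces $|\nabla_{(\xi_2,\eta_2)}\Phi|\gtrsim N_1^2$, while $|\nabla_{(\xi_2,\eta_2)}F|=|(\xi_1,\eta_1)+2(\xi_2,\eta_2)|\gtrsim N_1$ because the pair is not anti-parallel, so $|\sin\angle(\nabla\Phi,\nabla F)|\gtrsim 1$ and the level curves $\{\Phi(\xi_1^0,\eta_1^0,\cdot)=0\}$ and $\{F(\xi_1^0,\eta_1^0,\cdot)=0\}$ cross transversally within $O(A^{-1}N_1)$ of $(\xi_2^{(k_2)},\eta_2^{(k_2)})$. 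These are plane curves of degrees $3$ and $2$ with no common component (a short check shows $\Phi$, viewed as a polynomial in $(\xi_2,\eta_2)$, is not divisible by $F$), so by B\'ezout's theorem they have at most $6$ intersection points; hence every $(\xi_2^{(k_2)},\eta_2^{(k_2)})$ lies in one of at most $6$ balls of radius $O(A^{-1}N_1)$, i.e.\ in $O(1)$ tiles $\mathcal{T}_{k_2}^A$. Combining the three ranges of $A$ bounds the number of admissible $k_2$ by an absolute constant, which we may take to be $2^{1000}$, and the argument for $\overline{Z}_A$ is word for word the same.
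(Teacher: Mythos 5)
Your outline follows the paper's structure closely --- extract from $(k_1,k_2)\in\widehat{Z}_A$ (resp.\ $\overline{Z}_A$) the pointwise bounds $|\Phi|\lesssim A^{-1}N_1^3$, $|F|\lesssim A^{-1}N_1^2$ on the tile-product; reduce the counting to a transversality statement for the pair $(\Phi,F)$; identify the degeneracy locus with the excluded strips $\mathcal{K}\cup\mathcal{K}'$. The packaging differs: you formulate the transversality via the Jacobian $J=\partial_{\xi_2}\Phi\,\partial_{\eta_2}F-\partial_{\eta_2}\Phi\,\partial_{\xi_2}F$, propose to establish $|J|\gtrsim N_1^3$ off the strips by a compactness/homogeneity reduction, and then count via B\'ezout; the paper instead freezes $(\xi_1',\eta_1')$ at the tile center, shifts to $(\xi_2',\eta_2')=(\xi_2+\xi_1'/2,\eta_2+\eta_1'/2)$, uses $\tilde F$ to eliminate $\eta_2'$, and studies the one-variable function $G(\xi_2')$ and its derivative $G'(\xi_2')=J/(2\xi_2')$ directly. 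These are essentially equivalent up to elimination, and either route would work if carried through.

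The problem is that you stop at exactly the point where the lemma has its content. You write that the ``explicit resolution of $\{\Phi=F=J=0\}$, matching its solution directions to the algebraic numbers defining $\mathcal{K}_0,\mathcal{K}_1,\mathcal{K}_2$, is the step I expect to be the main obstacle'' --- and then you do not do it. But that computation is precisely the proof. In the paper, after showing $|\xi_1'|,|\eta_1'|\gtrsim N_1$ and $|\xi_2'|\gtrsim N_1$, the conditions $|G(\xi_2')|\lesssim A^{-1}N_1^3$ and $|G'(\xi_2')|\lesssim N_1^2$ are manipulated to yield
\begin{equation*}
\left| \left( (\xi_1')^{\frac{3}{2}}-(\sqrt2+1)^2(\eta_1')^{\frac{3}{2}}\right)\left( (\xi_1')^{\frac{3}{2}}-(\sqrt2-1)^2(\eta_1')^{\frac{3}{2}}\right)\right|\lesssim N_1^3,
\end{equation*}
and then a further calculation on the induced constraints on $(\xi_2,\eta_2)$ produces exactly the slopes $(\sqrt2\pm1)^{2/3}(\sqrt2+\sqrt3)$ and $-(\sqrt2\pm1)^{2/3}(\sqrt3-\sqrt2)$ defining $\mathcal{K}_1,\mathcal{K}_2,\mathcal{K}_1',\mathcal{K}_2'$. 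Without that chain of algebra you have not shown that the degeneracy locus is contained in the strips you excluded; you have only shown that \emph{if} it is, the count follows. A secondary point: the compactness reduction gives you a non-effective $\delta$ and $A_*$; since the lemma is uniform in $N_1,L_1,N_2,L_2$ (indeed those parameters vary over $\widehat{Z}_A$) you must also confirm the compact set does not depend on them, which is clear after homogeneity but should be said. The B\'ezout count itself is fine once transversality is available ($F$ does not divide $\Phi$ as a polynomial in $(\xi_2,\eta_2)$ when $\xi_1\neq0$, which holds here), and the treatment of small $A$ and of $\overline{Z}_A$ is correct.
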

\begin{proof}
It is clear that we may assume that $A \geq 2^{300}$ and
\begin{equation}
\mathcal{T}_{k_1}^A \times \mathcal{T}_{k_2}^A \not\subset \left( \mathcal{K} \cup \mathcal{K}' \right).\label{ass01-lemma3.6}
\end{equation}
Define $\check{Z}_A = \check{Z}_A(k_1) \in \Z^2$ as the collection of $k_2 \in \Z^2$ which satisfies
\begin{equation*}
\mathcal{T}_{k_1}^A \times \mathcal{T}_{k_2}^A \subset \left( \mathcal{K} \cup \mathcal{K}' \right)
\cup \{ (\xi_1, \eta_1) \times (\xi_2, \eta_2) \, | \,
| \sin \angle \left( (\xi_1, \eta_1), (\xi_2, \eta_2) \right)| < 2^{-22}  \}.
\end{equation*}
For simplicity, we assume that $N_2 \leq N_1$.
The case $N_1 \leq N_2$ can be treated in a similar way.
We can find $k_1'=k_1'(k_1) \in \Z^2$ and $k_2'=k_2'(k_2) \in \Z^2$ such that
$ \mathcal{T}_{k_1}^A \subset  \mathcal{T}_{k_1'}^{A/2}$ and
$ \mathcal{T}_{k_2}^A \subset  \mathcal{T}_{k_2'}^{A/2}$, respectively. From the definition, $(k_1, k_2) \in
\widehat{Z}_A$ implies $(k_1', k_2') \notin {Z}_{A/2}'$ which means that there exist
$(\xi_1, \eta_1)$, $(\tilde{\xi}_1, \tilde{\eta}_1) \in \mathcal{T}_{k_1'}^{A/2}$,
$(\xi_2, \eta_2)$, $(\tilde{\xi}_2, \tilde{\eta}_2) \in \mathcal{T}_{k_2'}^{A/2}$ which satisfy
\begin{equation*}
|\Phi(\xi_1, \eta_1, \xi_2, \eta_2)| \leq 2 A^{-1} N_1^3  \quad \textnormal{and} \quad
 |F (\tilde{\xi}_1, \tilde{\eta}_1, \tilde{\xi}_2 ,\tilde{\eta}_2)| \leq 2 A^{-1} N_1^2.
\end{equation*}
Thus, letting $(\xi_1', \eta_1')$ be the center of $\mathcal{T}_{k_1}^A$, it suffices to show that there exist
$k_{2, {(\ell)}} \in \Z^2$ $(\ell= 1,2,3,4)$ such that
\begin{equation}
\left\{ (\xi_2, \eta_2)  \in \R^2 \setminus \bigcup_{k_2 \in \check{Z}_A}  \mathcal{T}_{k_2}^A \, \left| \,
\begin{aligned} & |\Phi(\xi_1', \eta_1', \xi_2, \eta_2) |\leq 2^4 A^{-1} N_1^3, \\
& |F (\xi_1', \eta_1', \xi_2 ,\eta_2)| \leq 2^4 A^{-1} N_1^2.
 \end{aligned} \right.
\right\}
\subset \bigcup_{\ell =1}^4 \mathcal{T}_{k_{2, {(\ell)}}}^{2^{-200}A}.\label{desired02-lemma3.6}
\end{equation}
In addition, it is clear that \eqref{desired02-lemma3.6} implies the almost one-to-one correspondence of
$(k_1, k_2) \in \overline{Z}_{A}$.
Recall that $|\Phi(\xi_1', \eta_1', \xi_2, \eta_2)| \leq 2^4 A^{-1} N_1^3$ and $ |F (\xi_1', \eta_1', \xi_2 ,\eta_2)| \leq 2^4 A^{-1} N_1^2$ mean
\begin{align}
|\Phi (\xi_1', \eta_1', \xi_2 ,\eta_2) |& = |\xi_1' \xi_2(\xi_1' + \xi_2) + \eta_1' \eta_2 (\eta_1' + \eta_2)|
\leq 2^4 A^{-1} N_1^3 ,\label{info01-lemma3.6} \\
|F (\xi_1', \eta_1', \xi_2 ,\eta_2)| & = |\xi_1' \eta_2 +  \xi_2 \eta_1' + 2 (\xi_1' \eta_1' + \xi_2 \eta_2)|
\leq 2^4 A^{-1} N_1^2,\label{info02-lemma3.6}
\end{align}
respectively. By the transformation $\xi_2' = \xi_2 + \xi_1'/2$, $\eta_2' = \eta_2 + \eta_1'/2$, we see that
\eqref{info01-lemma3.6} and \eqref{info02-lemma3.6} are equivalent to
\begin{align}
|\tilde{\Phi} (\xi_2' ,\eta_2')| & := \Bigl| \xi_1' {\xi_2'}^2 + \eta_1' {\eta_2'}^2 - \frac{{\xi_1'}^3 + {\eta_1'}^3}{4} \Bigr|
\leq 2^4 A^{-1} N_1^3 , \label{info03-lemma3.6} \\
|\tilde{F} (\xi_2' ,\eta_2')| & := \Bigl| \frac{3}{2} \, \xi_1' \, \eta_1' + 2 \,  \xi_2'  \, \eta_2' \Bigr|
\leq 2^4 A^{-1} N_1^2,\label{info04-lemma3.6}
\end{align}
respectively. Therefore we will show that there exist
$k_{2, {(\ell)}}' \in \Z^2$ such that the set of $(\xi_2', \eta_2')$ which satisfies
$(\xi_2'-\xi_1'/2, \eta_2'-\eta_1'/2) \notin \displaystyle{\bigcup_{k_2 \in \check{Z}_A}  \mathcal{T}_{k_2}^A}$,
\eqref{info03-lemma3.6}
and \eqref{info04-lemma3.6} are contained in ${\displaystyle \bigcup_{\ell =1}^4 \mathcal{T}_{k_{2, {(\ell)}}'}^{2^{-200}A}}$.

First we observe that
\begin{align}
\min (|\xi_1'|, \, |\eta_1'| ) \geq 2^{-55} N_1,\label{est01-lemma3.6}\\
\max (|\xi_2'|, \, |\eta_2'| ) \geq 2^{-30} N_1.\label{est02-lemma3.6}
\end{align}
\eqref{est01-lemma3.6} can be seen as follows. If $|\xi_1'| < 2^{-55} N_1$, we may assume that $|\eta_1'| \geq 2^{-2}N_1$. Thus, we have
\begin{align*}
\eqref{info03-lemma3.6} & \iff \Bigl| \xi_1' {\xi_2'}^2 + \eta_1' {\eta_2'}^2 - \frac{{\xi_1'}^3 + {\eta_1'}^3}{4} \Bigr|
\leq 2^4 A^{-1} N_1^3\\
& \ \Longrightarrow |\eta_1'| \, \Bigl| {\eta_2'}^2 - \frac{{\eta_1'}^2}{4} \Bigr| \leq 2^{-50} N_1^3\\
& \ \Longrightarrow \Bigl| |\eta_2'| - \frac{|\eta_1'|}{2} \Bigr| \leq 2^{-45}N_1\\
& \ \Longrightarrow |\eta_2'| \geq 2^{-4}N_1.
\end{align*}
This gives $|\xi_2'| \geq 2^{-50}N_1$. Indeed, since $\xi_2' = \xi_2 + \xi_1'/2$, we only need to show
$|\xi_2| > 2^{-49} N_1$.
Since $A \geq 2^{300}$, it is observed that $(\xi_2, \eta_2)  \notin \bigcup_{k_2 \in \check{Z}_A}  \mathcal{T}_{k_2}^A$ provides
\begin{equation}\label{condition:anglelemma3.7}
\frac{|\xi_1' \eta_2 - \xi_2 \eta_1'|}{|(\xi_1',\eta_1')| \, |(\xi_2,\eta_2)|} > 2^{-23}.
\end{equation}
It follows from $N_1 \leq 2^{22} N_2$ that
\begin{equation*}
\frac{|\xi_1' \eta_2 - \xi_2 \eta_1'|}{|(\xi_1',\eta_1')| \, |(\xi_2,\eta_2)|} > 2^{-23}
\Longrightarrow \frac{|\xi_2 \eta_1'| + | \xi_1' \eta_2|}{|(\xi_1',\eta_1')|} > 2^{-47} N_1 \Longrightarrow
 |\xi_2| > 2^{-49}N_1.
\end{equation*}
Thus, it holds that $|\xi_2| >2^{-49} N_1$ which implies $|\xi_2'| \geq 2^{-50}N_1$.
While, it holds that
\begin{align*}
|\tilde{F} (\xi_2' ,\eta_2')| = & \Bigl| \frac{3}{2} \, \xi_1' \, \eta_1' + 2 \,  \xi_2'  \, \eta_2' \Bigr|\\
\geq  & 2|  \xi_2'  \, \eta_2'| - \frac{3}{2} |\xi_1' \, \eta_1'|\\
\geq & 2^{-54} N_1^2,
\end{align*}
which contradicts \eqref{info04-lemma3.6}. Similarly, if $\max (|\xi_2'|, \, |\eta_2'| ) < 2^{-30} N_1$, since
$\xi_2 = \xi_2' - \xi_1'/2$ and $\eta_2 = \eta_2' - \eta_1'/2$, we see
\begin{equation*}
\frac{|\xi_1' \eta_2 - \xi_2 \eta_1'|}{|(\xi_1', \eta_1')| \, |(\xi_2, \eta_2)|} < 2^{-23},
\end{equation*}
which contradicts \eqref{condition:anglelemma3.7}. Thus \eqref{est02-lemma3.6} holds. Without loss of generality, we can assume $2^{-30}N_1 \leq |\xi_2'|$. It follows from \eqref{info04-lemma3.6} that
\begin{align}
& \Bigl| \frac{3}{2} \, \xi_1' \, \eta_1' + 2 \,  \xi_2'  \, \eta_2' \Bigr|
 \leq 2^4 A^{-1} N_1^2 \notag\\
\Longrightarrow & \Bigl| \eta_2' + \frac{3  \xi_1' \, \eta_1'}{4 \xi_2'} \Bigr|  \leq \frac{2^3 A^{-1} N_1^2}{|\xi_2'|}
\leq 2^{33} A^{-1} N_1.\label{est003-lemma3.6}
\end{align}
\eqref{info03-lemma3.6} and \eqref{est003-lemma3.6} yield
\begin{align}
& \Bigl| \xi_1' {\xi_2'}^2 + \eta_1' {\eta_2'}^2 - \frac{{\xi_1'}^3 + {\eta_1'}^3}{4} \Bigr|
\leq 2^4 A^{-1} N_1^3 \notag\\
 \xLongrightarrow[ \ ]{\eqref{est003-lemma3.6}} & \Bigl| \xi_1' {\xi_2'}^2 + \eta_1' \frac{9 {\xi_1'}^2 {\eta_1'}^2}{16 {\xi_2'}^2} - \frac{{\xi_1'}^3 + {\eta_1'}^3}{4} \Bigr|
\leq 2^{70} A^{-1} N_1^3.\label{est004-lemma3.6}
\end{align}
Define
\begin{equation*}
G(\xi_2') := \xi_1' {\xi_2'}^2 +  \frac{9 {\xi_1'}^2 {\eta_1'}^3}{16 {\xi_2'}^2} -
\frac{{\xi_1'}^3 + {\eta_1'}^3}{4}.
\end{equation*}
Clearly, if we show
\begin{equation}
\Bigl| \Bigl( \frac{dG}{d\xi_2'} \Bigr)
 (\xi_2') \Bigr| = \Bigl| \frac{2 \xi_1'}{{\xi_2'}^3} \Bigl( {\xi_2'}^4 -  \frac{9 {\xi_1'} {\eta_1'}^3}{16}
\Bigr) \Bigr| \geq 2^{-100}N_1^2,\label{est005-lemma3.6}
\end{equation}
for any $\xi_2'$ which satisfies \eqref{est004-lemma3.6}, then there exist at most four constants
$c_{(\ell)} (\xi_1', \eta_1')$ $(\ell=1,2,3,4)$ such that
\begin{equation*}
|\xi_2' - c_{(\ell)} (\xi_1', \eta_1')| \leq 2^{170} A^{-1} N_1 \quad \textnormal{for any $\xi_2'$ which satisfies \eqref{est004-lemma3.6}},
\end{equation*}
which completes the proof since \eqref{est003-lemma3.6} gives a similar restriction of $\eta_2'$. Thus, it suffices to show
\eqref{est005-lemma3.6} for any $\xi_2'$ which satisfies \eqref{est003-lemma3.6}, \eqref{est004-lemma3.6} and
$(\xi_2'-\xi_1'/2, \eta_2'-\eta_1'/2) \notin
\displaystyle{\bigcup_{k_2 \in \check{Z}_A}  \mathcal{T}_{k_2}^A}$.
For the sake of contradiction, we assume that $\xi_2'$ satisfies
\begin{equation}
| G(\xi_2') | \leq 2^{70} A^{-1} N_1^3 \quad \textnormal{and} \quad | G'(\xi_2')| =
\Bigl| \Bigl( \frac{dG}{d\xi_2'} \Bigr)
 (\xi_2') \Bigr| \leq 2^{-100}N_1^2.\label{est007-lemma3.6}
\end{equation}
Obviously, $\xi_1' \, \eta_1' \leq 0$ implies $ | G'(\xi_2')| \geq 2|\xi_1' \xi_2'| \geq 2^{-75} N_1^2$.
Thus, we may assume $\xi_1' \, \eta_1' >0$. For simplicity, we assume $\xi_1' >0$ and $\eta_1' >0$.
The case $\xi_1' <0$ and $\eta_1'<0$ can be treated similarly. We calculate that
\begin{align}
&  | G'(\xi_2')|\leq 2^{-100}N_1^2 \notag \\
\iff & \left| \frac{2 \xi_1'}{{\xi_2'}^3} \left( {\xi_2'}^2 +  \frac{3}{4 } \sqrt{\xi_1' \, {\eta_1'}^3}
\right) \left( {\xi_2'}^2 -  \frac{3}{4 } \sqrt{\xi_1' \, {\eta_1'}^3}
\right)\right|\leq 2^{-100}N_1^2 \notag \\
\Longrightarrow \ & \left|  {\xi_2'}^2 -  \frac{3}{4 } \sqrt{\xi_1' \, {\eta_1'}^3}  \right| \leq 2^{-65}N_1^2.
\label{est006-lemma3.6}
\end{align}
We deduce from $| G(\xi_2') | \leq 2^{70} A^{-1} N_1^3$ and \eqref{est006-lemma3.6} that
\begin{align*}
& |G  (\xi_2')|\leq 2^{70} A^{-1} N_1^3 \\
 \xLongrightarrow[ \ ]{\eqref{est006-lemma3.6}} &
\left| \frac{3}{2 } \sqrt{{\xi_1'}^3 \, {\eta_1'}^3} - \frac{{\xi_1'}^3 + {\eta_1'}^3}{4} \right|
\leq 2^{-55}  N_1^3\\
 \Longrightarrow & \bigl| ( {\xi_1'}^{\frac{3}{2}} )^2 - 6  {\xi_1'}^{\frac{3}{2}}  {\eta_1'}^{\frac{3}{2}} +
({\eta_1'}^{\frac{3}{2}} )^2 \bigr| \leq 2^{-50}  N_1^3\\
 \Longrightarrow & \bigl| \bigl( {\xi_1'}^{\frac{3}{2}}  - ( \sqrt{2} + 1 )^2 {\eta_1'}^{\frac{3}{2}}\bigr)  \bigl( {\xi_1'}^{\frac{3}{2}}  - ( \sqrt{2}  - 1 )^2 {\eta_1'}^{\frac{3}{2}}\bigr)  \bigr|
\leq 2^{-50}  N_1^3.
\end{align*}
Last inequality implies
\begin{equation*}
\bigl| \eta_1'  - ( \sqrt{2} - 1 )^{\frac{4}{3}}\xi_1' \bigr| \leq 2^{-45} N_1 \quad \textnormal{or}
\quad
\bigl| \eta_1'  - ( \sqrt{2} + 1 )^{\frac{4}{3}}\xi_1' \bigr| \leq 2^{-45} N_1.
\end{equation*}
\underline{Case $|   \eta_1'  - ( \sqrt{2} - 1 )^{\frac{4}{3}}\xi_1'  | \leq 2^{-45}N_1$}

First we treat the case $\bigl| \eta_1'  - ( \sqrt{2} - 1 )^{\frac{4}{3}}\xi_1' \bigr| \leq 2^{-45} N_1$.
It follows from \eqref{est006-lemma3.6} that
\begin{equation*}
 \Bigl|  {\xi_2'}^2 -  \frac{3}{4 } ( \sqrt{2} - 1 )^2 {\xi_1'}^2  \Bigr| \leq 2^{-40}N_1^2
\end{equation*}
which gives
\begin{equation*}
 \Bigl|  {\xi_2'} -  \frac{\sqrt{3}}{2 } ( \sqrt{2} - 1 ) {\xi_1'}  \Bigr| \leq 2^{-35}N_1 \quad
\textnormal{or} \quad
\Bigl|  {\xi_2'} +  \frac{\sqrt{3}}{2 }  ( \sqrt{2} - 1 ) {\xi_1'}  \Bigr| \leq 2^{-35}N_1.
\end{equation*}
If we assume $ \bigl|  {\xi_2'} -  \frac{\sqrt{3}}{2 } ( \sqrt{2} - 1 ) {\xi_1'}  \bigr| \leq 2^{-35}N_1$,
it follows from \textnormal{\eqref{est003-lemma3.6}} that
\begin{equation*}
\Bigl| \eta_2' + \frac{3  \xi_1' \, \eta_1'}{4 \xi_2'} \Bigr|
\leq 2^{16} A^{-1} N_1
\Longrightarrow  \Bigl| \eta_2' + \frac{\sqrt{3}}{2} ( \sqrt{2} -1 )^{\frac{1}{3}} \xi_1' \Bigr|
\leq 2^{-30} N_1.
\end{equation*}
Since $\xi_2 = \xi_2' - \xi_1'/2$ and $\eta_2 = \eta_2' - \eta_1'/2$, we observe
\begin{align*}
& \begin{cases}
\bigl|  {\xi_2'} -  \frac{\sqrt{3}}{2 } \left( \sqrt{2} - 1\right) {\xi_1'}  \bigr| \leq 2^{-35}N_1,\\
\bigl| \eta_2' + \frac{\sqrt{3}}{2} \left( \sqrt{2} -1 \right)^{\frac{1}{3}} \xi_1' \bigr|
\leq 2^{-30} N_1.
\end{cases}\\
\Longrightarrow &
\begin{cases}
\bigl|  {\xi_2} +  \frac{1-\sqrt{3}(\sqrt{2} - 1) }{2 }  {\xi_1'}  \bigr| \leq 2^{-30}N_1,\\
\bigl| \eta_2 + \frac{\sqrt{3}+\sqrt{2} -1}{2} ( \sqrt{2} -1  )^{\frac{1}{3}} \xi_1' \bigr|
\leq 2^{-25} N_1.
\end{cases}
\end{align*}
This implies
\begin{equation*}
\bigl| \eta_2 - ( \sqrt{2}+ 1 )^{\frac{2}{3}} (\sqrt{2} + \sqrt{3} ) \xi_2 \bigr| \leq
2^{-23} N_1.
\end{equation*}
Similarly, if we assume $ \bigl|  {\xi_2'} +  \frac{\sqrt{3}}{2 } ( \sqrt{2} - 1 ) {\xi_1'}  \bigr| \leq 2^{-35}N_1$,
by the almost same calculation as above, we get
\begin{equation*}
\bigl| \eta_2 + ( \sqrt{2}+ 1 )^{\frac{2}{3}} (\sqrt{3} - \sqrt{2} ) \xi_2 \bigr| \leq
2^{-23} N_1.
\end{equation*}
\underline{Case $|   \eta_1'  - ( \sqrt{2} + 1 )^{\frac{4}{3}}\xi_1'  | \leq 2^{-45}N_1$}

Next we consider the case $\bigl| \eta_1'  - ( \sqrt{2} + 1 )^{\frac{4}{3}}\xi_1' \bigr| \leq 2^{-45} N_1$.
Following the same argument as that for the former case,
we can prove that $(\xi_2, \eta_2)$ satisfies one of the followings.
\begin{align*}
& \bigl| \eta_2 + ( \sqrt{2}- 1 )^{\frac{2}{3}} (\sqrt{2} + \sqrt{3} ) \xi_2 \bigr| \leq
2^{-23} N_1,\\
& \bigl| \eta_2 - ( \sqrt{2}- 1 )^{\frac{2}{3}} (\sqrt{3} - \sqrt{2} ) \xi_2 \bigr| \leq
2^{-23} N_1.
\end{align*}
To summarize the above, we conclude that if $(\xi_2', \eta_2')$ satisfies \eqref{est003-lemma3.6} and
\eqref{est007-lemma3.6} then for any $k_1$, $k_2 \in \Z^2$ such that $(\xi_1', \eta_1') \in \mathcal{T}_{k_1}^A$, $(\xi_2, \eta_2) \in
\mathcal{T}_{k_2}^A$, we get
\begin{equation*}
\mathcal{T}_{k_1}^A \times \mathcal{T}_{k_2}^A \subset \left( \mathcal{K} \cup \mathcal{K}' \right),
\end{equation*}
which contradicts the assumption \eqref{ass01-lemma3.6}.
\end{proof}
\begin{rem}\label{pairs-of-tiles}
We here sketch why we needed to exclude $\mathcal{K} \cup \mathcal{K}'$ in the proof of the almost orthogonality of $k_1$ and $k_2$
such that
$(k_1, k_2) \in \tilde{Z}_{A}$.
In the proof of Lemma \ref{lemma3.6}, we saw that the almost orthogonality is equivalent to the existence of
$k_{2, {(\ell)}}' \in \Z^2$ such that the set of $(\xi_2', \eta_2')$ which satisfies
\begin{align}
|\tilde{\Phi} (\xi_2' ,\eta_2')| & := \Bigl| \xi_1' {\xi_2'}^2 + \eta_1' {\eta_2'}^2 - \frac{{\xi_1'}^3 + {\eta_1'}^3}{4} \Bigr|
\leq 2^4 A^{-1} N_1^3 ,\label{rem3.4-01} \\
|\tilde{F} (\xi_2' ,\eta_2')| & := \Bigl| \frac{3}{2} \, \xi_1' \, \eta_1' + 2 \,  \xi_2'  \, \eta_2' \Bigr|
\leq 2^4 A^{-1} N_1^2,\label{rem3.4-02}
\end{align}
are contained in ${\displaystyle \bigcup_{\ell =1}^4 \mathcal{T}_{k_{2, {(\ell)}}'}^{2^{-200}A}}$.
We consider the conditions \eqref{rem3.4-01} and \eqref{rem3.4-02} by seeing the figures below.
Figure \ref{fig:modulation} describes the condition \eqref{rem3.4-01} for $\xi_1' \eta_1'>0$ and $\xi_1' \eta_1' <0$. $(\xi_2',\eta_2')$ which satisfies the condition \eqref{rem3.4-01} is confined to the gray areas whose width is $\sim A^{-1} N_1$.
\begin{figure}[H]
\caption{Condition \eqref{rem3.4-01} with $\xi_1' \, \eta_1'>0$ (left) and $\xi_1' \, \eta_1'<0$ (right).}
\label{fig:modulation}
\centering
    \begin{tikzpicture}
\begin{scope}[xshift=-3.75cm]
  \useasboundingbox (-3.5,-3.7) rectangle (3.5,3.5);
   \draw [thick, -stealth](-3.2,0)--(3.2,0) node [anchor=west, font=\scriptsize]{$\xi_2'$};
   \draw [thick, -stealth](0,-3.2)--(0,3.2) node [anchor=west, font=\scriptsize]{};
\node [anchor=west, font=\scriptsize] at(0,3.2){$\eta_2'$};
\path[draw,thick, fill=gray, fill opacity=0.5] plot[domain=0:6.28,variable=\t, samples=50] ({1.1*cos(\t r)},{1.1*2.5*sin(\t r)});
\path[draw,thick, fill=white] plot[domain=0:6.28,variable=\t, samples=70] ({0.9*cos(\t r)},{2.5*sin(\t r)});
   \draw [thick](-0.9,0)--(0.9,0) node [anchor=west, font=\scriptsize]{};
   \draw [thick](0,-2.5)--(0,2.5) node [anchor=west, font=\scriptsize]{};
   \draw [thin](0.748,1.27)--(0.95,1.33) node [anchor=west, font=\scriptsize]{};
\draw (0.849,1.3) to [out=70,in=195] (2.03,2.5);
\node [anchor=west] at(2,2.55){$\sim A^{-1}N_1$};
\end{scope}
\begin{scope}[xshift=3.75cm]
   \draw [thick, -stealth](-3.2,0)--(3.2,0) node [anchor=west, font=\scriptsize]{$\xi_2'$};
   \draw [thick, -stealth](0,-3.2)--(0,3.2) node [anchor=west, font=\scriptsize]{};
\node [anchor=west, font=\scriptsize] at(0,3.2){$\eta_2'$};
\path[draw,thick, fill=gray, fill opacity=0.5] plot[domain=-1.35:1.35,variable=\t, samples=70] ({0.7/cos(\t r)},{0.7*tan(\t r)});
\path[draw,thick, fill=gray, fill opacity=0.5] plot[domain=3.1415-1.35:3.1415+1.35,variable=\t, samples=50] ({0.7/cos(\t r)},{0.7*tan(\t r)});
\path[draw,thick, fill=white] plot[domain=-1.33:1.33,variable=\t, samples=70, xshift=8pt] ({0.7/cos(\t r)},{0.7*tan(\t r)});
\path[draw,thick, fill=white] plot[domain=3.1415-1.33:3.1415+1.33,variable=\t, samples=50, xshift=-8pt] ({0.7/cos(\t r)},{0.7*tan(\t r)});
   \draw [thick](0.99,0)--(3.2,0) node [anchor=west, font=\scriptsize]{};
   \draw [thick](-3.2,0)--(-0.99,0) node [anchor=west, font=\scriptsize]{};
   \draw [thin](-2.4,2)--(-2.25,2.14) node [anchor=west, font=\scriptsize]{};
\draw (-2.325,2.07) to [out=140,in=355] (-3.9,2.6);
\end{scope}
    \end{tikzpicture}
\end{figure}
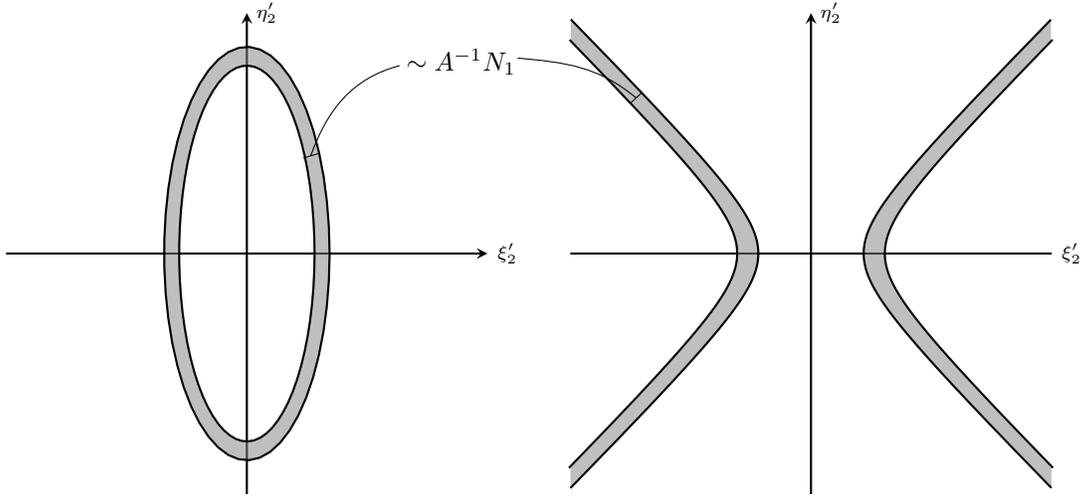
Figures \ref{fig:transversality} describes the condition \eqref{rem3.4-02} with $\xi_1' \eta_1'>0$ and $\xi_1' \eta_1' <0$, respectively. $(\xi_2',\eta_2')$ which satisfies the condition \eqref{rem3.4-02} is confined to the gray areas whose width is $\sim A^{-1} N_1$.
\begin{figure}[H]
\caption{Condition \eqref{rem3.4-02} with $\xi_1' \, \eta_1'>0$ (left) and $\xi_1' \, \eta_1'<0$ (right).}
\label{fig:transversality}
\centering
    \begin{tikzpicture}
\begin{scope}[xshift=-3.75cm]
  \useasboundingbox (-3.5,-3.7) rectangle (3.5,3.5);
   \draw [thick, -stealth](-3.2,0)--(3.2,0) node [anchor=west, font=\scriptsize]{$\xi_2'$};
   \draw [thick, -stealth](0,-3.2)--(0,3.2) node [anchor=west, font=\scriptsize]{};
\node [anchor=west, font=\scriptsize] at(0,3.2){$\eta_2'$};
\path[draw,thick, fill=gray, fill opacity=0.5] plot[domain=0.223:3.2, samples=50] ({\x},{-0.7/\x});
\path[fill=gray, fill opacity=0.5]
plot (0.223, -0.7/0.223)--(3.2,-0.7/3.2)--(3.2,-0.7/3.2-0.15)--(0.223+0.15, -0.7/0.223);
\path[draw, thick, fill=white] plot[domain=0.2355:3.0245, samples=50, xshift=5, yshift=-5] ({\x},{-0.7/\x});
\path[draw,thick, fill=gray, fill opacity=0.5] plot[domain=-3.2:-0.223, samples=50] ({\x},{-0.7/\x});
\path[fill=gray, fill opacity=0.5]
plot (-0.223, 0.7/0.223)--(-3.2,0.7/3.2)--(-3.2,0.7/3.2+0.15)--(-0.223-0.15, 0.7/0.223);
\path[draw, thick, fill=white] plot[domain=-3.0245:-0.2355, samples=50, xshift=-5, yshift=5] ({\x},{-0.7/\x});
\end{scope}
\begin{scope}[xshift=3.75cm]
  \useasboundingbox (-3.5,-3.7) rectangle (3.5,3.5);
   \draw [thick, -stealth](-3.2,0)--(3.2,0) node [anchor=west, font=\scriptsize]{$\xi_2'$};
   \draw [thick, -stealth](0,-3.2)--(0,3.2) node [anchor=west, font=\scriptsize]{};
\node [anchor=west, font=\scriptsize] at(-0.55,3.2){$\eta_2'$};
\path[draw,thick, fill=gray, fill opacity=0.5] plot[domain=0.223:3.2, samples=50] ({\x},{0.7/\x});
\path[fill=gray, fill opacity=0.5]
plot (0.223, 0.7/0.223)--(3.2,0.7/3.2)--(3.2,0.7/3.2+0.15)--(0.223+0.15, 0.7/0.223);
\path[draw, thick, fill=white] plot[domain=0.2355:3.0245, samples=50, xshift=5, yshift=5] ({\x},{0.7/\x});
\path[draw,thick, fill=gray, fill opacity=0.5] plot[domain=-3.2:-0.223, samples=50] ({\x},{0.7/\x});
\path[fill=gray, fill opacity=0.5]
plot (-0.223, -0.7/0.223)--(-3.2,-0.7/3.2)--(-3.2,-0.7/3.2-0.15)--(-0.223-0.15, -0.7/0.223);
\path[draw, thick, fill=white] plot[domain=-3.0245:-0.2355, samples=50, xshift=-5, yshift=-5] ({\x},{0.7/\x});
\end{scope}
    \end{tikzpicture}
\end{figure}
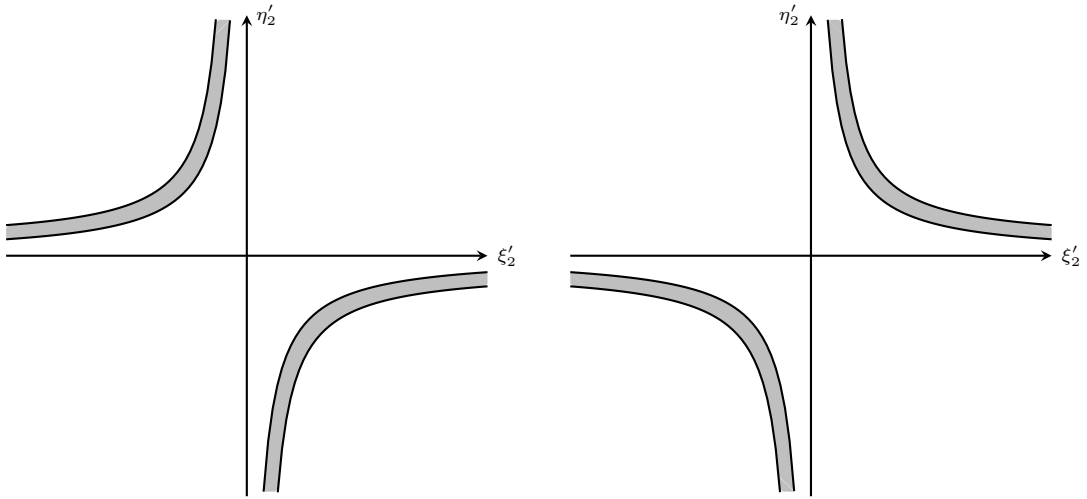
By seeing the above figures, for example, we can find that the almost orthogonality does not hold when there exists a point $a \in \R^2$ such that $\tilde{\Phi} (a)=\tilde{F}(a) = 0$ and the tangent line of  $\tilde{\Phi} (\xi_2' ,\eta_2') = 0$ at $a$ corresponds to that of $\tilde{F} (\xi_2' ,\eta_2')=0$ at $a$.
See Figure \ref{fig:both} below. To remove these interactions, we excluded $\mathcal{K} \cup \mathcal{K}'$ from
$\R^2 \times \R^2$.
\begin{figure}[H]
\caption{Failure of the almost orthogonality.}
\label{fig:both}
\centering
    \begin{tikzpicture}
  \useasboundingbox (-3.5,-3.5) rectangle (10.5,3.4);
   \draw [thick, -stealth](-3.2,0)--(3.2,0) node [anchor=west, font=\scriptsize, xshift=-8pt, yshift=10pt]{$\xi_2'$};
   \draw [thick, -stealth](0,-3.2)--(0,3.2) node [anchor=west, font=\scriptsize]{};
\node [anchor=west, font=\scriptsize] at(0,3.2){$\eta_2'$};
\path[draw,thick] plot[domain=0.223:3.2, samples=50] ({\x},{-0.7/\x});
\path[draw, thick] plot[domain=0.2355:3.0245, samples=50, xshift=5, yshift=-5] ({\x},{-0.7/\x});
\path[draw,thick] plot[domain=-3.2:-0.223, samples=50] ({\x},{-0.7/\x});
\path[draw, thick] plot[domain=-3.0245:-0.2355, samples=50, xshift=-5, yshift=5] ({\x},{-0.7/\x});
\path[draw,thick] plot[domain=0:6.28,variable=\t, samples=50] ({0.88*cos(\t r)},{2.35*sin(\t r)});
\path[draw,thick] plot[domain=0:6.28,variable=\t, samples=70] ({0.69*cos(\t r)},{0.83*2.5*sin(\t r)});
\path[draw,thick,fill=gray]  plot[domain=0.32:0.82, samples=50] ({\x},{-0.7/\x});
\path[draw,thick,fill=gray]
plot[domain=5.08:5.92,variable=\t, samples=50] ({0.88*cos(\t r)},{2.35*sin(\t r)});
\path[draw,thick,fill=gray]  plot[domain=-0.82:-0.32, samples=50] ({\x},{-0.7/\x});
\path[draw,thick,fill=gray]
plot[domain=5.08-3.1415:5.92-3.1415,variable=\t, samples=50] ({0.88*cos(\t r)},{2.35*sin(\t r)});
\path[draw,thick,fill=gray]  plot[domain=0.32:0.82, samples=50, xshift=150, yshift=120] ({3*\x},{3*-0.7/\x});
\path[draw,thick,fill=gray]
plot[domain=5.08:5.92,variable=\t, samples=50, xshift=150, yshift=120] ({3*0.88*cos(\t r)},{3*2.35*sin(\t r)});
   \draw [thick, dashed](0.82,-0.7/0.82)--(7.75,1.73) node [anchor=west, font=\scriptsize]{};
   \draw [thick, dashed](0.32,-0.7/0.32)--(6.28,-2.34) node [anchor=west, font=\scriptsize]{};
   \draw [thin](7.75,1.73)--(6.28,-2.34) node [anchor=west, font=\scriptsize]{};
\draw (7.75/2+6.28/2,1.73/2-2.34/2) to [out=350,in=195] (9,1)
node [anchor=west]{$\sim A^{-\frac{1}{2}}N_1$};
    \end{tikzpicture}
\end{figure}
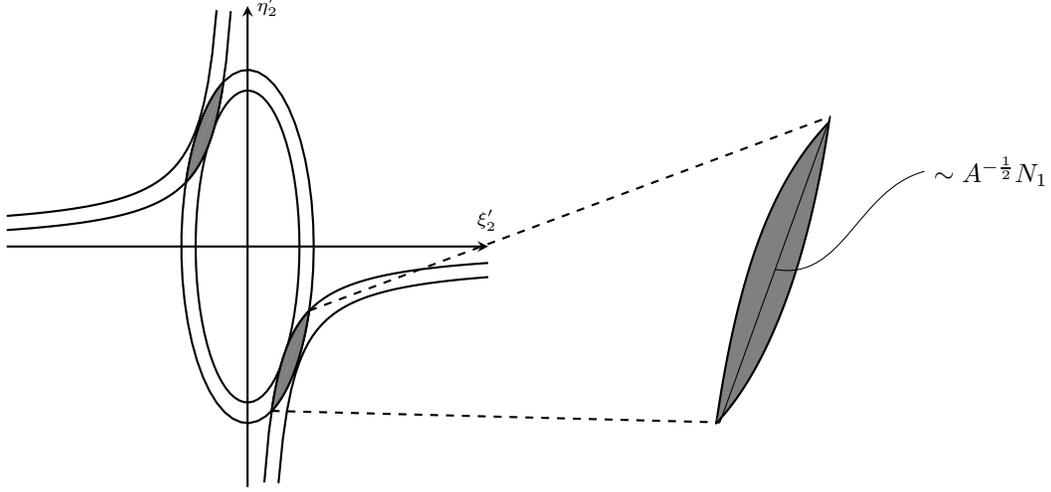
\end{rem}
By using Propositions \ref{prop3.3}, \ref{prop3.4} and Lemma \ref{lemma3.6}, we get the desired estimate \eqref{desired-est-11-17} under the assumption
$(\xi_1, \eta_1) \times (\xi_2, \eta_2) \in (\mathcal{K})^c \cap (\mathcal{K}')^c$.
\begin{prop}\label{prop3.7}
Assume \textnormal{(i)-(iii)} in \textit{Case} \textnormal{\ref{ass-1}}. Then we have
\begin{align}
&
 \Bigl|\int_{*}{  \ha{w}_{{N_0, L_0}}(\tau, \xi, \eta) \chi_{\{(\mathcal{K})^c \cap (\mathcal{K}')^c\}}((\xi_1, \eta_1),
(\xi_2, \eta_2))
\ha{u}_{N_1, L_1}(\tau_1, \xi_1, \eta_1)  \ha{v}_{N_2, L_2}(\tau_2, \xi_2, \eta_2)
}
d\sigma_1 d\sigma_2 \Bigr| \notag\\
& \qquad \qquad \qquad  \lesssim   (N_1)^{-\frac{5}{4}}  (L_0 L_1 L_2)^{\frac{5}{12}} \|\ha{u}_{N_1, L_1} \|_{L^2} \| \ha{v}_{N_2, L_2} \|_{L^2}
\|\ha{w}_{{N_0, L_0}} \|_{L^2},\label{desired-est-prop3.8}
\end{align}
where $d \sigma_j = d\tau_j d \xi_j d \eta_j$ and $*$ denotes $(\tau, \xi, \eta) = (\tau_1 + \tau_2, \xi_1+ \xi_2, \eta_1 + \eta_2).$
\end{prop}
\begin{proof}
Clearly, it suffices to show the estimate \eqref{desired-est-prop3.8} for non-negative
$\ha{w}_{{N_0, L_0}}$, $\ha{u}_{N_1, L_1}$,
$\ha{v}_{N_2, L_2} $. Thus, let $\ha{w}_{{N_0, L_0}}$, $\ha{u}_{N_1, L_1}$,
$\ha{v}_{N_2, L_2} $ be non-negative.
Let $A_0 \geq 2^{100}$ be dyadic.
By the definitions of $\widehat{Z}_A$ and $ \overline{Z}_{A_0}$, we see that
$\left(
G_{N_1, L_1} \times G_{N_2, L_2} \right) \cap \mathcal{A}  \cap
(\tilde{\mathcal{K}})^c \cap (\tilde{\mathcal{K}}')^c $ are contained in
\begin{equation*}
\bigcup_{2^{100} \leq A \leq A_0}
\bigcup_{(k_1, k_2) \in \widehat{Z}_A}
 \bigl( \tilde{\mathcal{T}}_{k_1}^{A} \times \tilde{\mathcal{T}}_{k_2}^{A} \bigr)
 \cup \bigcup_{(k_1, k_2) \in \overline{Z}_{A_0}}
 \bigl( \tilde{\mathcal{T}}_{k_1}^{A_0} \times \tilde{\mathcal{T}}_{k_2}^{A_0} \bigr).
\end{equation*}
Thus, we calculate that
\begin{align*}
& \textnormal{(LHS) of \eqref{desired-est-prop3.8}} \\
\leq & \sum_{2^{100} \leq A \leq A_0} \sum_{(k_1, k_2) \in \widehat{Z}_A}
\Bigl|\int_{*}{  \ha{w}_{{N_0, L_0}}(\tau, \xi, \eta)
\ha{u}_{N_1, L_1}|_{\tilde{\mathcal{T}}_{k_1}^A}(\tau_1, \xi_1, \eta_1)  \ha{v}_{N_2, L_2}|_{\tilde{\mathcal{T}}_{k_2}^A}(\tau_2, \xi_2, \eta_2)
}
d\sigma_1 d\sigma_2 \Bigr|\\
& \quad + \sum_{(k_1, k_2) \in \overline{Z}_{A_0}}
\Bigl|\int_{*}{  \ha{w}_{{N_0, L_0}}(\tau, \xi, \eta)
\ha{u}_{N_1, L_1}|_{\tilde{\mathcal{T}}_{k_1}^{A_0}}(\tau_1, \xi_1, \eta_1)  \ha{v}_{N_2, L_2}|_{\tilde{\mathcal{T}}_{k_2}^{A_0}}(\tau_2, \xi_2, \eta_2)
}
d\sigma_1 d\sigma_2 \Bigr|\\
& =: \sum_{2^{100} \leq A \leq A_0} \sum_{(k_1, k_2) \in \widehat{Z}_A}  I_1 + \sum_{(k_1, k_2) \in \overline{Z}_{A_0}}  I_2.
\end{align*}
We deduce from Propositions \ref{prop3.3}, \ref{prop3.4} and Lemma \ref{lemma3.6} that
\begin{align*}
& \sum_{(k_1, k_2) \in \widehat{Z}_A}  I_1 \\ \lesssim &
\sum_{(k_1, k_2) \in \widehat{Z}_A}
A^{\frac{1}{2}} N_1^{-2} (L_0 L_1 L_2)^{\frac{1}{2}} \|\ha{u}_{N_1, L_1}|_{\tilde{\mathcal{T}}_{k_1}^A} \|_{L^2}
\| \ha{v}_{N_2, L_2}|_{\tilde{\mathcal{T}}_{k_2}^A} \|_{L^2}
\|\ha{w}_{{N_0, L_0}} \|_{L^2}\\
\lesssim & \
A^{\frac{1}{2}} N_1^{-2} (L_0 L_1 L_2)^{\frac{1}{2}} \|\ha{u}_{N_1, L_1} \|_{L^2}
\| \ha{v}_{N_2, L_2} \|_{L^2}
\|\ha{w}_{{N_0, L_0}} \|_{L^2}.
\end{align*}
Let $A_0$ be the minimal dyadic number which satisfies
$A_0 \geq (L_{012}^{\max})^{-1/2} N_1^{3/2}$.
Then, from the above, we obtain
\begin{equation*}
\sum_{2^{100} \leq A \leq A_0} \sum_{(k_1, k_2) \in \widehat{Z}_A}   I_1
 \lesssim
(N_1)^{-\frac{5}{4}}  (L_0 L_1 L_2)^{\frac{5}{12}} \|\ha{u}_{N_1, L_1} \|_{L^2}
\| \ha{v}_{N_2, L_2} \|_{L^2}
\|\ha{w}_{{N_0, L_0}} \|_{L^2}.
\end{equation*}
Next we handle the latter term. For simplicity, we assume $L_0 =L_{012}^{\max} $.
The other cases $L_1 =L_{012}^{\max} $ and $L_2 =L_{012}^{\max} $ can be treated similarly.
It follows from \eqref{bilinearStrichartz-1} in Proposition \ref{prop3.2} and Lemma \ref{lemma3.6} that
\begin{align*}
\sum_{(k_1, k_2) \in \overline{Z}_{A_0}}  I_2 & \lesssim
 (N_1 A_0)^{-\frac{1}{2}} (L_1 L_2)^{\frac{1}{2}} \sum_{(k_1, k_2) \in \overline{Z}_{A_0}} \|\ha{u}_{N_1, L_1}|_{\tilde{\mathcal{T}}_{k_1}^{A_0}} \|_{L^2}
\| \ha{v}_{N_2, L_2}|_{\tilde{\mathcal{T}}_{k_2}^{A_0}} \|_{L^2} \|\ha{w}_{{N_0, L_0}} \|_{L^2}\\
& \lesssim (N_1)^{-\frac{5}{4}}  (L_0 L_1 L_2)^{\frac{5}{12}} \|\ha{u}_{N_1, L_1} \|_{L^2}
\| \ha{v}_{N_2, L_2} \|_{L^2}
\|\ha{w}_{{N_0, L_0}} \|_{L^2}.
\end{align*}
This completes the proof.
\end{proof}
It remains to prove the estimate \eqref{desired-est-11-17} for $(\xi_1 , \eta_1) \times (\xi_2, \eta_2) \in \left( \mathcal{K} \cup \mathcal{K}' \right)$. In this case, as we saw in the proof of Lemma \ref{lemma3.6} and \textit{Remark} \ref{pairs-of-tiles}, the almost one-to-one correspondence of
$(k_1, k_2) \in \tilde{Z}_{A}$ does not hold. Therefore we introduce another decomposition.
We note that, by symmetry of $\xi_i$ and $\eta_i$ with $i=1,2$, once the estimate
\eqref{desired-est-11-17} is verified for the case
$ (\xi_1 , \eta_1) \times (\xi_2, \eta_2) \in \mathcal{K}$, one can obtain the same estimate for $(\xi_1 , \eta_1) \times (\xi_2, \eta_2) \in \mathcal{K}'$. Similarly, by symmetry of $(\xi_1, \eta_1)$ and $(\xi_2, \eta_2)$, it suffices to show the estimate \eqref{desired-est-11-17} for the case $(\xi_1 , \eta_1) \times (\xi_2, \eta_2) \in (\mathcal{K}_1 \cup \mathcal{K}_2 ) \times \mathcal{K}_0$. For simplicity, we use
$\widehat{\mathcal{K}} :=  (\mathcal{K}_1 \cup \mathcal{K}_2 ) \times \mathcal{K}_0$.
\begin{defn}
Let $m=(n, z) \in \N  \times \Z$. We define the monotone increasing sequence $\{ a_{A,n} \}_{n \in \N} $ as
\begin{equation*}
a_{A,1} = 0, \qquad a_{A,n+1} = a_{A,n} + \frac{N_1}{\sqrt{(n+1)A}}.
\end{equation*}
and sets $\mathcal{R}_{A,m,1}$,
$\mathcal{R}_{A,m,2}$ as follows:
\begin{align*}
\mathcal{R}_{A,m,1} = &
\left\{ (\xi, \eta)  \in \R^2  \, \left| \,
\begin{aligned} & a_{A,n} \leq | \eta-   ( \sqrt{2}+ 1 )^{\frac{2}{3}} (\sqrt{2} + \sqrt{3} ) \xi |
< a_{A,n+1}, \\
& z A^{-1}  N_1 \leq \eta-( \sqrt{2}+ 1 )^{\frac{2}{3}}\xi < (z+1) A^{-1}N_1
 \end{aligned} \right.
\right\},\\
\mathcal{R}_{A,m,2} = &
\left\{ (\xi, \eta)  \in \R^2  \, \left| \,
\begin{aligned} & a_{A,n} \leq | \eta +  ( \sqrt{2}+ 1 )^{\frac{2}{3}} (\sqrt{3} - \sqrt{2} ) \xi |
< a_{A,n+1}, \\
& z A^{-1}  N_1 \leq \eta-( \sqrt{2}+ 1 )^{\frac{2}{3}}\xi < (z+1) A^{-1}N_1
 \end{aligned} \right.
\right\},\\
\tilde{\mathcal{R}}_{A,m,1} = & \R \times \mathcal{R}_{A,m,1}, \quad
\tilde{\mathcal{R}}_{A,m,2} =  \R \times \mathcal{R}_{A,m,2}.
\end{align*}
We will perform the Whitney type decomposition by using the above sets
instead of simple square tiles.
We define for $i=1,2$ that
\begin{align*}
M_{A,i}^1 & = \left\{ (m, k) \in (\N \times \Z) \times \Z^2 \ \left| \
\begin{aligned} & |\Phi(\xi_1, \eta_1, \xi_2, \eta_2)| \geq A^{-1} N_1^3 \ \\
&\textnormal{for any} \ (\xi_1, \eta_1) \in \mathcal{R}_{A,m,i} \  \textnormal{and} \
(\xi_2, \eta_2) \in \mathcal{T}_{k}^A
 \end{aligned} \right.
\right\},\\
M_{A,i}^2 & = \left\{ (m, k) \in (\N \times \Z) \times \Z^2 \ \left| \
\begin{aligned} & |F(\xi_1, \eta_1, \xi_2, \eta_2)| \geq A^{-1} N_1^3 \ \\
&\textnormal{for any} \ (\xi_1, \eta_1) \in \mathcal{R}_{A,m,i} \  \textnormal{and} \
(\xi_2, \eta_2) \in \mathcal{T}_{k}^A
 \end{aligned} \right.
\right\},\\
M_{A,i} & = M_{A,i}^1 \cup M_{A,i}^2 \subset  (\N \times \Z) \times \Z^2, \\
R_{A,i} & = \bigcup_{(m, k) \in M_{A,i}} \mathcal{R}_{A,m,i} \times
\mathcal{T}_{k}^A \subset \R^2 \times \R^2.
\end{align*}
Furthermore, we define ${M}_{A,i}' \subset M_{A,i}$ as the collection of $(m,k) \in \N \times \Z$ such that
\begin{equation*}
\mathcal{R}_{A,m,i} \times
\mathcal{T}_{k}^A \subset \bigcup_{2^{100} \leq A' <A} R_{A', i}.
\end{equation*}
By using ${M}_{A,i}'$, we define
\begin{align*}
Q_{A,i} =
\begin{cases}
R_{A,i} \setminus {\displaystyle \bigcup_{(m,k) \in {M}_{A,i}'}} (\mathcal{R}_{A,m,i} \times
\mathcal{T}_{k}^A) \ \textnormal{for} \   A \geq 2^{101},\\
 \quad \  R_{2^{100},i}  \, \quad \qquad \qquad \qquad \qquad \textnormal{for} \  A = 2^{100},
\end{cases}
\end{align*}
and $\tilde{M}_{A,i} = M_{A,i} \setminus {M}_{A,i}'$. Clearly, the followings hold.
\begin{equation*}
\bigcup_{(m,k) \in \tilde{M}_{A,i}} \mathcal{R}_{A,m,i} \times
\mathcal{T}_{k}^A = Q_{A,i}, \quad
\bigcup_{2^{100} \leq A \leq A_0} Q_{A,i} = R_{A_0,i},
\end{equation*}
where $A_0 \geq 2^{100}$ is dyadic.
Lastly, we define
\begin{align*}
\widehat{Z}_{A,i} & = \{ (m, k) \in \tilde{M}_{A,i} \, | \,
(  \tilde{\mathcal{R}}_{A,m,i} \times
\tilde{\mathcal{T}}_{k}^A ) \cap \left(
G_{N_1, L_1} \times G_{N_2, L_2} \right) \cap ( \tilde{\mathcal{K}}_i \times
\tilde{\mathcal{K}}_0 ) \not=
\emptyset \},\\
\overline{Z}_{A,i} & = \{ (m, k) \in  M_{A,i}^c \, | \,
(  \tilde{\mathcal{R}}_{A,m,i} \times
\tilde{\mathcal{T}}_{k}^A ) \cap \left(
G_{N_1, L_1} \times G_{N_2, L_2} \right) \cap ( \tilde{\mathcal{K}}_i \times
\tilde{\mathcal{K}}_0 )  \not=
\emptyset \},
\end{align*}
where $M_{A,i}^c = (\N \times \Z) \setminus M_{A,i}$ and $N_1$, $L_1$, $N_2$, $L_2$ satisfy (i) and (ii) in \textit{Case} \textnormal{\ref{ass-1}}. We easily see that
\begin{equation*}
 \left(
G_{N_1, L_1} \times G_{N_2, L_2} \right) \cup ( \tilde{\mathcal{K}}_i \times
\tilde{\mathcal{K}}_0  ) \subset \bigcup_{(m,k) \in \widehat{Z}_{A,i}} (  \tilde{\mathcal{R}}_{A,m,i} \times
\tilde{\mathcal{T}}_{k}^A ) \cup \bigcup_{(m,k) \in \overline{Z}_{A,i}} (  \tilde{\mathcal{R}}_{A,m,i} \times
\tilde{\mathcal{T}}_{k}^A ).
\end{equation*}
\end{defn}
\begin{lem}\label{lemma3.8}
Let $i=1,2$. For fixed $m \in \N \times \Z$, the number of $k \in \Z^2$ such that
$(m, k) \in \widehat{Z}_{A,i}$ is less than $2^{1000}$. On the other hand, for
fixed $k \in \Z^2$, the number of $m \in \N \times \Z$ such that
$(m, k) \in \widehat{Z}_{A,i}$ is less than $2^{1000}$.
Furthermore, the claim holds true whether we replace $\widehat{Z}_{A,i}$ by $\overline{Z}_{A,i}$ in
the above statements.
\end{lem}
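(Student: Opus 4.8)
\emph{Strategy.} The plan is to reproduce the argument of the proof of Lemma~\ref{lemma3.6}, with the square tiles of the first frequency replaced by the curved cells $\mathcal{R}_{A,m,i}$, the sequence $a_{A,n}$ being what compensates for the tangency described in Remark~\ref{pairs-of-tiles}. All four assertions reduce to one local statement. As in Lemma~\ref{lemma3.6}, membership $(m,k)\in\widehat{Z}_{A,i}$ (resp.\ $\in\overline{Z}_{A,i}$) forces, on a mild enlargement of $\mathcal{R}_{A,m,i}\times\mathcal{T}_k^A$, the simultaneous presence of a point where $|\Phi|\lesssim A^{-1}N_1^3$ and of a point where $|F|\lesssim A^{-1}N_1^2$, together with the restriction to $G_{N_1,L_1}\times G_{N_2,L_2}\cap(\tilde{\mathcal{K}}_i\times\tilde{\mathcal{K}}_0)$ and hence the non-parallel condition \eqref{ang-condition}. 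Using $|\nabla_{(\xi_2,\eta_2)}\Phi|\lesssim N_1^2$ and $|\nabla_{(\xi_2,\eta_2)}F|\lesssim N_1$ one transfers this smallness across any tile $\mathcal{T}_k^A$; the point of the slopes $(\sqrt2+1)^{2/3}$, $(\sqrt2+1)^{2/3}(\sqrt2+\sqrt3)$ and $(\sqrt2+1)^{2/3}(\sqrt3-\sqrt2)$ built into $\mathcal{R}_{A,m,i}$ is that the same smallness persists across a whole cell $\mathcal{R}_{A,m,i}$ as well. It then suffices to bound, for the center $(\xi_1',\eta_1')$ of a fixed cell $\mathcal{R}_{A,m,i}$, the number of tiles $\mathcal{T}_k^{2^{-c}A}$ needed to cover
\begin{equation*}
\bigl\{\,(\xi_2,\eta_2)\in\mathcal{K}_0 \;:\; |\Phi(\xi_1',\eta_1',\xi_2,\eta_2)|\lesssim A^{-1}N_1^3,\ \ |F(\xi_1',\eta_1',\xi_2,\eta_2)|\lesssim A^{-1}N_1^2\,\bigr\},
\end{equation*}
and, symmetrically, for a fixed point of a tile $\mathcal{T}_k^A$, the number of cells $\mathcal{R}_{A,m,i}$ meeting the corresponding set of $(\xi_1,\eta_1)\in\mathcal{K}_i$.

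\emph{The local estimate.} For this I would re-use the change of variables from the proof of Lemma~\ref{lemma3.6}: a translation by $\pm(\xi_1'/2,\eta_1'/2)$ turns the two constraints into $|\tilde{\Phi}|\lesssim A^{-1}N_1^3$, $|\tilde{F}|\lesssim A^{-1}N_1^2$ with $\tilde{\Phi},\tilde{F}$ as in \eqref{info03-lemma3.6}--\eqref{info04-lemma3.6}, that is, the overlap of the $\sim A^{-1}N_1$-neighborhoods of a conic $\{\tilde{\Phi}=0\}$ and a hyperbola $\{\tilde{F}=0\}$, both of curvature $\sim N_1^{-1}$. Eliminating one variable through $\tilde{F}$ reduces matters to the scalar function $G$ of \eqref{est007-lemma3.6}, whose derivative has a single first-order zero $\xi_2^{*}$ (with $G''(\xi_2^{*})\sim N_1$) at the tangency of the two curves, and for which $G(\xi_2^{*})=\tfrac32\sqrt{{\xi_1'}^3{\eta_1'}^3}-\tfrac14({\xi_1'}^3+{\eta_1'}^3)$; the factorization already carried out in the proof of Lemma~\ref{lemma3.6} shows that $|G(\xi_2^{*})|$ is comparable to $N_1^2$ times the distance of $(\xi_1',\eta_1')$ to the lines $\eta_1=(\sqrt2\pm1)^{4/3}\xi_1$. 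In the direction ``fix $m$, count $k$'' one has $(\xi_1',\eta_1')\in\mathcal{K}_i$, which sits at distance $\sim N_1$ from those two lines, so $|G(\xi_2^{*})|\sim N_1^3$, hence $|G'|\gtrsim N_1^2$ on the resonant set and the displayed set is covered by $O(1)$ tiles exactly as in Lemma~\ref{lemma3.6}; the only addition is to check that $G$, as a function of the remaining variable, moves by $\lesssim A^{-1}N_1^3$ over $\mathcal{R}_{A,m,i}$. In the opposite direction ``fix $k$, count $m$'' the fixed point lies in $\mathcal{K}_0$, the roles of the two frequencies are exchanged, and the analogous function is flat at its critical point precisely because that point, being in $\mathcal{K}_0$, is near one of the lines singled out above (now for the second frequency); thus the resonant $(\xi_1,\eta_1)$'s form a parabolic sliver of transverse width $\sim A^{-1}N_1$ hugging the central line of $\mathcal{K}_i$. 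The sequence $a_{A,n}$, for which $a_{A,n}\sim N_1\sqrt{n/A}$ and $a_{A,n+1}-a_{A,n}\sim N_1/\sqrt{nA}$, together with the Whitney selection $\tilde{M}_{A,i}=M_{A,i}\setminus M_{A,i}'$, which discards the cells straddling $\{\Phi=0\}\cup\{F=0\}$ and pushes them to larger $A$, is arranged exactly so that what survives on each $Q_{A,i}$ meets only boundedly many cells of the other family, while the contributions remain summable in $A$ when Proposition~\ref{prop3.7} is later imitated. The statements for $\overline{Z}_{A,i}$ come out of the same analysis run at the terminal scale $A_0$.

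\emph{Main obstacle.} The individual inequalities are perturbations of \eqref{est003-lemma3.6}--\eqref{est007-lemma3.6}; the difficulty is the bookkeeping. The crux is to make precise, and to verify, the claim above: that after the specific choice of slopes and of the exponent $\tfrac12$ in $a_{A,n+1}-a_{A,n}=N_1/\sqrt{(n+1)A}$, the Whitney-selected cells on $Q_{A,i}$ really do meet only $O(1)$ partners --- any other power of $n$ would break either this count or the later summation over $A$. One then has to carry this uniformly through the two orientations $i=1,2$ and the two sub-cases $N_1\le N_2$ and $N_2\le N_1$, and to treat the residual family $\overline{Z}_{A,i}$ at the scale $A_0$. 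This is why the proof is postponed to the Appendix.
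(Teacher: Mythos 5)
Your road map is the paper's road map: reduce each of the four counting statements to covering, for a fixed base point, the set where simultaneously $|\Phi|\lesssim A^{-1}N_1^3$ and $|F|\lesssim A^{-1}N_1^2$; eliminate one variable through $\tilde F$ as in \eqref{est003-lemma3.6}; and read the degeneracy off the critical value $G(\xi_2^{*})=\tfrac32\sqrt{{\xi_1'}^3{\eta_1'}^3}-\tfrac14({\xi_1'}^3+{\eta_1'}^3)$, which is large when the base point lies in $\mathcal{K}_i$ and nearly zero when it lies in $\mathcal{K}_0$. That identification is correct. But both counting statements are then left resting on assertions that are precisely the content of the Appendix, and the one concrete mechanism you offer for the harder transfer fails as stated. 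In the direction ``fix $m$, count $k$'', the cell $\mathcal{R}_{A,m,i}$ has length up to $A^{-1/2}N_1$ along $(1,(\sqrt2+1)^{2/3})$ while $|\nabla_{(\xi_1,\eta_1)}\Phi|\sim N_1^2$, so the first-order gradient bound you invoke only controls the variation of $\Phi$ across the cell by $A^{-1/2}N_1^3$, off by a factor $A^{1/2}$. What actually makes the cells work is that the directional derivatives of $\Phi$ and $F$ along $(1,(\sqrt2+1)^{2/3})$ vanish on the exact degenerate resonance variety, so that on the $n$-th band they are only $O(a_{A,n}N_1)$ and $O(a_{A,n})$ on the resonant set, and the variation over the band's length $N_1/\sqrt{(n+1)A}$ is $O(A^{-1}N_1^3)$ and $O(A^{-1}N_1^2)$; establishing that first-order smallness requires locating the resonant set within $\sim a_{A,n}$ of the resonance curve, which is not free. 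The paper avoids stating this as a derivative bound and instead parametrizes the cell explicitly and checks that the resonant set of $(\xi_2,\eta_2)$ moves by only $O(A^{-1}N_1)$ as the base point traverses the cell --- see \eqref{lemma3.9-estee02}, where the $c$-dependent term carries the vanishing factor $\tilde\xi+\tfrac{\sqrt3}{2}(\sqrt2+1)(\sqrt3-\sqrt2)N$. Either way a genuine second-order cancellation must be exhibited; calling this ``the only addition'' misjudges where the proof lives.

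In the direction ``fix $k$, count $m$'', the statement that the degenerate sliver ``meets only boundedly many cells'' is the lemma itself, and your outline does not verify the matching of scales that makes it true. The sliver's extent transverse to the central line of $\mathcal{K}_i$ depends on the distance $\delta$ of the fixed tile from the line $\ell:\eta=(\sqrt2-1)^{4/3}\xi$: by the quadratic behaviour of $G$ near $\xi^{*}$ it sits at distance $\approx\sqrt{N_1\delta}$ from that central line with uncertainty $\approx A^{-1}N_1^2/\sqrt{N_1\delta}$, which is exactly $a_{A,n+1}-a_{A,n}$ at the band containing it, while its extent in the direction $\eta-(\sqrt2+1)^{2/3}\xi$ is $\sim A^{-1}N_1$, matching the second defining constraint of $\mathcal{R}_{A,m,i}$. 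Verifying this, uniformly over the two regimes $\delta\lesssim A^{-1}N_1$ and $A^{-1}N_1\ll\delta\leq 2^{-20}N_1$, and pinning down the sliver's \emph{location} (not just its size) to within these tolerances, is the two-case computation of the Appendix (\eqref{lemma3.9-est01}--\eqref{lemma3.9-est09} and \eqref{lemma3.9-est101}--\eqref{lemma3.9-est108}); nothing in your proposal substitutes for it, and the case split on $\delta$ does not appear in your outline at all. In short: the strategy is the right one and the same as the paper's, but the two quantitative covering facts that constitute the lemma are asserted rather than proved, and the proposal as written has a genuine gap at each of them.
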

The proof of Lemma \ref{lemma3.8} will be long and complicated. For the sake of convenience, we skip the proof of Lemma \ref{lemma3.8} here and give it in Appendix.
\begin{prop}
Assume \textnormal{(i)-(iii)} in \textit{Case} \textnormal{\ref{ass-1}}. Then we have
\begin{align}
&
 \Bigl|\int_{*}{  \ha{w}_{{N_0, L_0}}(\tau, \xi, \eta) \chi_{\widehat{\mathcal{K}}}((\xi_1, \eta_1),
(\xi_2, \eta_2))
\ha{u}_{N_1, L_1}(\tau_1, \xi_1, \eta_1)  \ha{v}_{N_2, L_2}(\tau_2, \xi_2, \eta_2)
}
d\sigma_1 d\sigma_2 \Bigr| \notag\\
& \qquad \qquad \qquad  \lesssim   (N_1)^{-\frac{5}{4}}  (L_0 L_1 L_2)^{\frac{5}{12}} \|\ha{u}_{N_1, L_1} \|_{L^2} \| \ha{v}_{N_2, L_2} \|_{L^2}
\|\ha{w}_{{N_0, L_0}} \|_{L^2},\label{desired-est-prop3.9}
\end{align}
where $d \sigma_j = d\tau_j d \xi_j d \eta_j$ and $*$ denotes $(\tau, \xi, \eta) = (\tau_1 + \tau_2, \xi_1+ \xi_2, \eta_1 + \eta_2).$
\end{prop}
\begin{proof}
For simplicity, we here only consider the case $(\xi_1 , \eta_1) \times (\xi_2, \eta_2) \in \mathcal{K}_1 \times \mathcal{K}_0 =:\widehat{\mathcal{K}}_1$.
The case $(\xi_1 , \eta_1) \times (\xi_2, \eta_2) \in \mathcal{K}_2 \times \mathcal{K}_0$ can be treated in a similar way.
The strategy of the proof is completely the same as that for Proposition \ref{prop3.7}.
By the relation
\begin{equation*}
 \left(
G_{N_1, L_1} \times G_{N_2, L_2} \right) \cup ( \tilde{\mathcal{K}}_i \times
\tilde{\mathcal{K}}_0  ) \subset \bigcup_{(m,k) \in \widehat{Z}_{A,i}} (  \tilde{\mathcal{R}}_{A,m,i} \times
\tilde{\mathcal{T}}_{k}^A ) \cup \bigcup_{(m,k) \in \overline{Z}_{A,i}} (  \tilde{\mathcal{R}}_{A,m,i} \times
\tilde{\mathcal{T}}_{k}^A ),
\end{equation*}
we have
\begin{align*}
&  \Bigl|\int_{*}{  \ha{w}_{{N_0, L_0}}(\tau, \xi, \eta) \chi_{\widehat{\mathcal{K}}_1}((\xi_1, \eta_1),
(\xi_2, \eta_2))
\ha{u}_{N_1, L_1}(\tau_1, \xi_1, \eta_1)  \ha{v}_{N_2, L_2}(\tau_2, \xi_2, \eta_2)
}
d\sigma_1 d\sigma_2 \Bigr| \\
\leq & \sum_{2^{100} \leq A \leq A_0} \sum_{(m,k) \in \widehat{Z}_{A,1}}
\Bigl|\int_{*}{  \ha{w}_{{N_0, L_0}}(\tau, \xi, \eta)
\ha{u}_{N_1, L_1}|_{\tilde{\mathcal{R}}_{A,m,1}}(\tau_1, \xi_1, \eta_1)  \ha{v}_{N_2, L_2}|_{\tilde{\mathcal{T}}_{k}^A}(\tau_2, \xi_2, \eta_2)
}
d\sigma_1 d\sigma_2 \Bigr|\\
& \quad + \sum_{(m, k) \in \overline{Z}_{A_0,1}}
\Bigl|\int_{*}{  \ha{w}_{{N_0, L_0}}(\tau, \xi, \eta)
\ha{u}_{N_1, L_1}|_{\tilde{\mathcal{R}}_{A_0,m,1}}(\tau_1, \xi_1, \eta_1)  \ha{v}_{N_2, L_2}|_{\tilde{\mathcal{T}}_{k}^{A_0}}(\tau_2, \xi_2, \eta_2)
}
d\sigma_1 d\sigma_2 \Bigr|\\
& =: \sum_{2^{100} \leq A \leq A_0} \sum_{(m,k) \in \widehat{Z}_{A,1}}   I_1 + \sum_{(m, k) \in \overline{Z}_{A_0,1}}   I_2.
\end{align*}
We deduce from Propositions \ref{prop3.3}, \ref{prop3.4},Lemma \ref{lemma3.8} and the almost
orthogonality that
\begin{align*}
& \sum_{(m,k) \in \widehat{Z}_{A,1}}   I_1 \\ \lesssim &
\sum_{(m,k) \in \widehat{Z}_{A,1}}
A^{\frac{1}{2}} N_1^{-2} (L_0 L_1 L_2)^{\frac{1}{2}} \|\ha{u}_{N_1, L_1}|_{\tilde{\mathcal{R}}_{A,m,1}} \|_{L^2}
\| \ha{v}_{N_2, L_2}|_{\tilde{\mathcal{T}}_{k}^A} \|_{L^2}
\|\ha{w}_{{N_0, L_0}} \|_{L^2}\\
\lesssim & \
A^{\frac{1}{2}} N_1^{-2} (L_0 L_1 L_2)^{\frac{1}{2}} \|\ha{u}_{N_1, L_1} \|_{L^2}
\| \ha{v}_{N_2, L_2} \|_{L^2}
\|\ha{w}_{{N_0, L_0}} \|_{L^2}.
\end{align*}
Let $A_0$ be the minimal dyadic number which satisfies
$A_0 \geq (L_{012}^{\max})^{-1/2} N_1^{3/2}$.
We obtain
\begin{equation*}
\sum_{2^{100} \leq A \leq A_0} \sum_{(m,k) \in \widehat{Z}_{A,1}}    I_1
 \lesssim
(N_1)^{-\frac{5}{4}}  (L_0 L_1 L_2)^{\frac{5}{12}} \|\ha{u}_{N_1, L_1} \|_{L^2}
\| \ha{v}_{N_2, L_2} \|_{L^2}
\|\ha{w}_{{N_0, L_0}} \|_{L^2}.
\end{equation*}
Next we deal with the latter term. For simplicity, we assume $L_0 =L_{012}^{\max} $.
It follows from \eqref{bilinearStrichartz-1} in Proposition \ref{prop3.2}, Lemma \ref{lemma3.8} and the almost orthogonality that
\begin{align*}
\sum_{(m, k) \in \overline{Z}_{A_0,1}}  I_2 & \lesssim
 (N_1 A_0)^{-\frac{1}{2}} (L_1 L_2)^{\frac{1}{2}} \sum_{(m, k) \in \overline{Z}_{A_0,1}}  \|\ha{u}_{N_1, L_1}|_{\tilde{\mathcal{R}}_{A,m,1}} \|_{L^2}
\| \ha{v}_{N_2, L_2}|_{\tilde{\mathcal{T}}_{k}^{A_0}} \|_{L^2} \|\ha{w}_{{N_0, L_0}} \|_{L^2}\\
& \lesssim (N_1)^{-\frac{5}{4}}  (L_0 L_1 L_2)^{\frac{5}{12}} \|\ha{u}_{N_1, L_1} \|_{L^2}
\| \ha{v}_{N_2, L_2} \|_{L^2}
\|\ha{w}_{{N_0, L_0}} \|_{L^2}.
\end{align*}
This completes the proof.
\end{proof}
\subsection{Case 3: low modulation, parallel interactions}\label{Case 3}
It remains to show \eqref{2018-06-05-01} for Case 3. We recall that,
by Plancherel's theorem, \eqref{2018-06-05-01} can be written
as
\begin{equation}
\begin{split}
&
 \Bigl|\int_{*}{  (\xi+\eta)\ha{w}_{{N_0, L_0}}(\tau, \xi, \eta)
\ha{u}_{N_1, L_1}(\tau_1, \xi_1, \eta_1)  \ha{v}_{N_2, L_2}(\tau_2, \xi_2, \eta_2)
}
d\sigma_1 d\sigma_2 \Bigr| \\
& \qquad \qquad \qquad \qquad \qquad  \lesssim   \|u \|_{X^{s,\,b}} \|v \|_{X^{s,\,b}}
\|w \|_{X^{-s,\,1-b-\e}}.\label{desired-est-12-23}
\end{split}
\end{equation}
In this subsection, we will show \eqref{desired-est-12-23} under the following assumptions.
\begin{case}[low modulation, parallel interactions]\label{case-2} $\quad$\\
(i) $ \, \, $ $L_{012}^{\max} \leq 2^{-100} (N_{012}^{\max})^3$
\begin{equation*}
\textnormal{(ii)'} \
\begin{cases}
\textnormal{If min}(N_0, N_1, N_2) = N_0 \quad
|\sin \angle \left( (\xi_1, \eta_1), (\xi_2, \eta_2) \right)|  \leq 2^{-20},
\\
\textnormal{If min}(N_0, N_1, N_2) = N_1 \quad
|\sin \angle \left( (\xi, \, \eta), \, (\xi_2, \eta_2) \right)| \leq 2^{-20},\\
\textnormal{If min}(N_0, N_1, N_2) = N_2 \quad
|\sin \angle \left( (\xi, \, \eta), \, (\xi_1, \eta_1) \right)| \leq 2^{-20}.
\end{cases}
\end{equation*}
\end{case}
\begin{rem}
Note that \textit{Case} 1, \textit{Case} 2 and \textit{Case} 3 cover all cases. To see this, we show
that if we assume that
$N_0$, $N_1$, $N_2$, $(\xi,\eta)$, $(\xi_1,\eta_1)$, $(\xi_2,\eta_2)$ do not satisfy (ii)' in \textit{Case} 3 then
these satisfy (ii) and (iii) in \textit{Case} 2. For simplicity, we assume $N_2 = \min (N_0, N_1,N_2)$ and
$|\sin \angle \left( (\xi, \, \eta), \, (\xi_1, \eta_1) \right)| > 2^{-20}$. Let $(\xi_1, \eta_1) = (r_1 \cos \theta_1,
r_1 \sin \theta_1)$ and $(\xi, \eta) = (r \cos \theta,
r \sin \theta)$. We have
\begin{align*}
|(\xi_2, \eta_2)| & = |(\xi-\xi_1, \eta - \eta_1)|\\
& = \sqrt{(r \cos \theta - r_1 \cos \theta_1)^2 + (r \sin \theta - r_1 \sin \theta_1)^2}\\
& = \sqrt{r^2 + r_1^2 - 2 r r_1 (\cos \theta \cos \theta_1 + \sin \theta \sin \theta_1)}\\
& = \sqrt{ (r - r_1 )^2 + 2 r r_1 ( 1- \cos (\theta - \theta_1))}\\
& \geq \sqrt{r r_1} |\sin (\theta-\theta_1)|\\
& \geq \frac{N_1}{2} |\sin \angle \left( (\xi, \, \eta), \, (\xi_2, \eta_2) \right)| > 2^{-21} N_1,
\end{align*}
which implies (ii) in \textit{Case} 2. In addition, we see
\begin{align*}
|\sin \angle \left( (\xi_1, \, \eta_1), \, (\xi_2, \eta_2) \right)| & =
\frac{|\xi_1 \eta_2 - \xi_2 \eta_1|}{|(\xi_1,\eta_1)| | (\xi_2, \eta_2)|} \\
 & \geq  \frac{|\xi_1 \eta - \xi \eta_1|}{4 |(\xi_1,\eta_1)| |(\xi, \eta)|}\\
 & = 2^{-2} |\sin \angle \left( (\xi, \, \eta), \, (\xi_1, \eta_1) \right)| > 2^{-22},
\end{align*}
which implies (iii) in \textit{Case} 2.
\end{rem}
Next, we introduce an angular decomposition.
\begin{defn}
We define a partition of unity in $\R$,
\begin{equation*}
1 = \sum_{j \in \Z} \omega_j, \qquad \omega_j (s) = \psi(s-j) \Bigl( \sum_{k \in \Z} \psi (s-k) \Bigr)^{-1}.
\end{equation*}
For a dyadic number $A \geq 64$, we also define a partition of unity on the unit circle,
\begin{equation*}
1 = \sum_{j =0}^{A-1} \omega_j^A, \qquad \omega_j^A (\theta) =
\omega_j \Bigl( \frac{A\theta}{\pi} \Bigr) + \omega_{j-A} \Bigl( \frac{A\theta}{\pi} \Bigr).
\end{equation*}
We observe that $\omega_j^A$ is supported in
\begin{equation*}
\Theta_j^A = \Bigl[\frac{\pi}{A} \, (j-2), \ \frac{\pi}{A} \, (j+2) \Bigr]
\cup \Bigl[-\pi + \frac{\pi}{A} \, (j-2), \ - \pi +\frac{\pi}{A} \, (j+2) \Bigr].
\end{equation*}
We now define the angular frequency localization operators $R_j^A$,
\begin{equation*}
\F_{x,y} (R_j^A f)(\xi,\eta) = \omega_j^A(\theta) \F_{x,y} f(\xi, \eta),
\quad \textnormal{where} \ (\xi, \eta) = |(\xi,\eta)|
(\cos \theta, \sin \theta).
\end{equation*}
For any function $u  : \, \R \, \times \, \R^2 \, \to \C$, $(t,x,y) \mapsto u(t,x,y)$ we set
$(R_j^A u ) (t, x,y) = (R_j^Au( t, \cdot)) (x,y)$. These operators localize function in frequency to the sets
\begin{equation*}
\tilde{\mathfrak{D}}_j^A = \R \times {\mathfrak{D}}_j^A,
\end{equation*}
where ${\mathfrak{D}}_j^A  = \{ ( |(\xi,\eta)| \cos \theta, |(\xi, \eta)| \sin \theta) \in  \R^2
\, | \, \theta \in \Theta_j^A  \} $.

Immediately, we can see
\begin{equation*}
u = \sum_{j=0}^{A-1} R_j^A u.
\end{equation*}
Let $\mathcal{I}_1$, $\mathcal{I}_2$, $\mathcal{I}_3 \subset \R^2 \times \R^2$ be defined as follows:
\begin{align*}
\mathcal{I}_1 & = \bigl( {\mathfrak{D}}_{0}^{2^{11}} \times {\mathfrak{D}}_{0}^{2^{11}} \bigr)
\cup \bigl( {\mathfrak{D}}_{2^{10}}^{2^{11}} \times {\mathfrak{D}}_{2^{10}}^{2^{11}} \bigr) , &
\tilde{\mathcal{I}}_1 & = \bigl( \tilde{{\mathfrak{D}}}_{0}^{2^{11}} \times \tilde{{\mathfrak{D}}}_{0}^{2^{11}} \bigr)
\cup \bigl( \tilde{{\mathfrak{D}}}_{2^{10}}^{2^{11}} \times \tilde{{\mathfrak{D}}}_{2^{10}}^{2^{11}} \bigr),\\
\mathcal{I}_2 & = \bigl( {\mathfrak{D}}_{2^9 \times 3}^{2^{11}} \times {\mathfrak{D}}_{2^9 \times 3}^{2^{11}} \bigr)  ,&
\tilde{\mathcal{I}}_2 & = \bigl( \tilde{{\mathfrak{D}}}_{2^9 \times 3}^{2^{11}} \times \tilde{{\mathfrak{D}}}_{2^9 \times 3}^{2^{11}} \bigr),\\
\mathcal{I}_3 & = ( \R^2 \times \R^2 ) \setminus  ( \mathcal{I}_1 \cup \mathcal{I}_2 ), &
\tilde{\mathcal{I}}_3 & = ( \R^3 \times \R^3 ) \setminus
\bigl( \tilde{\mathcal{I}}_1 \cup \tilde{\mathcal{I}}_2 \bigr).
\end{align*}
Note that
\begin{align*}
 {\mathfrak{D}}_{0}^{2^{11}} & =\left\{ ( |(\xi,\eta)| \cos \theta, |(\xi, \eta)| \sin \theta) \in  \R^2
\, | \, \textnormal{min} \left( |\theta|, |\theta - \pi| \right)  \leq 2^{-10}\pi \right\},\\
 {\mathfrak{D}}_{2^{10}}^{2^{11}} & =\Bigl\{ ( |(\xi,\eta)| \cos \theta, |(\xi, \eta)| \sin \theta) \in  \R^2
\, | \, \textnormal{min} \Bigl( \Bigl| \theta-\frac{\pi}{2}\Bigr|,
\Bigl| \theta + \frac{\pi}{2} \Bigr| \Bigr) \leq 2^{-10}  \pi
 \Bigr\},\\
 {\mathfrak{D}}_{2^{9} \times 3}^{2^{11}} & =\Bigl\{ ( |(\xi,\eta)| \cos \theta, |(\xi, \eta)| \sin \theta) \in  \R^2
\, | \, \textnormal{min} \Bigl( \Bigl| \theta-\frac{3 \pi}{4}\Bigr|,
\Bigl| \theta + \frac{\pi}{4} \Bigr| \Bigr) \leq 2^{-10} \pi
 \Bigr\}.
\end{align*}
\end{defn}
We turn to show \eqref{desired-est-12-23}.
We only consider the cases $\min(N_0, N_1, N_2) = N_2$ and $\min(N_0, N_1, N_2) = N_0$.
By symmetry, the same argument for $\min(N_0, N_1, N_2) = N_2$
can be applied to the case $\min(N_0, N_1, N_2) = N_1$.
In addition, we mainly treat the case $\min(N_0, N_1, N_2) = N_2$ and skip the most of the proof for the case $\min(N_0, N_1, N_2) = N_0$ to avoid redundancy.

Suppose $\min(N_0, N_1, N_2) = N_2$.
We divide the proof of \eqref{desired-est-12-23} into the three cases.\\
(I) $(\xi_1,\eta_1)\times (\xi,\eta) \in \mathcal{I}_1 $, $ \ $
\textnormal{(I \hspace{-0.15cm}I)} $(\xi_1,\eta_1)\times (\xi,\eta) \in \mathcal{I}_2$, $ \ $
\textnormal{(I \hspace{-0.16cm}I \hspace{-0.16cm}I)}
$(\xi_1,\eta_1)\times (\xi,\eta) \in \mathcal{I}_3$.

In the first case, both of $(\xi_1, \eta_1)$ and $(\xi,\eta)$ are close to
the $\xi$-axis or close to the $\eta$-axis.
We recall that the transversality is determined by the following product.
\begin{equation*}
|\xi_1 \eta - \xi \eta_1| \, |\xi_1 \eta +  \xi \eta_1 + 2 (\xi_1 \eta_1 + \xi \eta)|.
\end{equation*}
In the case (I), the both sizes of former function $|\xi_1 \eta - \xi \eta_1|$ and the latter function $|\xi_1 \eta +  \xi \eta_1 + 2 (\xi_1 \eta_1 + \xi \eta)|$ are small.
Therefore, we will see that this case is the most difficult to show \eqref{desired-est-12-23} and the proof  become complicated.
Next, the interactions which arise near the line
$\xi + \eta = 0$ will be treated in the case \textnormal{(I \hspace{-0.17cm}I)}.
The interesting point is that if
$\xi_1+ \eta_1 = \xi + \eta =0$ then the resonance function $\Phi(\xi_1,\eta_1, \xi,\eta)$ is always equal to $0$ and the derivative loss $\xi+\eta$ in \eqref{desired-est-12-23} also
become $0$. Thus, we will deal with these interactions by a similar argument which was utilized to show the estimates of a null-form
nonlinearity. In the last case \textnormal{(I \hspace{-0.16cm}I \hspace{-0.16cm}I)}, \eqref{desired-est-12-23} can be verified by the same strategy as that for the Zakharov system in \cite{BHHT09}. Therefore, we first consider the case \textnormal{(I \hspace{-0.16cm}I \hspace{-0.16cm}I)}. Next, we treat the case \textnormal{(I \hspace{-0.15cm}I)}, and lastly we prove \eqref{desired-est-12-23} in the case (I).
The following proposition immediately gives \eqref{desired-est-12-23} in the case \textnormal{(I \hspace{-0.17cm}I \hspace{-0.17cm}I)}.
\begin{prop}\label{prop3.10}
Assume \textnormal{(i)}, \textnormal{(ii)'} in \textit{Case} \textnormal{\ref{case-2}}
and $\min(N_0, N_1, N_2) = N_2$. Then we have
\begin{equation}
\begin{split}
&
\Bigl|\int_{**}{  \ha{v}_{{N_2, L_2}}(\tau_2, \xi_2, \eta_2) \chi_{\mathcal{I}_3}((\xi_1,\eta_1), (\xi,\eta))
\ha{u}_{N_1, L_1}(\tau_1, \xi_1, \eta_1)
\ha{w}_{N_0, L_0}(\tau, \xi, \eta)
}
d\sigma_1 d\sigma \Bigr|\\
& \qquad \qquad \qquad \qquad
\lesssim  N_1^{-\frac{5}{4}} L_0^{\frac{1}{4}} (L_1 L_2)^{\frac{1}{2}} \|\ha{u}_{N_1, L_1}\|_{L^2}
\| \ha{v}_{N_2, L_2}\|_{L^2}
\|\ha{w}_{{N_0, L_0}}\|_{L^2},\label{prop3.10-1}
\end{split}
\end{equation}
where $d \sigma_1 = d\tau_1 d \xi_1 d \eta_1$, $d \sigma = d\tau d \xi d \eta$
 and $**$ denotes $(\tau_2, \xi_2, \eta_2) = (\tau_1 + \tau, \xi_1+ \xi, \eta_1 + \eta).$
\end{prop}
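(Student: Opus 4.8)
The plan is to follow the treatment of the non-degenerate parallel regime for the Zakharov system in \cite{BHHT}, adapted to the cubic dispersion relation $\xi^3+\eta^3$. First I would reduce \eqref{prop3.10-1}: by duality and density, and after the change of variables $\tau_1=\xi_1^3+\eta_1^3+c_1$, $\tau=\xi^3+\eta^3+c$ that linearizes the modulation weights (exactly as at the beginning of the proof of Proposition \ref{prop3.4}), it suffices to prove \eqref{prop3.10-1} for nonnegative $\ha u_{N_1,L_1}$, $\ha v_{N_2,L_2}$, $\ha w_{N_0,L_0}$. I would record at the outset that the convolution constraint $**$ together with $\min(N_0,N_1,N_2)=N_2$ forces $N_0\sim N_1=:N$ and $N_2\le N$, and that $(\xi_1,\eta_1)$ is nearly anti-parallel to $(\xi,\eta)$ by \textnormal{(ii)'}.

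The geometric core is the observation that $\mathcal{I}_3$ is chosen precisely so that on it the common direction of $(\xi_1,\eta_1)$ and $(\xi,\eta)$ is bounded away from the two coordinate axes and from the line $\xi+\eta=0$ — this is exactly what the $2^{-10}\pi$ angular margins in the definitions of $\mathcal{I}_1$ and $\mathcal{I}_2$ buy us. The resonance function is $\Phi=-3(\xi_1\xi\xi_2+\eta_1\eta\eta_2)$, and (as in Proposition \ref{prop2.7}\,(iii)) the transversality determinant of the three unit normals to the characteristic graphs factors, up to the normalising factors, as
\begin{equation*}
|\textnormal{det}\,N|\ \sim\ \frac{|\xi_1\eta-\xi\eta_1|\ \bigl|\xi_1\eta+\xi\eta_1+2(\xi_1\eta_1+\xi\eta)\bigr|}{\LR{(\xi_1,\eta_1)}^2\,\LR{(\xi,\eta)}^2\,\LR{(\xi_2,\eta_2)}^2}\,.
\end{equation*}
On $\mathcal{I}_3$ one therefore has $|\xi_1\eta_1|\sim N^2$, the second factor satisfies $|\xi_1\eta+\xi\eta_1+2(\xi_1\eta_1+\xi\eta)|\sim N^2$, and a short computation (writing $(\xi,\eta)$ as a negative near-multiple of $(\xi_1,\eta_1)$) gives $|\Phi|\sim N_2N^2$ uniformly; in particular $L_{012}^{\max}\gtrsim N_2N^2$, and hence $L_{012}^{\max}\gtrsim N^2$, always holds here. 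Similarly, writing $\phi(\xi,\eta)=\xi^3+\eta^3$, the gradient difference $|\nabla\phi|$ between two of the three frequency vectors is $\sim N^2$ when one of them has size $\sim N_2$ and the other $\sim N$, and $\sim N_2N$ for the pair both of size $\sim N$.

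Next I would run bilinear Strichartz estimates. Exactly as in the proof of Proposition \ref{prop3.2}, the transversality estimates above yield $\|(\text{one wave})\cdot(\text{another wave})\|_{L^2_{t,x,y}}\lesssim(\text{transversality})^{-1/2}(\text{modulation})^{1/2}\|\cdot\|_{L^2}\|\cdot\|_{L^2}$ for the pairs of factors in \eqref{prop3.10-1}; one then splits into cases according to which of $L_0,L_1,L_2$ is largest, applies the appropriate bilinear estimate to two of the three functions, uses Cauchy--Schwarz for the third, and uses $L_{012}^{\max}\gtrsim N_2N^2$ to absorb the surplus powers of $N$. This gives \eqref{prop3.10-1} in all configurations except the one in which the low-frequency factor $v$ carries the largest modulation; there the bilinear Strichartz gain is not quite enough, and I would instead apply the nonlinear Loomis--Whitney inequality (Proposition \ref{prop2.7}) on pieces obtained by decomposing $u$ and $w$ into angular sectors at a scale dictated by $L_{012}^{\max}$ and $N$, the crucial point being that, by the geometric step, the transversality parameter $d$ is then comparable to the angular scale (the second factor never degenerating on $\mathcal{I}_3$). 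Summing the almost-orthogonal angular pieces and the remaining dyadic parameters finishes the argument.

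The main obstacle will be making the transversality bound $|\xi_1\eta+\xi\eta_1+2(\xi_1\eta_1+\xi\eta)|\sim N^2$ hold \emph{uniformly} up to the boundary of $\mathcal{I}_3$ — which is exactly why the decomposition $\R^2\times\R^2=\mathcal{I}_1\cup\mathcal{I}_2\cup\mathcal{I}_3$ was performed with those specific margins — and, in the Loomis--Whitney case, organising the angular and frequency-tile decompositions so that the diameter hypothesis of Proposition \ref{prop2.7} holds while the admissible pieces remain essentially one-to-one, which is the analogue for the near-anti-parallel non-degenerate configuration of Lemmas \ref{lemma3.6} and \ref{lemma3.8}. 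The remaining summations are routine geometric series.
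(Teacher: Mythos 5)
Your overall architecture --- angular Whitney decomposition, bilinear Strichartz off the resonant set, nonlinear Loomis--Whitney on it, with transversality governed by the product $|\xi_1\eta-\xi\eta_1|\,|\xi_1\eta+\xi\eta_1+2(\xi_1\eta_1+\xi\eta)|$ whose second factor is indeed $\sim N_1^2$ on $\mathcal{I}_3$ --- is the same as the paper's. But the step you lean on to close the bilinear-Strichartz cases is false: it is not true that $|\Phi|\sim N_2N_1^2$ uniformly, and hence not true that $L_{012}^{\max}\gtrsim N_2N_1^2\geq N_1^2$ on the support. Writing $(\xi,\eta)=-(\xi_1,\eta_1)+(\epsilon_1,\epsilon_2)$ with $|\epsilon|\sim N_2$, one computes $\Phi=-\xi_1^2\epsilon_1-\eta_1^2\epsilon_2+O(N_1N_2^2)$; the leading term vanishes when $\epsilon$ is orthogonal to $(\xi_1^2,\eta_1^2)$, a codimension-one condition fully compatible with $\mathcal{I}_3$ and with (ii)$'$, which only constrains the angle between $(\xi_1,\eta_1)$ and $(\xi,\eta)$ to within $2^{-20}$, not exactly. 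So all three modulations can be $O(1)$ here --- this is precisely the resonant set that makes Case 3 hard --- and your plan of ``bilinear Strichartz in all configurations except when $v$ carries the largest modulation'' collapses: the case split is not governed by which $L_j$ is largest but by whether $|\Phi|$ is large or small relative to the angular scale.

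The correct organization (Lemma \ref{lemma3.11} together with Propositions \ref{prop3.12} and \ref{prop3.13}) runs the implication in the opposite direction from yours: low modulation plus angular alignment at scale $A$ \emph{forces} $N_2\lesssim A^{-1}N_1$; it does not follow that $N_2$ bounds $|\Phi|$ from below. At each Whitney scale $A$ one therefore dichotomizes on $N_2$ versus $A^{-1}N_1$: if $N_2\gtrsim A^{-1}N_1$ then $|\Phi|\gtrsim A^{-1}N_1^3$ and the bilinear Strichartz estimates of Proposition \ref{prop3.12} suffice; if $N_2\lesssim A^{-1}N_1$ one tiles into cubes of side $\sim A^{-1}N_1$ and applies Proposition \ref{prop2.7} with $d\sim A^{-1}$, using exactly the non-degeneracy of the second factor on $\mathcal{I}_3$ that you identified. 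The diagonal blocks at the final scale $A_0\sim L_0^{-1/2}N_1^{3/2}$ are closed with \eqref{bilinearStrichartz-6}, and summing $A^{1/2}N_1^{-2}(L_0L_1L_2)^{1/2}$ over $A\leq A_0$ yields \eqref{prop3.10-1}. Your proposal needs this repair before the ``routine summations'' at the end can be carried out.
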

Similarly to the argument in Case 2, by employing the bilinear Strichartz estimates and the nonlinear
Loomis-Whitney inequality, we show Proposition \ref{prop3.10}.
First we observe that low modulation condition $L_{012}^{\max} \leq  2^{-20}A^{-1} (N_{012}^{\max})^3$
provides the smallness of the low frequency if $(\xi_1, \eta_1)$ and $(\xi, \eta)$ are almost parallel.
\begin{lem}\label{lemma3.11}
Let $A \geq 2^{25}$ and
\begin{equation*}
(\tau_1, \xi_1, \eta_1) \in G_{N_1, L_1}\cap  \tilde{{\mathfrak{D}}}_{j_1}^A, \
(\tau,\xi, \eta) \in G_{N_0,L_0} \cap  \tilde{\mathfrak{D}}_{j}^A, \ (\tau_1+\tau, \xi_1+\xi, \eta_1+\eta) \in G_{N_2, L_2}.
\end{equation*}
Assume $L_{012}^{\max} \leq  2^{-20}A^{-1} N_1^3$, $|j_1-j| \leq 32$,
$\min(N_0, N_1, N_2) = N_2$ and
\begin{equation*}
(\xi_1,\eta_1) \times (\xi,\eta) \notin \mathcal{I}_2.
\end{equation*}
Then we have $N_2 \leq 2^{11}A^{-1} N_1$.
\end{lem}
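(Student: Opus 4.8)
The plan is to combine the resonance relation coming from the three modulation constraints with the near-collinearity of $(\xi_1,\eta_1)$ and $(\xi,\eta)$ forced by the angular localization $|j_1-j|\le 32$, and to invoke the hypothesis $(\xi_1,\eta_1)\times(\xi,\eta)\notin\mathcal{I}_2$ precisely to rule out the single direction (the line $\xi+\eta=0$) in which the argument would otherwise break down.

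First I would read off the resonance bound. Write $\xi_2=\xi_1+\xi$, $\eta_2=\eta_1+\eta$. From $(\tau_1,\xi_1,\eta_1)\in G_{N_1,L_1}$, $(\tau,\xi,\eta)\in G_{N_0,L_0}$ and $(\tau_1+\tau,\xi_1+\xi,\eta_1+\eta)\in G_{N_2,L_2}$ together with the identity
\[
(\xi_1+\xi)^3+(\eta_1+\eta)^3-\xi_1^3-\eta_1^3-\xi^3-\eta^3=3\bigl(\xi_1\xi(\xi_1+\xi)+\eta_1\eta(\eta_1+\eta)\bigr),
\]
one obtains $\bigl|\xi_1\xi(\xi_1+\xi)+\eta_1\eta(\eta_1+\eta)\bigr|\lesssim L_{012}^{\max}\le 2^{-20}A^{-1}N_1^3$.

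Next I would exploit the angular information. Since $|j_1-j|\le 32$ and $A\ge 2^{25}$, the angle between the lines spanned by $(\xi_1,\eta_1)$ and $(\xi,\eta)$ is $\lesssim A^{-1}$; the triangle inequality together with $N_2=\min(N_0,N_1,N_2)$ then gives $N_0\sim N_1$. Hence we may write $(\xi,\eta)=\lambda(\xi_1,\eta_1)+\beta(-\eta_1,\xi_1)$ with $|\lambda|\sim 1$ and $|\beta|\lesssim A^{-1}$, so that $(\xi_2,\eta_2)=(1+\lambda)(\xi_1,\eta_1)+\beta(-\eta_1,\xi_1)$; substituting and expanding,
\[
\xi_1\xi(\xi_1+\xi)+\eta_1\eta(\eta_1+\eta)=\lambda(1+\lambda)(\xi_1^3+\eta_1^3)+O(A^{-1}N_1^3).
\]
At this point $\mathcal{I}_2$ enters: because $(\xi,\eta)$ is almost collinear with $(\xi_1,\eta_1)$ and $A\ge 2^{25}$, if $(\xi_1,\eta_1)$ were within angle $2^{-11}\pi$ of the line $\{\xi+\eta=0\}$ then both $(\xi_1,\eta_1)$ and $(\xi,\eta)$ would lie in $\mathfrak{D}_{2^{9}\cdot 3}^{2^{11}}$, i.e.\ $(\xi_1,\eta_1)\times(\xi,\eta)\in\mathcal{I}_2$, contrary to hypothesis. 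Therefore $|\xi_1+\eta_1|\gtrsim N_1$, and since $\xi_1^3+\eta_1^3=(\xi_1+\eta_1)(\xi_1^2-\xi_1\eta_1+\eta_1^2)$ with $|\xi_1^2-\xi_1\eta_1+\eta_1^2|\sim N_1^2$, we get $|\xi_1^3+\eta_1^3|\gtrsim N_1^3$. Combining the last three displays with $|\lambda|\sim 1$ forces $|1+\lambda|\lesssim A^{-1}$, whence
\[
N_2\sim|(\xi_2,\eta_2)|\le\bigl(|1+\lambda|+|\beta|\bigr)\,|(\xi_1,\eta_1)|\lesssim A^{-1}N_1 ,
\]
and a careful accounting of the numerical constants along the way yields the asserted $N_2\le 2^{11}A^{-1}N_1$.

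I expect the main difficulty to be exactly this constant bookkeeping. The lower bound $|\xi_1^3+\eta_1^3|\gtrsim N_1^3$ comes with only a small explicit constant, essentially the angular margin $\sim 2^{-11}\pi$ built into the definition of $\mathfrak{D}_{2^{9}\cdot 3}^{2^{11}}$, so to arrive at the precise threshold $2^{11}$ rather than a larger power of two one has to be economical when estimating the error $O(A^{-1}N_1^3)$ and the size of $\lambda$. A secondary, more routine point is to make the reductions $N_0\sim N_1$ and $|\lambda|\sim 1$ fully quantitative; this should follow directly from the dyadic bounds $|(\xi_i,\eta_i)|\in(N_i/2,2N_i)$ and $N_2\le\min(N_0,N_1)$ via the triangle inequality, before any use of the resonance estimate.
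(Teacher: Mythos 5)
Your argument is correct and follows essentially the same route as the paper's proof: the resonance identity bounds $\Phi=\xi_1\xi(\xi_1+\xi)+\eta_1\eta(\eta_1+\eta)$ by $L_{012}^{\max}$, the exclusion of $\mathcal{I}_2$ supplies the lower bound $|\cos^3\theta_1+\sin^3\theta_1|\gtrsim 2^{-11}$ (your $|\xi_1^3+\eta_1^3|\gtrsim N_1^3$), and near-collinearity then rules out the parallel configuration and forces $\bigl||(\xi_1,\eta_1)|-|(\xi,\eta)|\bigr|\lesssim A^{-1}N_1$ in the antiparallel one; your conclusion $|1+\lambda|\lesssim A^{-1}$ is exactly this dichotomy written in linear rather than polar coordinates. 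The one caution is the constant, which matters downstream (Remark \ref{rem3.5} uses $2^{11}$ with $A=2^{25}$): bounding $|\lambda|$ and $|\xi_1^3+\eta_1^3|$ separately loses noticeably more factors of $2$ than the paper's direct factorization $r_1r(r_1\pm r)(1-\tfrac{1}{2}\sin 2\theta_1)(\cos\theta_1+\sin\theta_1)$, so, as you anticipate, you must keep the product $|\lambda(1+\lambda)(\xi_1^3+\eta_1^3)|$ intact rather than estimating its factors one at a time.
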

\begin{proof}
Put $r_1=|(\xi_1,\eta_1)|$, $r =|(\xi,\eta)|$. $\theta_1$, $\theta \in [0,2\pi)$ denote angular variables defined by
\begin{equation*}
(\xi_1,\eta_1) = r_1 (\cos \theta_1, \sin \theta_1), \quad
(\xi,\eta) = r (\cos \theta, \sin \theta).
\end{equation*}
Since $(\xi_1,\eta_1) \times (\xi,\eta) \notin \mathcal{I}_2$, without loss of generality, we may
assume that $(\xi_1,\eta_1) \notin  {\mathfrak{D}}_{2^{9} \times 3}^{2^{11}}$ which gives $|\cos \theta_1 + \sin \theta_1| = \sqrt{2} |\sin (\theta_1 + \pi/4)| > 2^{-11}\pi$.
We deduce from the assumption $|j_1-j| \leq 32$ that
$ |(\cos \theta_1,\sin \theta_1 ) - (\cos \theta, \sin \theta)| \leq 2^{7} A^{-1}$ or $ |(\cos \theta_1,\sin \theta_1 ) + (\cos \theta, \sin \theta)| \leq 2^{7} A^{-1}$.
For the former case, we observe that
\begin{align*}
|\Phi(\xi_1,\eta_1,\xi,\eta) |= & |\xi_1\xi (\xi_1 +\xi) + \eta_1 \eta(\eta_1+\eta)|\\
\geq  & r_1 r (r_1 + r)|\cos^3 \theta_1+\sin^3 \theta_1 |- 2^{9} A^{-1}r_1 r (r_1 + r)\\
= &  r_1 r (r_1 + r) (1- 2^{-1}\sin 2\theta_1) | \cos \theta_1 + \sin \theta_1| -2^{9} A^{-1}r_1 r (r_1 + r)\\
\geq & 2^{-13} r_1 r (r_1 + r),
\end{align*}
which contradicts the assumption $L_{012}^{\max} \leq  2^{-20}A^{-1} N_1^3$. Similarly, for the latter case, we get $|\Phi(\xi_1,\eta_1,\xi,\eta)| \geq 2^{-13} r_1 r (r_1 - r)$. This and low modulations assumption $L_{012}^{\max} \leq  2^{-20}A^{-1} N_1^3$ yield $|r_1-r| \leq A^{-1}N_1$. We conclude that
\begin{align*}
|(\xi_1+ \xi, \eta_1+\eta)| & \leq | (r_1 \cos \theta_1 + r \cos \theta, r_1\sin \theta_1 + r \sin \theta)| \\
& \leq |r_1-r|+ 2^{8} A^{-1}r \leq 2^{10} A^{-1} N_1.
\end{align*}
\end{proof}
\begin{rem}\label{rem3.5}
For any $(\xi_1, \eta_1)$, $(\xi,\eta)$ which satisfy $|\sin \angle \left( (\xi, \, \eta), \, (\xi_1, \eta_1) \right)| \leq 2^{-20}$, there exist $j_1$ and $j$ such that $|j_1-j| \leq 32$ and $(\xi_1,\eta_1) \times (\xi,\eta) \in
{\mathfrak{D}}_{j_1}^{25} \times {\mathfrak{D}}_{j}^{25}$.
Therefore, Lemma \ref{lemma3.11} implies that if we assume \textnormal{(i)}, \textnormal{(ii)'} in \textit{Case} \textnormal{\ref{case-2}}
, $\min(N_0, N_1, N_2) = N_2$ and $
(\xi_1,\eta_1) \times (\xi,\eta) \notin \mathcal{I}_2$,
then we have $N_2 \leq 2^{-14} N_1$.
\end{rem}
We now show the bilinear Strichartz estimates.
Here we use the angular decomposition
${\mathfrak{D}}_{j}^A$ instead of the square-tile decomposition $\mathcal{T}_{k}^{A}$.
\begin{prop}\label{prop3.12}
Assume \textnormal{(i)}, \textnormal{(ii)'} in \textit{Case} \textnormal{\ref{case-2}}
and $\min(N_0, N_1, N_2) = N_2.$ Let $A \geq 2^{25}$ be dyadic, $|j_1-j| \leq 32$ and
\begin{equation*}
\left( {\mathfrak{D}}_{j_1}^A \times {\mathfrak{D}}_{j}^A \right) \subset \mathcal{I}_3.
\end{equation*}
Then we have
\begin{align}
& \Bigl\| \chi_{G_{N_2, L_2}} \int \ha{u}_{N_1, L_1}|_{\tilde{\mathfrak{D}}_{j_1}^A} (\tau_1, \xi_1, \eta_1) \ha{w}_{N_0, L_0}|_{\tilde{\mathfrak{D}}_{j}^A} (\tau_2 - \tau_1, \xi_2-\xi_1, \eta_2- \eta_1) d\sigma_1
\Bigr\|_{L_{\xi_2, \eta_2, \tau_2}^2} \notag \\
& \qquad \qquad \qquad \qquad \qquad
\lesssim N_1^{-\frac{1}{2}} (L_0 L_1)^{\frac{1}{2}} \|\ha{u}_{N_1, L_1}|_{\tilde{\mathfrak{D}}_{j_1}^A}\|_{L^2}
\|\ha{w}_{N_0, L_0}|_{\tilde{\mathfrak{D}}_{j}^A} \|_{L^2},\label{bilinearStrichartz-4}\\
& \Bigl\| \chi_{G_{N_1, L_1} \cap \tilde{\mathfrak{D}}_{j_1}^A} \int \ha{w}_{N_0, L_0}|_{\tilde{\mathfrak{D}}_{j}^A}
(\tau, \xi, \eta)
\ha{v}_{N_2, L_2} (\tau_1+ \tau, \xi_1+\xi, \eta_1+ \eta) d\sigma \Bigr\|_{L_{\xi_1, \eta_1, \tau_1}^2}
\notag \\
& \qquad \qquad \qquad \qquad \qquad
\lesssim (A N_1)^{-\frac{1}{2}} (L_0 L_2)^{\frac{1}{2}}
\|\ha{w}_{N_0, L_0}|_{\tilde{\mathfrak{D}}_{j}^A} \|_{L^2}
\|\ha{v}_{N_2, L_2}\|_{L^2}, \label{bilinearStrichartz-5}\\
& \Bigl\| \chi_{G_{N_0,L_0} \cap \tilde{\mathfrak{D}}_{j}^A} \int \ha{v}_{N_2, L_2} (\tau_1+ \tau, \xi_1+\xi, \eta_1+ \eta)  \ha{u}_{N_1, L_1}|_{\tilde{\mathfrak{D}}_{j_1}^A} (\tau_1, \xi_1, \eta_1) d \sigma_1 \Bigr\|_{L_{\xi, \eta, \tau}^2} \notag \\
& \qquad \qquad \qquad \qquad \qquad
\lesssim ( A N_1)^{-\frac{1}{2}} (L_1 L_2)^{\frac{1}{2}} \|\ha{v}_{N_2, L_2} \|_{L^2}
\|\ha{u}_{N_1, L_1}|_{\tilde{\mathfrak{D}}_{j_1}^A} \|_{L^2} .\label{bilinearStrichartz-6}
\end{align}
\end{prop}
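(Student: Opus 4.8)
The plan is to prove the three inequalities by the Cauchy--Schwarz/measure argument already used for Proposition~\ref{prop3.2}: one writes the $L^2$ norm of the convolution as a trilinear form $\int\int\widehat h(\zeta_1+\zeta)\,\widehat f(\zeta_1)\,\widehat g(\zeta)\,d\zeta_1\,d\zeta$, applies Cauchy--Schwarz, and is reduced to a bound $\sup_{\zeta\in S}|E(\zeta)|\lesssim(\cdots)$ for the measure of the resonant fiber $E(\zeta)$ of admissible convolution partners; the three estimates correspond to freezing the variable of frequency $N_2$, of frequency $N_1$ in sector $j_1$, and of frequency $N_0$ in sector $j$, respectively. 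Two preliminary facts drive everything. First, by Lemma~\ref{lemma3.11} and Remark~\ref{rem3.5} the hypotheses force $N_2\le2^{-14}N_1$, hence $N_0\sim N_1$, and since $(\xi_2,\eta_2)=(\xi_1,\eta_1)+(\xi,\eta)$ has length $\sim N_2\ll N_1$ the interaction is almost anti-parallel: $(\xi,\eta)=-(\xi_1,\eta_1)+O(N_2)$. Second, because $\mathfrak D_{j_1}^A\times\mathfrak D_j^A\subset\mathcal I_3$ and $|j_1-j|\le32$, the vectors $(\xi_1,\eta_1)$ and $(\xi,\eta)$ stay away from the $\xi$- and $\eta$-axes and from the line $\{\xi+\eta=0\}$, so $|\xi_1|\sim|\eta_1|\sim|\xi_1+\eta_1|\sim N_1$. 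I will also use the cubic identity: from $\xi_2^3-\xi_1^3-\xi^3=3\xi_1\xi\xi_2$ (valid since $\xi_2=\xi_1+\xi$) and the modulation bounds $|\tau_i-\xi_i^3-\eta_i^3|\lesssim L_i$, with any one of the three variables frozen the sum of the other two modulation constraints takes the form $|\,3\,\Psi(\,\cdot\,)-c\,|\lesssim$ (the maximum of those two $L$'s), where $\Psi$ is the corresponding cubic and $c$ is a fixed constant (the frozen modulation value); in particular no spurious power of $L_{012}^{\max}$ ever enters.

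By the symmetry of the trilinear form under interchanging its two input slots, together with $N_0\sim N_1$, estimate \eqref{bilinearStrichartz-6} follows from \eqref{bilinearStrichartz-5} by duality (swap $u$ localized to $(N_1,j_1)$ with $w$ localized to $(N_0,j)$, and $L_1$ with $L_0$). So it suffices to prove \eqref{bilinearStrichartz-4} and \eqref{bilinearStrichartz-5}.

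For \eqref{bilinearStrichartz-5} I freeze $\zeta_1\in G_{N_1,L_1}\cap\widetilde{\mathfrak D}_{j_1}^A$ and estimate $|E(\zeta_1)|=|\{\zeta\in G_{N_0,L_0}\cap\widetilde{\mathfrak D}_j^A:\zeta_1+\zeta\in G_{N_2,L_2}\}|$. The $\tau$-slice, for fixed $(\xi,\eta)$, lies in the intersection of the slabs $|\tau-\xi^3-\eta^3|\lesssim L_0$ and $|(\tau_1+\tau)-(\xi_1+\xi)^3-(\eta_1+\eta)^3|\lesssim L_2$, hence has length $\lesssim\min(L_0,L_2)$. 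For the $(\xi,\eta)$-integral, the sector $j$ confines $(\xi,\eta)$ to a strip of width $\sim N_1A^{-1}$, while the gradient of the relevant cubic $\xi_1\xi(\xi_1+\xi)+\eta_1\eta(\eta_1+\eta)$ equals $\big(\xi_1(\xi_1+2\xi),\eta_1(\eta_1+2\eta)\big)=(-\xi_1^2,-\eta_1^2)+O(N_1N_2)$, so its length is $\gtrsim N_1^2$; consequently the set where that cubic is fixed up to $O(\max(L_0,L_2))$ is a neighbourhood of thickness $\lesssim N_1^{-2}\max(L_0,L_2)$ of a curve crossing the strip transversally, of area $\lesssim N_1^{-1}A^{-1}\max(L_0,L_2)$. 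Multiplying, $|E(\zeta_1)|\lesssim(AN_1)^{-1}L_0L_2$, which is \eqref{bilinearStrichartz-5}; since the constant $c$ in the cubic bound never involves the largest of the three $L$'s, no separate treatment of the case $L_1=L_{012}^{\max}$ is needed.

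The main obstacle is \eqref{bilinearStrichartz-4}, where the output is frozen at the \emph{small} frequency $N_2$ and no power of $A$ may be spent. Freezing $\zeta_2\in G_{N_2,L_2}$ and bounding $E(\zeta_2)=\{\zeta_1\in G_{N_1,L_1}\cap\widetilde{\mathfrak D}_{j_1}^A:\zeta_2-\zeta_1\in G_{N_0,L_0}\cap\widetilde{\mathfrak D}_j^A\}$, the $\tau_1$-slice again costs $\min(L_0,L_1)$, but now the relevant gradient $\partial_{(\xi_1,\eta_1)}\big(\xi_1\xi_2(\xi_2-\xi_1)+\eta_1\eta_2(\eta_2-\eta_1)\big)=\big(\xi_2(\xi_2-2\xi_1),\eta_2(\eta_2-2\eta_1)\big)$ has length only $\sim N_1N_2$ because $\xi_2,\eta_2$ are small, so the naive argument yields $|E(\zeta_2)|\lesssim N_2^{-1}L_0L_1$, which is too large by the factor $N_1/N_2$. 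The missing gain must come from the joint angular localization of $u$ and $w$ together with the geometry of the two characteristic surfaces $\{\tau_1=\xi_1^3+\eta_1^3\}$ and $\{\tau_2-\tau_1=(\xi_2-\xi_1)^3+(\eta_2-\eta_1)^3\}$, whose normals $(1,-3\xi_1^2,-3\eta_1^2)$ and $(-1,3(\xi_2-\xi_1)^2,3(\eta_2-\eta_1)^2)$ are almost antiparallel --- they enclose an angle $\sim N_2/N_1$ --- so the sliced admissible set for $(\xi_1,\eta_1)$ is far smaller than a full strip. Quantifying this forces a geometric case analysis that rules out the configurations in which all relevant quantities degenerate simultaneously --- the difference $|\xi_1\eta-\xi\eta_1|$, the companion factor $|\xi_1\eta+\xi\eta_1+2(\xi_1\eta_1+\xi\eta)|$, and the two surface normals --- entirely in the spirit of the derivation of \eqref{est-space-modu} in the proof of Proposition~\ref{prop3.2}; it is precisely here that the exclusions of $\mathcal I_1$ (near the axes) and $\mathcal I_2$ (near $\{\xi+\eta=0\}$) are used, via lower bounds such as $|\xi_1\eta+\xi\eta_1+2(\xi_1\eta_1+\xi\eta)|\gtrsim N_1^2$. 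I expect this case analysis, together with the bookkeeping of the sub-cases (which of $L_0,L_1,L_2$ is largest, and the relative sizes of $A$ and $N_1/N_2$), to be the bulk of the work; everything else is a routine repetition of the computations in Proposition~\ref{prop3.2}. Finally \eqref{bilinearStrichartz-6} follows from \eqref{bilinearStrichartz-5} by the duality noted above.
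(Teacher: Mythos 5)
Your treatment of \eqref{bilinearStrichartz-5} (measure bound via the radial derivative $\partial_r\Phi\gtrsim N_1^2$, exploiting that $\mathcal{I}_3$ keeps $|\xi_1+\eta_1|\gtrsim N_1$) and the deduction of \eqref{bilinearStrichartz-6} by duality is sound and matches the paper's proof modulo which of the two is proved directly; the paper proves \eqref{bilinearStrichartz-6} and dualizes to \eqref{bilinearStrichartz-5}, you do the reverse, same computation.

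The genuine gap is \eqref{bilinearStrichartz-4}. You correctly observe that the Cauchy--Schwarz / fiber-measure argument loses a factor $\sim N_1/(AN_2)$, since the gradient in $(\xi_1,\eta_1)$ of $\xi_1\xi_2(\xi_2-\xi_1)+\eta_1\eta_2(\eta_2-\eta_1)$ has size only $N_1N_2$. But you then propose to recover this loss by a ``geometric case analysis'' exploiting antiparallelism of the surface normals, which you do not carry out, and which is not how the bound is actually obtained. The paper's argument is far simpler and of a completely different nature: since $(\mathfrak{D}_{j_1}^A\times\mathfrak{D}_j^A)\subset\mathcal{I}_3$, one has $\min(|\xi_1|,|\eta_1|,|\xi_2-\xi_1|,|\eta_2-\eta_1|)\gtrsim N_1$, so the Kenig--Ponce--Vega Strichartz estimate \eqref{Strichartz-1} with $p=q=4$, namely
\begin{equation*}
\bigl\||\nabla_x|^{\frac18}|\nabla_y|^{\frac18}Q_Lf\bigr\|_{L^4_{t,x,y}}\lesssim L^{\frac12}\|Q_Lf\|_{L^2},
\end{equation*}
applies to \emph{both} $u_{N_1,L_1}|_{\tilde{\mathfrak{D}}_{j_1}^A}$ and $w_{N_0,L_0}|_{\tilde{\mathfrak{D}}_j^A}$, each yielding a gain $N_1^{-1/4}L^{1/2}$, and H\"older's inequality in $L^4\times L^4\to L^2$ gives exactly $N_1^{-1/2}(L_0L_1)^{1/2}$ with no $A$-dependence, as required. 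Your measure approach cannot reach this bound because \eqref{bilinearStrichartz-4} has no factor $A^{-1/2}$ to spend; the output frequency $N_2$ is small and the two input surfaces are nearly tangent there, so the $L^4$-Strichartz route (which sees only the individual modulations, not the geometry of the intersection) is the one that works. You should replace your sketch for \eqref{bilinearStrichartz-4} with this one-line Strichartz argument.
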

\begin{proof}
As mentioned in \textit{Remark} \ref{rem3.5}, the assumption $\left( {\mathfrak{D}}_{j_1}^A \times {\mathfrak{D}}_{j}^A \right) \subset \mathcal{I}_3$
and Lemma \ref{lemma3.11} imply $N_2 \leq 2^{-14} N_1$. Since $\left( {\mathfrak{D}}_{j_1}^A \times {\mathfrak{D}}_{j}^A \right) \subset \mathcal{I}_3$, we may assume
$\min(|\xi_1|, |\eta_1|, |\xi_2-\xi_1|, |\eta_2-\eta_1|) \gtrsim N_1$ in (LHS) of
\eqref{bilinearStrichartz-4}. Therefore,
\eqref{bilinearStrichartz-4} is immediately obtained by H\"{o}lder's inequality and the Strichartz estimates
\eqref{Strichartz-3} with $p=q=4$.
For \eqref{bilinearStrichartz-6}, by the same argument as in the proof of
Proposition \ref{prop3.2}, it suffices to show
\begin{equation}
\sup_{(\tau, \xi, \eta) \in G_{N_0, L_0}\cap \tilde{\mathfrak{D}}_{j}^A}
|E(\tau, \xi, \eta)| \lesssim (A N_1)^{-1} L_1 L_2, \label{est01-prop3.12}
\end{equation}
where
\begin{equation*}
E(\tau, \xi, \eta) := \{ (\tau_1, \xi_1, \eta_1) \in G_{N_1, L_1} \cap \tilde{\mathfrak{D}}_{j_1}^A
\, | \, (\tau_1+\tau, \xi_1+ \xi, \eta_1+\eta) \in G_{N_2,L_2} \}.
\end{equation*}
First, for fixed $(\xi, \eta)$, we deduce from $(\tau_1, \xi_1, \eta_1) \in G_{N_1, L_1}$ and $(\tau_1+\tau, \xi_1+ \xi, \eta_1+\eta) \in G_{N_2,L_2}$ that
\begin{equation}
\sup_{(\tau, \xi, \eta) \in G_{N_0, L_0} \cap \tilde{\mathfrak{D}}_{j}^A}
| \{ \tau_1 \, | \, (\tau_1, \xi_1, \eta_1) \in E(\tau, \xi, \eta) \}|
\lesssim \min(L_1, L_2).\label{est02-prop3.12}
\end{equation}
We observe that
\begin{align*}
 |3 \Phi(\xi_1,\eta_1,\xi,\eta) -\tau + \xi^3 +\eta^3| & =
|3 \xi_1 \xi (\xi_1+\xi) + 3 \eta_1 \eta (\eta_1+\eta)-\tau + \xi^3 +\eta^3|\\
& = |  (\tau_1 - \xi_1^3 -\eta_1^3) - (\tau_1+\tau - (\xi_1+ \xi)^3 - (\eta_1 + \eta)^3)|\\
& \lesssim \max(L_1, L_2).
\end{align*}
We write $(\xi_1,\eta_1) = (r_1 \cos \theta_1, r_1 \sin \theta_1)$ and
$(\xi,\eta) = r (\cos \theta, \sin \theta)$. We calculate that
\begin{align*}
|\partial_{r_1}  \Phi(\xi_1,\eta_1,\xi,\eta)| & = |(\cos \theta_1 \partial_{\xi_1} + \sin \theta_1
\partial_{\eta_1})\Phi(\xi_1,\eta_1,\xi,\eta)|\\
& = \, r \bigl|\cos \theta_1 \cos \theta (r_1 \cos \theta_1 + \xi_1 + \xi)
+ \sin \theta_1 \sin \theta (r_1 \sin \theta_1 +
\eta_1 + \eta) \bigr|\\
& \geq \, r_1 r |\cos^2 \theta_1 \cos \theta + \sin^2 \theta_1 \sin \theta|-r(|\xi_1+ \xi|+|\eta_1 + \eta|)\\
& \geq \,  r_1 r (1- 2^{-1}\sin 2\theta_1)  |\cos \theta_1 + \sin \theta_1| - 2^{-11}r N_1\\
& \geq \, 2^{-12} N_1^2.
\end{align*}
Here we used the assumptions $A \geq 2^{25}$ and $|j_1-j| \leq 32$ which provide
$ |(\cos \theta_1,\sin \theta_1 ) - (\cos \theta, \sin \theta)| \leq 2^{-18}$ or $ |(\cos \theta_1,\sin \theta_1 ) + (\cos \theta, \sin \theta)| \leq 2^{-18}$.
The above two estimates imply that $r_1$ is confined to a set of measure
$\max(L_1,L_2)/N_1^2$ for fixed $\theta_1$. In addition, it follows from $(\xi_1, \eta_1)  \in  {\mathfrak{D}}_{j_1}^{A}$ that $\theta_1$ is confined to a set of measure $\sim A^{-1}$.
We observe that
\begin{align*}
  | \{ (\xi_1, \eta_1) \ | \ (\tau_1, \xi_1, \eta_1) \in E(\tau, \xi,\eta) \} |
= &  \int_{\theta_1} \int_{r_1} {\chi}_{E(\tau,\xi,\eta)} (|\xi_1|, \theta_1) r_1 d r_1 d \theta_1 \\
\lesssim & (N_1A)^{-1}\max (L_1, L_2).
\end{align*}
This and \eqref{est02-prop3.12} yield \eqref{est01-prop3.12}. \eqref{bilinearStrichartz-5} is verified by taking duality of \eqref{bilinearStrichartz-6}.
\end{proof}
\begin{prop}\label{prop3.13}
Assume \textnormal{(i)}, \textnormal{(ii)'} in \textit{Case} \textnormal{\ref{case-2}}
and $\min(N_0, N_1, N_2) = N_2.$ Let $A \geq 2^{25}$ be dyadic,
$j_1$, $j$ satisfy $16 \leq |j_1-j| \leq 32$ and
\begin{equation*}
\left( {\mathfrak{D}}_{j_1}^A \times {\mathfrak{D}}_{j}^A \right) \subset \mathcal{I}_3.
\end{equation*}
Then we have
\begin{equation}
\begin{split}
&
\Bigl|\int_{**}{  \ha{v}_{{N_2, L_2}}(\tau_2, \xi_2, \eta_2)
\ha{u}_{N_1, L_1}|_{\tilde{\mathfrak{D}}_{j_1}^A}(\tau_1, \xi_1, \eta_1)
\ha{w}_{N_0, L_0}|_{\tilde{\mathfrak{D}}_{j}^A}(\tau, \xi, \eta)
}
d\sigma_1 d\sigma \Bigr|\\
& \qquad \qquad \qquad  \lesssim  A^{\frac{1}{2}} N_1^{-2} (L_0 L_1 L_2)^{\frac{1}{2}} \|\ha{u}_{N_1, L_1}|_{\tilde{\mathfrak{D}}_{j_1}^A} \|_{L^2}
\| \ha{v}_{N_2, L_2}\|_{L^2}
\|\ha{w}_{{N_0, L_0}}|_{\tilde{\mathfrak{D}}_{j}^A}  \|_{L^2},\label{prop3.11-1}
\end{split}
\end{equation}
where $d \sigma_1 = d\tau_1 d \xi_1 d \eta_1$, $d \sigma = d\tau d \xi d \eta$
 and $**$ denotes $(\tau_2, \xi_2, \eta_2) = (\tau_1 + \tau, \xi_1+ \xi, \eta_1 + \eta).$
\end{prop}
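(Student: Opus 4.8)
The plan is to mimic the proof of Proposition~\ref{prop3.4}, replacing the square--tile decomposition by the angular decomposition and recomputing the transversality from scratch. We first make the standard reductions: taking $\widehat{u}$, $\widehat{v}$, $\widehat{w}$ non-negative, passing to the characteristic surfaces via the substitution $\tau_j=\xi_j^3+\eta_j^3+c_j$ together with Fubini's theorem, and rescaling by $(\tau,\xi,\eta)\mapsto(N_1^3\tau,N_1\xi,N_1\eta)$ exactly as in the proof of Proposition~\ref{prop3.4}, it suffices to prove a convolution estimate $\|f|_{S_1}*g|_{S_2}\|_{L^2(S_3)}\lesssim A^{1/2}\|f\|_{L^2(S_1)}\|g\|_{L^2(S_2)}$, where $S_1$, $S_2$, $S_3$ are pieces of the graph of $(\xi,\eta)\mapsto(\xi^3+\eta^3+c,\xi,\eta)$ associated respectively to $u$, $w$ and $v$. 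Before that I would record the geometry forced by the hypotheses: by Lemma~\ref{lemma3.11} and Remark~\ref{rem3.5}, the condition $\mathfrak{D}_{j_1}^A\times\mathfrak{D}_{j}^A\subset\mathcal{I}_3$ (in particular $\notin\mathcal{I}_2$) together with $\min(N_0,N_1,N_2)=N_2$ forces $N_2\lesssim A^{-1}N_1$ and makes $(\xi_1,\eta_1)$ and $(\xi,\eta)$ nearly anti-parallel with the radii $|(\xi_1,\eta_1)|$ and $|(\xi,\eta)|$ differing by $\lesssim A^{-1}N_1$; hence $N_0\sim N_1$ and $(\xi_2,\eta_2)$ is confined to a ball of radius $\lesssim A^{-1}N_1$. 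Consequently, after subdividing $\tilde{\mathfrak{D}}_{j_1}^A$ and $\tilde{\mathfrak{D}}_{j}^A$ radially into boxes of side $\sim 2^{-C}A^{-1}N_1$ and noting that $\widehat{v}$ occupies only $O(1)$ such boxes while on the convolution constraint each box of $\widehat{u}$ meets only $O(1)$ boxes of $\widehat{w}$, all three surfaces have diameter $\lesssim 2^{-C}A^{-1}$ and the $O(1)$-to-$O(1)$ family of pieces may be summed by Cauchy--Schwarz without loss.

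The estimate for a single triple of pieces comes from the nonlinear Loomis--Whitney inequality, Proposition~\ref{prop2.7}, applied with $d\sim A^{-1}$. The H\"older condition for the unit normals $\mathfrak{n}_i=(1+9\xi_i^4+9\eta_i^4)^{-1/2}(-1,3\xi_i^2,3\eta_i^2)$ and the requirement $\mathrm{diam}(S_i)\lesssim d$ are verified verbatim as in the proof of Proposition~\ref{prop3.4}, so the only genuinely new point is the transversality bound $|\det N(\lambda_1,\lambda_2,\lambda_3)|\gtrsim A^{-1}$. Using $(\xi_2,\eta_2)=(\xi_1,\eta_1)+(\xi,\eta)$, a direct computation gives the factorization
\[
\det N(\lambda_1,\lambda_2,\lambda_3)
=\frac{9\,(\xi_1\eta-\xi\eta_1)\,\bigl(\xi_1\eta+\xi\eta_1+2(\xi_1\eta_1+\xi\eta)\bigr)}
{\sqrt{1+9\xi_1^4+9\eta_1^4}\,\sqrt{1+9\xi^4+9\eta^4}\,\sqrt{1+9\xi_2^4+9\eta_2^4}},
\]
and on the rescaled region all three denominators are $\sim1$. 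For the first factor, the hypothesis $16\le|j_1-j|\le32$ with $A\ge2^{25}$ gives $|\sin\angle((\xi_1,\eta_1),(\xi,\eta))|\sim A^{-1}$, hence $|\xi_1\eta-\xi\eta_1|\sim A^{-1}$ in the rescaled variables. For the second factor, writing $(\xi_1,\eta_1)=r_1(\cos\theta_1,\sin\theta_1)$ and $(\xi,\eta)=r(\cos\theta,\sin\theta)$, one has $\xi_1\eta+\xi\eta_1+2(\xi_1\eta_1+\xi\eta)=r_1r\sin(\theta_1+\theta)+r_1^2\sin2\theta_1+r^2\sin2\theta$; since $\theta$ differs from $\theta_1+\pi$ by $O(A^{-1})$ (the same-direction alternative being excluded because $(\xi_1,\eta_1)\notin\mathcal{I}_2$ keeps $|\cos^3\theta_1+\sin^3\theta_1|$ bounded below, cf. Lemma~\ref{lemma3.11}), this equals $(r_1^2+r^2-r_1r)\sin2\theta_1+O(A^{-1})$, and the exclusion of $\mathcal{I}_1$ keeps $(\xi_1,\eta_1)$ away from both coordinate axes, so $|\sin2\theta_1|\gtrsim1$ while $r_1^2+r^2-r_1r\gtrsim1$. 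Thus the second factor is $\gtrsim1$ and $|\det N|\gtrsim A^{-1}$; un-scaling and re-assembling the modulation slabs then restore the claimed constant $A^{1/2}N_1^{-2}(L_0L_1L_2)^{1/2}$.

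The main obstacle is exactly the lower bound on the symmetric quadratic $|\xi_1\eta+\xi\eta_1+2(\xi_1\eta_1+\xi\eta)|$: in contrast with the Zakharov system, where transversality reduces to keeping the single antisymmetric quantity $|\xi_1\eta-\xi\eta_1|$ away from zero, here one must also keep this second factor away from zero, and it degenerates precisely for interactions concentrated near the coordinate axes or near the line $\xi+\eta=0$ --- which is why those configurations were quarantined into the separate cases (I) and (II) and do not appear in this proposition. Everything else --- the bilinear Cauchy--Schwarz bookkeeping over the sub-boxes, the change of variables and Fubini, and the rescaling --- is routine and parallels Subsection~\ref{Case 2} and the proof of Proposition~\ref{prop3.4}.
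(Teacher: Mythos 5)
Your reduction to the nonlinear Loomis--Whitney inequality and your transversality computation reproduce the paper's argument for one of its two cases, and your lower bound for the second factor, $(r_1^2+r^2-r_1r)\sin 2\theta_1+O(A^{-1})\gtrsim 1$, is a correct (slightly different) route to the paper's estimate $2|\xi_1'\eta_1'|-O(N_2/N_1)-O(A^{-1})\geq 2^{-10}$. However, there is a genuine gap at the very first step: you claim that Lemma \ref{lemma3.11} and Remark \ref{rem3.5} force $N_2\lesssim A^{-1}N_1$ under the hypotheses of the proposition. Lemma \ref{lemma3.11} yields $N_2\leq 2^{11}A^{-1}N_1$ only under the additional hypothesis $L_{012}^{\max}\leq 2^{-20}A^{-1}N_1^3$, which is strictly stronger than assumption (i) of \textit{Case} \ref{case-2} once $A\gg 2^{100}$ (and in the application $A$ ranges up to $\sim N_1^{3/2}L_0^{-1/2}$); Remark \ref{rem3.5} applies the lemma only with the fixed value $A=2^{25}$ and gives $N_2\leq 2^{-14}N_1$, not $N_2\lesssim A^{-1}N_1$.

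Consequently the regime $N_2\gg A^{-1}N_1$ is not covered by your argument, and there the geometric confinement you rely on fails: $\widehat v$ is no longer supported in $O(1)$ boxes of side $\sim A^{-1}N_1$, so the almost-orthogonal summation over the radial sub-boxes (each box of $\widehat u$ pairing with only $O(1)$ boxes of $\widehat w$) breaks down, and the diameter condition $\operatorname{diam}(S_3)\lesssim d\sim A^{-1}$ for the surface carrying $\widehat v$ cannot be arranged. The paper splits into $N_2\geq 2^{100}N_1A^{-1}$ and $N_2\leq 2^{100}N_1A^{-1}$: in the first sub-case the contrapositive of Lemma \ref{lemma3.11} gives $|\Phi(\xi_1,\eta_1,\xi,\eta)|\gtrsim A^{-1}N_1^3$, hence $L_{012}^{\max}\gtrsim A^{-1}N_1^3$, and the bilinear Strichartz estimates \eqref{bilinearStrichartz-4}--\eqref{bilinearStrichartz-6} of Proposition \ref{prop3.12} then convert the gain $(L_{012}^{\max})^{1/2}\gtrsim A^{-1/2}N_1^{3/2}$ into the required factor $A^{1/2}N_1^{-2}$; only in the second sub-case is the Loomis--Whitney argument used. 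You need to add this high-modulation sub-case to close the proof.
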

\begin{proof}
We divide the proof into the two cases, $N_2 \geq 2^{100}N_1 A^{-1}$ and $N_2 \leq 2^{100}N_1 A^{-1}$.\\
\underline{Case $N_2  \geq 2^{100}N_1 A^{-1}$}

In this case, by Lemma \ref{lemma3.11}, we get $|\Phi(\xi_1,\eta_1,\xi,\eta)| \gtrsim N_1^3 A^{-1}$.
From this and Proposition \ref{prop3.12}, we easily obtain \eqref{prop3.11-1}.\\
\underline{Case $N_2  \leq 2^{100}N_1 A^{-1}$}

The strategy of the proof is the same as for Proposition \ref{prop3.4}. Clearly, we may assume that $A \geq 2^{100}$. Since $N_2 \leq  2^{100} N_1 A^{-1}$,
by applying harmless decompositions, we may assume that $\ha{u}_{N_1, L_1}$, $\ha{v}_{N_2, L_2}$ and
$\ha{w}_{{N_0, L_0}}$ are supported in square prisms whose side length is $2^{-100} N_1 A^{-1}$. Let $A' = 2^{100} A$. Put
$f$, $g$, $h$ to satisfy
\begin{equation*}
\operatorname{supp} f \subset G_{N_1, L_1} \cap \tilde{\mathcal{T}}_{k_1}^{A'}, \quad
\operatorname{supp} g \subset G_{N_0, L_0} \cap \tilde{\mathcal{T}}_{k_2}^{A'}, \quad
\operatorname{supp} h \subset G_{N_2, L_2} \cap \tilde{\mathcal{T}}_{k_3}^{A'},
\end{equation*}
where $\left( \mathcal{T}_{k_1}^{A'} \times \mathcal{T}_{k_2}^{A'}\right) \cap \left( {\mathfrak{D}}_{j_1}^A \times {\mathfrak{D}}_{j}^A \right) \not= \emptyset$. The desired estimate \eqref{prop3.11-1} is reduced to the estimate
\begin{equation}
\begin{split}
& \Bigl|\int_{\R^3 \times \R^3} { h (\tau_1+\tau_2, \xi_1+\xi_2, \eta_1+\eta_2)
f(\tau_1, \xi_1, \eta_1)  g(\tau_2, \xi_2, \eta_2)
}
d\sigma_1 d\sigma_2 \Bigr|\\
& \qquad \qquad \qquad \qquad \lesssim A^{\frac{1}{2}} N_1^{-2} (L_0 L_1 L_2)^{\frac{1}{2}} \|f \|_{L^2}
\|g\|_{L^2}
\| h \|_{L^2}.
\end{split}\label{est01-prop3.11}
\end{equation}
As we saw in the proof of Proposition \ref{prop3.4}, it suffices to show that
\begin{equation}
\| \tilde{f} |_{S_1} * \tilde{g} |_{S_2} \|_{L^2(S_3)} \lesssim A^{\frac{1}{2}}
\| \tilde{f} \|_{L^2(S_1)} \| \tilde{g} \|_{L^2(S_2)}.\label{est02-prop3.12}
\end{equation}
Here we used the same notations $\tilde{f}$, $\tilde{g}$, $S_1$, $S_2$, $S_3$ as in the proof of Proposition \ref{prop3.4}:
\begin{align*}
& \tilde{f}  (\tau_1, \xi_1 , \eta_1)  = f (N_1^3 \tau_1 , N_1 \xi_1, N_1 \eta_1), \\
& \tilde{g} (\tau_2, \xi_2, \eta_2)  = g (N_1^3 \tau_2, N_1 \xi_2, N_1 \eta_2), \\
& S_1  = \{ \phi_{\tilde{c_1}} (\xi, \eta) = (\xi^3 + \eta^3 + \tilde{c_1}, \, \xi, \, \eta) \in \R^3 \ | \
(\xi, \eta) \in \mathcal{T}_{\tilde{k_1}}^{N_1^{-1}A'} \}, \\
& S_2  = \{ \phi_{\tilde{c_2}} (\xi, \eta) = (\xi^3 + \eta^3 + \tilde{c_2}, \, \xi, \, \eta) \in \R^3 \ | \ (\xi, \eta) \in \mathcal{T}_{\tilde{k_2}}^{N_1^{-1}A'} \},\\
& S_3  = \Bigl\{ (\psi (\xi,\eta), \xi, \eta) \in \R^3  \ | \  (\xi, \eta) \in \mathcal{T}_{\tilde{k_3}}^{N_1^{-1}A'},
\  \psi (\xi,\eta) =  \xi^3 + \eta^3 + \frac{c_0'}{N_1^{3}} \Bigr\},
\end{align*}
where $\tilde{k_i} = k_i/N_1$ with $i = 1,2,3$.
We easily see
\begin{equation}
\textnormal{diam} (S_i) \leq 2^{-80} A^{-1} \qquad  \textnormal{for} \ i=1,2,3.\label{diam-prop3.12}
\end{equation}
For any $\lambda_i \in S_i$, we can find $(\xi_1, \eta_1)$, $(\xi_2, \eta_2)$, $(\xi, \eta)$ which satisfy
\begin{equation*}
\lambda_1=\phi_{\tilde{c_1}} (\xi_1, \eta_1), \quad \lambda_2 =  \phi_{\tilde{c_2}} (\xi_2,\eta_2), \quad
\lambda_3 = (\psi (\xi,\eta), \xi,\eta).
\end{equation*}
Let $(\xi_1', \eta_1')$, $(\xi_2', \eta_2')$, $(\xi', \eta')$ satisfy
\begin{align*}
& (\xi_1', \eta_1') + (\xi_2', \eta_2') = (\xi', \eta'), \\
\lambda_1' = \phi_{\tilde{c_1}} (\xi_1', \eta_1')  \in S_1, \ \ & \lambda_2' = \phi_{\tilde{c_2}} (\xi_2', \eta_2')\in S_2, \ \
\lambda_3' = (\psi (\xi',\eta'), \xi',\eta') \in S_3.
\end{align*}
Similarly to the proof of Proposition \ref{prop3.4}, the hypersurfaces $S_1$, $S_2$, $S_3$ satisfy the following estimates:
\begin{align*}
\sup_{\lambda_i, \lambda_i' \in S_i} & \frac{|\mathfrak{n}_i(\lambda_i) -
\mathfrak{n}_i(\lambda_i')|}{|\lambda_i - \lambda_i'|}
+ \frac{|\mathfrak{n}_i(\lambda_i) (\lambda_i - \lambda_i')|}{|\lambda_i - \lambda_i'|^2} \leq 2^3,\\
&|{\mathfrak{n}}_1(\lambda_1) - {\mathfrak{n}}_1(\lambda_1')| \leq 2^{-70} A^{-1},\\
& |{\mathfrak{n}}_2(\lambda_2) - {\mathfrak{n}}_2(\lambda_2')| \leq 2^{-70} A^{-1},\\
& |{\mathfrak{n}}_3(\lambda_3) - {\mathfrak{n}}_3(\lambda_3')| \leq 2^{-70} A^{-1}.
\end{align*}
Thus it suffices to show
\begin{equation*}
2^{-30} A^{-1} \leq |\textnormal{det} N(\lambda_1', \lambda_2', \lambda_3')| .
\end{equation*}
Since $\lambda_1' =  \phi_{\tilde{c_1}} (\xi_1', \eta_1')$, $\lambda_2' = \phi_{\tilde{c_2}} (\xi_2', \eta_2')$, $\lambda_3' =  (\psi (\xi',\eta'), \xi',\eta') $ and
$(\xi_1', \eta_1') + (\xi_2', \eta_2') = (\xi', \eta')$, we get
\begin{align*}
|\textnormal{det} N(\lambda_1', \lambda_2', \lambda_3')| \geq &
2^{-10} \frac{1}{\LR{(\xi_1, \eta_1) }^2 \LR{(\xi_2, \eta_2)}^2} \left|\textnormal{det}
\begin{pmatrix}
-1 & -1 & - 1 \\
 3 (\xi_1')^2  &  3 (\xi_2')^2  & 3 (\xi')^2 \\
 3 (\eta_1')^2   & 3 (\eta_2')^2  & 3 (\eta')^2
\end{pmatrix} \right| \notag \\
\geq & 2^{-10}\frac{|\xi_1' \eta_2' - \xi_2' \eta_1' |}{\LR{(\xi_1, \eta_1) }^2 \LR{(\xi_2, \eta_2)}^2}
| \xi_1' \eta_2' +  \xi_2' \eta_1' + 2 (\xi_1' \eta_1' + \xi_2' \eta_2')|
\\
\geq & 2^{-20}
\frac{|\xi_1' \eta_2' - \xi_2' \eta_1' |}{|(\xi_1', \eta_1')| \, |(\xi_2', \eta_2')|}| \xi_1' \eta_2' +  \xi_2' \eta_1' + 2 (\xi_1' \eta_1' + \xi_2' \eta_2')| \\
\geq & 2^{-20}A^{-1}| \xi_1' \eta_2' +  \xi_2' \eta_1' + 2 (\xi_1' \eta_1' + \xi_2' \eta_2')|.
\end{align*}
Here we used the relation $\bigl( \mathcal{T}_{\tilde{k_1}}^{N_1^{-1} A'} \times
\mathcal{T}_{\tilde{k_2}}^{N_1^{-1}A'}\bigr) \cap ( {\mathfrak{D}}_{j_1}^A \times {\mathfrak{D}}_{j}^A ) \not= \emptyset$ which gives
\begin{equation*}
\frac{|\xi_1' \eta_2' - \xi_2' \eta_1' |}{|(\xi_1', \eta_1')| \, |(\xi_2', \eta_2')|} \geq A^{-1}.
\end{equation*}
We only need to show
\begin{equation*}
| \xi_1' \eta_2' +  \xi_2' \eta_1' + 2 (\xi_1' \eta_1' + \xi_2' \eta_2')| \geq 2^{-10}.
\end{equation*}
We deduce from $\bigl( \mathcal{T}_{\tilde{k_1}}^{N_1^{-1} A'} \times
\mathcal{T}_{\tilde{k_2}}^{N_1^{-1}A'}\bigr) \cap ( {\mathfrak{D}}_{j_1}^A \times {\mathfrak{D}}_{j}^A ) \not= \emptyset$ and $( {\mathfrak{D}}_{j_1}^A \times {\mathfrak{D}}_{j}^A )
\subset \mathcal{I}_3$ that $|\xi_1' \eta_1'| \geq 2^{-10}$. Then we have
\begin{align*}
| \xi_1' \eta_2' +  \xi_2' \eta_1' + 2 (\xi_1' \eta_1' + \xi_2' \eta_2')| &
\geq 2 | \xi_1' \eta_1' + \xi_2'(\eta_1'+\eta_2')| - |\xi_1' \eta_2' - \xi_2' \eta_1'|\\
& \geq 2 |\xi_1' \eta_1'| - 2^2 \frac{N_2}{N_1} - 2^2 A^{-1}\\
& \geq 2^{-10}.
\end{align*}
\end{proof}
\begin{proof}[Proof of Proposition \ref{prop3.10}]
It suffices to show the estimate \eqref{prop3.10-1} for non-negative
$\ha{w}_{{N_0, L_0}}$, $\ha{u}_{N_1, L_1}$,
$\ha{v}_{N_2, L_2} $.
We define that
\begin{equation*}
J_{A}^{\mathcal{I}_3} = \{ (j_1, j) \, | \, 0 \leq j_1,j \leq A -1, \ \left( {\mathfrak{D}}_{j_1}^A \times {\mathfrak{D}}_{j}^A \right) \subset \mathcal{I}_3\}.
\end{equation*}
Let $A_0 \geq 2^{25}$ be dyadic which will be chosen later. We apply
the Whitney type decomposition of angular variables to  $\mathcal{I}_3$ as follows:
\begin{equation*}
\mathcal{I}_3 = \bigcup_{64 \leq A \leq A_0} \ \bigcup_{\tiny{\substack{(j_1,j) \in J_{A}^{\mathcal{I}_3}\\ 16 \leq |j_1 - j|\leq 32}}}
{\mathfrak{D}}_{j_1}^A \cross {\mathfrak{D}}_{j}^A
\cup \bigcup_{\tiny{\substack{(j_1,j) \in J_{A_0}^{\mathcal{I}_3}\\|j_1 - j|\leq 16}}}
{\mathfrak{D}}_{j_1}^{A_0} \cross {\mathfrak{D}}_{j}^{A_0}.
\end{equation*}
In addition, by the assumption \textnormal{(ii)'} in \textit{Case} \textnormal{\ref{case-2}}, we can assume that $A \geq 2^{25}$.
We calculate that
\begin{align*}
& \textnormal{(LHS) of \eqref{prop3.10-1}} \\
\leq & \sum_{2^{25} \leq A \leq A_0} \sum_{\tiny{\substack{(j_1,j) \in J_{A}^{\mathcal{I}_3}\\ 16 \leq  |j_1 - j|\leq 32}}}
\Bigl|\int_{**}{  \ha{v}_{{N_2, L_2}}(\tau_2, \xi_2, \eta_2)
\ha{u}_{N_1, L_1}|_{\tilde{\mathfrak{D}}_{j_1}^A}(\tau_1, \xi_1, \eta_1)
\ha{w}_{N_0, L_0}|_{\tilde{\mathfrak{D}}_{j}^A}(\tau, \xi, \eta)
}
d\sigma_1 d\sigma \Bigr|\\
& \quad + \sum_{\tiny{\substack{(j_1,j) \in J_{A_0}^{\mathcal{I}_3}\\ |j_1 - j|\leq 16}}}
\Bigl|\int_{**}{  \ha{v}_{{N_2, L_2}}(\tau_2, \xi_2, \eta_2)
\ha{u}_{N_1, L_1}|_{\tilde{\mathfrak{D}}_{j_1}^{A_0}}(\tau_1, \xi_1, \eta_1)
\ha{w}_{N_0, L_0}|_{\tilde{\mathfrak{D}}_{j}^{A_0}}(\tau, \xi, \eta)
}
d\sigma_1 d\sigma \Bigr|\\
& =: \sum_{2^{25} \leq A \leq A_0} \sum_{\tiny{\substack{(j_1,j) \in J_{A}^{\mathcal{I}_3}\\ 16 \leq  |j_1 - j|\leq 32}}}   I_1 + \sum_{\tiny{\substack{(j_1,j) \in J_{A_0}^{\mathcal{I}_3}\\|j_1 - j|\leq 16}}}   I_2.
\end{align*}
By Proposition \ref{prop3.13}, we get
\begin{align*}
& \sum_{\tiny{\substack{(j_1,j) \in J_{A}^{\mathcal{I}_3}\\ 16 \leq  |j_1 - j|\leq 32}}}   I_1 \\ \lesssim &
\sum_{\tiny{\substack{(j_1,j) \in J_{A}^{\mathcal{I}_3}\\ 16 \leq  |j_1 - j|\leq 32}}}
A^{\frac{1}{2}} N_1^{-2} (L_0 L_1 L_2)^{\frac{1}{2}} \|\ha{u}_{N_1, L_1}|_{\tilde{\mathfrak{D}}_{j_1}^A} \|_{L^2}
\| \ha{v}_{N_2, L_2}\|_{L^2}
\|\ha{w}_{{N_0, L_0}}|_{\tilde{\mathfrak{D}}_{j}^A}  \|_{L^2}\\
\lesssim & \
A^{\frac{1}{2}} N_1^{-2} (L_0 L_1 L_2)^{\frac{1}{2}} \|\ha{u}_{N_1, L_1} \|_{L^2}
\| \ha{v}_{N_2, L_2} \|_{L^2}
\|\ha{w}_{{N_0, L_0}} \|_{L^2}.
\end{align*}
Letting $A_0$ be the minimal dyadic number which satisfies
$A_0 \geq L_{0}^{-1/2} N_1^{3/2}$, we obtain
\begin{equation*}
\sum_{2^{25} \leq A \leq A_0} \sum_{\tiny{\substack{(j_1,j) \in J_{A}^{\mathcal{I}_3}\\ 16 \leq  |j_1 - j|\leq 32}}}    I_1
 \lesssim
(N_1)^{-\frac{5}{4}}  L_0^{\frac{1}{4}} (L_1 L_2)^{\frac{1}{2}} \|\ha{u}_{N_1, L_1} \|_{L^2}
\| \ha{v}_{N_2, L_2} \|_{L^2}
\|\ha{w}_{{N_0, L_0}} \|_{L^2}.
\end{equation*}
For the latter term, we utilize the estimate \eqref{bilinearStrichartz-6} in Proposition \ref{prop3.12} as
\begin{align*}
\sum_{\tiny{\substack{(j_1,j) \in J_{A_0}^{\mathcal{I}_3}\\ |j_1 - j|\leq 16}}}  I_2 & \lesssim
 (N_1 A_0)^{-\frac{1}{2}} (L_1 L_2)^{\frac{1}{2}}
\sum_{\tiny{\substack{(j_1,j) \in J_{A_0}^{\mathcal{I}_3}\\ |j_1 - j|\leq 16}}}
\|\ha{u}_{N_1, L_1}|_{\tilde{\mathfrak{D}}_{j_1}^A} \|_{L^2}
\| \ha{v}_{N_2, L_2}\|_{L^2}
\|\ha{w}_{{N_0, L_0}}|_{\tilde{\mathfrak{D}}_{j}^A}  \|_{L^2}\\
& \lesssim (N_1)^{-\frac{5}{4}}  L_0^{\frac{1}{4}} (L_1 L_2)^{\frac{1}{2}} \|\ha{u}_{N_1, L_1} \|_{L^2}
\| \ha{v}_{N_2, L_2} \|_{L^2}
\|\ha{w}_{{N_0, L_0}} \|_{L^2}.
\end{align*}
This completes the proof.
\end{proof}
Next, we consider Case \textnormal{(I \hspace{-0.15cm}I)}, $(\xi_1,\eta_1)\times (\xi,\eta) \subset \mathcal{I}_2$. The following Proposition gives the desired estimate \eqref{desired-est-12-23}.
\begin{prop}\label{prop3.14}
Assume \textnormal{(i)}, \textnormal{(ii)'} in \textit{Case} \textnormal{\ref{case-2}}
and $\min(N_0, N_1, N_2) = N_2$. Then we have
\begin{align}
&
\Bigl|\int_{**}{ |\xi+\eta| \ha{v}_{{N_2, L_2}}(\tau_2, \xi_2, \eta_2) \chi_{\mathcal{I}_2}((\xi_1,\eta_1), (\xi,\eta))
\ha{u}_{N_1, L_1}(\tau_1, \xi_1, \eta_1)
\ha{w}_{N_0, L_0}(\tau, \xi, \eta)
}
d\sigma_1 d\sigma \Bigr| \notag \\
& \qquad \qquad \qquad \qquad
\lesssim  N_1^{-\frac{1}{4}} (L_0 L_1 L_2)^{\frac{1}{2}} \|\ha{u}_{N_1, L_1}\|_{L^2}
\| \ha{v}_{N_2, L_2}\|_{L^2}
\|\ha{w}_{{N_0, L_0}}\|_{L^2},\label{prop3.14-1}
\end{align}
where $d \sigma_1 = d\tau_1 d \xi_1 d \eta_1$, $d \sigma = d\tau d \xi d \eta$
 and $**$ denotes $(\tau_2, \xi_2, \eta_2) = (\tau_1 + \tau, \xi_1+ \xi, \eta_1 + \eta).$
\end{prop}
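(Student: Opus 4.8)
The plan is to adapt the scheme of Subsection \ref{Case 2} and of the proof of Proposition \ref{prop3.10}---a Whitney type angular decomposition combined with bilinear Strichartz estimates and the nonlinear Loomis--Whitney inequality---to the region $\mathcal{I}_2$, where the resonance function degenerates but so does the derivative weight $\xi+\eta$. First I would record two elementary facts. Since $(\xi_2,\eta_2)=(\xi_1+\xi,\eta_1+\eta)$ and $N_2=\min(N_0,N_1,N_2)$, the triangle inequality gives $|N_0-N_1|\lesssim N_2\le\min(N_0,N_1)$, hence $N_0\sim N_1$; write $N$ for this common dyadic size. And since $(\xi_1,\eta_1)$ and $(\xi,\eta)$ both lie in the sector $\mathfrak{D}_{2^9\times3}^{2^{11}}$ around the line $\{\xi+\eta=0\}$, the derivative weight obeys $|\xi+\eta|\lesssim2^{-10}N$. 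Therefore it suffices to prove \eqref{prop3.14-1} \emph{without} the factor $|\xi+\eta|$ and with $N_1^{-5/4}$ in place of $N_1^{-1/4}$; multiplying the resulting estimate by $|\xi+\eta|\lesssim N$ then gives \eqref{prop3.14-1}. This ``trade'' is exactly the null-form mechanism referred to in the introduction: the weight $|\xi+\eta|$ is large only away from the line $\{\xi+\eta=0\}$, which is precisely the locus on which $\Phi$ degenerates.

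The no-weight estimate is proved by the same decomposition used for $\mathcal{I}_3$, but two ingredients must be replaced because $\mathcal{I}_2$ meets the degenerate line. First, Lemma \ref{lemma3.11} fails in $\mathcal{I}_2$, so the bilinear Strichartz bounds of Proposition \ref{prop3.12} must be re-derived from a different non-degeneracy. From $\Phi(\xi_1,\eta_1,\xi,\eta)=\xi_1\xi(\xi_1+\xi)+\eta_1\eta(\eta_1+\eta)$ one has $\partial_{\xi_1}\Phi=\xi(2\xi_1+\xi)$ and $\partial_{\xi}\Phi=\xi_1(2\xi+\xi_1)$, and in $\mathcal{I}_2$ at least one of $|2\xi_1+\xi|$, $|2\xi+\xi_1|$ is $\gtrsim N$ (both $\ll N$ would force $\xi_1\approx0$, impossible in a sector of size $N$). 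In the first regime $|\partial_{\xi_1}\Phi|\gtrsim N^2$, so the argument of Proposition \ref{prop3.2} localizes the frequency of $u$; in the second $|\partial_{\xi}\Phi|\gtrsim N^2$ and one localizes the frequency of $w$ instead. Second, the transversality constant in the nonlinear Loomis--Whitney inequality (Proposition \ref{prop2.7}) must be estimated in $\mathcal{I}_2$: a computation as in Proposition \ref{prop3.4} gives $|\det N(\lambda_1,\lambda_2,\lambda_3)|\gtrsim|\xi_1\eta-\xi\eta_1|\,N^{-4}\,|\xi_1\eta+\xi\eta_1+2(\xi_1\eta_1+\xi\eta)|$, and in $\mathcal{I}_2$ the last factor is $\sim N^2$ throughout (writing $\eta_\ast=-\xi_\ast+(\xi_\ast+\eta_\ast)$ one finds it equals $-2(\xi_1^2+\xi_1\xi+\xi^2)$ plus an error $\lesssim2^{-10}N^2$), so $|\det N|\gtrsim|\sin\angle((\xi_1,\eta_1),(\xi,\eta))|$---the only remaining degeneracy is the near-parallel one, which the angular decomposition in $\angle((\xi_1,\eta_1),(\xi,\eta))$ absorbs exactly as in Proposition \ref{prop3.13}.

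With these replacements in place I would run the decomposition of the proof of Proposition \ref{prop3.10}: split off the two regimes above, decompose $\mathcal{I}_2$ into dyadic angular sectors in $\angle((\xi_1,\eta_1),(\xi,\eta))$ up to the largest scale $A_0\sim(L_{012}^{\max})^{-1/2}N^{3/2}$, and on each sector combine the bilinear Strichartz estimate (from $|\partial\Phi|\gtrsim N^2$) with Proposition \ref{prop2.7} exactly as in Propositions \ref{prop3.4} and \ref{prop3.13}, using the scale $A_0$ together with pure bilinear Strichartz for the finest piece. Summing over the dyadic scales and sectors by the almost-orthogonality of Case 3, and using $L_{012}^{\max}\le2^{-100}N^3$ to balance the modulation powers, produces the no-weight bound $\lesssim N^{-5/4}(L_0L_1L_2)^{1/2}\|\ha{u}_{N_1,L_1}\|_{L^2}\|\ha{v}_{N_2,L_2}\|_{L^2}\|\ha{w}_{N_0,L_0}\|_{L^2}$; multiplying by $|\xi+\eta|\lesssim N$ gives \eqref{prop3.14-1}.

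The main obstacle is the bookkeeping forced by the two degeneracies meeting inside $\mathcal{I}_2$: the near-parallel degeneracy of $\sin\angle((\xi_1,\eta_1),(\xi,\eta))$, and the ``null'' degeneracy along $\{\xi+\eta=0\}$ on which $\Phi$ vanishes identically, together with the sub-degenerate directions $\xi\approx-2\xi_1$ and $\xi_1\approx-2\xi$ at which one of the two gradients $\partial_{\xi_1}\Phi$, $\partial_{\xi}\Phi$ vanishes. One must organize the decomposition so that at every piece at least one of these gradients is non-degenerate---switching which input ($u$ or $w$) is localized in the bilinear Strichartz step according to the sub-region one is in---while keeping all the summations almost orthogonal, and then verify that the constants balance to the borderline exponent $N_1^{-1/4}$ (matching the sharp threshold $s>-1/4$). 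This is where essentially all of the work lies; the individual estimates are variants of those already proved for $\mathcal{I}_3$.
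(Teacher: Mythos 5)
Your overall framework---angular Whitney decomposition, bilinear Strichartz, nonlinear Loomis--Whitney---is of course the right one, and you have correctly observed two useful facts: that $|F(\xi_1,\eta_1,\xi,\eta)|=|\xi_1\eta+\xi\eta_1+2(\xi_1\eta_1+\xi\eta)|\sim N_1^2$ uniformly on $\mathcal{I}_2$, and that at most one of $|2\xi_1+\xi|$, $|2\xi+\xi_1|$ can be $\ll N_1$ there. But the reduction you propose at the very beginning does not work, and this sinks the argument.

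You want to discard the weight via $|\xi+\eta|\lesssim N_1$ and instead prove a no-weight estimate $\lesssim N_1^{-5/4}(L_0L_1L_2)^{1/2}$ over all of $\mathcal{I}_2$. No such estimate is available. On $\mathcal{I}_2$ the resonance function $\Phi$ vanishes \emph{identically} along the line $\{\xi_1+\eta_1=\xi+\eta=0\}$, so the modulation constraint provides no radial localisation near that line. Concretely: the angular sector $\mathfrak{D}_{j_1}^A$ has angular width $A^{-1}N_1$ but radial width $\sim N_1$. To apply Proposition~\ref{prop2.7} one must cover it by tiles of diameter $\lesssim d\sim A^{-1}$, i.e.\ $A^{-1}N_1\times A^{-1}N_1$ tiles, and the $\Phi$-constraint $|\Phi|\lesssim L_{012}^{\max}$ controls the radial extent only to $\sim A^{-1}MN_1$ when the distance to the line is $\sim M^{-1}N_1$ (this is precisely \eqref{est01-prop3.16}). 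One is therefore left with $\sim M$ tiles, and Cauchy--Schwarz over them costs $M^{1/2}$, which is what produces the $(AM)^{1/2}$ in Proposition~\ref{prop3.16}. Your Cartesian-gradient observations $|\partial_{\xi_1}\Phi|\gtrsim N_1^2$ or $|\partial_{\xi}\Phi|\gtrsim N_1^2$ do improve the \emph{bilinear Strichartz} step (they remove the radial-separation hypothesis of Proposition~\ref{prop3.15}), but they do nothing for the radial tile count in the Loomis--Whitney step: the gradient and the sector both localize in the direction transverse to the degenerate line, and therefore do not combine to control the parallel (radial) direction. A rough accounting gives, for the no-weight quantity, $\sum_{M\le N_1}M^{1/2}\cdot N_1^{-5/4}\sim N_1^{-3/4}$, not $N_1^{-5/4}$; multiplying by $N_1$ then lands at $N_1^{1/4}$, which is $N_1^{1/2}$ short of the target.

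This is exactly why the paper introduces the $\mathcal{I}_2^M$ refinement: on $\mathcal{I}_2^M$ the weight satisfies $|\xi+\eta|\lesssim M^{-1}N_1$, which \emph{exactly} cancels the $M^{1/2}$ loss after the $M$-sum ($\sum_M M^{-1}N_1\cdot (AM)^{1/2}N_1^{-2}\sim A^{1/2}N_1^{-1}\sum_M M^{-1/2}$), and the $A$-sum then delivers $N_1^{-1/4}$. The weight is not a constant $\sim N_1$ to be peeled off at the end; its smallness near the degenerate line is the null-form gain, and it must be retained dyadically against $M$. Replacing it by its sup throws the mechanism away. To salvage a proof along your lines you would, in effect, have to reintroduce the $\mathcal{I}_2^M$ decomposition (or an anisotropic rescaling that encodes it), at which point you are back to the paper's argument.
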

As we mentioned, the key ingredient to show Proposition \ref{prop3.14} is $|\xi+ \eta|$ in (LHS) of
\eqref{prop3.14-1}. Thus we introduce the decomposition of $\mathcal{I}_{2}$ to determine the size of $|\xi+\eta|$.
\begin{defn}
Let $M \geq 2^{11}$ be dyadic. We define
\begin{align*}
\mathcal{I}_{2}^M &  = \bigl( {\mathfrak{D}}_{{2^{-2}M \times 3}}^{M} \times {\mathfrak{D}}_{2^{-2}M \times 3}^{M} \bigr) \setminus \bigl( {\mathfrak{D}}_{{2^{-1}M \times 3}}^{2M} \times {\mathfrak{D}}_{2^{-1}M \times 3}^{2M} \bigr)  ,\\
\tilde{\mathcal{I}}_2^M & = \bigl( \tilde{{\mathfrak{D}}}_{2^{-2}M \times 3}^{M} \times
\tilde{{\mathfrak{D}}}_{2^{-2}M \times 3}^{M} \bigr) \setminus
\bigl( \tilde{{\mathfrak{D}}}_{2^{-1}M \times 3}^{2M} \times
\tilde{{\mathfrak{D}}}_{2^{-1}M \times 3}^{2M} \bigr).
\end{align*}
It is easy to see that
\begin{equation*}
(r \cos \theta, r \sin \theta) \in \mathcal{I}_{2}^M \ \iff \ M^{-1} \pi \leq \min
\Bigl( \Bigl| \theta-\frac{3 \pi}{4}\Bigr|,
\Bigl| \theta + \frac{\pi}{4} \Bigr| \Bigr) \leq 2 M^{-1} \pi
\end{equation*}
and, for a dyadic number $M_0 \geq 2^{11}$,
\begin{equation*}
\mathcal{I}_{2} = \bigcup_{2^{11} \leq M < M_0} \mathcal{I}_{2}^M \, \cup \,
\bigl( {\mathfrak{D}}_{{2^{-2}M_0 \times 3}}^{M_0} \times {\mathfrak{D}}_{2^{-2}M_0 \times 3}^{M_0} \bigr).
\end{equation*}
\end{defn}
Note that if $(\xi_1, \eta_1) \times (\xi,\eta) \in \mathcal{I}_{2}^M$ then $|\xi+\eta| \lesssim M^{-1} N_1 $.
\begin{prop}\label{prop3.15}
Assume \textnormal{(i)}, \textnormal{(ii)'} in \textit{Case} \textnormal{\ref{case-2}}
and $\min(N_0, N_1, N_2) = N_2.$ Let $M\geq 2^{11}$ and $A \geq 2^{20} M$ be dyadic, $|j_1-j| \leq 32$ and
\begin{equation*}
\left( {\mathfrak{D}}_{j_1}^A \times {\mathfrak{D}}_{j}^A \right) \subset \mathcal{I}_{2}^M.
\end{equation*}
Assume that $||(\xi_1, \eta_1)|- |(\xi_2-\xi_1, \eta_2-\eta_1)|| \geq 2^{-3} N_1$.
Then we have
\begin{align}
& \Bigl\| \chi_{G_{N_2, L_2}} \int \ha{u}_{N_1, L_1}|_{\tilde{\mathfrak{D}}_{j_1}^A} (\tau_1, \xi_1, \eta_1) \ha{w}_{N_0, L_0}|_{\tilde{\mathfrak{D}}_{j}^A} (\tau_2 - \tau_1, \xi_2-\xi_1, \eta_2- \eta_1) d\sigma_1
\Bigr\|_{L_{\xi_2, \eta_2, \tau_2}^2} \notag \\
& \qquad \qquad \qquad \qquad
\lesssim A^{-\frac{1}{2}} M^{\frac{1}{2}} N_1^{-\frac{1}{2}} (L_0 L_1)^{\frac{1}{2}} \|\ha{u}_{N_1, L_1}|_{\tilde{\mathfrak{D}}_{j_1}^A}\|_{L^2}
\|\ha{w}_{N_0, L_0}|_{\tilde{\mathfrak{D}}_{j}^A} \|_{L^2}.\label{bilinearStrichartz-7}
\end{align}
Similarly, if we assume $||(\xi, \eta)|- |(\xi_1+\xi, \eta_1+\eta)|| \geq 2^{-3} N_1$, then we have
\begin{align}
& \Bigl\| \chi_{G_{N_1, L_1} \cap \tilde{\mathfrak{D}}_{j_1}^A} \int \ha{w}_{N_0, L_0}|_{\tilde{\mathfrak{D}}_{j}^A}
(\tau, \xi, \eta)
\ha{v}_{N_2, L_2} (\tau_1+ \tau, \xi_1+\xi, \eta_1+ \eta) d\sigma \Bigr\|_{L_{\xi_1, \eta_1, \tau_1}^2}
\notag \\
& \qquad \qquad \qquad \qquad
\lesssim A^{-\frac{1}{2}} M^{\frac{1}{2}} N_1^{-\frac{1}{2}} (L_0 L_2)^{\frac{1}{2}}
\|\ha{w}_{N_0, L_0}|_{\tilde{\mathfrak{D}}_{j}^A} \|_{L^2}
\|\ha{v}_{N_2, L_2}\|_{L^2}. \label{bilinearStrichartz-8}
\end{align}
If we assume $||(\xi_1, \eta_1)|- |(\xi_1+\xi, \eta_1+\eta)|| \geq 2^{-3} N_1$, then we have
\begin{align}
& \Bigl\| \chi_{G_{N_0,L_0} \cap \tilde{\mathfrak{D}}_{j}^A} \int \ha{v}_{N_2, L_2} (\tau_1+ \tau, \xi_1+\xi, \eta_1+ \eta)  \ha{u}_{N_1, L_1}|_{\tilde{\mathfrak{D}}_{j_1}^A} (\tau_1, \xi_1, \eta_1) d \sigma_1 \Bigr\|_{L_{\xi, \eta, \tau}^2} \notag \\
& \qquad \qquad \qquad \qquad
\lesssim A^{-\frac{1}{2}} M^{\frac{1}{2}} N_1^{-\frac{1}{2}} (L_1 L_2)^{\frac{1}{2}} \|\ha{v}_{N_2, L_2} \|_{L^2}
\|\ha{u}_{N_1, L_1}|_{\tilde{\mathfrak{D}}_{j_1}^A} \|_{L^2} .\label{bilinearStrichartz-9}
\end{align}
\end{prop}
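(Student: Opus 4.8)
The plan is to transcribe the proof of Proposition~\ref{prop3.12} (itself modelled on Proposition~\ref{prop3.2}) almost verbatim, the only new feature being that we now sit at distance $\sim M^{-1}$ from the line $\{\xi+\eta=0\}$, where the radial resonance derivative degrades by a factor $M$; this loss is harmless because the target constant is weaker by exactly $M^{1/2}$. By the symmetry of the trilinear convolution it suffices to prove one of the three estimates, say \eqref{bilinearStrichartz-7}, and \eqref{bilinearStrichartz-8}, \eqref{bilinearStrichartz-9} follow by the identical argument after permuting the roles of the frequencies $(\xi_1,\eta_1)$, $(\xi_2-\xi_1,\eta_2-\eta_1)$, $(\xi_2,\eta_2)$, each invoking its own magnitude-separation hypothesis. (One checks that at least two of the three hypotheses hold in any configuration in $\mathcal{I}_2^M$, since $(\xi_1,\eta_1)$ and $(\xi_2-\xi_1,\eta_2-\eta_1)$ being (anti)parallel precludes all three of $|(\xi_1,\eta_1)|$, $|(\xi_2-\xi_1,\eta_2-\eta_1)|$, $|(\xi_2,\eta_2)|$ being comparable, so the three estimates together do cover the whole region when they are combined in Proposition~\ref{prop3.14}.)

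Squaring \eqref{bilinearStrichartz-7} and running the Cauchy--Schwarz argument of Proposition~\ref{prop3.2}, the claim reduces to the pointwise bound
\[
\sup_{(\tau_2,\xi_2,\eta_2)\in G_{N_2,L_2}}|E(\tau_2,\xi_2,\eta_2)|\lesssim A^{-1}MN_1^{-1}L_0L_1,
\]
where $E(\tau_2,\xi_2,\eta_2)=\{(\tau_1,\xi_1,\eta_1)\in G_{N_1,L_1}\cap\tilde{\mathfrak{D}}_{j_1}^A\mid(\tau_2-\tau_1,\xi_2-\xi_1,\eta_2-\eta_1)\in G_{N_0,L_0}\cap\tilde{\mathfrak{D}}_{j}^A\}$. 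I would bound $|E|$ by slicing $(\tau_1,\xi_1,\eta_1)$ in polar coordinates $(\tau_1,r_1,\theta_1)$ exactly as in Proposition~\ref{prop3.12}: the $\tau_1$-fiber has measure $\lesssim\min(L_0,L_1)$; the angular variable is confined to an arc of measure $\lesssim A^{-1}$ because $(\xi_1,\eta_1)\in\mathfrak{D}_{j_1}^A$; and, since the two modulation constraints force $3\Phi(\xi_1,\eta_1,\xi_2-\xi_1,\eta_2-\eta_1)$ to lie within $O(\max(L_0,L_1))$ of the fixed number $\xi_2^3+\eta_2^3-\tau_2$, for fixed $\theta_1$ the radial variable $r_1$ is confined to a set of measure $\lesssim\max(L_0,L_1)/\inf|\partial_{r_1}\Phi|$, where $\partial_{r_1}$ denotes the derivative of $r_1\mapsto\Phi(r_1\cos\theta_1,r_1\sin\theta_1,\xi_2-r_1\cos\theta_1,\eta_2-r_1\sin\theta_1)$. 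Since the planar area element is $r_1\,dr_1\,d\theta_1$ and $r_1\sim N_1$, this gives $|E|\lesssim\min(L_0,L_1)\,A^{-1}N_1\,\max(L_0,L_1)/\inf|\partial_{r_1}\Phi|$, and the displayed measure bound follows once one proves
\[
|\partial_{r_1}\Phi|\gtrsim M^{-1}N_1^2.
\]

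Establishing this lower bound is the crux, and is where the hypothesis $||(\xi_1,\eta_1)|-|(\xi_2-\xi_1,\eta_2-\eta_1)||\geq2^{-3}N_1$ is used. Writing $(\xi_1,\eta_1)=r_1(\cos\theta_1,\sin\theta_1)$, $(\xi_2-\xi_1,\eta_2-\eta_1)=r(\cos\theta,\sin\theta)$, $(\xi_2,\eta_2)=r_2(\cos\theta_2,\sin\theta_2)$, the differentiation yields, after the usual cancellation,
\[
\partial_{r_1}\Phi=r_2\bigl[\,r(\cos\theta_1\cos\theta\cos\theta_2+\sin\theta_1\sin\theta\sin\theta_2)-r_1(\cos^2\theta_1\cos\theta_2+\sin^2\theta_1\sin\theta_2)\,\bigr].
\]
In $\mathcal{I}_2^M$ the angles $\theta_1,\theta$ lie within $\sim M^{-1}$ of $\tfrac{3\pi}{4}$ or $-\tfrac{\pi}{4}$, while $A\geq2^{20}M$ and $|j_1-j|\leq32$ keep $\theta_1,\theta,\theta_2$ within $O(A^{-1})\ll M^{-1}$ of one common line; a Taylor expansion then shows each of the two trigonometric factors above is $\sim M^{-1}$. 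The magnitude-separation hypothesis does two things: in the near-(anti)parallel regime imposed by $|j_1-j|\leq32$ it excludes the case $r_2\ll N_1$, so $r_2\sim N_1$; and it forces the factor $r-r_1$ (or, in the anti-parallel sub-case, $r_1$ itself) appearing in the leading part of the bracket to be $\gtrsim N_1$, which prevents the two bracketed terms from cancelling and pins $|\partial_{r_1}\Phi|$ below by $M^{-1}N_1^2$. I expect this sign bookkeeping --- separating the parallel and anti-parallel configurations and tracking which of $r_1,r,r_2$ the hypothesis keeps away from zero --- to be the only delicate point; everything else is a mechanical copy of Proposition~\ref{prop3.12}. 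Feeding $|\partial_{r_1}\Phi|\gtrsim M^{-1}N_1^2$ back in yields $|E|\lesssim A^{-1}MN_1^{-1}L_0L_1$, and Cauchy--Schwarz then produces \eqref{bilinearStrichartz-7} with the stated constant $A^{-1/2}M^{1/2}N_1^{-1/2}(L_0L_1)^{1/2}$.
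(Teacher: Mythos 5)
Your proposal follows the same overall architecture as the paper: Cauchy--Schwarz reduces \eqref{bilinearStrichartz-7} to a measure bound for the slice set $E$, the $\tau_1$-fibers give $\min(L_0,L_1)$, the angular localization gives $A^{-1}$, and everything rests on a lower bound $|\partial_{r_1}\Phi|\gtrsim M^{-1}N_1^2$. However, the crux — the lower bound on $\partial_{r_1}\Phi$ — is where you depart from the paper, and where the argument you sketch is more fragile than you acknowledge. The paper never introduces polar coordinates $(r_2,\theta_2)$ for $(\xi_2,\eta_2)$; it computes
\[
\partial_{r_1}\Phi=\cos\theta_1\bigl(\xi_1-(\xi_2-\xi_1)\bigr)\bigl(\xi_1+(\xi_2-\xi_1)\bigr)+\sin\theta_1\bigl(\eta_1-(\eta_2-\eta_1)\bigr)\bigl(\eta_1+(\eta_2-\eta_1)\bigr),
\]
and observes that replacing $(\cos\theta,\sin\theta)$ by $\pm(\cos\theta_1,\sin\theta_1)$ (an $O(A^{-1})$ error) turns each product into $(r_1^2-r^2)\cos^2\theta_1$ resp.\ $(r_1^2-r^2)\sin^2\theta_1$, yielding
\[
|\partial_{r_1}\Phi|\geq (r_1+r)|r_1-r|\,|\cos^3\theta_1+\sin^3\theta_1|-2^{10}A^{-1}N_1^2.
\]
Because $r_1^2-r^2=(r_1-r)(r_1+r)$ is insensitive to the sign of $\cos\theta+\cos\theta_1$, the parallel and anti-parallel configurations are handled in one stroke: the hypothesis $|r_1-r|\geq2^{-3}N_1$ and $|\cos\theta_1+\sin\theta_1|\geq M^{-1}$ feed in directly, and $A\geq2^{20}M$ makes the error term negligible.

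Your factorization $\partial_{r_1}\Phi=r_2[\,\cdots\,]$ is algebraically correct, but it forces exactly the parallel/anti-parallel case split you flag as ``the only delicate point,'' and your parenthetical bookkeeping there is off. In the anti-parallel regime the bracket is $\approx\mp(r_1+r)(\cos^3\theta_1+\sin^3\theta_1)$, not anything involving ``$r_1$ itself''; the hypothesis enters instead through $r_2\approx|r_1-r|\gtrsim N_1$. In the parallel regime $r_2\approx r_1+r\sim N_1$ automatically and the hypothesis is what makes the bracket $\approx(r-r_1)(\cos^3\theta_1+\sin^3\theta_1)$ large. So the magnitude-separation hypothesis is used to control a \emph{different} factor of your product in the two regimes, and the sentence as written conflates them. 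The conclusion you reach is right, and the approach would work if this bookkeeping were done carefully, but the paper's identity is both cleaner and less error-prone precisely because it never separates the two regimes and never introduces $\theta_2$.

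One further small remark: the ``at least two of the three hypotheses hold'' observation you make is correct and is in fact slightly stronger than what the paper states (it only claims at least one holds, which is all that is needed in the proof of Proposition~\ref{prop3.14}).
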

\begin{proof}
We only consider the first estimate \eqref{bilinearStrichartz-7}.
The other two estimates can be obtained in a similar way.
The proof is almost the same as that for \eqref{bilinearStrichartz-6} in Proposition \ref{prop3.12}. Since $\left( {\mathfrak{D}}_{j_1}^A \times {\mathfrak{D}}_{j}^A \right) \subset \mathcal{I}_{2}^M$, without loss of generality, we can assume that $(\xi_1,\eta_1) \in {\mathfrak{D}}_{{2^{-2}M \times 3}}^{M} \setminus {\mathfrak{D}}_{2^{-1}M \times 3}^{2M}$.
As we saw in the proof of
Proposition \ref{prop3.2}, it suffices to show the estimate
\begin{equation}
\sup_{(\tau_2, \xi_2, \eta_2) \in G_{N_2, L_2}}|E(\tau_2, \xi_2, \eta_2)| \lesssim A^{-1}M N_1^{-1} L_1 L_2, \label{est01-prop3.15}
\end{equation}
where
\begin{equation*}
E(\tau_2, \xi_2, \eta_2) = \{ (\tau_1, \xi_1, \eta_1) \in G_{N_1, L_1} \cap \tilde{\mathfrak{D}}_{j_1}^A
 |  (\tau_2-\tau_1, \xi_2- \xi_1, \eta_2-\eta_1) \in G_{N_0,L_0} \cap \tilde{\mathfrak{D}}_{j}^A\}.
\end{equation*}
For fixed $(\xi, \eta)$, it follows from $(\tau_1, \xi_1, \eta_1) \in G_{N_1, L_1}$ and $(\tau_2-\tau_1, \xi_2- \xi_1, \eta_2-\eta_1) \in G_{N_0,L_0}$ that
\begin{equation}
\sup_{(\tau_2, \xi_2, \eta_2) \in G_{N_2, L_2}} | \{ \tau_1 \, | \, (\tau_1, \xi_1, \eta_1) \in E(\tau_2, \xi_2, \eta_2) \}|
\lesssim \min(L_0, L_1).\label{est02-prop3.15}
\end{equation}
Let$(\xi_1,\eta_1) = (r_1 \cos \theta_1, r_1 \sin \theta_1)$ and
$(\xi_2-\xi_1,\eta_2-\eta_1)= (r \cos \theta, r \sin \theta)$. Recall that the assumption
$(\xi_1,\eta_1) \in {\mathfrak{D}}_{{2^{-2}M \times 3}}^{M} \setminus {\mathfrak{D}}_{2^{-1}M \times 3}^{2M}$
implies $|\sin \theta_1 + \cos \theta_1| \geq M^{-1}$.
Furthermore, $|j_1-j| \leq 32$ gives $ |(\cos \theta_1,\sin \theta_1 ) - (\cos \theta, \sin \theta)| \leq 2^{7} A^{-1}$ or $ |(\cos \theta_1,\sin \theta_1 ) + (\cos \theta, \sin \theta)| \leq 2^{7} A^{-1}$.
Thus, we get
\begin{align*}
& |\partial_{r_1}  \Phi(\xi_1,\eta_1,-\xi_2,-\eta_2)|  = |(\cos \theta_1 \partial_{\xi_1} + \sin \theta_1
\partial_{\eta_1})\Phi(\xi_1,\eta_1,-\xi_2,-\eta_2)|\\
 = & \bigl| \cos \theta_1 \bigl( \xi_1 - (\xi_2-\xi_1)  \bigr) \bigl( \xi_1 + (\xi_2-\xi_1) \bigr)
+ \sin \theta_1 \bigl( \eta_1 - (\eta_2-\eta_1) \bigr) \bigl( \eta_1 + (\eta_2-\eta_1) \bigr) \bigr| \\
\geq & \, (r_1+r)|r_1-r| | \cos^3 \theta_1 + \sin^3 \theta_1|-2^{10} A^{-1} N_1^2\\
 \geq & \,  2^{-4} (1- 2^{-1}\sin 2\theta_1)  |\cos \theta_1 + \sin \theta_1| N_1^2 - 2^{10}A^{-1} N_1^2\\
\geq & \,  2^{-8}M^{-1}N_1^2 .
\end{align*}
Here we used
$|r_1-r| = ||(\xi_1, \eta_1)|- |(\xi_2-\xi_1, \eta_2-\eta_1)|| \geq 2^{-3} N_1$
and
$A \geq 2^{20} M$.
It is also observed that
\begin{equation*}
|\Phi(\xi_1,\eta_1,-\xi_2,-\eta_2)+ (\tau_2-\xi_2^3 -\eta_2^3)| \lesssim \max(L_0,L_1).
\end{equation*}
Consequently, for fixed $\theta_1$, we see $r_1$ is confined to a set of measure at most $\max(L_0,L_1)M/N_1^2$ and we get
\begin{align*}
  | \{ (\xi_1, \eta_1) \ | \ (\tau_1, \xi_1, \eta_1) \in E(\tau_2, \xi_2,\eta_2) \} |
= &  \int_{\theta_1} \int_{r_1} {\chi}_{E(\tau_2,\xi_2,\eta_2)} (|\xi_1|, \theta_1) r_1 d r_1 d \theta_1 \\
\lesssim &A^{-1}M N_1^{-1} \max (L_1, L_2).
\end{align*}
This and \eqref{est02-prop3.15} give \eqref{est01-prop3.15}.
\end{proof}
\begin{prop}\label{prop3.16}
Assume \textnormal{(i)}, \textnormal{(ii)'} in \textit{Case} \textnormal{\ref{case-2}}
and $\min(N_0, N_1, N_2) = N_2.$ Let $A$ and $M$ be dyadic numbers such that $2^{11} \leq M \leq N_1$ and $A \geq \max (2^{25}, M)$. Suppose that
$j_1$, $j$ satisfy $16 \leq |j_1-j| \leq 32$ and
\begin{equation*}
\left( {\mathfrak{D}}_{j_1}^A \times {\mathfrak{D}}_{j}^A \right) \subset \mathcal{I}_{2}^M.
\end{equation*}
Then we have
\begin{equation}
\begin{split}
&
\Bigl|\int_{**}{  \ha{v}_{{N_2, L_2}}(\tau_2, \xi_2, \eta_2)
\ha{u}_{N_1, L_1}|_{\tilde{\mathfrak{D}}_{j_1}^A}(\tau_1, \xi_1, \eta_1)
\ha{w}_{N_0, L_0}|_{\tilde{\mathfrak{D}}_{j}^A}(\tau, \xi, \eta)
}
d\sigma_1 d\sigma \Bigr|\\
& \qquad \qquad  \lesssim  (AM)^{\frac{1}{2}} N_1^{-2} (L_0 L_1 L_2)^{\frac{1}{2}} \|\ha{u}_{N_1, L_1}|_{\tilde{\mathfrak{D}}_{j_1}^A} \|_{L^2}
\| \ha{v}_{N_2, L_2}\|_{L^2}
\|\ha{w}_{{N_0, L_0}}|_{\tilde{\mathfrak{D}}_{j}^A}  \|_{L^2},\label{prop3.16-1}
\end{split}
\end{equation}
where $d \sigma_1 = d\tau_1 d \xi_1 d \eta_1$, $d \sigma = d\tau d \xi d \eta$
 and $**$ denotes $(\tau_2, \xi_2, \eta_2) = (\tau_1 + \tau, \xi_1+ \xi, \eta_1 + \eta).$
\end{prop}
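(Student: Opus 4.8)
The plan is to run the same scheme as in the proof of Proposition~\ref{prop3.13}, splitting according to the size of the low frequency $N_2$ and feeding in the bilinear Strichartz estimates adapted to $\mathcal{I}_2^M$ (Proposition~\ref{prop3.15}) in place of those adapted to $\mathcal{I}_3$ (Proposition~\ref{prop3.12}); this substitution is precisely what produces the extra factor $M^{1/2}$. As usual one first reduces to non-negative $\ha{w}_{N_0,L_0}$, $\ha{u}_{N_1,L_1}$, $\ha{v}_{N_2,L_2}$ and, after a preliminary reduction, assumes $A\ge 2^{20}M$. Two geometric facts are recorded from $\tilde{\mathfrak{D}}_{j_1}^A\times\tilde{\mathfrak{D}}_j^A\subset\mathcal{I}_2^M$ and $16\le|j_1-j|\le32$: since $**$ forces $(\xi_2,\eta_2)=(\xi_1,\eta_1)+(\xi,\eta)$ and $\ha{u},\ha{w}$ carry the two large and nearly antipodal frequencies (so that $N_0\sim N_1$ with angular defect $\sim A^{-1}$), one has $|(\xi_2,\eta_2)|\sim\max(\bigl|\,|(\xi_1,\eta_1)|-|(\xi,\eta)|\,\bigr|,\,N_1A^{-1})$; and, running the computation of Lemma~\ref{lemma3.11} on $\mathcal{I}_2^M$ --- where $\eta\approx-\xi$ for all three frequencies and $|\cos^{3}\theta_1+\sin^{3}\theta_1|\sim M^{-1}$ --- one obtains
\[
|\Phi(\xi_1,\eta_1,\xi,\eta)|\;\sim\;M^{-1}N_1^{2}\,\bigl|\,|(\xi_1,\eta_1)|-|(\xi,\eta)|\,\bigr|\;+\;O(A^{-1}N_1^{3}).
\]

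First I would treat the regime $N_2\ge 2^{-20}N_1$. Then $\bigl|\,|(\xi_1,\eta_1)|-|(\xi,\eta)|\,\bigr|\sim N_1$, so the displayed relation together with $A\gtrsim M$ gives $|\Phi|\gtrsim M^{-1}N_1^{3}$, hence $L_{012}^{\max}\gtrsim M^{-1}N_1^{3}$, and therefore $A\ge M\ge N_1^{3/2}(L_{012}^{\max})^{-1/2}$. I would then bound the trilinear form by H\"{o}lder's inequality, isolating the factor carrying the maximal modulation and estimating the convolution of the other two by the appropriate one of \eqref{bilinearStrichartz-7}--\eqref{bilinearStrichartz-9}; its magnitude-separation hypothesis is immediate here, since $\bigl|\,|(\xi_1,\eta_1)|-|(\xi,\eta)|\,\bigr|\sim N_1$ already realizes one of the three options. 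The constant one reads off is $\lesssim A^{-1/2}M^{1/2}N_1^{-1/2}(L_0L_1L_2)^{1/2}$ divided by the square root of the isolated modulation, and the bound $A\ge N_1^{3/2}(L_{012}^{\max})^{-1/2}$ is exactly what converts it into $(AM)^{1/2}N_1^{-2}(L_0L_1L_2)^{1/2}$, i.e.\ \eqref{prop3.16-1}.

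The complementary regime $N_2\le 2^{-20}N_1$ I would handle by the nonlinear Loomis--Whitney inequality, as in the proofs of Propositions~\ref{prop3.4} and~\ref{prop3.13}. After the harmless spatial decompositions and the rescaling $(\tau,\xi,\eta)\mapsto(N_1^{3}\tau,N_1\xi,N_1\eta)$, matters reduce to $\|\tilde f|_{S_1}*\tilde g|_{S_2}\|_{L^2(S_3)}\lesssim A^{1/2}\|\tilde f\|_{L^2(S_1)}\|\tilde g\|_{L^2(S_2)}$ on pieces of the characteristic surface. The transversality determinant factors, up to harmless weights, as $|\sin\angle((\xi_1',\eta_1'),(\xi_2',\eta_2'))|\,|F(\xi_1',\eta_1',\xi_2',\eta_2')|$, where now $16\le|j_1-j|$ gives $|\sin\angle|\gtrsim A^{-1}$ and, on $\mathcal{I}_2$, $F\approx-2\bigl((\xi_1')^{2}+\xi_1'\xi_2'+(\xi_2')^{2}\bigr)$ has full size $\gtrsim1$ thanks to $N_2\le 2^{-20}N_1$. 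Hence $d\sim A^{-1}$, $\textnormal{diam}(S_i)\lesssim d$, and Proposition~\ref{prop2.7} gives the estimate with the $M$-free constant $A^{1/2}N_1^{-2}(L_0L_1L_2)^{1/2}\le(AM)^{1/2}N_1^{-2}(L_0L_1L_2)^{1/2}$; the sum over the spatial tiles contained in the sectors $\mathfrak{D}_{j_1}^A,\mathfrak{D}_j^A$ stays harmless because $|\Phi|\lesssim L_{012}^{\max}$ pins the radial positions of the two large frequencies close to one another, exactly as in Proposition~\ref{prop3.4}.

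I expect the main difficulty to lie in the first regime, for the reason highlighted in the informal discussion preceding Proposition~\ref{prop3.10}: $\Phi$ and the symbol $\xi+\eta$ both vanish on the resonant line $\xi+\eta=0$, so near $\mathcal{I}_2$ there is no honest high-modulation gain available, and the substitute lower bound $|\Phi|\gtrsim M^{-1}N_1^{2}N_2$ is weaker than the one used in Proposition~\ref{prop3.13} by the factor $MN_1/N_2$. The delicate point is to check that this weakening is always compensated, uniformly over the admissible range of $A$, by the $M^{1/2}$-loss of Proposition~\ref{prop3.15}, and that the hypotheses of Proposition~\ref{prop3.15} genuinely hold whenever that proposition is invoked; together with keeping the Loomis--Whitney transversality of order $A^{-1}$, this is what dictates the choice of the threshold $N_2\sim 2^{-20}N_1$ separating the two regimes.
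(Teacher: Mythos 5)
Your case split is at $N_2\lessgtr 2^{-20}N_1$, whereas the paper splits at $|\Phi(\xi_1,\eta_1,\xi,\eta)|\lessgtr 2^{-20}A^{-1}N_1^3$; on $\mathcal{I}_2^M$ these are \emph{not} equivalent. Using your own relation $|\Phi|\sim M^{-1}N_1^2\,\bigl||(\xi_1,\eta_1)|-|(\xi,\eta)|\bigr|+O(A^{-1}N_1^3)$ together with $N_2\sim\bigl||(\xi_1,\eta_1)|-|(\xi,\eta)|\bigr|+O(A^{-1}N_1)$, the condition $|\Phi|\leq 2^{-20}A^{-1}N_1^3$ translates to $N_2\lesssim A^{-1}MN_1$, a far smaller threshold than $2^{-20}N_1$ once $A\gg M$. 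In the intermediate range $A^{-1}MN_1\lesssim N_2\leq 2^{-20}N_1$ your Case~2 argument breaks down. Concretely: after tiling into pieces of side $A^{-1}N_1$ the Loomis--Whitney constant is $A^{1/2}N_1^{-2}(L_0L_1L_2)^{1/2}$ per tile triple, but the constraint that $(\xi_1,\eta_1)+(\xi,\eta)$ lands in a ball of radius $\sim N_2$ only pins $\bigl||(\xi_1,\eta_1)|-|(\xi,\eta)|\bigr|\lesssim N_2$, so each annular sector contains $\sim AN_2/N_1$ compatible tiles, and the correct Cauchy--Schwarz loss over the compatible tile triples is $(AN_2/N_1)^{1/2}$, not $O(1)$. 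This is $\leq M^{1/2}$ precisely when $N_2\lesssim A^{-1}MN_1$, and it can be much larger otherwise. Your appeal to ``$|\Phi|\lesssim L_{012}^{\max}$ pins the radial positions close to one another'' is circular here: in your Case~2, $|\Phi|$ (hence $L_{012}^{\max}$) can still be as large as $\sim M^{-1}N_1^3$, and the resulting pinning $\bigl||(\xi_1,\eta_1)|-|(\xi,\eta)|\bigr|\lesssim ML_{012}^{\max}/N_1^2\sim N_1$ is too weak to bound the tile count. This is exactly why the paper organizes the dichotomy on $|\Phi|$ rather than on $N_2$: in the large-$|\Phi|$ regime the modulation gain $(L_{012}^{\max})^{-1/2}\lesssim A^{1/2}N_1^{-3/2}$ is available and must be used.

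There is a second, more minor gap in your Case~1. The claim $\bigl||(\xi_1,\eta_1)|-|(\xi,\eta)|\bigr|\sim N_1$ fails in general: when $N_2\sim N_1$ the two high frequencies can be nearly collinear and of nearly equal magnitude, so the hypothesis of \eqref{bilinearStrichartz-7} can fail. What is actually true is that the maximum of the three magnitude gaps (the hypotheses of \eqref{bilinearStrichartz-7}--\eqref{bilinearStrichartz-9}) is $\gtrsim N_1$; but the specific one that isolates $L_{012}^{\max}$ may not hold. The paper bridges this by supplementing Proposition~\ref{prop3.15} with the refined $L^4$ Strichartz estimate \eqref{Strichartz-1} with $p=q=4$ (valid because on $\mathcal{I}_2$ all of $|\xi_1|,|\eta_1|,|\xi|,|\eta|\sim N_1$), which your sketch omits. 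Both issues are repairable, but as written the proof is incomplete precisely in the regime you yourself flag as ``the main difficulty,'' and the case decomposition as chosen does not make the tile counting close.
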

\begin{proof}
It is easily observed that if $|\Phi(\xi_1,\eta_1, \xi, \eta)| \gtrsim A^{-1}N_1^3$, then by using the Strichartz estimates \eqref{Strichartz-1} with $p=q=4$ and Proposition \ref{prop3.15}, we get \eqref{prop3.16-1}.
Therefore, we assume that $|\Phi(\xi_1,\eta_1, \xi, \eta)| \leq 2^{-20}A^{-1}N_1^3$ hereafter.

We first observe that $|\Phi(\xi_1,\eta_1, \xi, \eta)| \leq 2^{-20} A^{-1}N_1^3$ implies that
\begin{equation}
| \, |(\xi_1,\eta_1)|-|(\xi, \eta)| \, | \leq 2^{15} A^{-1} M N_1.\label{est01-prop3.16}
\end{equation}
The proof is almost the same as that of Lemma \ref{lemma3.11}. Let $(\xi_1,\eta_1) = (r_1 \cos \theta_1, r_1 \sin \theta_1)$,
$(\xi,\eta) = (r \cos \theta, r \sin \theta)$.
Because $\left( {\mathfrak{D}}_{j_1}^A \times {\mathfrak{D}}_{j}^A \right) \subset \mathcal{I}_{2}^M$, without loss of generality, we assume that $(\xi_1,\eta_1) \in {\mathfrak{D}}_{{2^{-2}M \times 3}}^{M} \setminus {\mathfrak{D}}_{2^{-1}M \times 3}^{2M}$.
Furthermore, it follows from $|j_1-j|\leq 32$ that
\begin{equation*}
 |(\cos \theta_1,\sin \theta_1 ) - (\cos \theta, \sin \theta)| \leq 2^{7} A^{-1}
\ \textnormal{or} \  |(\cos \theta_1,\sin \theta_1 ) + (\cos \theta, \sin \theta)| \leq 2^{7} A^{-1}.
\end{equation*}
The inequality $|(\cos \theta_1,\sin \theta_1 ) - (\cos \theta, \sin \theta)| \leq 2^{7} A^{-1}$ yields
\begin{align*}
 |\Phi(\xi_1,\eta_1, \xi, \eta)|&  = r_1 r |  \cos \theta_1 \cos \theta (r_1 \cos \theta_1 + r \cos \theta) +
\sin \theta_1 \sin \theta (r_1 \sin \theta_1 + r \sin \theta) | \\
 & \geq   r_1  r \, ( r_1+r  ) \, (1- 2^{-1} \sin 2\theta_1)
 |(\cos \theta_1 + \sin \theta_1)| \\
& \geq 2^{-10} M^{-1} N_1^3,
\end{align*}
which contradicts $|\Phi(\xi_1,\eta_1, \xi, \eta)| \leq 2^{-20}A^{-1}N_1^3$. Similarly, for the latter case, we observe
\begin{align*}
& |\Phi(\xi_1,\eta_1, \xi, \eta)| \leq  2^{-20}A^{-1}N_1^3\\
\Longrightarrow & \, r_1  r \, | r_1-r  | \,
 |(\cos \theta_1 + \sin \theta_1) (1- 2^{-1} \sin 2\theta_1)| \leq 2^{10} A^{-1} N_1^3\\
\Longrightarrow & \, | r_1-r | \leq 2^{15} A^{-1} M N_1.
\end{align*}
Thus \eqref{est01-prop3.16} holds true.
We turn to show \eqref{prop3.16-1}. In view of \eqref{est01-prop3.16}, by the almost orthogonality, we may assume that $r_1$ and $r$ are restricted to sets of measure $\sim A^{-1} M N_1$, respectively. Precisely, we can assume
\begin{equation*}
\operatorname{supp} \ha{u}_{N_1, L_1} \subset \mathbb{S}_{A^{-1} M N_1}^{\ell_1}, \qquad
\operatorname{supp} \ha{w}_{N_0, L_0} \subset \mathbb{S}_{A^{-1} M N_1}^{\ell},
\end{equation*}
where $|\ell_1-\ell| \leq 2^{10}$,
\begin{equation*}
\mathbb{S}_\delta^{\ell}= \{ (\tau, \xi,\eta ) \in \R^3 \, | \, N_1+\ell \delta \leq \langle {\xi} \rangle \leq N_1 + (\ell+1)\delta \}.
\end{equation*}
Note that the set $\mathbb{S}_{A^{-1} M N_1}^{\ell_1} \cap {\mathfrak{D}}_{j_1}^A$ is contained in a rectangle whose short side length is $\sim A^{-1} N_1$ and long side length is $\sim A^{-1} M N_1$.
Thus, for $A' = 2^{-100} A$, we can find approximately $M$ number of tiles $\{{\mathcal{T}}_{k_1}^{A'}\}_{k_1}$ such that
\begin{equation}
 \mathbb{S}_{A^{-1} M N_1}^{\ell_1} \cap {\mathfrak{D}}_{j_1}^A\cap {\mathcal{T}}_{k_1}^{A'} \not= \emptyset, \quad \mathbb{S}_{A^{-1} M N_1}^{\ell_1} \cap {\mathfrak{D}}_{j_1}^A \subset \bigcup_{\# k_1 \sim M} {\mathcal{T}}_{k_1}^{A'}.\label{prop3.16-tiles}
\end{equation}
We can find that \eqref{prop3.16-1} reduces to the following estimate.
\begin{equation}
\begin{split}
& \Bigl|\int_{\R^3 \times \R^3} { h (\tau_1+\tau_2, \xi_1+\xi_2, \eta_1+\eta_2)
f(\tau_1, \xi_1, \eta_1)  g(\tau_2, \xi_2, \eta_2)
}
d\sigma_1 d\sigma_2 \Bigr|\\
& \qquad \qquad \qquad \qquad \lesssim A^{\frac{1}{2}} N_1^{-2} (L_0 L_1 L_2)^{\frac{1}{2}} \|f \|_{L^2}
\|g\|_{L^2}
\| h \|_{L^2}.
\end{split}\label{est02-prop3.16}
\end{equation}
Here $f$, $g$, $h$ satisfy
\begin{equation*}
\operatorname{supp} f \subset G_{N_1, L_1} \cap \tilde{\mathcal{T}}_{k_1}^{A'}, \quad
\operatorname{supp} g \subset G_{N_0, L_0} \cap \mathbb{S}_{A^{-1} M N_1}^{\ell} \cap {\tilde{\mathfrak{D}}_{j}^A}, \quad
\operatorname{supp} h \subset G_{N_2, L_2}.
\end{equation*}
Indeed, by \eqref{prop3.16-tiles} and the almost orthogonality, we can see that
\begin{align*}
&
\Bigl|\int_{**}{  \ha{v}_{{N_2, L_2}}(\tau_2, \xi_2, \eta_2)
\ha{u}_{N_1, L_1}|_{\tilde{\mathfrak{D}}_{j_1}^A \cap  \mathbb{S}_{A^{-1} M N_1}^{\ell_1}}(\tau_1, \xi_1, \eta_1)
\ha{w}_{N_0, L_0}|_{\tilde{\mathfrak{D}}_{j}^A \cap  \mathbb{S}_{A^{-1} M N_1}^{\ell}}(\tau, \xi, \eta)
}
d\sigma_1 d\sigma \Bigr|\\
\lesssim & \sum_{\# k_1 \sim M}
\Bigl|\int_{**}{  \ha{v}_{{N_2, L_2}}(\tau_2, \xi_2, \eta_2)
\ha{u}_{N_1, L_1}|_{ \tilde{\mathcal{T}}_{k_1}^{A'}}(\tau_1, \xi_1, \eta_1)
\ha{w}_{N_0, L_0}|_{\tilde{\mathfrak{D}}_{j}^A \cap  \mathbb{S}_{A^{-1} M N_1 }^{\ell}}(\tau, \xi, \eta)
}
d\sigma_1 d\sigma \Bigr|\\
 \underset{\eqref{est02-prop3.16}}{\lesssim} &
A^{\frac{1}{2}} N_1^{-2} (L_0 L_1 L_2)^{\frac{1}{2}}
\sum_{\# k_1 \sim M} \|\ha{u}_{N_1, L_1}|_{ \tilde{\mathcal{T}}_{k_1}^{A'}} \|_{L^2}
\|\ha{v}_{N_2, L_2}\|_{L^2}
\| \ha{w}_{N_0, L_0}|_{\tilde{\mathfrak{D}}_{j}^A \cap  \mathbb{S}_{A^{-1} M N_1 }^{\ell}} \|_{L^2}\\
\lesssim &   (AM)^{\frac{1}{2}} N_1^{-2} (L_0 L_1 L_2)^{\frac{1}{2}}
\|\ha{u}_{N_1, L_1}|_{\tilde{\mathfrak{D}}_{j_1}^A \cap  \mathbb{S}_{A^{-1} M N_1}^{\ell_1}}\|_{L^2}
\| \ha{v}_{N_2, L_2}\|_{L^2}
\|\ha{w}_{N_0, L_0}|_{\tilde{\mathfrak{D}}_{j}^A \cap  \mathbb{S}_{A^{-1} M N_1}^{\ell}} \|_{L^2}.
\end{align*}
Since the proof of \eqref{est02-prop3.16} is the same as that for \eqref{est01-prop3.11}, we omit the details.
\end{proof}
\begin{proof}[Proof of Proposition \ref{prop3.14}]
Let $A_0 \geq 2^{25}$ and $M_0 \geq 2^{11}$ be dyadic which will be chosen later. Define that
\begin{equation*}
J_{A}^{\mathcal{I}_2^M} = \{ (j_1, j) \, | \, 0 \leq j_1,j \leq A -1, \ \left( {\mathfrak{D}}_{j_1}^A \times {\mathfrak{D}}_{j}^A \right) \subset \mathcal{I}_2^M\}.
\end{equation*}
We introduce the angular decomposition of $\mathcal{I}_{2}^M$ which is defined by
\begin{equation*}
\mathcal{I}_2^M =   \bigcup_{64 \leq A \leq A_0} \ \bigcup_{\tiny{\substack{(j_1,j) \in J_{A}^{\mathcal{I}_2^M}\\ 16 \leq |j_1 - j|\leq 32}}}
{\mathfrak{D}}_{j_1}^A \cross {\mathfrak{D}}_{j}^A \cup
\bigcup_{\tiny{\substack{(j_1,j) \in J_{A_0}^{\mathcal{I}_2^M}\\|j_1 - j|\leq 16}}}
{\mathfrak{D}}_{j_1}^{A_0} \cross {\mathfrak{D}}_{j}^{A_0}.
\end{equation*}
Recall that
\begin{equation*}
\mathcal{I}_{2} = \bigcup_{2^{11} \leq M < M_0} \mathcal{I}_{2}^M \, \cup \,
\left( {\mathfrak{D}}_{{2^{-2}M_0 \times 3}}^{M_0} \times {\mathfrak{D}}_{2^{-2}M_0 \times 3}^{M_0} \right).
\end{equation*}
Hence, by using the following notation for simplicity
\begin{equation*}
I_A = \Bigl|\int_{**}{  \ha{v}_{{N_2, L_2}}(\tau_2, \xi_2, \eta_2)
\ha{u}_{N_1, L_1}|_{\tilde{\mathfrak{D}}_{j_1}^A}(\tau_1, \xi_1, \eta_1)
\ha{w}_{N_0, L_0}|_{\tilde{\mathfrak{D}}_{j}^A}(\tau, \xi, \eta)
}
d\sigma_1 d\sigma \Bigr|,
\end{equation*}
we have
\begin{align*}
& \textnormal{(LHS) of \eqref{prop3.14-1}} \\
\lesssim & \sum_{2^{11} \leq M \leq M_0}  \sum_{2^{25} \leq A \leq A_0}
\sum_{\tiny{\substack{(j_1,j) \in J_{A}^{\mathcal{I}_2^M}\\ 16 \leq |j_1 - j|\leq 32}}} N_1 M^{-1} I_A
+ \sum_{2^{11} \leq M \leq M_0} \sum_{\tiny{\substack{(j_1,j) \in J_{A_0}^{\mathcal{I}_2^M}\\|j_1 - j|\leq 16}}}
N_1 M^{-1}I_{A_0}  \\
 + & N_1 M_0^{-1}
\Bigl|\int_{**}{  \ha{v}_{{N_2, L_2}}(\tau_2, \xi_2, \eta_2)
\ha{u}_{N_1, L_1}|_{\tilde{\mathfrak{D}}_{{2^{-2}M_0 \times 3}}^{M_0} }(\tau_1, \xi_1, \eta_1)
\ha{w}_{N_0, L_0}|_{\tilde{\mathfrak{D}}_{2^{-2}M_0 \times 3}^{M_0}}(\tau, \xi, \eta)
}
d\sigma_1 d\sigma \Bigr|.
\end{align*}
For the first term, by Proposition \ref{prop3.16}, we get
\begin{align*}
& \sum_{2^{25} \leq A \leq A_0}
\sum_{\tiny{\substack{(j_1,j) \in J_{A}^{\mathcal{I}_2^M}\\ 16 \leq |j_1 - j|\leq 32}}} N_1 M^{-1} I_A \\
\lesssim & \sum_{2^{25} \leq A \leq A_0}
\sum_{\tiny{\substack{(j_1,j) \in J_{A}^{\mathcal{I}_2^M}\\ 16 \leq |j_1 - j|\leq 32}}}
A^{\frac{1}{2}} M^{-\frac{1}{2}}N_1^{-1} (L_0 L_1 L_2)^{\frac{1}{2}} \|\ha{u}_{N_1, L_1}|_{\tilde{\mathfrak{D}}_{j_1}^A} \|_{L^2}
\| \ha{v}_{N_2, L_2}\|_{L^2}
\|\ha{w}_{{N_0, L_0}}|_{\tilde{\mathfrak{D}}_{j}^A} \|_{L^2}\\
\lesssim & \ {A_0}^{\frac{1}{2}} M^{-\frac{1}{2}}N_1^{-1} (L_0 L_1 L_2)^{\frac{1}{2}} \|\ha{u}_{N_1, L_1} \|_{L^2}
\| \ha{v}_{N_2, L_2}\|_{L^2}
\|\ha{w}_{{N_0, L_0}} \|_{L^2}.
\end{align*}
Thus, if we choose $A_0$ as the minimal dyadic number which is greater than $N_1^{3/2}$, the first term is bounded by (RHS) of \eqref{prop3.14-1}. Next, it is easily observed that
\begin{equation*}
\max(| \, |(\xi_1,\eta_1)| - |(\xi,  \eta)| \, |, \,
| \, |(\xi_1,\eta_1)| - |(\xi_2, \eta_2)| \, |, \,
| \, |(\xi,\eta)| - |(\xi_2, \eta_2)| \, |) \geq 2^{-3} N_1,
\end{equation*}
where $(\xi_2,\eta_2)=(\xi_1+\xi, \eta_1+\eta)$ and $(\xi_1,\eta_1)\times (\xi,\eta) \in
{\mathfrak{D}}_{j_1}^{A_0} \cross {\mathfrak{D}}_{j}^{A_0}$ with $|j_1-j| \leq 16$.
This implies that we can use one of \eqref{bilinearStrichartz-7}-\eqref{bilinearStrichartz-9}
in Proposition \ref{prop3.15} and we have
\begin{align*}
& \sum_{2^{11} \leq M \leq M_0} \sum_{\tiny{\substack{(j_1,j) \in J_{A_0}^{\mathcal{I}_2^M}\\|j_1 - j|\leq 16}}}
N_1 M^{-1}I_{A_0}  \\
\lesssim &\sum_{2^{11} \leq M \leq M_0} \sum_{\tiny{\substack{(j_1,j) \in J_{A_0}^{\mathcal{I}_2^M}\\|j_1 - j|\leq 16}}}
{A_0}^{-\frac{1}{2}} M^{-\frac{1}{2}}N_1^{\frac{1}{2}} (L_0 L_1 L_2)^{\frac{1}{2}} \|\ha{u}_{N_1, L_1}|_{\tilde{\mathfrak{D}}_{j_1}^{A_0}} \|_{L^2}
\| \ha{v}_{N_2, L_2}\|_{L^2}
\|\ha{w}_{{N_0, L_0}}|_{\tilde{\mathfrak{D}}_{j}^{A_0}} \|_{L^2}\\
\lesssim & \ \textnormal{(RHS) of \eqref{prop3.14-1}}.
\end{align*}
We now set $M_0 = N_1$. By the Strichartz estimate \eqref{Strichartz-1} with $p=q=4$, we can easily confirm that the last term is also bounded by \textnormal{(RHS) of \eqref{prop3.14-1}}.
\end{proof}
Lastly, we consider the case (I) $(\xi_1,\eta_1)\times (\xi,\eta) \subset \mathcal{I}_1$. By symmetry of $(\xi_1,\xi)$ and $(\eta_1,\eta)$, it suffices to show the following estimate.
\begin{prop}\label{prop3.17}
Assume \textnormal{(i)}, \textnormal{(ii)'} in \textit{Case} \textnormal{\ref{case-2}}
and $\min(N_0, N_1, N_2) = N_2$.
\begin{equation}
\begin{split}
&
\Bigl|\int_{**}{  \ha{v}_{{N_2, L_2}}(\tau_2, \xi_2, \eta_2)
\ha{u}_{N_1, L_1}|_{\tilde{\mathfrak{D}}_{0}^{2^{11}}} (\tau_1, \xi_1, \eta_1)
\ha{w}_{N_0, L_0}|_{\tilde{\mathfrak{D}}_{0}^{2^{11}}} (\tau, \xi, \eta)
}
d\sigma_1 d\sigma \Bigr|\\
& \qquad \qquad \qquad \qquad
\lesssim  N_1^{-1} N_2^{-\frac{1}{4}} L_0^{\frac{1}{4}} (L_1 L_2)^{\frac{1}{2}} \|\ha{u}_{N_1, L_1}\|_{L^2}
\| \ha{v}_{N_2, L_2}\|_{L^2}
\|\ha{w}_{{N_0, L_0}}\|_{L^2},\label{prop3.17-1}
\end{split}
\end{equation}
where $d \sigma_1 = d\tau_1 d \xi_1 d \eta_1$, $d \sigma = d\tau d \xi d \eta$
 and $**$ denotes $(\tau_2, \xi_2, \eta_2) = (\tau_1 + \tau, \xi_1+ \xi, \eta_1 + \eta).$
\end{prop}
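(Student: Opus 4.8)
The plan is to follow the scheme of the previous subsections — bilinear Strichartz estimates, the nonlinear Loomis--Whitney inequality of Proposition \ref{prop2.7}, and a Whitney-type decomposition — but now with a \emph{two-parameter} decomposition, since region (I) carries two genuinely independent degeneracies. As usual we may assume $\ha{u}_{N_1,L_1}$, $\ha{v}_{N_2,L_2}$, $\ha{w}_{N_0,L_0}$ are non-negative and the integrand has non-empty support.

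First I would extract the geometry. Since $(\xi_1,\eta_1),(\xi,\eta)\in\tilde{\mathfrak{D}}_0^{2^{11}}$, the $\eta$-components are small, $\max(|\eta_1|,|\eta|)\lesssim 2^{-8}N_1$, while $|\xi_1|\sim|\xi|\sim N_1$; combining this with assumption (i) of \textit{Case} \ref{case-2} and $3L_{012}^{\max}\gtrsim|\Phi(\xi_1,\eta_1,\xi,\eta)|=|\xi_1\xi(\xi_1+\xi)+\eta_1\eta(\eta_1+\eta)|$ forces $|\xi_1+\xi|=|\xi_2|\lesssim 2^{-20}N_1$, hence (otherwise the integrand vanishes) $N_0\sim N_1$ and $N_2\lesssim 2^{-7}N_1$. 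Writing $\xi_1=-\xi+\xi_2$ one then computes, exactly as in the proof of Proposition \ref{prop3.4}, that the transversality determinant of the three characteristic surfaces $\tau=\xi^3+\eta^3$ is, up to the harmless weights $\LR{(\xi_1,\eta_1)}$, $\LR{(\xi_2,\eta_2)}$, comparable to $N_1^{-2}\,|\xi_1\eta-\xi\eta_1|\,|\xi_1\eta+\xi\eta_1+2(\xi_1\eta_1+\xi\eta)|$, and that in region (I) the first factor is controlled (up to a harmless shift) by $N_1|\eta_2|$ and the second by $N_1|\eta_1-\eta|$, so that the transversality is $\sim N_1^{-2}|\eta_2|\,|\eta_1-\eta|$. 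The crucial new point, as anticipated in the text, is that $|\eta_2|$ and $|\eta_1-\eta|$ can be small \emph{independently}: the angular localization $\mathfrak{D}_{j_1}^A\times\mathfrak{D}_j^A$ resolves only $|\eta_2|\sim N_1|\sin\angle((\xi_1,\eta_1),(\xi,\eta))|$, while $|\eta_1-\eta|$ needs its own decomposition.

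The core construction is therefore a Whitney-type decomposition indexed by two dyadic scales $A$ and $B$: $A$ governs the angular localization of $(\xi_1,\eta_1)$ and $(\xi,\eta)$ (hence the size of $|\eta_2|$), $B$ governs the localization of the difference variable $\eta_1-\eta$, and one groups pairs of cells by the size of the product of the two transversality factors, in the spirit of the sets $Z_A$, $Q_A$, $\widehat{Z}_A$ above. On the resolved part — where the transversality is $\gtrsim (AB)^{-1}$ — I would run the scaling-and-graph argument of Proposition \ref{prop3.4}: rescale by $(N_1^3,N_1,N_1)$, verify the H\"older bound on the normals $\mathfrak{n}_i$, check $\textnormal{diam}(S_i)\lesssim d$ with $d\sim(AB)^{-1}$, and apply Proposition \ref{prop2.7} to obtain a per-cell bound of the shape $(AB)^{1/2}N_1^{-2}(L_0L_1L_2)^{1/2}$ times the three $L^2$ norms. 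On the unresolved remainder I would instead use an adapted bilinear Strichartz estimate in the spirit of Proposition \ref{prop3.12}: since $\theta_1\approx 0$ gives $|\cos\theta_1+\sin\theta_1|\sim 1$ one still has $|\partial_{r_1}\Phi|\gtrsim N_1^2$, so the relevant set $E(\tau_2,\xi_2,\eta_2)$ for an $(A,B)$-cell has measure $\lesssim (AN_1)^{-1}B^{-1}N_1\cdot L_1L_2$, which after Cauchy--Schwarz and pairing with the unlocalized factor closes the estimate. Summing cells requires the almost one-to-one correspondence of admissible cell pairs in both the $A$- and the $B$-variables, which I would establish by the same implicit-function/derivative analysis as in Lemma \ref{lemma3.6} and Lemma \ref{lemma3.8}; choosing the cutoffs $A_0\sim L_0^{-1/2}N_1^{3/2}$ (to convert the accumulated $A_0^{1/2}$ into the factor $L_0^{1/4}$) and running the $B$-sum up to the scale at which the difference variable can no longer be resolved then yields the stated weights $N_1^{-1}N_2^{-1/4}L_0^{1/4}(L_1L_2)^{1/2}$, the factor $N_2^{-1/4}$ coming from the fact that on the deeply collapsed cells $N_2$ is correspondingly small.

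I expect the main obstacle to be precisely the doubly-degenerate configuration $\eta_1\approx\eta\approx\eta_2\approx 0$ with $\xi_1\approx-\xi$ — the analogue, now sitting on the $\xi$-axis, of the exceptional sets $\mathcal{K}\cup\mathcal{K}'$ of Case 2 — on which \emph{both} transversality factors vanish, Loomis--Whitney is unavailable at every fixed scale, and naive summation over cells loses a power of the scale. Handling it amounts to showing that along this collapse $N_2$ is forced to be small enough that the $v$-factor occupies only $O(1)$ cells, so that a direct bilinear Strichartz estimate with $v$ unlocalized still closes the bound with the claimed $N_2^{-1/4}$ gain; carrying out the two-parameter almost-orthogonality count near this collapse, and bookkeeping the interaction of the $A$- and $B$-sums, is where the bulk of the work lies.
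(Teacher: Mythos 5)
Your geometric analysis is accurate: in $\mathcal{I}_1$ both $|\eta_1|,|\eta|\ll N_1$, the low-modulation hypothesis forces $|\xi_1+\xi|\ll N_1$ so $N_0\sim N_1$ and $N_2\ll N_1$, and the transversality determinant factors roughly as $N_1^{-2}\cdot N_1|\eta_2|\cdot N_1|\eta-\eta_1|$. Identifying the need for a second decomposition parameter beyond the angular scale, and foreseeing the doubly-degenerate collapse as the hard case, is also right. But there is a concrete gap in the way you plan to run the nonlinear Loomis--Whitney step, and it is precisely the point where the paper has to do something new.

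You propose to bound the per-cell contribution by applying Proposition \ref{prop2.7} after only the isotropic rescaling $(\tau,\xi,\eta)\mapsto(N_1^3\tau,N_1\xi,N_1\eta)$, ``checking $\textnormal{diam}(S_i)\lesssim d$ with $d\sim(AB)^{-1}$.'' That check fails. After the $N_1$-rescaling, the diameter of a rectangular cell of dimensions $A^{-1}N_1\times B^{-1}N_1$ is $\max(A^{-1},B^{-1})$, while the transversality is of size $(AB)^{-1}$; so $\textnormal{diam}(S_i)\gg d$ whenever both $A$ and $B$ are large, and Proposition \ref{prop2.7} is simply not applicable. Shrinking the cells to diameter $(AB)^{-1}$ makes the cell count blow up and the sum no longer closes. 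The paper resolves exactly this tension with an \emph{anisotropic} rescaling: in Proposition \ref{prop3.19} one further substitutes $\eta\mapsto K^{1/2}\eta$, and in Proposition \ref{prop3.22} one substitutes $\eta\mapsto A^{1/2}\eta$, together with correspondingly stretched rectangular tiles ($\mathcal{T}_k^{A,d}$ of size $A^{-3/2}d^{-1}N_1\times A^{-1}d^{-1}N_1$); after this the surfaces $S_i^K$, $S_i^A$ have diameter comparable to the (rescaled) transversality, and the H\"older and transversality bounds \eqref{normals00-prop3.19}--\eqref{trans-prop3.22} can actually be verified. This step has no analogue in your proposal and cannot be bypassed: near the $\xi$-axis the curvature of $\tau=\xi^3+\eta^3$ degenerates in $\eta$, and the convolution estimate only recovers the $1/\sqrt d$ loss after the frequencies are rebalanced.

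Two smaller structural points. First, the paper does not in fact run a two-parameter Whitney sum over both scales: it splits into $N_2\lesssim L_0^{1/3}$ (where a single bilinear Strichartz estimate at angular scale $N_1/N_2$ already closes the bound), $N_2\gg L_0^{1/3}$ with $|\xi_2|\gtrsim N_2$ (where $|\Phi|\gtrsim N_1^2 N_2$ gives a direct Strichartz argument), and the genuinely resonant case $N_2\gg L_0^{1/3}$, $|\xi_2|\ll N_2$, in which the angular scale is \emph{forced} to be $A\sim N_1/N_2$ and the only Whitney sum is in the second parameter $d$. Second, and as a consequence, the bookkeeping of the final weights is distributed differently from what you describe: the factor $N_2^{-1/4}$ arises from $A^{1/4}\sim(N_1/N_2)^{1/4}$ (not from counting collapsed cells), while the factor $L_0^{1/4}$ comes from the $d$-Whitney cutoff $d_0\sim A^{-3/2}N_1^{3/2}L_0^{-1/2}$ in Proposition \ref{prop3.20} (together with the constraint $A\leq N_1 L_0^{-1/3}$ that guarantees $d_0\geq 2^{20}$), rather than from a cutoff $A_0\sim L_0^{-1/2}N_1^{3/2}$ in the angular variable.
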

\begin{defn}
Let $A \geq 2^{25}$ and $K$ be dyadic which satisfy $2^{10} \leq K \leq 2^{-10}A$. We define that
\begin{align*}
\mathfrak{J}_A^K & = \Bigl\{ j \in \N \ | \ \frac{A}{K} \leq j \leq 2 \frac{A}{K}, \quad A- 2 \frac{A}{K} \leq j \leq A-  \frac{A}{K} \Bigr\},\\
\mathfrak{J}_A & = \left\{ j \in \N \ | \ 0 \leq j \leq 2^{10}, \quad A- 2^{10}  \leq j \leq A- 1 \right\}.
\end{align*}
\end{defn}
\begin{prop}\label{prop3.18}
Assume \textnormal{(i)}, \textnormal{(ii)'} in \textit{Case} \textnormal{\ref{case-2}}
and $\min(N_0, N_1, N_2) = N_2.$ Let $A \geq 2^{25}$ be dyadic, $|j_1-j| \leq 32$ and
\begin{equation*}
\left( {\mathfrak{D}}_{j_1}^A \times {\mathfrak{D}}_{j}^A \right) \subset \mathcal{I}_1.
\end{equation*}
Then we have
\begin{align}
\begin{split}\label{bilinearStrichartz-11}
& \Bigl\| \chi_{G_{N_1, L_1} \cap \tilde{\mathfrak{D}}_{j_1}^A} \int \ha{w}_{N_0, L_0}|_{\tilde{\mathfrak{D}}_{j}^A}
(\tau, \xi, \eta)
\ha{v}_{N_2, L_2} (\tau_1+ \tau, \xi_1+\xi, \eta_1+ \eta) d\sigma \Bigr\|_{L_{\xi_1, \eta_1, \tau_1}^2} \\
& \qquad \qquad \qquad \qquad \qquad
\lesssim (A N_1 )^{-\frac{1}{2}} (L_0 L_2)^{\frac{1}{2}}
\|\ha{w}_{N_0, L_0}|_{\tilde{\mathfrak{D}}_{j}^A} \|_{L^2}
\|\ha{v}_{N_2, L_2}\|_{L^2},
\end{split}\\
\begin{split}\label{bilinearStrichartz-12}
& \Bigl\| \chi_{G_{N_0,L_0} \cap \tilde{\mathfrak{D}}_{j}^A} \int \ha{v}_{N_2, L_2} (\tau_1+ \tau, \xi_1+\xi, \eta_1+ \eta)  \ha{u}_{N_1, L_1}|_{\tilde{\mathfrak{D}}_{j_1}^A} (\tau_1, \xi_1, \eta_1) d \sigma_1 \Bigr\|_{L_{\xi, \eta, \tau}^2} \\
& \qquad \qquad \qquad \qquad \qquad
\lesssim (A N_1 )^{-\frac{1}{2}} (L_1 L_2)^{\frac{1}{2}} \|\ha{v}_{N_2, L_2} \|_{L^2}
\|\ha{u}_{N_1, L_1}|_{\tilde{\mathfrak{D}}_{j_1}^A} \|_{L^2} .
\end{split}
\end{align}
In addition to the above assumptions,\\
\textnormal{(1)} assume $j_1 \in \mathfrak{J}_A^K$, then we have
\begin{align}
& \Bigl\| \chi_{G_{N_2, L_2}} \int \ha{u}_{N_1, L_1}|_{\tilde{\mathfrak{D}}_{j_1}^A} (\tau_1, \xi_1, \eta_1) \ha{w}_{N_0, L_0}|_{\tilde{\mathfrak{D}}_{j}^A} (\tau_2 - \tau_1, \xi_2-\xi_1, \eta_2- \eta_1) d\sigma_1
\Bigr\|_{L_{\xi_2, \eta_2, \tau_2}^2} \notag \\
& \qquad \qquad \qquad \qquad \qquad
\lesssim K^{\frac{1}{4}} N_1^{-\frac{1}{2}} (L_0 L_1)^{\frac{1}{2}} \|\ha{u}_{N_1, L_1}|_{\tilde{\mathfrak{D}}_{j_1}^A}\|_{L^2}
\|\ha{w}_{N_0, L_0}|_{\tilde{\mathfrak{D}}_{j}^A} \|_{L^2}.\label{bilinearStrichartz-13}
\end{align}
\textnormal{(2)} Assume $j_1 \in \mathfrak{J}_A$ and $16 \leq |j_1-j| \leq 32$, then we have
\begin{align}
& \Bigl\| \chi_{G_{N_2, L_2}} \int \ha{u}_{N_1, L_1}|_{\tilde{\mathfrak{D}}_{j_1}^A} (\tau_1, \xi_1, \eta_1) \ha{w}_{N_0, L_0}|_{\tilde{\mathfrak{D}}_{j}^A} (\tau_2 - \tau_1, \xi_2-\xi_1, \eta_2- \eta_1) d\sigma_1
\Bigr\|_{L_{\xi_2, \eta_2, \tau_2}^2} \notag \\
& \qquad \qquad \qquad \qquad \qquad
\lesssim A^{\frac{1}{4}} N_1^{-\frac{1}{2}} (L_0 L_1)^{\frac{1}{2}} \|\ha{u}_{N_1, L_1}|_{\tilde{\mathfrak{D}}_{j_1}^A}\|_{L^2}
\|\ha{w}_{N_0, L_0}|_{\tilde{\mathfrak{D}}_{j}^A} \|_{L^2}.\label{bilinearStrichartz-14}
\end{align}
\end{prop}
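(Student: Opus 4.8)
All four inequalities are bilinear Strichartz estimates of the same type as Propositions \ref{prop3.2}, \ref{prop3.12} and \ref{prop3.15}, so the plan is to imitate those proofs: square the left-hand side, apply the Cauchy--Schwarz inequality pointwise to the convolution, and use Plancherel's theorem to reduce each estimate to an $L^\infty$ bound for the measure of the relevant overlap set $E$ (the set of frequencies of the first factor which, together with the fixed output frequency, place the second factor on its own frequency--modulation set). Before estimating $|E|$ I would record the geometry forced by the hypotheses $(\mathfrak D_{j_1}^A\times\mathfrak D_j^A)\subset\mathcal I_1$, $|j_1-j|\le32$ and $\min(N_0,N_1,N_2)=N_2$. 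By the symmetry in $(\xi_i,\eta_i)$ we may assume both sectors lie near the $\xi$-axis; combined with $|j_1-j|\le32$ this forces $(\xi_1,\eta_1)$ and $(\xi,\eta)$ to be almost anti-parallel, so $N_0\sim N_1$, $|\xi_1|\sim|\xi|\sim N_1$ and $|\eta_1|,|\eta|\ll N_1$, while Lemma \ref{lemma3.11} (applicable since $\mathcal I_1\cap\mathcal I_2=\emptyset$) together with Remark \ref{rem3.5} gives $N_2\le2^{-14}N_1$; in particular $|\xi_2|\le|(\xi_2,\eta_2)|=N_2\ll N_1$.

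\textbf{The estimates \eqref{bilinearStrichartz-11} and \eqref{bilinearStrichartz-12}.} These I would prove first and directly, by slicing $E$ into three one-dimensional pieces. The two modulation conditions confine the time variable to an interval whose length is the minimum of the two relevant $L_i$'s. Their difference, once the time variable is eliminated, equals $3\Phi$ up to an additive constant, and a direct computation gives $\partial_\xi(3\Phi)=3\xi_1(\xi_1+2\xi)=3\xi_1(\xi_2+\xi)$ for \eqref{bilinearStrichartz-11} and the analogous $\partial_{\xi_1}(3\Phi)=3\xi(\xi+2\xi_1)$ for \eqref{bilinearStrichartz-12}; in both cases this has size $\sim N_1^2$ because $|\xi|\sim N_1\gg N_2\ge|\xi_2|$, which confines the remaining horizontal variable to a set of measure $\lesssim N_1^{-2}L_{012}^{\max}$. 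Finally the angular localization to $\tilde{\mathfrak D}_{j}^A$ (resp.\ $\tilde{\mathfrak D}_{j_1}^A$), together with $|\xi|\sim N_1$, confines the vertical variable to an interval of length $\sim N_1A^{-1}$. Multiplying the three factors, $|E|\lesssim(AN_1)^{-1}L_0L_2$ (resp.\ $(AN_1)^{-1}L_1L_2$), which is \eqref{bilinearStrichartz-11} (resp.\ its dual \eqref{bilinearStrichartz-12}).

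\textbf{The refined estimates \eqref{bilinearStrichartz-13} and \eqref{bilinearStrichartz-14}.} Here the left-hand side is localized to the low-frequency output set $G_{N_2,L_2}$, and the transversality used above degenerates: along the common near-axis direction, $\Phi$ loses one order of non-degeneracy in the long direction of the angular sector. The plan is to exploit the extra localization $j_1\in\mathfrak J_A^K$ (resp.\ $j_1\in\mathfrak J_A$), which pins the angular distance of $(\xi_1,\eta_1)$ from the $\xi$-axis to be $\sim K^{-1}$ (resp.\ $\lesssim2^{10}A^{-1}$) and hence $|\eta_1|\sim N_1K^{-1}$ (resp.\ $|\eta_1|\lesssim N_1A^{-1}$). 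Writing $(\xi_1,\eta_1)=r(\cos\theta,\sin\theta)$, the time-slice again has length $\min(L_0,L_1)$, the sector $\mathfrak D_{j_1}^A$ confines $\theta$ to an arc of length $\sim A^{-1}$, and the radial derivative of $\Phi(\xi_1,\eta_1,\xi_2-\xi_1,\eta_2-\eta_1)$, with the output frequency held fixed, equals, up to lower-order terms, $-2r(\xi_2\cos^2\theta+\eta_2\sin^2\theta)$. Combining this with the crude bound $|E|\le\min(L_0,L_1)\,|\operatorname{supp}\ha{u}_{N_1,L_1}\cap\tilde{\mathfrak D}_{j_1}^A|$, and, where the radial derivative degenerates, further decomposing the output frequency $(\xi_2,\eta_2)$ in the spirit of the $\mathbb{S}_\delta^\ell$ decomposition of Proposition \ref{prop3.16} so as to separate the regime $|\xi_2|\sim N_2$ from the one in which $\xi_2$ is small (which then forces $N_2\lesssim N_1K^{-1}$), one recovers $|E|\lesssim K^{1/2}N_1^{-1}L_0L_1$ (resp.\ $A^{1/2}N_1^{-1}L_0L_1$); the loss of $K^{1/2}$ (resp.\ $A^{1/2}$) relative to the non-degenerate factor $A^{-1}$ is precisely the price of the degenerate transversality near the axis.

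\textbf{Main obstacle.} The routine part is \eqref{bilinearStrichartz-11}--\eqref{bilinearStrichartz-12}: a short three-fold slicing once the anti-parallel geometry is in place. The difficulty is entirely in \eqref{bilinearStrichartz-13}--\eqref{bilinearStrichartz-14}, which are the technical heart of Case 3; near the $\xi$-axis the resonance function degenerates in the long direction of the angular sector, so the naive radial slicing is lossy precisely in the small-$N_2$ / small-$\xi_2$ regime, and the subdivisions $\mathfrak J_A^K$ and $\mathfrak J_A$ are introduced exactly to quantify this degeneracy. Obtaining the sharp exponents $K^{1/4}$ and $A^{1/4}$ requires carefully book-keeping the interplay between the modulation windows $L_0,L_1$, the angular width $A^{-1}$, the axis-distance $K^{-1}$, and the size of the output frequency $(\xi_2,\eta_2)$.
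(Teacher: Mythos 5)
Your set-up and the estimates \eqref{bilinearStrichartz-11}--\eqref{bilinearStrichartz-12} are fine: the Cartesian slicing (time variable to $\min(L_i,L_j)$, the $\xi$- or $\xi_1$-variable via $|\partial_\xi\Phi|=|\xi_1(\xi_1+2\xi)|\sim N_1^2$ in the anti-parallel regime, the remaining variable to $\sim A^{-1}N_1$ from the angular sector) reproduces the bound $|E|\lesssim(AN_1)^{-1}L_iL_j$, which is what the paper obtains by the polar slicing of Proposition~\ref{prop3.12}.

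There is a genuine gap in your treatment of \eqref{bilinearStrichartz-13}. The paper does \emph{not} use a measure estimate here; instead it observes that $j_1\in\mathfrak J_A^K$ together with $|j_1-j|\le 32$ and $N_2\le2^{-14}N_1$ force $|\xi_1|\sim|\xi_2-\xi_1|\sim N_1$ and $|\eta_1|\sim|\eta_2-\eta_1|\sim K^{-1}N_1$ on the supports, and then applies H\"older $\|fg\|_{L^2}\le\|f\|_{L^4}\|g\|_{L^4}$ and the Riesz-potential Strichartz estimate \eqref{Strichartz-1} with $p=q=4$, which gives $\|Q_{L_1}u_1\|_{L^4}\lesssim L_1^{1/2}N_1^{-1/8}(K^{-1}N_1)^{-1/8}\|u_1\|_{L^2}$ and the analogous bound for $w$, whose product is exactly $(L_0L_1)^{1/2}K^{1/4}N_1^{-1/2}$. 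Your proposal never mentions the Riesz potentials in \eqref{Strichartz-1}, which are the whole source of the $K^{1/4}$ gain. The radial-derivative substitute does not close: with $|\xi_2|\sim N_2$ the radial derivative is $\sim N_1N_2$, and slicing gives $|E|\lesssim(AN_2)^{-1}L_0L_1$; comparing against the required $K^{1/2}N_1^{-1}L_0L_1$ would need $N_2\gtrsim A^{-1}K^{-1/2}N_1$, which has no a priori justification (Lemma~\ref{lemma3.11} gives only the opposite-sided bound $N_2\lesssim A^{-1}N_1$). The crude support bound is likewise too weak.

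The sketch for \eqref{bilinearStrichartz-14} is directionally closer but differs in structure and is too vague to verify. The paper first reduces (using $16\le|j_1-j|\le32$) to $|\eta_1|\gtrsim A^{-1}N_1$, then splits: if $|\eta_2-\eta_1|\sim A^{-1}N_1$ it again invokes the Riesz-potential Strichartz (giving $A^{1/4}$ by the same mechanism as above); if $|\eta_2-\eta_1|\le2^{-10}A^{-1}N_1$ it proves the measure bound $|E|\lesssim A^{1/2}N_1^{-1}L_0L_1$ by splitting on $|\xi_2|\lessgtr A^{-3/2}N_1$ and using $|\partial_{\eta_1}\Phi|=|\eta_2(2\eta_1-\eta_2)|\gtrsim A^{-2}N_1^2$ in the small-$\xi_2$ case (together with almost orthogonality restricting $\xi_1$ to length $\sim A^{-3/2}N_1$) and $|\partial_{\xi_1}\Phi|=|\xi_2(2\xi_1-\xi_2)|\gtrsim A^{-3/2}N_1^2$ in the large-$\xi_2$ case. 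Your split on $|\xi_2|\sim N_2$ versus small $\xi_2$ does not match, and your radial-derivative formula is not the derivative the paper exploits; without the preliminary reduction on $|\eta_2-\eta_1|$ the argument is not complete.
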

\begin{proof}
First we recall that $N_2 \leq 2^{-14} N_1$ holds by Lemma \ref{lemma3.11}.
\eqref{bilinearStrichartz-11} and \eqref{bilinearStrichartz-12} are obtained by the same proof as that for
\eqref{bilinearStrichartz-5} and \eqref{bilinearStrichartz-6} in Proposition \ref{prop3.12}, respectively.
Thus we omit the proofs of them.
For \eqref{bilinearStrichartz-13}, since $j_1 \in \mathfrak{J}_A^K$ and $N_2 \leq 2^{-14} N_1$, it is easy to see
\begin{equation*}
|\xi_1| \sim |\xi_2- \xi_1| \sim N_1, \quad |\eta_1| \sim |\eta_2-\eta_1| \sim K^{-1} N_1
\end{equation*}
in (LHS) of \eqref{bilinearStrichartz-13}. Thus the Strichartz estimates \eqref{Strichartz-1} with
$p=q=4$ yield the claim.
Lastly, we prove \eqref{bilinearStrichartz-14}.
We deduce from $16 \leq |j_1-j|\leq 32$ that we may assume $|\eta_1| \geq A^{-1}N_1$ in (LHS) of \eqref{bilinearStrichartz-14} without loss of generality. If $|\eta_2-\eta_1| \sim A^{-1} N_1$, then by utilizing the Strichartz estimates \eqref{Strichartz-1} again, we get \eqref{bilinearStrichartz-14}.
Thus we assume that $|\eta_2-\eta_1| \leq 2^{-10} A^{-1} N_1$ in (LHS) of \eqref{bilinearStrichartz-14}.
Recall that it suffices to show the following estimate.
\begin{equation}
\sup_{(\tau_2, \xi_2, \eta_2) \in G_{N_2, L_2}}|E(\tau_2, \xi_2, \eta_2)| \lesssim A^{\frac{1}{2}} N_1^{-1} L_0 L_1, \label{est01-prop3.18}
\end{equation}
where
\begin{equation*}
E(\tau_2, \xi_2, \eta_2) := \{ (\tau_1, \xi_1, \eta_1) \in G_{N_1, L_1} \cap \tilde{\mathfrak{D}}_{j_1}^A
\, | \, (\tau_2-\tau_1, \xi_2- \xi_1, \eta_2-\eta_1) \in G_{N_0,L_0} \cap  \tilde{\mathfrak{D}}_{j}^A \}.
\end{equation*}
We decompose the proof of \eqref{est01-prop3.18} into the two cases, $|\xi_2| \leq  A^{-3/2}N_1$ and $|\xi_2| \geq  A^{-3/2}N_1$.\\
\underline{Case $|\xi_2| \leq  A^{-3/2} N_1$}

By the almost orthogonality, we may assume that $\xi_1$ is restricted to a set whose measure is
$\sim N_1 A^{-3/2}$. For $(\tau_1, \xi_1,\eta_1) \in E(\tau_2, \xi_2, \eta_2) $, a simple calculation yields
\begin{align*}
|3\Phi(\xi_1,\eta_1,-\xi_2,-\eta_2) & + \tau_2 -\xi_2^3 - \eta_2^3|  \lesssim \max (L_0,L_1),\\
 |\partial_{\eta_1}\Phi(\xi_1,\eta_1,-\xi_2,-\eta_2)| & =
|(\eta_1 + (\eta_2-\eta_1) )\, (\eta_1- (\eta_2-\eta_1))|\\
& \geq (|\eta_1| - |\eta_2-\eta_1| )^2 \gtrsim  A^{-2} N_1^2.
\end{align*}
For fixed $\xi_1$, the above inequalities imply that $\eta_1$ is confined to a set of measure
$\sim A^2 N_1^{-2} \max(L_0,L_1)$. Thus we have
\[
\int_{\xi_1} \int_{\eta_1} \int_{\tau_1} {\chi}_{E(\tau_2,\xi_2,\eta_2)} (\tau_1, \xi_1, \eta_1) d \tau_1  d \eta_1 d \xi_1
\lesssim  A^{\frac{1}{2}} N_1^{-1} (L_0 L_1)^{\frac{1}{2}}.
\]
\underline{Case $|\xi_2| \geq  A^{-3/2}N_1$}

In this case, we can observe that
\begin{equation*}
 |\partial_{\xi_1}\Phi(\xi_1,\eta_1,-\xi_2,-\eta_2)|  =
|\xi_2 \, (2\xi_1- \xi_2)| \geq N_1^2 A^{-\frac{3}{2}}.
\end{equation*}
This and $(\xi_1,\eta_1) \in {\mathfrak{D}}_{j_1}^A$ with $j_1 \in \mathfrak{J}_A$ give \eqref{est01-prop3.18} in the same manner as above.
\end{proof}
Next we show the crucial estimate under the conditions
$(\xi_1,\eta_1) \times (\xi,\eta) \in \mathfrak{D}_{j_1}^A \times \mathfrak{D}_{j}^A$ where
$j_1 \in \mathfrak{J}_A$ and $16 \leq |j_1-j| \leq 32$.
\begin{prop}\label{prop3.19}
Assume \textnormal{(i)}, \textnormal{(ii)'} in \textit{Case} \textnormal{\ref{case-2}}
and $\min(N_0, N_1, N_2) = N_2.$ Let $A \geq 2^{25}$ be dyadic,
$j_1 \in \mathfrak{J}_A^K$, $j$ satisfy $16 \leq |j_1-j| \leq 32$.
Then we have
\begin{equation}
\begin{split}
&
\Bigl|\int_{**}{  \ha{v}_{{N_2, L_2}}(\tau_2, \xi_2, \eta_2)
\ha{u}_{N_1, L_1}|_{\tilde{\mathfrak{D}}_{j_1}^A}(\tau_1, \xi_1, \eta_1)
\ha{w}_{N_0, L_0}|_{\tilde{\mathfrak{D}}_{j}^A}(\tau, \xi, \eta)
}
d\sigma_1 d\sigma \Bigr|\\
& \qquad \qquad \lesssim  (AK)^{\frac{1}{2}} N_1^{-2} (L_0 L_1 L_2)^{\frac{1}{2}} \|\ha{u}_{N_1, L_1}|_{\tilde{\mathfrak{D}}_{j_1}^A} \|_{L^2}
\| \ha{v}_{N_2, L_2}\|_{L^2}
\|\ha{w}_{{N_0, L_0}}|_{\tilde{\mathfrak{D}}_{j}^A}  \|_{L^2},\label{prop3.19-1}
\end{split}
\end{equation}
where $d \sigma_1 = d\tau_1 d \xi_1 d \eta_1$, $d \sigma = d\tau d \xi d \eta$
 and $**$ denotes $(\tau_2, \xi_2, \eta_2) = (\tau_1 + \tau, \xi_1+ \xi, \eta_1 + \eta).$
\end{prop}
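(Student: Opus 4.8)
The plan is to argue exactly as in the proofs of Propositions \ref{prop3.13} and \ref{prop3.16}, splitting the estimate according to the size of the resonance function $\Phi(\xi_1,\eta_1,\xi,\eta)=\xi_1\xi(\xi_1+\xi)+\eta_1\eta(\eta_1+\eta)$. First, since we are inside $\mathcal{I}_1$, the pair $(\xi_1,\eta_1)\times(\xi,\eta)$ does not belong to $\mathcal{I}_2$, so Lemma \ref{lemma3.11} and \textit{Remark} \ref{rem3.5} apply and give $N_2\leq 2^{-14}N_1$, hence $N_0\sim N_1$; moreover $j_1\in\mathfrak{J}_A^K$ and $16\leq|j_1-j|\leq 32$ force $|(\xi_1,\eta_1)|\sim|(\xi,\eta)|\sim N_1$ together with $|\eta_1|\sim|\eta|\sim K^{-1}N_1$, and the directions of $(\xi_1,\eta_1)$ and $(\xi,\eta)$ are nearly antiparallel with angular gap $\sim A^{-1}$. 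I would take $A^{-1}K^{-1/2}N_1^3$ as the threshold separating the two regimes.

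In the high-resonance regime $|\Phi(\xi_1,\eta_1,\xi,\eta)|\gtrsim A^{-1}K^{-1/2}N_1^3$ one has $L_{012}^{\max}\gtrsim A^{-1}K^{-1/2}N_1^3$, and I would close \eqref{prop3.19-1} purely by the bilinear Strichartz estimates of Proposition \ref{prop3.18}: apply \eqref{bilinearStrichartz-13} when $L_2=L_{012}^{\max}$ (this is the step that uses $j_1\in\mathfrak{J}_A^K$), \eqref{bilinearStrichartz-11} when $L_1=L_{012}^{\max}$, and \eqref{bilinearStrichartz-12} when $L_0=L_{012}^{\max}$, each followed by Cauchy--Schwarz in the remaining function. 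The only case that is tight is $L_2=L_{012}^{\max}$: there \eqref{bilinearStrichartz-13} contributes $K^{1/4}N_1^{-1/2}(L_0L_1)^{1/2}$, and $L_2^{-1/2}\leq A^{1/2}K^{1/4}N_1^{-3/2}$, whose product is precisely $(AK)^{1/2}N_1^{-2}(L_0L_1L_2)^{1/2}$; the other two subcases leave room to spare.

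In the low-resonance regime $|\Phi(\xi_1,\eta_1,\xi,\eta)|\lesssim A^{-1}K^{-1/2}N_1^3$ the three characteristic surfaces are nearly parallel, so I would rescale by $(\tau,\xi,\eta)\mapsto(N_1^3\tau,N_1\xi,N_1\eta)$ as in Proposition \ref{prop3.4} and reduce to a convolution estimate of the form $\|\tilde f|_{S_1}*\tilde g|_{S_2}\|_{L^2(S_3)}\lesssim (AK)^{1/2}\|\tilde f\|_{L^2(S_1)}\|\tilde g\|_{L^2(S_2)}$, to be obtained from the nonlinear Loomis--Whitney inequality, Proposition \ref{prop2.7}. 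The decisive quantity is the transversality $d$: repeating the determinant computation of Proposition \ref{prop3.13} one finds $|\det N(\lambda_1',\lambda_2',\lambda_3')|$ comparable (up to harmless factors coming from $\LR{(\xi_i',\eta_i')}$) to the product of the cross term $|\xi_1'\eta_2'-\xi_2'\eta_1'|/(|(\xi_1',\eta_1')|\,|(\xi_2',\eta_2')|)$, which is $\gtrsim A^{-1}$ because of the angular gap $\sim A^{-1}$ between the antiparallel $u$- and $w$-frequencies, and the null term $\xi_1'\eta_2'+\xi_2'\eta_1'+2(\xi_1'\eta_1'+\xi_2'\eta_2')=2\xi_1'\eta_1'-\xi_1'b-a\eta_1'+2ab$, where $(a,b)$ denotes the scaled $v$-frequency; since the smallness of $\Phi$ forces $N_2$ to be small, the leading term $2\xi_1'\eta_1'\sim K^{-1}$ dominates the corrections and the null term is $\gtrsim K^{-1}$. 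Thus $d\sim(AK)^{-1}$ and Proposition \ref{prop2.7} supplies the constant $(AK)^{1/2}$.

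The main obstacle is the compatibility condition $\textnormal{diam}(S_i)\lesssim d=(AK)^{-1}$ required by Proposition \ref{prop2.7}: the angular sector $\mathfrak{D}_{j_1}^A$ intersected with the annulus $|(\xi,\eta)|\sim N_1$ is too large, even after the near-resonance restriction, to meet it directly. I would handle this by a further Whitney-type, almost-orthogonal decomposition of the supports of $\ha u$, $\ha v$, $\ha w$ into boxes of side $\sim(AK)^{-1}N_1$ set in almost one-to-one correspondence across the three surfaces --- in the spirit of Lemmas \ref{lemma3.6} and \ref{lemma3.8} --- so that summing over these boxes costs no power of $K$; the H\"older and graph hypotheses of Proposition \ref{prop2.7} are then checked as in Proposition \ref{prop3.4}, using that the $\eta$-components being $\lesssim K^{-1}N_1$ makes the normals $\mathfrak{n}_i$ vary slowly on each box. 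Carrying out this almost-orthogonality, and in particular isolating the degenerate sub-configurations where the null term (or the low-frequency size $N_2$) degenerates further --- which must be dispatched separately, e.g. by one of \eqref{bilinearStrichartz-11}--\eqref{bilinearStrichartz-14} --- is where I expect the real work to lie; the high-resonance case above is routine.
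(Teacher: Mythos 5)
Your high-resonance case is correct and is exactly what the paper does: the threshold you choose on $|\Phi|$ is equivalent (via the contrapositive of the paper's computation) to the paper's threshold $|\xi_2|\geq 2^{10}A^{-1}K^{-1/2}N_1$, and the bookkeeping with \eqref{bilinearStrichartz-11}--\eqref{bilinearStrichartz-13} closes as you say. You also correctly identify the two key facts in the low-resonance case: the transversality is $d\sim (AK)^{-1}$ (cross term $\gtrsim A^{-1}$ from the angular separation, null term $\gtrsim K^{-1}$ from $\xi_1\eta_1\sim K^{-1}N_1^2$ dominating), and the diameter of the supports is too large for Proposition \ref{prop2.7} to apply directly.

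The gap is in your proposed resolution of that mismatch. After localizing $|\xi_2|\lesssim A^{-1}K^{-1/2}N_1$, the supports of $\ha u|_{\tilde{\mathfrak{D}}_{j_1}^A}$ and $\ha w|_{\tilde{\mathfrak{D}}_{j}^A}$ are rectangles of dimensions $\sim A^{-1}K^{-1/2}N_1\times A^{-1}N_1$, so covering them by boxes of side $(AK)^{-1}N_1$ produces $\sim K^{3/2}$ boxes on each side, and there is no mechanism forcing an almost one-to-one pairing between them. The convolution constraint does not help, because for fixed $(\xi_1,\eta_1)$ the admissible $(\xi,\eta)$ fill essentially the whole second rectangle (the output set $\{|\xi_2|\lesssim A^{-1}K^{-1/2}N_1,\ |(\xi_2,\eta_2)|\sim N_2\}$ can itself have dimensions $A^{-1}K^{-1/2}N_1\times A^{-1}N_1$ when $N_2\gtrsim A^{-1}N_1$); and the modulation constraint $|\Phi-c|\lesssim L^{\max}$ does not help either, since $\partial_\eta\Phi\sim K^{-2}N_1^2$ only confines $\eta$ to an interval of length $L^{\max}K^2N_1^{-2}$, which exceeds $A^{-1}N_1$ in the allowed range $L^{\max}\lesssim A^{-1}K^{-1/2}N_1^3$. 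The mechanism of Lemmas \ref{lemma3.6} and \ref{lemma3.8} that you invoke is unavailable here: those lemmas obtain orthogonality from \emph{simultaneous upper bounds} on $|\Phi|$ and $|F|$, whereas in this regime $|F|$ is bounded \emph{below} (that is the transversality), so only one constraint is active. A blind Cauchy--Schwarz over all pairs of boxes therefore loses a positive power of $K$, which destroys the target constant $(AK)^{1/2}$. The paper's actual device is different and avoids any further decomposition: after the isotropic rescaling it performs a second, anisotropic change of variables $(\tau,\xi,\eta)\mapsto(\tau,\xi,K^{1/2}\eta)$. This shrinks the diameters of the three surfaces to $\sim A^{-1}K^{-1/2}$ while the Jacobian factor $K^{3/2}$ in the determinant boosts the transversality to $|\det N|\gtrsim K^{3/2}\cdot A^{-1}\cdot K^{-3/2}\cdot K^{1/2}\cdots\sim A^{-1}K^{-1/2}$, so the diameter and transversality match and Proposition \ref{prop2.7} applies with $d\sim A^{-1}K^{-1/2}$, yielding the constant $A^{1/2}K^{1/4}$ which converts back to $(AK)^{1/2}$ after undoing the scaling. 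Without this (or some genuinely new orthogonality input), your low-resonance argument does not close.
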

\begin{proof}
We decompose the proof into the two cases, $|\xi_2| \geq  2^{10}A^{-1}K^{-1/2}N_1$ and
$|\xi_2| \leq 2^{10}A^{-1}K^{-1/2}N_1$.\\
\underline{Case $|\xi_2| \geq 2^{10}A^{-1}K^{-1/2}N_1$}

We show $|\Phi(\xi_1, \eta_1, \xi, \eta)| \gtrsim A^{-1} K^{-1/2} N_1^{3}$. This, combined with Proposition \ref{prop3.18}, immediately yields \eqref{prop3.19-1}. We write $(\xi_1,\eta_1) = (r_1 \cos \theta_1, r_1 \sin \theta_1)$,
$(\xi,\eta) = (r \cos \theta, r \sin \theta)$.
Since $j_1 \in \mathfrak{J}_A^K$ and $16 \leq |j_1-j| \leq 32$,
we may assume that $|\sin \theta_1| \sim K^{-1}$,
$|\cos \theta_1| \geq 4/5$,
$|\cos \theta_1 + \cos \theta| \leq 2^5 A^{-1} K^{-1}$ and $|\sin \theta_1 + \sin \theta| \leq 2^5 A^{-1}$.
We observe that
\begin{align*}
|\Phi(\xi_1, \eta_1, \xi, \eta)| & = |\xi_1 \xi (\xi_1 +\xi) + \eta_1 \eta (\eta_1+ \eta)|\\
& \geq 2^{-1} r_1 r |r_1-r| - 2^{10} A^{-1} K^{-1} N_1^3.
\end{align*}
Thus, it suffices to show $|\xi_1+\xi| \leq 2^{-2} |r_1-r|$. A simple calculation gives
\begin{equation*}
|\xi_1 + \xi| = |r_1 \cos \theta_1 + r \cos \theta| \leq \frac{4}{5} |r_1 - r|  + 2^5 A^{-1} K^{-1} N_1.
\end{equation*}
This and $|\xi_1+\xi| \geq 2^{10}A^{-1}K^{-1/2}N_1$ yield $|\xi_1+\xi| \leq 2^{-2} |r_1-r|$.\\
\underline{Case $|\xi_2| \leq 2^{10}A^{-1}K^{-1/2}N_1$}

Similarly to the proofs of Propositions \ref{prop3.4}, \ref{prop3.13}, \ref{prop3.16}, we will show \eqref{prop3.19-1}
by the nonlinear Loomis-Whitney inequality.
It is clear that the assumptions $|\xi_2| \leq 2^{10}A^{-1}K^{-1/2}N_1$ and $(\xi_1,\eta_1) \times (\xi, \eta) \in \mathfrak{D}_{j_1}^A \times \mathfrak{D}_{j}^A$ imply that $(\xi_2,\eta_2)$ is restricted to a rectangle set whose short side is parallel to $\xi$-axis and its length is $\sim A^{-1}K^{-1/2}N_1$, long side length is $\sim A^{-1}N_1 $. Thus, by the almost orthogonality, it suffices to prove the following estimate.
\begin{equation}
\begin{split}
& \Bigl|\int_{\R^3 \times \R^3} { h (\tau_1+\tau_2, \xi_1+\xi_2, \eta_1+\eta_2)
f(\tau_1, \xi_1, \eta_1)  g(\tau_2, \xi_2, \eta_2)
}
d\sigma_1 d\sigma_2 \Bigr|\\
& \qquad \qquad \qquad \qquad \lesssim (AK)^{\frac{1}{2}} N_1^{-2} (L_0 L_1 L_2)^{\frac{1}{2}} \|f \|_{L^2}
\|g\|_{L^2}
\| h \|_{L^2}.
\end{split}\label{est01-prop3.19}
\end{equation}
Here $f$, $g$, $h$ satisfy
\begin{equation*}
\operatorname{supp} f \subset G_{N_1, L_1} \cap \tilde{\mathfrak{R}}_1^{A}, \quad
\operatorname{supp} g \subset G_{N_0, L_0} \cap \tilde{\mathfrak{R}}_2^{A}, \quad
\operatorname{supp} h \subset G_{N_2, L_2} \cap \tilde{\mathfrak{R}}_3^{A},
\end{equation*}
where $\tilde{\mathfrak{R}}_1^{A}$, $\tilde{\mathfrak{R}}_2^{A}$, $\tilde{\mathfrak{R}}_3^{A}$ are prisms which are defined by using $\alpha_i$, $\beta_i \in \R$ $(i=1,2,3)$ as follows:
\begin{align*}
\tilde{\mathfrak{R}}_1^{A} & = \R \times \mathfrak{R}_1^{A}, \quad
\tilde{\mathfrak{R}}_2^{A} = \R \times \mathfrak{R}_2^{A}, \quad
\tilde{\mathfrak{R}}_3^{A} = \R \times \mathfrak{R}_3^{A},\\
\mathfrak{R}_1^{A} & = \{ (\xi,\eta) \,
|\, \alpha_1\leq \xi \leq \alpha_1+2^{-30}A^{-1}K^{-\frac{1}{2}}N_1 , \
\beta_1 \leq \eta \leq \beta_1 + 2^{-30}A^{-1}N_1\},\\
\mathfrak{R}_2^{A} & = \{ (\xi,\eta)  \,
|\, \alpha_2 \leq  \xi \leq \alpha_2+2^{-30}A^{-1}K^{-\frac{1}{2}}N_1 , \
\beta_2 \leq \eta \leq \beta_2 + 2^{-30}A^{-1} N_1\},\\
\mathfrak{R}_3^{A} & = \{ (\xi,\eta)  \,
|\, \alpha_3 \leq  \xi \leq \alpha_3+2^{-30}A^{-1}K^{-\frac{1}{2}}N_1 , \
\beta_3 \leq \eta \leq \beta_3 + 2^{-30}A^{-1} N_1\}.
\end{align*}
Here we choose $\alpha_i$, $\beta_i$ to satisfy
\begin{align*}
& |\alpha_3| \leq 2^{10}A^{-1}K^{-\frac{1}{2}}N_1, \quad |\beta_3| \sim K^{-1} N_1,\\
(\mathfrak{R}_1^{A}  + \mathfrak{R}_2^{A}& )  \cap \mathfrak{R}_3^{A} \not= \emptyset, \quad
\mathfrak{R}_1^{A} \cap \mathfrak{D}_{j_1}^A \not= \emptyset, \quad
\mathfrak{R}_2^{A} \cap \mathfrak{D}_{j}^A \not= \emptyset.
\end{align*}
By the same argument as in the proof of Proposition \ref{prop3.14}, \eqref{est01-prop3.19} reduces to the equation
\begin{equation}
\| \tilde{f} |_{S_1} * \tilde{g} |_{S_2} \|_{L^2(S_3)} \lesssim (AK)^{\frac{1}{2}}
\| \tilde{f} \|_{L^2(S_1)} \| \tilde{g} \|_{L^2(S_2)},\label{est02-prop3.19}
\end{equation}
where the notations $\tilde{f}$, $\tilde{g}$, $S_1$, $S_2$, $S_3$ are the same as in the proof of Proposition \ref{prop3.4}:
\begin{align*}
& \tilde{f}  (\tau_1, \xi_1 , \eta_1)  = f (N_1^3 \tau_1 , N_1 \xi_1, N_1 \eta_1), \\
& \tilde{g} (\tau_2, \xi_2, \eta_2)  = g (N_1^3 \tau_2, N_1 \xi_2, N_1 \eta_2), \\
& S_1  = \{ \phi_{\tilde{c_1}} (\xi, \eta) = (\xi^3 + \eta^3 + \tilde{c_1}, \, \xi, \, \eta) \in \R^3 \ | \
(N_1 \xi, N_1\eta) \in \mathfrak{R}_1^{A} \}, \\
& S_2  = \{ \phi_{\tilde{c_2}} (\xi, \eta) = (\xi^3 + \eta^3 + \tilde{c_2}, \, \xi, \, \eta) \in \R^3 \ | \ (N_1 \xi, N_1 \eta) \in \mathfrak{R}_2^{A} \},\\
& S_3  = \Bigl\{ (\psi (\xi,\eta), \xi, \eta) \in \R^3  \ | \  (N_1\xi, N_1 \eta) \in \mathfrak{R}_3^{A},
\  \psi (\xi,\eta) =  \xi^3 + \eta^3 + \frac{c_0'}{N_1^{3}} \Bigr\}.
\end{align*}
Note that \eqref{est02-prop3.19} can not be shown
by simply applying the nonlinear Loomis-Whitney inequality. This is because the hypersurfaces $S_1$, $S_2$, $S_3$ do not satisfy the necessary  diameter condition to apply the nonlinear Loomis-Whitney inequality.
To be more specific, the transversality of $S_1$, $S_2$, $S_3$ is comparable to $(A K)^{-1}$.
On the other hand, the diameters of them are comparable to $A^{-1}$.
To overcome this, we perform another scaling $(\tau,\xi,\eta) \to ( \tau , \,  \xi, \, K^{1/2} \eta)$ to define
\begin{align*}
 \tilde{f}_K (\tau_1, \xi_1 , \eta_1) & = \tilde{f} ( \tau_1 ,  \xi_1, K^{\frac{1}{2}} \eta_1), \\
 \tilde{g}_K (\tau_2, \xi_2, \eta_2) & = \tilde{g} ( \tau_2,  \xi_2, K^{\frac{1}{2}} \eta_2), \\
 \tilde{h}_K (\tau, \xi, \eta) & = \tilde{h} ( \tau,  \xi, K^{\frac{1}{2}} \eta).
\end{align*}
By using the above functions, \eqref{est02-prop3.19} can be rewritten as
\begin{equation}
\| \tilde{f}_K |_{S_1^K} * \tilde{g}_K |_{S_2^K} \|_{L^2(S_3^K)} \lesssim A^{\frac{1}{2}} K^{\frac{1}{4}}
\| \tilde{f}_K \|_{L^2(S_1^K)} \| \tilde{g}_K \|_{L^2(S_2^K)},\label{est03-prop3.19}
\end{equation}
where
\begin{align*}
& S_1^K  = \{ \phi_{\tilde{c_1}}^K (\xi, \eta) = (\xi^3 + K^{\frac{3}{2}}\eta^3 + \tilde{c_1}, \, \xi, \, \eta)  \ | \
(N_1 \xi, K^{\frac{1}{2}}N_1\eta) \in \mathfrak{R}_1^{A} \}, \\
& S_2^K  = \{ \phi_{\tilde{c_2}}^K (\xi, \eta) = (\xi^3 + K^{\frac{3}{2}} \eta^3 + \tilde{c_2}, \, \xi, \, \eta) \ | \ (N_1 \xi, K^{\frac{1}{2}} N_1 \eta) \in \mathfrak{R}_2^{A} \},\\
& S_3^K  = \Bigl\{ (\psi^K (\xi,\eta), \xi, \eta)   \ | \  (N_1\xi, K^{\frac{1}{2}} N_1 \eta) \in \mathfrak{R}_3^{A},
\  \psi^K (\xi,\eta) =  \xi^3 + K^{\frac{3}{2}} \eta^3 + \frac{c_0'}{N_1^{3}} \Bigr\}.
\end{align*}
We now show that $S_1^K$, $S_2^K$, $S_3^K$ satisfy the necessary conditions to use the nonlinear Loomis-Whitney inequality.
We see that $(N_1 \xi, K^{\frac{1}{2}}N_1\eta) \in \mathfrak{R}_i^{A}$ $(i=1,2,3)$ means that $(\xi,\eta)$ is confined to square set whose side length is $2^{-30} A^{-1} K^{-1/2}$ and
$|\xi| \leq 2$, $|\eta| \leq 2^{5} K^{-3/2}$.
This implies that $\textnormal{diam}(S_i) \leq 2^{-20} A^{-1} K^{-1/2}$.
Define ${\mathfrak{n}}_1(\lambda_1)$, ${\mathfrak{n}}_2(\lambda_2)$,
${\mathfrak{n}}_3(\lambda_3)$ denote unit normals on $\lambda_1 \in S_1^K$,
$\lambda_2 \in S_2^K$, $\lambda_3 \in S_3^K$, respectively. Let
\begin{equation*}
\lambda_1=\phi_{\tilde{c_1}}^K (\xi_1, \eta_1), \quad \lambda_2 =  \phi_{\tilde{c_2}}^K (\xi_2,\eta_2), \quad
\lambda_3 = (\psi^K (\xi,\eta), \xi,\eta).
\end{equation*}
We can write that
\begin{align*}
& {\mathfrak{n}}_1(\lambda_1) = \frac{1}{\sqrt{1+ 9 \xi_1^4 + 9 K^3\eta_1^4}}
\bigl(-1, \ 3  \xi_1^2, \ 3 K^{\frac{3}{2}} \eta_1^2 \bigr), \\
& {\mathfrak{n}}_2(\lambda_2) = \frac{1}{\sqrt{1+ 9 \xi_2^4 + 9 K^3\eta_2^4}}
\bigl(-1, \ 3  \xi_2^2, \ 3 K^{\frac{3}{2}} \eta_2^2 \bigr),\\
& {\mathfrak{n}}_3(\lambda_3) = \frac{1}{\sqrt{1+ 9 \xi^4 + 9 K^3\eta^4}}
\bigl(-1, \ 3  \xi^2, \ 3 K^{\frac{3}{2}} \eta^2 \bigr).
\end{align*}
Since $(N_1 \xi_i, K^{\frac{1}{2}}N_1\eta_i) \in \mathfrak{R}_i^{A}$, we can easily verify that the hepersurfaces $S_1^K$, $S_2^K$, $S_3^K$ satisfy the following.
\begin{equation}
\sup_{\lambda_i, \lambda_i' \in S_i^K} \frac{|\mathfrak{n}_i(\lambda_i) -
\mathfrak{n}_i(\lambda_i')|}{|\lambda_i - \lambda_i'|}
+ \frac{|\mathfrak{n}_i(\lambda_i) (\lambda_i - \lambda_i')|}{|\lambda_i - \lambda_i'|^2} \leq 2^{10}.\label{normals00-prop3.19}
\end{equation}
We choose $(\xi_1', \eta_1')$, $(\xi_2', \eta_2')$, $(\xi', \eta')$ to satisfy
\begin{align*}
& (\xi_1', \eta_1') + (\xi_2', \eta_2') = (\xi', \eta'), \\
\phi_{\tilde{c_1}}^K (\xi_1', \eta_1') & \in S_1^K, \ \ \phi_{\tilde{c_2}}^K (\xi_2', \eta_2')\in S_2^K, \ \
(\psi^K (\xi',\eta'), \xi', \eta') \in S_3^K.
\end{align*}
Let $\lambda_1' = \phi_{\tilde{c_1}}^K (\xi_1', \eta_1')$, $\lambda_2' = \phi_{\tilde{c_2}}^K (\xi_2', \eta_2')$,
$\lambda_3' = (\psi^K (\xi',\eta'), \xi',\eta')$.
For any $\lambda_1 \in S_1^K$, $\lambda_2 \in S_2^K$, $\lambda_3 \in S_3^K$, \eqref{normals00-prop3.19}
implies
\begin{align}
& |{\mathfrak{n}}_1(\lambda_1) - {\mathfrak{n}}_1(\lambda_1')|
\leq 2^{-10} A^{-1} K^{-\frac{1}{2}} ,\label{normal01-prop3.19}\\
& |{\mathfrak{n}}_2(\lambda_2) - {\mathfrak{n}}_2(\lambda_2')|
\leq 2^{-10} A^{-1} K^{-\frac{1}{2}} ,\label{normal02-prop3.19}\\
& |{\mathfrak{n}}_3(\lambda_3) - {\mathfrak{n}}_3(\lambda_3')|
\leq 2^{-10} A^{-1} K^{-\frac{1}{2}}.\label{normal03-prop3.19}
\end{align}
Lastly, we show that the hypersurfaces satisfy the desirable transversality condition. From \eqref{normal01-prop3.19}-\eqref{normal03-prop3.19}, it suffices to show
\begin{equation}
 |\textnormal{det} N(\lambda_1', \lambda_2', \lambda_3')| \geq  2^{-5} A^{-1} K^{-\frac{1}{2}} \label{trans-prop3.19}.
\end{equation}
We observe that
\begin{align*}
|\textnormal{det} N(\lambda_1', \lambda_2', \lambda_3')| \geq &
2^{-7}  \left|\textnormal{det}
\begin{pmatrix}
-1 & -1 & - 1 \\
 3 (\xi_1')^2  &  3 (\xi_2')^2  & 3 (\xi')^2 \\
 3 K^{\frac{3}{2}} (\eta_1')^2   & 3 K^{\frac{3}{2}} (\eta_2')^2  & 3 K^{\frac{3}{2}} (\eta')^2
\end{pmatrix} \right| \notag \\
\geq & 2^{-4} K^{\frac{3}{2}} |\xi_1' \eta_2' - \xi_2' \eta_1' | \,
| \xi_1' \eta_2' +  \xi_2' \eta_1' + 2 (\xi_1' \eta_1' + \xi_2' \eta_2')|.
\end{align*}
Hence, it suffices to show
\begin{align}
|\mathfrak{A}(\xi_1', \eta_1', \xi_2' ,\eta_2')| & :=  |\xi_1' \eta_2' - \xi_2' \eta_1' | \geq
A^{-1} K^{-\frac{1}{2}},\label{angular02-prop3.19}\\
|F(\xi_1', \eta_1', \xi_2' ,\eta_2')|&= | \xi_1' \eta_2' +  \xi_2' \eta_1' + 2 (\xi_1' \eta_1' + \xi_2' \eta_2')| \geq
 2^{-1}K^{-\frac{3}{2}}.\label{transversal02-prop3.19}
\end{align}
\eqref{angular02-prop3.19} and \eqref{transversal02-prop3.19} are equivalent to
\begin{align}
|\mathfrak{A}(N_1\xi_1', K^{\frac{1}{2}} N_1\eta_1', N_1 \xi_2' , K^{\frac{1}{2}} N_1 \eta_2')| &
= K^{\frac{1}{2}}N_1^2 |\xi_1' \eta_2' - \xi_2' \eta_1' | \geq
 A^{-1} N_1^2,\label{angular03-prop3.19}\\
|F(N_1\xi_1', K^{\frac{1}{2}} N_1\eta_1', N_1 \xi_2' , K^{\frac{1}{2}} N_1 \eta_2')|&
= K^{\frac{1}{2}} N_1^2 | \xi_1' \eta_2' +  \xi_2' \eta_1' + 2 (\xi_1' \eta_1' + \xi_2' \eta_2')| \notag \\
 & \geq 2^{-1} K^{-1} N_1^2,\label{transversal03-prop3.19}
\end{align}
respectively.
We deduce from $(N_1 \xi_1', K^{\frac{1}{2}} N_1 \eta_1') \in \mathfrak{R}_1^{A}$,
$(N_1 \xi_2', K^{\frac{1}{2}} N_1 \eta_2') \in \mathfrak{R}_2^{A}$,
$\mathfrak{R}_1^{A} \cap \mathfrak{D}_{j_1}^A \not= \emptyset$ and
$\mathfrak{R}_2^{A} \cap \mathfrak{D}_{j}^A \not= \emptyset$ that
\eqref{angular03-prop3.19} holds.
Letting $(\xi_1, \eta_1) \in \mathfrak{R}_1^{A}$ and
$(\xi_2,\eta_2) \in \mathfrak{R}_2^{A}$, \eqref{transversal03-prop3.19} is obtained as follows.
\begin{align*}
|F(\xi_1,\eta_1,\xi_2,\eta_2)| & =|\xi_1 \eta_2+ \xi_2 \eta_1 + 2(\xi_1 \eta_1 + \xi_2 \eta_2)|\\
& \geq 2| \xi_2 \eta_1 + \xi_1 \eta_1 + \xi_2 \eta_2| - |\xi_1 \eta_2- \xi_2 \eta_1| \\
& \geq 2|\xi_2 \eta_2| - 2|\xi_1 + \xi_2| |\eta_2| - 2^7 A^{-1} N_1^2 \\
& \geq K^{-1} N_1^2 -2^8 A^{-1} N_1^2 \\
& \geq 2^{-1}K^{-1} N_1^2.
\end{align*}
Here we used $|\xi_1 + \xi_2| \leq 2^{11} A^{-1} K^{-1/2} N_1$ which follows from
$(\mathfrak{R}_1^{A}  + \mathfrak{R}_2^{A} )  \cap \mathfrak{R}_3^{A}$. As a result, we get
\eqref{trans-prop3.19} which, combined with \eqref{normal01-prop3.19}-\eqref{normal03-prop3.19}, completes the proof of \eqref{est03-prop3.19}.
\end{proof}
We next consider the case $j_1 \in \mathfrak{J}$. The goal is to establish the following estimate.
\begin{prop}\label{prop3.20}
Assume \textnormal{(i)}, \textnormal{(ii)'} in \textit{Case} \textnormal{\ref{case-2}}
and $\min(N_0, N_1, N_2) = N_2.$ Let $A$ be dyadic which satisfy $2^{25} \leq A \leq N_1 L_0^{-1/3}$,
$j_1 \in \mathfrak{J}$, $j$ satisfy $16 \leq |j_1-j| \leq 32$.
Then we have
\begin{equation}
\begin{split}
&
\Bigl|\int_{**}{  \ha{v}_{{N_2, L_2}}(\tau_2, \xi_2, \eta_2)
\ha{u}_{N_1, L_1}|_{\tilde{\mathfrak{D}}_{j_1}^A}(\tau_1, \xi_1, \eta_1)
\ha{w}_{N_0, L_0}|_{\tilde{\mathfrak{D}}_{j}^A}(\tau, \xi, \eta)
}
d\sigma_1 d\sigma \Bigr|\\
& \qquad \qquad \lesssim  A^{\frac{1}{4}} N_1^{-\frac{5}{4}} L_0^{\frac{1}{4}} (L_1 L_2)^{\frac{1}{2}} \|\ha{u}_{N_1, L_1}|_{\tilde{\mathfrak{D}}_{j_1}^A} \|_{L^2}
\| \ha{v}_{N_2, L_2}\|_{L^2}
\|\ha{w}_{{N_0, L_0}}|_{\tilde{\mathfrak{D}}_{j}^A}  \|_{L^2},\label{prop3.20-1}
\end{split}
\end{equation}
where $d \sigma_1 = d\tau_1 d \xi_1 d \eta_1$, $d \sigma = d\tau d \xi d \eta$
 and $**$ denotes $(\tau_2, \xi_2, \eta_2) = (\tau_1 + \tau, \xi_1+ \xi, \eta_1 + \eta).$
\end{prop}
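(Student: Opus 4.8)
The plan is to follow the same architecture as in the proofs of Propositions \ref{prop3.13}, \ref{prop3.16} and \ref{prop3.19}: dichotomize on the size of the low frequency $|\xi_2|$, dispose of the high-modulation regime with the bilinear Strichartz estimates of Proposition \ref{prop3.18}, and treat the remaining regime with the nonlinear Loomis--Whitney inequality, Proposition \ref{prop2.7}, after suitable rescalings. First I would split according to whether $|\xi_2|\gtrsim A^{-3/2}N_1$ or $|\xi_2|\lesssim A^{-3/2}N_1$, exactly as in the proof of \eqref{bilinearStrichartz-14}. Throughout, one uses that $\min(N_0,N_1,N_2)=N_2$ forces $|(\xi_1,\eta_1)|\sim|(\xi,\eta)|\sim N_1$ with $(\xi,\eta)$ nearly antiparallel to $(\xi_1,\eta_1)$, and that $j_1\in\mathfrak J$ and $16\le|j_1-j|\le 32$ put $(\xi_1,\eta_1)$ and $(\xi,\eta)$ in two \emph{opposite} angular branches adjacent to the $\xi$-axis.

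For the regime $|\xi_2|\gtrsim A^{-3/2}N_1$, the identity $\Phi(\xi_1,\eta_1,\xi,\eta)=\xi_1\xi\xi_2+\eta_1\eta\eta_2$ together with $|\xi_1|,|\xi|\sim N_1$ and $|\xi_2-2\xi_1|\sim N_1$ gives $|\Phi(\xi_1,\eta_1,\xi,\eta)|\gtrsim A^{-3/2}N_1^3$, hence $L_{012}^{\max}\gtrsim A^{-3/2}N_1^3$. I would then bound the left-hand side of \eqref{prop3.20-1} by choosing, according to which of $L_0,L_1,L_2$ is maximal, the bilinear Strichartz estimate of Proposition \ref{prop3.18} that does \emph{not} carry the maximal modulation---namely \eqref{bilinearStrichartz-12} if $L_0=L_{012}^{\max}$, \eqref{bilinearStrichartz-11} if $L_1=L_{012}^{\max}$, \eqref{bilinearStrichartz-14} if $L_2=L_{012}^{\max}$---and pairing it with the remaining $L^2$ factor. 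In each of the three cases the gain $L_{012}^{\max}\gtrsim A^{-3/2}N_1^3$ combined with the hypotheses $A\le N_1L_0^{-1/3}$ and $L_0\le 2^{-100}N_1^3$ (Case \ref{case-2}, (i)) is precisely enough to produce the right-hand side of \eqref{prop3.20-1}.

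For the regime $|\xi_2|\lesssim A^{-3/2}N_1$ I would run the Loomis--Whitney scheme of Propositions \ref{prop3.4}, \ref{prop3.13}, \ref{prop3.16}, \ref{prop3.19}. One first decomposes $\xi_1$ almost orthogonally into intervals of length $\sim A^{-3/2}N_1$, so that $\xi_2=\xi_1+\xi$ is localized on the same scale; after the parabolic rescaling $(\tau,\xi,\eta)\mapsto(N_1^3\tau,N_1\xi,N_1\eta)$ the three characteristic surfaces have $\xi$-extent $\sim A^{-3/2}$ but $\eta$-extent $\sim A^{-1}$, so---exactly the difficulty met in Proposition \ref{prop3.19}---they fail the diameter hypothesis of Proposition \ref{prop2.7} and one must insert the anisotropic rescaling $\eta\mapsto A^{1/2}\eta$ to bring every diameter down to $\sim A^{-3/2}$. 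The geometric input is that the angular factor $|\xi_1\eta-\xi\eta_1|$ and the transversality factor $F(\xi_1,\eta_1,\xi,\eta)=\xi_1\eta+\xi\eta_1+2(\xi_1\eta_1+\xi\eta)$ are \emph{both} comparable to $N_1^2/A$: the first because $16\le|j_1-j|$, and the second because the two opposite-branch vectors have $\eta$-components of opposite sign, so that $\eta_1-\eta$---which, up to errors of size $A^{-5/2}N_1^2$ coming from $\xi_2$, controls $F$ in this regime---is $\sim(j_1+j)N_1/A\gtrsim N_1/A$ instead of being near $0$. Hence $|\det N|\sim A^{-3/2}$, Proposition \ref{prop2.7} applies with $d\sim A^{-3/2}$, and keeping track of the Jacobians of the two rescalings gives a bound of the form $AN_1^{-2}(L_0L_1L_2)^{1/2}$ times the $L^2$ norms, which dominates the right-hand side of \eqref{prop3.20-1} exactly because $A\le N_1L_0^{-1/3}$.

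The part I expect to be the main obstacle is concentrated in this last regime. It is not a single hard estimate but rather the need to arrange the two successive rescalings together with the almost-orthogonal decomposition of $\xi_1$ so that no spurious power of $A$ is lost, and to verify the non-degeneracy $|F|\sim N_1^2/A$ \emph{uniformly} over all admissible sub-tiles---this is the one point at which the $\mathfrak J$-regime genuinely differs from the $\mathfrak J_A^K$-regime of Proposition \ref{prop3.19}, since now $j_1$ does not by itself pin down $|\eta_1|$. Once the geometry is pinned down, the summation over $A$ in the Whitney decomposition and over the sub-tiles costs nothing by the almost orthogonality, and \eqref{prop3.20-1} follows.
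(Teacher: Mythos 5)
The proposed argument breaks down exactly at the point you flag as ``the main obstacle,'' and the breakdown is not merely a matter of bookkeeping: the claimed non-degeneracy $|F(\xi_1,\eta_1,\xi,\eta)|\sim N_1^2/A$ uniformly over the sub-tiles is \emph{false} in the $\mathfrak J_A$-regime, and the paper says so explicitly just before stating this proposition: ``there exists $(\xi_1,\eta_1)\times(\xi,\eta)\in\mathfrak D_{j_1}^A\times\mathfrak D_{j}^A$ such that $F(\xi_1,\eta_1,\xi,\eta)=0$.'' Your heuristic $F\approx\xi_1(\eta_1-\eta)$ with $\eta_1-\eta\gtrsim N_1/A$ is not reliable here: writing $(\xi,\eta)=(-\xi_1+\xi_2,-\eta_1+\eta_2)$ one gets $F=2\xi_1\eta_1-\xi_1\eta_2-\xi_2\eta_1+2\xi_2\eta_2$, and since for $j_1\in\mathfrak J_A$ the component $\eta_1$ ranges over an interval containing $0$ (rather than being pinned to a fixed dyadic scale as in $\mathfrak J_A^K$), the quantity $2\xi_1\eta_1-\xi_1\eta_2$ can cancel exactly. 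A single anisotropic rescaling followed by Proposition \ref{prop2.7} therefore cannot deliver a transversality $d\sim A^{-3/2}$ uniformly; the nonlinear Loomis--Whitney step simply does not apply on the whole region.

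What the paper does instead is insert an \emph{additional} Whitney-type decomposition in a new dyadic parameter $d$, over rectangle tiles $\mathcal T_k^{A,d}$ with short side $A^{-3/2}d^{-1}N_1$ and long side $A^{-1}d^{-1}N_1$ (these are adapted to the $\mathfrak J_A$-geometry, not the square tiles used elsewhere). On each tile pair one has either $|\Phi|\gtrsim A^{-3/2}d^{-1}N_1^3$ (Proposition \ref{prop3.21}, which uses the bilinear Strichartz estimates \eqref{bilinearStrichartz-11}, \eqref{bilinearStrichartz-12}, \eqref{bilinearStrichartz-14}) or $|F|\gtrsim A^{-1}d^{-1}N_1^2$ (Proposition \ref{prop3.22}, which applies the $\eta\mapsto A^{1/2}\eta$ rescaled Loomis--Whitney argument with transversality $\sim A^{-3/2}d^{-1}$); the almost-orthogonality needed to sum over tiles at each fixed $d$ is Lemma \ref{lemma3.23}. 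Each $d$-level contributes $Ad^{1/2}N_1^{-2}(L_0L_1L_2)^{1/2}$, and the leftover large-modulation piece at $d=d_0\sim A^{-3/2}N_1^{3/2}L_0^{-1/2}$ is closed with the additional bilinear estimate \eqref{bilinearStrichartz-prop3.20}; summing the geometric series in $d$ is what produces the $L_0^{1/4}$ (rather than $L_0^{1/2}$) and the $A^{1/4}N_1^{-5/4}$ in \eqref{prop3.20-1}. The preliminary dichotomy $|\xi_2|\gtrless A^{-3/2}N_1$ you propose is also not how the paper organizes the argument: it only uses the crude input $|\xi_2|\le 2^{-10}N_1$ from Lemma \ref{lemma3.11} before entering the $d$-decomposition. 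In short, the missing idea in your plan is the second Whitney decomposition in $d$; without it, the regime where $F$ is small is simply not covered.
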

In contrast to the case $j_1 \in \mathfrak{J}_A^K$,
there exists $(\xi_1,\eta_1) \times (\xi,\eta) \in \mathfrak{D}_{j_1}^A \times \mathfrak{D}_{j}^A$ such that $F(\xi_1, \eta_1, \xi,\eta) = 0$.
This suggests that the same argument as in the proof of Proposition \ref{prop3.19} is no longer available.
To overcome this difficulty, we again perform the Whitney type decomposition. To do so, we define the following decomposition.
\begin{defn}
Let $A \geq 2^{25}$ and $d \geq 2^{20}$ be dyadic and $k = ( k_{(1)}, k_{(2)} ) \in \Z^2$. We define rectangle-tiles
$\{ \mathcal{T}_k^{A,d}\}_{k \in \Z^2}$ whose short side is parallel to $\xi$-axis and its length is $A^{-3/2} d^{-1} N_1$, long side length is $A^{-1} d^{-1} N_1 $ and prisms $\{ \tilde{\mathcal{T}}_k^{A,d}\}_{k \in \Z^2}$ as follows:
\begin{align*}
& \mathcal{T}_k^{A,d} : = \{ (\xi,\eta) \in \R^2 \, | \, \xi \in A^{-\frac{3}{2}} d^{-1} N_1
[ k_{(1)}, k_{(1)} + 1), \ \eta \in A^{-1} d^{-1} N_1 [ k_{(2)}, k_{(2)} + 1)  \},\\
& \tilde{\mathcal{T}}_k^{A,d}  : = \R \times \mathcal{T}_k^{A,d}.
\end{align*}
\end{defn}
The following two estimates play a crucial role in the proof of Proposition \ref{prop3.20}.
\begin{prop}\label{prop3.21}
Assume \textnormal{(i)}, \textnormal{(ii)'} in \textit{Case} \textnormal{\ref{case-2}}
and $\min(N_0, N_1, N_2) = N_2.$ Let $A \geq 2^{25}$ and $d \geq 2^{20}$ be dyadic,
$j_1 \in \mathfrak{J}$, $j$ satisfy $16 \leq |j_1-j| \leq 32$. We assume that $k_1$, $k \in \Z^2$ satisfy
\begin{align*}
& \bigl( \mathcal{T}_{k_1}^{A,d} \times \mathcal{T}_{k}^{A,d} \bigr)
\cap ( \mathfrak{D}_{j_1}^A \times \mathfrak{D}_{j}^A )
\not= \emptyset,\\
|\Phi(\xi_1, \eta_1, \xi, \eta)| & \geq A^{-\frac{3}{2}} d^{-1}  N_1^{3}  \ \ \textnormal{for any} \ (\xi_1, \eta_1)
\times (\xi,\eta) \in
\mathcal{T}_{k_1}^{A,d} \times \mathcal{T}_{k}^{A,d}.
\end{align*}
Then we have
\begin{equation}
\begin{split}
&
\Bigl|\int_{**}{  \ha{v}_{{N_2, L_2}}(\tau_2, \xi_2, \eta_2)
\ha{u}_{N_1, L_1}|_{\tilde{\mathcal{T}}_{k_1}^{A,d}}(\tau_1, \xi_1, \eta_1)
\ha{w}_{N_0, L_0}|_{\tilde{\mathcal{T}}_k^{A,d}}(\tau, \xi, \eta)
}
d\sigma_1 d\sigma \Bigr|\\
& \qquad \qquad \lesssim  A d^{\frac{1}{2}} N_1^{-2} (L_0 L_1 L_2)^{\frac{1}{2}} \|\ha{u}_{N_1, L_1}|_{\tilde{\mathcal{T}}_{k_1}^{A,d}} \|_{L^2}
\| \ha{v}_{N_2, L_2}\|_{L^2}
\|\ha{w}_{N_0, L_0}|_{\tilde{\mathcal{T}}_k^{A,d}}  \|_{L^2}\label{prop3.21-1}
\end{split}
\end{equation}
where $d \sigma_1 = d\tau_1 d \xi_1 d \eta_1$, $d \sigma = d\tau d \xi d \eta$
 and $**$ denotes $(\tau_2, \xi_2, \eta_2) = (\tau_1 + \tau, \xi_1+ \xi, \eta_1 + \eta).$
\end{prop}
\begin{proof}
It is clear that \eqref{bilinearStrichartz-11}, \eqref{bilinearStrichartz-12}, \eqref{bilinearStrichartz-14} in Proposition
\ref{prop3.18} and the assumption $|\Phi(\xi_1, \eta_1, \xi, \eta)|  \geq A^{-3/2} d^{-1}  N_1^{3}$
yield \eqref{prop3.21-1}.
\end{proof}
\begin{prop}\label{prop3.22}
Assume \textnormal{(i)}, \textnormal{(ii)'} in \textit{Case} \textnormal{\ref{case-2}}
and $\min(N_0, N_1, N_2) = N_2.$ Let $A \geq 2^{25}$ and $d \geq 2^{20}$ be dyadic,
$j_1 \in \mathfrak{J}$, $j$ satisfy $16 \leq |j_1-j| \leq 32$. We assume that $k_1$, $k \in \Z^2$ satisfy
\begin{align*}
& \bigl( \mathcal{T}_{k_1}^{A,d} \times \mathcal{T}_{k}^{A,d} \bigr)
\cap ( \mathfrak{D}_{j_1}^A \times \mathfrak{D}_{j}^A )
\not= \emptyset,\\
 | F (\xi_1, \eta_1, \xi ,\eta)| &  \geq A^{-1} d^{-1} N_1^2  \ \ \textnormal{for any} \ (\xi_1, \eta_1)
\times (\xi,\eta) \in
\mathcal{T}_{k_1}^{A,d} \times \mathcal{T}_{k}^{A,d}.
\end{align*}
Then we have
\begin{equation}
\begin{split}
&
\Bigl|\int_{**}{  \ha{v}_{{N_2, L_2}}(\tau_2, \xi_2, \eta_2)
\ha{u}_{N_1, L_1}|_{\tilde{\mathcal{T}}_{k_1}^{A,d}}(\tau_1, \xi_1, \eta_1)
\ha{w}_{N_0, L_0}|_{\tilde{\mathcal{T}}_k^{A,d}}(\tau, \xi, \eta)
}
d\sigma_1 d\sigma \Bigr|\\
& \qquad \qquad \lesssim  A d^{\frac{1}{2}} N_1^{-2} (L_0 L_1 L_2)^{\frac{1}{2}} \|\ha{u}_{N_1, L_1}|_{\tilde{\mathcal{T}}_{k_1}^{A,d}} \|_{L^2}
\| \ha{v}_{N_2, L_2}\|_{L^2}
\|\ha{w}_{N_0, L_0}|_{\tilde{\mathcal{T}}_k^{A,d}}  \|_{L^2},\label{prop3.22-1}
\end{split}
\end{equation}
where $d \sigma_1 = d\tau_1 d \xi_1 d \eta_1$, $d \sigma = d\tau d \xi d \eta$
 and $**$ denotes $(\tau_2, \xi_2, \eta_2) = (\tau_1 + \tau, \xi_1+ \xi, \eta_1 + \eta).$
\end{prop}
\begin{proof}
The strategy of the proof is completely the same as that for the proof of Proposition \ref{prop3.19} for
the case $|\xi_2| \leq 2^{10}A^{-1}K^{-1/2}N_1$.
Since we may assume that $(\xi_1,\eta_1) \in \mathcal{T}_{k_1}^{A,d}$ and
$(\xi,\eta) \in \mathcal{T}_{k}^{A,d}$, it suffices to prove the following:
\begin{equation}
\begin{split}
& \Bigl|\int_{\R^3 \times \R^3} { h (\tau_1+\tau_2, \xi_1+\xi_2, \eta_1+\eta_2)
f(\tau_1, \xi_1, \eta_1)  g(\tau_2, \xi_2, \eta_2)
}
d\sigma_1 d\sigma_2 \Bigr|\\
& \qquad \qquad \qquad \qquad \lesssim A d^{\frac{1}{2}} N_1^{-2} (L_0 L_1 L_2)^{\frac{1}{2}} \|f \|_{L^2}
\|g\|_{L^2}
\| h \|_{L^2}.
\end{split}\label{est01-prop3.22}
\end{equation}
Here $f$, $g$, $h$ satisfy
\begin{equation*}
\operatorname{supp} f \subset G_{N_1, L_1} \cap \tilde{\mathfrak{R}}_1^{A,d}, \quad
\operatorname{supp} g \subset G_{N_0, L_0} \cap \tilde{\mathfrak{R}}_2^{A,d}, \quad
\operatorname{supp} h \subset G_{N_2, L_2} \cap \tilde{\mathfrak{R}}_3^{A,d},
\end{equation*}
where $\tilde{\mathfrak{R}}_1^{A,d}$, $\tilde{\mathfrak{R}}_2^{A,d}$, $\tilde{\mathfrak{R}}_3^{A,d}$ are prisms which are defined as follows:
\begin{align*}
\tilde{\mathfrak{R}}_1^{A,d} & = \R \times \mathfrak{R}_1^{A,d}, \quad
\tilde{\mathfrak{R}}_2^{A,d} = \R \times \mathfrak{R}_2^{A,d}, \quad
\tilde{\mathfrak{R}}_3^{A,d} = \R \times \mathfrak{R}_3^{A,d},\\
\mathfrak{R}_1^{A,d} & = \{ (\xi,\eta) \,
|\, \alpha_1\leq \xi \leq \alpha_1+2^{-30}A^{-\frac{3}{2} }d^{-1}N_1 , \
\beta_1 \leq \eta \leq \beta_1 + 2^{-30}A^{-1} d^{-1}N_1\},\\
\mathfrak{R}_2^{A,d} & = \{ (\xi,\eta)  \,
|\, \alpha_2 \leq  \xi \leq \alpha_2+2^{-30}A^{-\frac{3}{2} }d^{-1}N_1 , \
\beta_2 \leq \eta \leq \beta_2 + 2^{-30}A^{-1} d^{-1}N_1\},\\
\mathfrak{R}_3^{A,d} & = \{ (\xi,\eta)  \,
|\, \alpha_3 \leq \xi \leq \alpha_3+2^{-30}A^{-\frac{3}{2} }d^{-1}N_1 , \
\beta_3 \leq \eta \leq \beta_3 + 2^{-30}A^{-1} d^{-1}N_1\}.
\end{align*}
Here we choose $\alpha_i$, $\beta_i \in \R$ to satisfy
\begin{align*}
 |\beta_3| \leq 2^{10} A^{-1} N_1, \
(\mathfrak{R}_1^{A}  + \mathfrak{R}_2^{A} )  \cap \mathfrak{R}_3^{A} \not= \emptyset, \quad
\mathfrak{R}_1^{A} \cap \mathcal{T}_{k_1}^{A,d} \not= \emptyset, \quad
\mathfrak{R}_2^{A} \cap \mathcal{T}_{k}^{A,d} \not= \emptyset.
\end{align*}
As we saw in the proofs of Propositions \ref{prop3.14} and \ref{prop3.19}, it suffices to show
\begin{equation}
\| \tilde{f} |_{S_1} * \tilde{g} |_{S_2} \|_{L^2(S_3)} \lesssim  A d^{\frac{1}{2}}
\| \tilde{f} \|_{L^2(S_1)} \| \tilde{g} \|_{L^2(S_2)},\label{est02-prop3.22}
\end{equation}
where the notations $\tilde{f}$, $\tilde{g}$, $S_1$, $S_2$, $S_3$ are the same as in the proof of Propositions \ref{prop3.4} and \ref{prop3.19}:
\begin{align*}
& \tilde{f}  (\tau_1, \xi_1 , \eta_1)  = f (N_1^3 \tau_1 , N_1 \xi_1, N_1 \eta_1), \\
& \tilde{g} (\tau_2, \xi_2, \eta_2)  = g (N_1^3 \tau_2, N_1 \xi_2, N_1 \eta_2), \\
& S_1  = \{ \phi_{\tilde{c_1}} (\xi, \eta) = (\xi^3 + \eta^3 + \tilde{c_1}, \, \xi, \, \eta) \in \R^3 \ | \
(N_1 \xi, N_1\eta) \in \mathfrak{R}_1^{A,d} \}, \\
& S_2  = \{ \phi_{\tilde{c_2}} (\xi, \eta) = (\xi^3 + \eta^3 + \tilde{c_2}, \, \xi, \, \eta) \in \R^3 \ | \ (N_1 \xi, N_1 \eta) \in \mathfrak{R}_2^{A,d} \},\\
& S_3  = \Bigl\{ (\psi (\xi,\eta), \xi, \eta) \in \R^3  \ | \  (N_1\xi, N_1 \eta) \in \mathfrak{R}_3^{A,d},
\  \psi (\xi,\eta) =  \xi^3 + \eta^3 + \frac{c_0'}{N_1^{3}} \Bigr\}.
\end{align*}
Similarly to the proof of Proposition \ref{prop3.19}, we perform the scaling $(\tau,\xi,\eta) \to ( \tau , \,  \xi, \, A^{1/2} \eta)$ to define
\begin{align*}
 \tilde{f}_A (\tau_1, \xi_1 , \eta_1) & = \tilde{f} ( \tau_1 ,  \xi_1, A^{\frac{1}{2}} \eta_1), \\
 \tilde{g}_A (\tau_2, \xi_2, \eta_2) & = \tilde{g} ( \tau_2,  \xi_2, A^{\frac{1}{2}} \eta_2), \\
 \tilde{h}_A (\tau, \xi, \eta) & = \tilde{h} ( \tau,  \xi, A^{\frac{1}{2}} \eta).
\end{align*}
Now \eqref{est02-prop3.22} can be rewritten as
\begin{equation}
\| \tilde{f}_A |_{S_1^A} * \tilde{g}_A |_{S_2^A} \|_{L^2(S_3^A)} \lesssim A^{\frac{3}{4}} d^{\frac{1}{2}}
\| \tilde{f}_A \|_{L^2(S_1^A)} \| \tilde{g}_A \|_{L^2(S_2^A)},\label{est03-prop3.22}
\end{equation}
where
\begin{align*}
& S_1^A  = \{ \phi_{\tilde{c_1}}^A (\xi, \eta) = (\xi^3 + A^{\frac{3}{2}}\eta^3 + \tilde{c_1}, \, \xi, \, \eta)  \ | \
(N_1 \xi, A^{\frac{1}{2}}N_1\eta) \in \mathfrak{R}_1^{A,d} \}, \\
& S_2^A  = \{ \phi_{\tilde{c_2}}^A (\xi, \eta) = (\xi^3 + A^{\frac{3}{2}} \eta^3 + \tilde{c_2}, \, \xi, \, \eta) \ | \ (N_1 \xi, A^{\frac{1}{2}} N_1 \eta) \in \mathfrak{R}_2^{A,d} \},\\
& S_3^A  = \Bigl\{ (\psi^A (\xi,\eta), \xi, \eta)   \ | \  (N_1\xi, A^{\frac{1}{2}} N_1 \eta) \in
\mathfrak{R}_3^{A,d},
\  \psi^A (\xi,\eta) =  \xi^3 + A^{\frac{3}{2}} \eta^3 + \frac{c_0'}{N_1^{3}} \Bigr\}.
\end{align*}
We verify that $S_1^A$, $S_2^A$, $S_3^A$ satisfy the necessary conditions to apply Proposition \ref{prop2.7}.
Let ${\mathfrak{n}}_1(\lambda_1)$, ${\mathfrak{n}}_2(\lambda_2)$,
${\mathfrak{n}}_3(\lambda_3)$ be unit normals on $\lambda_1 \in S_1^A$,
$\lambda_2 \in S_2^A$, $\lambda_3 \in S_3^A$, respectively. We define
\begin{equation*}
\lambda_1=\phi_{\tilde{c_1}}^A (\xi_1, \eta_1), \quad \lambda_2 =  \phi_{\tilde{c_2}}^A (\xi_2,\eta_2), \quad
\lambda_3 = (\psi^A (\xi,\eta), \xi,\eta).
\end{equation*}
The unit normals can be written explicitly as
\begin{align*}
& {\mathfrak{n}}_1(\lambda_1) = \frac{1}{\sqrt{1+ 9 \xi_1^4 + 9 A^3\eta_1^4}}
\bigl(-1, \ 3  \xi_1^2, \ 3 A^{\frac{3}{2}} \eta_1^2 \bigr), \\
& {\mathfrak{n}}_2(\lambda_2) = \frac{1}{\sqrt{1+ 9 \xi_2^4 + 9 A^3\eta_2^4}}
\bigl(-1, \ 3  \xi_2^2, \ 3 A^{\frac{3}{2}} \eta_2^2 \bigr),\\
& {\mathfrak{n}}_3(\lambda_3) = \frac{1}{\sqrt{1+ 9 \xi^4 + 9 A^3\eta^4}}
\bigl(-1, \ 3  \xi^2, \ 3 A^{\frac{3}{2}} \eta^2 \bigr).
\end{align*}
$(N_1 \xi_i, A^{\frac{1}{2}}N_1\eta_i) \in \mathfrak{R}_i^{A}$ implies that the hypersurfaces $S_1^A$, $S_2^A$, $S_3^A$ satisfy the certain regularity conditions.
\begin{equation}
\sup_{\lambda_i, \lambda_i' \in S_i^A} \frac{|\mathfrak{n}_i(\lambda_i) -
\mathfrak{n}_i(\lambda_i')|}{|\lambda_i - \lambda_i'|}
+ \frac{|\mathfrak{n}_i(\lambda_i) (\lambda_i - \lambda_i')|}{|\lambda_i - \lambda_i'|^2} \leq 2^{10},\label{normals00-prop3.22}
\end{equation}
and diameter conditions
\begin{equation}
\textnormal{diam}(S_i) \leq 2^{-20} A^{-\frac{3}{2}} d^{-1}. \label{diam-prop3.22}
\end{equation}
Choose $(\xi_1', \eta_1')$, $(\xi_2', \eta_2')$, $(\xi', \eta')$ such that
\begin{align*}
& (\xi_1', \eta_1') + (\xi_2', \eta_2') = (\xi', \eta'), \\
\phi_{\tilde{c_1}}^A (\xi_1', \eta_1') & \in S_1^A, \ \ \phi_{\tilde{c_2}}^A (\xi_2', \eta_2')\in S_2^A, \ \
(\psi^A (\xi',\eta'), \xi', \eta') \in S_3^A.
\end{align*}
Define $\lambda_1' = \phi_{\tilde{c_1}}^A (\xi_1', \eta_1')$, $\lambda_2' = \phi_{\tilde{c_2}}^A (\xi_2', \eta_2')$,
$\lambda_3' = (\psi^A (\xi',\eta'), \xi',\eta')$.
For any $\lambda_1 \in S_1^A$, $\lambda_2 \in S_2^A$, $\lambda_3 \in S_3^A$, \eqref{normals00-prop3.22}
and \eqref{diam-prop3.22}
imply
\begin{align}
& |{\mathfrak{n}}_1(\lambda_1) - {\mathfrak{n}}_1(\lambda_1')|
\leq 2^{-10} A^{-\frac{3}{2}} d^{-1} ,\label{normal01-prop3.22}\\
& |{\mathfrak{n}}_2(\lambda_2) - {\mathfrak{n}}_2(\lambda_2')|
\leq 2^{-10} A^{-\frac{3}{2}} d^{-1} ,\label{normal02-prop3.22}\\
& |{\mathfrak{n}}_3(\lambda_3) - {\mathfrak{n}}_3(\lambda_3')|
\leq 2^{-10} A^{-\frac{3}{2}} d^{-1}.\label{normal03-prop3.22}
\end{align}
We lastly observe that the hypersurfaces satisfy transversality condition. Since \eqref{normal01-prop3.22}-\eqref{normal03-prop3.22}, we only need to show
\begin{equation}
 |\textnormal{det} N(\lambda_1', \lambda_2', \lambda_3')| \geq  2^{-5}  A^{-\frac{3}{2}} d^{-1} \label{trans-prop3.22}.
\end{equation}
As in the proof of Proposition \ref{prop3.19}, we calculate that
\begin{align*}
|\textnormal{det} N(\lambda_1', \lambda_2', \lambda_3')| \geq &
2^{-7}  \left|\textnormal{det}
\begin{pmatrix}
-1 & -1 & - 1 \\
 3 (\xi_1')^2  &  3 (\xi_2')^2  & 3 (\xi')^2 \\
 3 A^{\frac{3}{2}} (\eta_1')^2   & 3 A^{\frac{3}{2}} (\eta_2')^2  & 3 A^{\frac{3}{2}} (\eta')^2
\end{pmatrix} \right| \notag \\
\geq & 2^{-4} A^{\frac{3}{2}} |\xi_1' \eta_2' - \xi_2' \eta_1' | \,
| \xi_1' \eta_2' +  \xi_2' \eta_1' + 2 (\xi_1' \eta_1' + \xi_2' \eta_2')|.
\end{align*}
Hence, it suffices to show
\begin{align}
|\mathfrak{A}(\xi_1', \eta_1', \xi_2' ,\eta_2')| & =  |\xi_1' \eta_2' - \xi_2' \eta_1' | \geq
A^{-\frac{3}{2}},\label{angular02-prop3.22}\\
|F(\xi_1', \eta_1', \xi_2' ,\eta_2')|&= | \xi_1' \eta_2' +  \xi_2' \eta_1' + 2 (\xi_1' \eta_1' + \xi_2' \eta_2')| \geq
 2^{-1}A^{-\frac{3}{2}} d^{-1},\label{transversal02-prop3.22}
\end{align}
which are equivalent to
\begin{align}
|\mathfrak{A}(N_1\xi_1', A^{\frac{1}{2}} N_1\eta_1', N_1 \xi_2' , A^{\frac{1}{2}} N_1 \eta_2')| &
 \geq
 A^{-1} N_1^2,\label{angular03-prop3.22}\\
|F(N_1\xi_1', A^{\frac{1}{2}} N_1\eta_1', N_1 \xi_2' , A^{\frac{1}{2}} N_1 \eta_2')|
 & \geq 2^{-1} A^{-1} d^{-1} N_1^2,\label{transversal03-prop3.22}
\end{align}
respectively.
Note that $(N_1 \xi_1', A^{\frac{1}{2}} N_1 \eta_1') \in \mathfrak{R}_1^{A,d}$,
$(N_1 \xi_2', A^{\frac{1}{2}} N_1 \eta_2') \in \mathfrak{R}_2^{A}$ and
recall that $\mathfrak{R}_1^{A} \cap \mathcal{T}_{k_1}^{A,d} \not= \emptyset$,
$\mathfrak{R}_2^{A} \cap \mathcal{T}_{k}^{A,d} \not= \emptyset$.
The assumption
$\mathcal{T}_{k_1}^{A,d} \cap \mathfrak{D}_{j_1}^A \not= \emptyset$,
$\mathcal{T}_{k}^{A,d} \cap \mathfrak{D}_{j}^A \not= \emptyset$ immediately yield \eqref{angular03-prop3.22}.
Additionally, since we assume that
\begin{equation*}
 | F (\xi_1, \eta_1, \xi ,\eta)|   \geq A^{-1} d^{-1} N_1^2  \ \ \textnormal{for any} \ (\xi_1, \eta_1)
\times (\xi,\eta) \in
\mathcal{T}_{k_1}^{A,d} \times \mathcal{T}_{k}^{A,d},
\end{equation*}
we have \eqref{transversal03-prop3.22}.
Therefore \eqref{angular02-prop3.22} and \eqref{transversal02-prop3.22} hold true.
Consequently, we obtain \eqref{trans-prop3.22} which verify the desired estimate \eqref{est03-prop3.22}.
\end{proof}
\begin{defn}[Whitney type decomposition]
Let $A \geq 2^{25}$ and $d \geq 2^{20}$ be dyadic and $j_1$, $j \in \mathfrak{J}_A$. Recall that
\begin{align*}
\Phi (\xi_1, \eta_1, \xi ,\eta) & = \xi_1 \xi(\xi_1 + \xi) + \eta_1 \eta (\eta_1 + \eta), \\
F (\xi_1, \eta_1, \xi ,\eta) & = \xi_1 \eta +  \xi \eta_1 + 2 (\xi_1 \eta_1 + \xi \eta).
\end{align*}
We define $Z_{A,d,j_1,j}^1 $ as the set of $(k_1, k) \in \Z^2 \times \Z^2 $ such that
\begin{equation*}
\begin{cases}
 |\Phi(\xi_1, \eta_1, \xi, \eta)| \geq A^{-\frac{3}{2}} d^{-1} N_1^3  \ \ \textnormal{for any} \
(\xi_1, \eta_1) \times (\xi,\eta) \in
\mathcal{T}_{k_1}^{A,d} \times \mathcal{T}_{k}^{A,d}, \\
 \bigl( \mathcal{T}_{k_1}^{A,d} \times \mathcal{T}_{k}^{A,d} \bigr)
\cap ( \mathfrak{D}_{j_1}^A \times \mathfrak{D}_{j}^A )
\not= \emptyset,\\
 |\xi_1+ \xi| \leq 2^{10}A^{-\frac32}N_1  \ \ \textnormal{for any} \
(\xi_1, \eta_1) \times (\xi,\eta) \in
\mathcal{T}_{k_1}^{A,d} \times \mathcal{T}_{k}^{A,d}.
\end{cases}
\end{equation*}
Similarly, we define $Z_{A,d,j_1,j}^2$
as the set of $(k_1, k) \in \Z^2 \times \Z^2$ such that
\begin{equation*}
\begin{cases}
 |F(\xi_1, \eta_1, \xi, \eta)| \geq A^{-1} d^{-1} N_1^2  \ \ \textnormal{for any} \
(\xi_1, \eta_1) \times (\xi,\eta) \in
\mathcal{T}_{k_1}^{A,d} \times \mathcal{T}_{k}^{A,d}, \\
 \bigl( \mathcal{T}_{k_1}^{A,d} \times \mathcal{T}_{k}^{A,d} \bigr)
\cap ( \mathfrak{D}_{j_1}^A \times \mathfrak{D}_{j}^A )
\not= \emptyset,\\
|\xi_1+ \xi| \leq 2^{10}A^{-\frac32}N_1  \ \ \textnormal{for any} \
(\xi_1, \eta_1) \times (\xi,\eta) \in
\mathcal{T}_{k_1}^{A,d} \times \mathcal{T}_{k}^{A,d},
\end{cases}
\end{equation*}
and
\begin{equation*}
Z_{A,d}^{j_1,j} = Z_{A,d,j_1,j}^1 \, \cup \, Z_{A,d,j_1,j}^2 ,
\quad R_{A,d}^{j_1,j} = \bigcup_{(k_1, k) \in Z_{A,d}^{j_1,j}} \mathcal{T}_{k_1}^{A,d} \times
\mathcal{T}_{k}^{A,d} \subset \R^2 \times \R^2.
\end{equation*}
It is clear that $d_1 \leq d_2 \Longrightarrow  R_{A,d_1}^{j_1,j} \subset  R_{A,d_2}^{j_1,j}$.
Further, we define
\begin{equation*}
Q_{A,d}^{j_1,j} =
\begin{cases}
R_{A,d}^{j_1,j} \setminus R_{A,\frac{d}{2}}^{j_1,j} \quad \textnormal{for} \  d \geq 2^{21},\\
 \ R_{A,2^{20}}^{j_1,j}  \quad \, \qquad \textnormal{for} \  d = 2^{20},
\end{cases}
\end{equation*}
and a set of pairs of integer pair $\widehat{Z}_{A,d}^{j_1,j} \subset Z_{A,d}^{j_1,j}$ as
\begin{equation*}
\bigcup_{(k_1, k) \in \widehat{Z}_{A,d}^{j_1,j}} \mathcal{T}_{k_1}^{A,d} \times
\mathcal{T}_{k}^{A,d} = Q_{A,d}^{j_1,j}.
\end{equation*}
Clearly, $\widehat{Z}_{A,d}^{j_1,j}$ is uniquely defined and
\begin{equation*}
d_1 \not= d_2 \Longrightarrow Q_{A,d_1}^{j_1,j} \cap Q_{A,d_2}^{j_1,j} = \emptyset, \quad
\bigcup_{2^{20} \leq d \leq d_0} Q_{A,d}^{j_1,j} = R_{A,d_0}^{j_1,j}
\end{equation*}
where $d_0 \geq 2^{20}$ is dyadic.
Lastly, we define $\overline{Z}_{A,d}^{j_1,j}$ as the collection of $(k_1, k) \in \Z^2 \times \Z^2$ which satisfies
\begin{equation*}
\begin{cases}
\mathcal{T}_{k_1}^{A,d} \times \mathcal{T}_{k}^{A,d} \not\subset
\displaystyle{\bigcup_{2^{20} \leq d' \leq d}
\bigcup_{(k_1', k') \in \widehat{Z}_{A,d'}^{j_1,j}}}
\bigl( \mathcal{T}_{k_1'}^{A,d'} \times \mathcal{T}_{k'}^{A,d'} \bigr),\\
 \bigl( \mathcal{T}_{k_1}^{A,d} \times \mathcal{T}_{k}^{A,d} \bigr)
\cap ( \mathfrak{D}_{j_1}^A \times \mathfrak{D}_{j}^A )
\not= \emptyset,\\
 |\xi_1+ \xi| \leq 2^{10}A^{-\frac32}N_1  \ \ \textnormal{for any} \
(\xi_1, \eta_1) \times (\xi,\eta) \in
\mathcal{T}_{k_1}^{A,d} \times \mathcal{T}_{k}^{A,d}.
\end{cases}
\end{equation*}
\end{defn}
\begin{lem}\label{lemma3.23}
Let $A \geq 2^{25}$ and $d \geq 2^{20}$ be dyadic and $j_1$, $j \in \mathfrak{J}_A$.
For fixed $k_1 \in \Z^2$, the number of $k_2 \in \Z^2$ such that
$(k_1, k_2) \in \widehat{Z}_{A,d}^{j_1,j}$ is less than $2^{1000}$. Furthermore, the same claim holds true if we replace $\widehat{Z}_{A,d}^{j_1,j}$ by $\overline{Z}_{A,d}^{j_1,j}$.
\end{lem}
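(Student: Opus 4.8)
The plan is to follow the reduction used in the proof of Lemma \ref{lemma3.6}. Fix $k_1 \in \Z^2$ and let $(\xi_1', \eta_1')$ be the center of $\mathcal{T}_{k_1}^{A,d}$. The first step is to reduce to a statement about a joint sublevel set. If $(k_1,k) \in \widehat{Z}_{A,d}^{j_1,j}$, then $\mathcal{T}_{k_1}^{A,d} \times \mathcal{T}_{k}^{A,d} \subset Q_{A,d}^{j_1,j} = R_{A,d}^{j_1,j} \setminus R_{A,d/2}^{j_1,j}$, so this cell is not contained in $R_{A,d/2}^{j_1,j}$, whence the scale-$(d/2)$ cell containing it fails to lie in $Z_{A,d/2}^{j_1,j}$. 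Since the cone-intersection condition with $\mathfrak{D}_{j_1}^{A} \times \mathfrak{D}_{j}^{A}$ and the smallness condition on the $\xi$-coordinates carry over to the scale-$(d/2)$ cell up to a harmless loss in the constant (the tile side lengths $A^{-3/2} d^{-1} N_1$ and $A^{-1} d^{-1} N_1$ are $\ll 2^{-10} N_1$ as $A \geq 2^{25}$, $d \geq 2^{20}$), the failure of membership forces a point of the scale-$(d/2)$ cell at which $|\Phi| < 2A^{-3/2} d^{-1} N_1^3$ and a (possibly different) point at which $|F| < 2A^{-1} d^{-1} N_1^2$. Moving both points to $(\xi_1', \eta_1')$ by means of $\textnormal{diam}(\mathcal{T}_{k}^{A,d}) \lesssim A^{-3/2} d^{-1} N_1$, $|\nabla_{\xi,\eta}\Phi| \lesssim N_1^2$ and $|\nabla_{\xi,\eta}F| \lesssim N_1$ on the relevant region, it then suffices to show that the set
\[
S_{k_1} := \left\{ (\xi,\eta) \in \R^2 \ : \ |\Phi(\xi_1',\eta_1',\xi,\eta)| \leq 2^{C} A^{-3/2} d^{-1} N_1^3, \quad |F(\xi_1',\eta_1',\xi,\eta)| \leq 2^{C} A^{-1} d^{-1} N_1^2 \right\},
\]
intersected with the region cut out by the cone and smallness conditions, is covered by $\lesssim 1$ tiles $\mathcal{T}_{k}^{A,d}$. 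The claim for $\overline{Z}_{A,d}^{j_1,j}$ follows from the same reduction, now using that the cell is not contained in $\bigcup_{2^{20} \leq d' \leq d} \bigcup_{(k_1',k') \in \widehat{Z}_{A,d'}^{j_1,j}} (\mathcal{T}_{k_1'}^{A,d'} \times \mathcal{T}_{k'}^{A,d'})$.

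The second step is to record the geometric input. In the regime $j_1, j \in \mathfrak{J}_A$, with $(\xi_1,\eta_1)\times(\xi,\eta) \in \mathcal{I}_1$ and $\min(N_0,N_1,N_2) = N_2$ (so $N_0 \sim N_1$ and, by Lemma \ref{lemma3.11} together with Remark \ref{rem3.5}, $N_2 \leq 2^{-14} N_1$), the relevant tiles satisfy $|\xi_1'| \gtrsim N_1$, $|\xi| \gtrsim N_1$, $|\eta_1'|, |\eta| \lesssim A^{-1} N_1$, and the smallness condition on the $\xi$-coordinates keeps $\xi$ away from $-\xi_1'/2$, so that $|\xi_1' + 2\xi| \sim |\xi_1'| \gtrsim N_1$. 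A direct computation then gives
\[
\partial_\xi \Phi = \xi_1'(\xi_1' + 2\xi), \quad \partial_\eta F = \xi_1' + 2\xi, \quad \partial_\eta \Phi = \eta_1'(\eta_1' + 2\eta), \quad \partial_\xi F = \eta_1' + 2\eta,
\]
so that $|\partial_\xi\Phi| \sim |\xi_1'|^2$ and $|\partial_\eta F| \sim |\xi_1'|$, while $|\partial_\eta\Phi| \lesssim A^{-2} |\xi_1'|^2$ and $|\partial_\xi F| \lesssim A^{-1} |\xi_1'|$. In particular the Jacobian of $(\xi,\eta) \mapsto (\Phi(\xi_1',\eta_1',\xi,\eta), F(\xi_1',\eta_1',\xi,\eta))$ equals $\partial_\xi\Phi\,\partial_\eta F - \partial_\eta\Phi\,\partial_\xi F \sim |\xi_1'|^3$, the cross term being smaller by a factor $A^{-3}$.

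The third step is routine. For each fixed $\eta$ in the region, the quadratic $\xi \mapsto \Phi(\xi_1',\eta_1',\xi,\eta)$ has one root near $\xi = 0$, which lies outside the region $|\xi| \gtrsim N_1$, and one root near $\xi = -\xi_1'$, near which $|\partial_\xi\Phi| \sim |\xi_1'|^2$; hence $\{\xi : |\Phi| \leq 2^{C} A^{-3/2} d^{-1} N_1^3\}$ is a single interval of length $\lesssim 2^{C} A^{-3/2} d^{-1} N_1$, whose center moves with $\eta$ at rate $|\partial_\eta\Phi/\partial_\xi\Phi| \lesssim A^{-2}$. Restricting this thin curve further by $\{|F| \leq 2^{C} A^{-1} d^{-1} N_1^2\}$ and using $|\partial_\eta F| \sim |\xi_1'|$ along it confines $\eta$ to an interval of length $\lesssim 2^{C} A^{-1} d^{-1} N_1$. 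Thus $S_{k_1}$ intersected with the region sits inside an axis-parallel rectangle of side lengths $\lesssim 2^{C} A^{-3/2} d^{-1} N_1$ and $\lesssim 2^{C} A^{-1} d^{-1} N_1$, which is covered by $\lesssim 2^{2C}$ tiles $\mathcal{T}_k^{A,d}$; choosing the implicit constants from the reduction appropriately yields the bound $2^{1000}$.

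I expect the main obstacle to lie in the bookkeeping of the first step rather than in the transversality estimate: one must verify, for both $\widehat{Z}_{A,d}^{j_1,j}$ and $\overline{Z}_{A,d}^{j_1,j}$, that membership genuinely forces the $\Phi$-subthreshold and the $F$-subthreshold conditions to hold \emph{simultaneously} on the scale-$(d/2)$ cell while the cone and $\xi$-smallness information remains available, which requires careful tracking of the many scales ($A$, $d$, $N_1$, and the sectors $\mathfrak{D}_{j_1}^A, \mathfrak{D}_j^A$) and use of the slack built into the constants. By contrast, the transversality computation itself is robust; its only delicate point is the exclusion of $\xi = 0$ from the region, which makes the quadratic $\Phi(\xi_1',\eta_1',\cdot,\eta)$ contribute a single branch rather than the several branches that appeared in Lemma \ref{lemma3.6}, and this degeneration is precisely what makes the $\mathfrak{J}_A$ case lighter than the generic situation there.
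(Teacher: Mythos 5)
Your proposal is correct and takes essentially the same route as the paper: after passing to the tile center $(\xi_1',\eta_1')$ and reducing to a joint sublevel-set bound, both arguments exploit the transversality of $\{\Phi \approx 0\}$ and $\{F \approx 0\}$, the paper by solving the $F$-constraint for $\eta'$ (via $|\eta' + 3\xi_1'\eta_1'/(4\xi')| \lesssim A^{-1}d^{-1}N_1$), substituting into the $\Phi$-constraint, and showing the resulting $G(\xi')$ has $|G'| \gtrsim N_1^2$, and you by the equivalent observation that the Jacobian $\partial_\xi\Phi\,\partial_\eta F - \partial_\eta\Phi\,\partial_\xi F \sim |\xi_1'|^3$ is non-degenerate (the order of elimination is swapped, but the content is identical). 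Your remark that $j_1, j \in \mathfrak{J}_A$ together with the $\xi$-smallness constraint forces $|\xi'|\gtrsim N_1$, so $\Phi$ contributes a single branch and the $\mathcal{K}$-exclusion needed in Lemma \ref{lemma3.6} is unnecessary here, accurately captures why this case is lighter and matches the paper's computation.
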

\begin{proof}
Without loss of generality, we can assume that $d \geq 2^{100}$.
We follow the same strategy as that for the proof of Lemma \ref{lemma3.6}.
We assume $(k_1, k) \in \widehat{Z}_{A,d}^{j_1,j}$ and define
$k_1'=k_1'(k_1) \in \Z^2$ and $k'=k'(k) \in \Z^2$ such that
$ \mathcal{T}_{k_1}^{A,d} \subset  \mathcal{T}_{k_1'}^{A, d/2}$ and
$ \mathcal{T}_{k}^{A,d} \subset  \mathcal{T}_{k'}^{A,d/2}$, respectively.
$(k_1, k_2) \in \widehat{Z}_{A,d}^{j_1,j}$ suggests that there exist
$(\bar{\xi}_1, \bar{\eta}_1)$, $(\tilde{\xi}_1, \tilde{\eta}_1) \in \mathcal{T}_{k_1'}^{A,d/2}$,
$(\bar{\xi}, \bar{\eta})$, $(\tilde{\xi}, \tilde{\eta}) \in \mathcal{T}_{k'}^{A,d/2}$ which satisfy
\begin{equation}
|\Phi(\bar{\xi}_1, \bar{\eta}_1, \bar{\xi}, \bar{\eta})| \leq 2 A^{-\frac{3}{2}}d^{-1} N_1^3  \quad \textnormal{and} \quad
 |F (\tilde{\xi}_1, \tilde{\eta}_1, \tilde{\xi} ,\tilde{\eta})| \leq 2 A^{-1} d^{-1} N_1^2.\label{lemma3.6-est001}
\end{equation}
Let $(\xi_1', \eta_1')$ be the center of $\mathcal{T}_{k_1}^{A,d}$. Since
$j_1$, $j \in \mathfrak{J}_A$ and
$ \bigl( \mathcal{T}_{k_1}^{A,d} \times \mathcal{T}_{k}^{A,d} \bigr)
\cap ( \mathfrak{D}_{j_1}^A \times \mathfrak{D}_{j}^A )
\not= \emptyset$, we have $|\eta_1| \leq 2^{11}  A^{-1} N_1$, $|\eta| \leq 2^{11} A^{-1} N_1$ for any
$(\xi_1,\eta_1) \times  (\xi,\eta) \in \mathcal{T}_{k_1}^{A,d} \times \mathcal{T}_{k}^{A,d} $.
Therefore, for $(\xi,\eta) \in \mathcal{T}_{k}^{A,d} $,
\eqref{lemma3.6-est001} implies
\begin{equation*}
|\Phi(\xi_1', \eta_1', \xi,\eta)| \leq 2^5 A^{-\frac{3}{2}}d^{-1} N_1^3  \quad \textnormal{and} \quad
 |F (\xi_1', \eta_1', \xi,\eta)| \leq 2^5 A^{-1} d^{-1} N_1^2.
\end{equation*}
Hence, it suffices to show that there exists
$\tilde{k} \in \Z^2$ such that
\begin{equation*}
\left\{ (\xi, \eta)  \in \mathfrak{D}_{j}^A \, \left| \,
\begin{aligned} & |\Phi(\xi_1', \eta_1', \xi,\eta)| \leq 2^5 A^{-\frac{3}{2}}d^{-1} N_1^3, \\
 &  |F (\xi_1', \eta_1', \xi,\eta)| \leq 2^5 A^{-1} d^{-1} N_1^2,
 \end{aligned} \ |\xi_1'+ \xi|  \leq 2^{11}A^{-\frac32}N_1 \right.
\right\}
\subset \mathcal{T}_{\tilde{k}}^{A, 2^{-20}d}.
\end{equation*}
Similarly to the proof of Lemma \ref{lemma3.6},
letting $\xi' = \xi + \xi_1'/2$, $\eta' = \eta + \eta_1'/2$, we see that
\begin{align*}
|\Phi(\xi_1', \eta_1', \xi, \eta) |& = |\xi_1' \xi(\xi_1' + \xi) + \eta_1' \eta (\eta_1' + \eta)|
\leq  2^5 A^{-\frac{3}{2}}d^{-1} N_1^3, \\
 |F (\xi_1', \eta_1', \xi,\eta)| & = |\xi_1' \eta +  \xi \eta_1' + 2 (\xi_1' \eta_1' + \xi \eta)|
\leq 2^5 A^{-1} d^{-1} N_1^2,
\end{align*}
are equivalent to
\begin{align}
\tilde{\Phi} (\xi' ,\eta') & := \Bigl| \xi_1' {\xi'}^2 + \eta_1' {\eta'}^2 - \frac{{\xi_1'}^3 + {\eta_1'}^3}{4} \Bigr|
\leq 2^5 A^{-\frac{3}{2}}d^{-1} N_1^3 , \label{info03-lemma3.23} \\
\tilde{F} (\xi' ,\eta') & := \Bigl| \frac{3}{2} \, \xi_1' \, \eta_1' + 2 \,  \xi'  \, \eta' \Bigr|
\leq 2^5 A^{-1} d^{-1} N_1^2,\label{info04-lemma3.23}
\end{align}
respectively. Note that $|\xi_1'+ \xi|  \leq 2^{11}A^{-\frac32}N_1$ yields
$|\xi'| \geq |\xi|/2 - |\xi_1' + \xi|/2 \geq 2^{-2}N_1$.
We observe that
\begin{equation}
\eqref{info04-lemma3.23} \Longrightarrow  \Bigl| \eta' + \frac{3  \xi_1' \, \eta_1'}{4 \xi'} \Bigr|  \leq \frac{2^4 A^{-1} d^{-1} N_1^2}{|\xi'|}
\leq 2^{6} A^{-1} d^{-1} N_1.\label{est003-lemma3.23}
\end{equation}
Since $|\eta_1'| \leq 2^{11} A^{-1} N_1$, \eqref{info03-lemma3.23} and \eqref{est003-lemma3.23} give
\begin{align}
& \Bigl| \xi_1' {\xi'}^2 + \eta_1' {\eta'}^2 - \frac{{\xi_1'}^3 + {\eta_1'}^3}{4} \Bigr|
\leq 2^5 A^{-\frac{3}{2}}d^{-1} N_1^3 \notag\\
 \xLongrightarrow[ \ ]{\eqref{est003-lemma3.23}} & \Bigl| \xi_1' {\xi'}^2 + \eta_1' \frac{9 {\xi_1'}^2 {\eta_1'}^2}{16 {\xi'}^2} - \frac{{\xi_1'}^3 + {\eta_1'}^3}{4} \Bigr|
\leq 2^{6} A^{-\frac{3}{2}}d^{-1} N_1^3.\label{est004-lemma3.23}
\end{align}
Let
\begin{equation*}
G(\xi') := \xi_1' {\xi'}^2 +  \frac{9 {\xi_1'}^2 {\eta_1'}^3}{16 {\xi'}^2} -
\frac{{\xi_1'}^3 + {\eta_1'}^3}{4}.
\end{equation*}
We deduce from $2^{-2} N_1 \leq |\xi_1'|$, $|\xi'| \leq 2 N_1$ and $|\eta_1'| \leq 2^{11} A^{-1} N_1$ that
\begin{equation*}
\Bigl| \Bigl( \frac{dG}{d\xi'} \Bigr)
 (\xi') \Bigr| = \Bigl| \frac{2 \xi_1'}{{\xi'}^3} \Bigl( {\xi'}^4 -  \frac{9 {\xi_1'} {\eta_1'}^3}{16}
\Bigr) \Bigr| \geq 2^{-5}N_1^2,
\end{equation*}
which, combined with \eqref{est004-lemma3.23}, establish that there exists a constant
$c(\xi_1',\eta_1') \in \R$ such that
\begin{equation*}
|\xi' - c (\xi_1', \eta_1')| \leq 2^{12} A^{-\frac{3}{2}}d^{-1} N_1.
\end{equation*}
This and \eqref{est003-lemma3.23} imply that there exists a constant $c'(\xi_1',\eta_1') \in \R$ such that
\begin{equation*}
|\eta' - c' (\xi_1', \eta_1')| \leq 2^{7} A^{-1}d^{-1} N_1,
\end{equation*}
which completes the proof.
\end{proof}
\begin{proof}[Proof of Proposition \ref{prop3.20}]
Similarly to the proof of Proposition \ref{prop3.19} for the case $|\xi_2| \geq 2^{10}A^{-1}K^{-1/2}N_1$,
we observe that $|\xi_2| \geq 2^{9} A^{-3/2} N_1$ implies
$|\Phi(\xi_1, \eta_1, \xi, \eta)| \gtrsim A^{-3/2} N_1^{3}$.
Since $A \leq N_1 L_0^{-1/3}$, this and Proposition \ref{prop3.18} verify the desired estimate \eqref{prop3.20-1}.
Therefore we may assume $|\xi_2| = |\xi_1 + \xi| \leq 2^{-9} A^{-3/2} N_1$, and then, by the definition of
$\widehat{Z}_{A,d}^{j_1,j}$ and $\overline{Z}_{A,d}^{j_1,j}$, we observe that
\begin{align*}
& \textnormal{(LHS) of \ref{prop3.20-1}}\\
\leq & \sum_{2^{20} \leq d \leq d_0} \sum_{(k_1, k) \in \widehat{Z}_{A,d}^{j_1,j}}
\Bigl|\int_{**}{  \ha{v}_{{N_2, L_2}}(\tau_2, \xi_2, \eta_2)
\ha{u}_{N_1, L_1}|_{\tilde{\mathcal{T}}_{k_1}^{A,d}}(\tau_1, \xi_1, \eta_1)
\ha{w}_{N_0, L_0}|_{\tilde{\mathcal{T}}_k^{A,d}}(\tau, \xi, \eta)
}
d\sigma_1 d\sigma \Bigr|\\
& + \sum_{(k_1, k) \in \widehat{Z}_{A,d_0}^{j_1,j}}
\Bigl|\int_{**}{  \ha{v}_{{N_2, L_2}}(\tau_2, \xi_2, \eta_2)
\ha{u}_{N_1, L_1}|_{\tilde{\mathcal{T}}_{k_1}^{A,d_0}}(\tau_1, \xi_1, \eta_1)
\ha{w}_{N_0, L_0}|_{\tilde{\mathcal{T}}_k^{A,d_0}}(\tau, \xi, \eta)
}
d\sigma_1 d\sigma \Bigr|\\
& =: \sum_{2^{20} \leq d \leq d_0} \sum_{(k_1, k) \in \widehat{Z}_{A,d}^{j_1,j}}  I_1 +
\sum_{(k_1, k) \in \widehat{Z}_{A,d_0}^{j_1,j}}  I_2.
\end{align*}
Here $d_0$ is defined as the minimal dyadic number such that
$d_0 \geq 2^{20} A^{-3/2} N_1^{3/2} L_0^{-1/2}$.
For the former term, it follows from Propositions \ref{prop3.21},
\ref{prop3.22} and Lemma \ref{lemma3.23} that
\begin{align*}
& \sum_{2^{20} \leq d \leq d_0} \sum_{(k_1, k) \in \widehat{Z}_{A,d}^{j_1,j}} I_1 \\
\lesssim & \sum_{2^{20} \leq d \leq d_0}
\sum_{(k_1, k) \in \widehat{Z}_{A,d}^{j_1,j}} A d^{\frac{1}{2}} N_1^{-2} (L_0 L_1 L_2)^{\frac{1}{2}} \|\ha{u}_{N_1, L_1}|_{\tilde{\mathcal{T}}_{k_1}^{A,d}} \|_{L^2}
\| \ha{v}_{N_2, L_2}\|_{L^2}
\|\ha{w}_{N_0, L_0}|_{\tilde{\mathcal{T}}_k^{A,d}}  \|_{L^2}\\
\lesssim & \sum_{2^{20} \leq d \leq d_0} A d^{\frac{1}{2}} N_1^{-2} (L_0 L_1 L_2)^{\frac{1}{2}} \|\ha{u}_{N_1, L_1} \|_{L^2}
\| \ha{v}_{N_2, L_2} \|_{L^2}
\|\ha{w}_{{N_0, L_0}} \|_{L^2}\\
\lesssim & \  A^{\frac{1}{4}} N_1^{-\frac{5}{4}} L_0^{\frac{1}{4}} (L_1 L_2)^{\frac{1}{2}}
\|\ha{u}_{N_1, L_1} \|_{L^2}
\| \ha{v}_{N_2, L_2} \|_{L^2}
\|\ha{w}_{{N_0, L_0}} \|_{L^2}.
\end{align*}
Next we deal with the latter term. We first consider the following bilinear Strichartz estimate.
\begin{equation}
\begin{split}
& \Bigl\| \chi_{ \tilde{\mathcal{T}}_{k}^{A,d_0}} \int \ha{v}_{N_2, L_2} (\tau_1+ \tau, \xi_1+\xi, \eta_1+ \eta)  \ha{u}_{N_1, L_1}|_{\tilde{\mathcal{T}}_{k_1}^{A,d_0}} (\tau_1, \xi_1, \eta_1) d \sigma_1 \Bigr\|_{L_{\xi, \eta, \tau}^2} \\
& \qquad \qquad \qquad \qquad \qquad
\lesssim ( A \, d_0  N_1 )^{-\frac{1}{2}} (L_1 L_2)^{\frac{1}{2}} \|\ha{v}_{N_2, L_2} \|_{L^2}
\|\ha{u}_{N_1, L_1}|_{\tilde{\mathfrak{D}}_{j_1}^A} \|_{L^2},
\end{split}\label{bilinearStrichartz-prop3.20}
\end{equation}
which is established by the same argument as in the proof of \eqref{bilinearStrichartz-6} in Proposition \ref{prop3.12}. Thus, to avoid redundancy, we omit the proof of \eqref{bilinearStrichartz-prop3.20}.
By Lemma \ref{lemma3.23} and \eqref{bilinearStrichartz-prop3.20}, we obtain
\begin{align*}
& \sum_{(k_1, k) \in \widehat{Z}_{A,d_0}^{j_1,j}}  I_2 \\
\lesssim & \sum_{(k_1, k) \in \widehat{Z}_{A,d_0}^{j_1,j}}
( A \, d_0  N_1 )^{-\frac{1}{2}} (L_1 L_2)^{\frac{1}{2}}
\|\ha{u}_{N_1, L_1}|_{\tilde{\mathcal{T}}_{k_1}^{A,d}} \|_{L^2}
\| \ha{v}_{N_2, L_2}\|_{L^2}
\|\ha{w}_{N_0, L_0}|_{\tilde{\mathcal{T}}_k^{A,d}}  \|_{L^2}\\
\lesssim & \  A^{\frac{1}{4}} N_1^{-\frac{5}{4}} L_0^{\frac{1}{4}} (L_1 L_2)^{\frac{1}{2}}
\|\ha{u}_{N_1, L_1} \|_{L^2}
\| \ha{v}_{N_2, L_2} \|_{L^2}
\|\ha{w}_{{N_0, L_0}} \|_{L^2}.
\end{align*}
This completes the proof.
\end{proof}
\begin{proof}[Proof of Proposition \ref{prop3.17}]
Without loss of generality, we may assume that
$\ha{w}_{{N_0, L_0}}$, $\ha{u}_{N_1, L_1}$,
$\ha{v}_{N_2, L_2} $ are non-negative.
We define that
\begin{equation*}
J_{A}^{\mathcal{I}_1} = \{ (j_1, j) \, | \, 0 \leq j_1,j \leq A -1, \ ( {\mathfrak{D}}_{j_1}^A \times {\mathfrak{D}}_{j}^A ) \subset ( \mathfrak{D}_{0}^{2^{11}} \times
\mathfrak{D}_{0}^{2^{11}} )\}.
\end{equation*}
We divide the proof into the three cases.\\
(I) $\ 1 \leq N_2 \lesssim L_0^{1/3},$ $ \ $
\textnormal{(I \hspace{-0.15cm}I)} $N_2 \gg L_0^{1/3}, \ |\xi_2| \geq 2^{-10}N_2$, $ \ $
\textnormal{(I \hspace{-0.16cm}I \hspace{-0.16cm}I)}  $N_2 \gg L_0^{1/3}, \ |\xi_2| \leq 2^{-10} N_2$.

For the first case, we perform the decomposition as
\begin{equation*}
\mathfrak{D}_{0}^{2^{11}} \times
\mathfrak{D}_{0}^{2^{11}} =    \bigcup_{\tiny{(j_1,j) \in J_{N_1/N_2}^{\mathcal{I}_1}}}
{\mathfrak{D}}_{j_1}^{N_1/N_2} \cross {\mathfrak{D}}_{j}^{N_1/N_2}.
\end{equation*}
It is clear that we may assume that $j_1$ and $j$ satisfy that $|j_1-j| \leq 2$ here.
Therefore, by \eqref{bilinearStrichartz-12} in Proposition \ref{prop3.18}, we get
\begin{align*}
& \textnormal{(LHS) of \eqref{prop3.17-1}} \\
\lesssim & \sum_{\tiny{\substack{(j_1,j) \in J_{N_1/N_2}^{\mathcal{I}_1}\\ |j_1 - j|\leq 2}}}
N_1^{-1} N_2^{\frac{1}{2}}  (L_1 L_2)^{\frac{1}{2}}
\|\ha{u}_{N_1, L_1}|_{\tilde{\mathfrak{D}}_{j_1}^{N_1/N_2}} \|_{L^2}
\| \ha{v}_{N_2, L_2}\|_{L^2}
\|\ha{w}_{{N_0, L_0}}|_{\tilde{\mathfrak{D}}_{j}^{N_1/N_2}}  \|_{L^2}\\
\lesssim & \, N_1^{-1} N_2^{\frac{1}{2}}  (L_1 L_2)^{\frac{1}{2}}
\|\ha{u}_{N_1, L_1}\|_{L^2}
\| \ha{v}_{N_2, L_2}\|_{L^2}
\|\ha{w}_{{N_0, L_0}}  \|_{L^2}\\
\lesssim & \, N_1^{-1} N_2^{-\frac{1}{4}} L_0^{\frac{1}{4}} (L_1 L_2)^{\frac{1}{2}}
\|\ha{u}_{N_1, L_1}\|_{L^2}
\| \ha{v}_{N_2, L_2}\|_{L^2}
\|\ha{w}_{{N_0, L_0}}  \|_{L^2}\\
= & \, \textnormal{(RHS) of \eqref{prop3.17-1}}.
\end{align*}
Here we used $N_2^{1/2} \lesssim N_2^{-1/4} L_0^{1/4}$ which is equivalent to $N_2 \lesssim L_0^{1/3}$.

Next we consider the case $N_2 \gg L_0^{1/3}, \ |\xi_2| \geq 2^{-10}N_2$.
In this case we immediately obtain $|\Phi(\xi_1, \eta_1, \xi, \eta)| \gtrsim N_1^{2} N_2$.
Indeed, we get
\begin{align*}
|\Phi(\xi_1, \eta_1, \xi, \eta)| & \geq |\xi_1 \xi (\xi_1+ \xi)| - |\eta_1 \eta (\eta_1 + \eta)|\\
& \geq 2^{-12} N_1^2 N_2 - 2^{-20} N_1^2 N_2\\
& \gtrsim N_1^2 N_2.
\end{align*}
Therefore, it suffices to show the following bilinear Strichartz estimate.
\begin{align}
& \Bigl\| \chi_{G_{N_2, L_2}} \int \ha{u}_{N_1, L_1}|_{\tilde{\mathfrak{D}}_{j_1}^{\frac{N_1}{N_2}}} (\tau_1, \xi_1, \eta_1) \ha{w}_{N_0, L_0}|_{\tilde{\mathfrak{D}}_{j}^{\frac{N_1}{N_2}}} (\tau_2 - \tau_1, \xi_2-\xi_1, \eta_2- \eta_1) d\sigma_1
\Bigr\|_{L_{\xi_2, \eta_2, \tau_2}^2} \notag \\
& \qquad \qquad \qquad \qquad \quad
\lesssim  N_1^{-\frac{1}{2}} (L_0 L_1)^{\frac{1}{2}} \|\ha{u}_{N_1, L_1}|_{\tilde{\mathfrak{D}}_{j_1}^A}\|_{L^2}
\|\ha{w}_{N_0, L_0}|_{\tilde{\mathfrak{D}}_{j}^A} \|_{L^2},\label{prop3.17-bilinear01}
\end{align}
where $(j_1,j) \in J_{N_1/N_2}^{\mathcal{I}_1}$, $|j_1-j| \leq 2$ and $|\xi_2| \geq 2^{-10} N_2$.
\eqref{prop3.17-bilinear01} can be given by the estimate \vspace{-1mm}
\begin{equation}
\sup_{\substack{(\tau_2, \xi_2, \eta_2) \in G_{N_2, L_2} \\
|\xi_2| \geq  2^{-10} N_2}}|E(\tau_2, \xi_2, \eta_2)| \lesssim N_1^{-1} L_0 L_1 \label{est01-prop3.17}\vspace{-2mm}
\end{equation}
where
\begin{equation*}
E(\tau_2, \xi_2, \eta_2) := \bigl\{ (\tau_1, \xi_1, \eta_1) \in G_{N_1, L_1} \cap
\tilde{\mathfrak{D}}_{j_1}^{\frac{N_1}{N_2}}
\, | \, (\tau_2-\tau_1, \xi_2- \xi_1, \eta_2-\eta_1) \in G_{N_0,L_0} \cap
\tilde{\mathfrak{D}}_{j}^{\frac{N_1}{N_2}} \bigr\}.
\end{equation*}
In the same manner as in the proof of Proposition \ref{prop3.18}, $ |\partial_{\xi_1}\Phi(\xi_1,\eta_1,-\xi_2,-\eta_2)|  =
|\xi_2 \, (2\xi_1- \xi_2)| \gtrsim N_1 N_2$ yields \eqref{est01-prop3.17}.
By utilizing the bilinear Strichartz estimates \eqref{bilinearStrichartz-11}, \eqref{bilinearStrichartz-12},
\eqref{prop3.17-bilinear01} and $|\Phi(\xi_1, \eta_1, \xi, \eta)| \gtrsim N_1^{2} N_2$, we easily establish
\eqref{prop3.17-1}.

Lastly, we consider the case $N_2 \gg L_0^{1/3}, \ |\xi_2| \leq 2^{-10}N_2$.
Let $A_0$ be the maximal dyadic number which satisfies $A_0 \leq 2^{10} N_1 /N_2$.
Note that $N_2 \gg L_0^{\frac{1}{3}}$ implies $A_0\leq N_1/L_0^{1/3}$.
By the Whitney type decomposition of angular variables, we have
\begin{equation*}
\mathfrak{D}_{0}^{2^{11}} \times
\mathfrak{D}_{0}^{2^{11}} =   \bigcup_{64 \leq A \leq A_0} \ \bigcup_{\tiny{\substack{(j_1,j) \in J_{A}^{\mathcal{I}_1}\\ 16 \leq |j_1 - j|\leq 32}}}
{\mathfrak{D}}_{j_1}^A \cross {\mathfrak{D}}_{j}^A \cup
\bigcup_{\tiny{\substack{(j_1,j) \in J_{A_0}^{\mathcal{I}_1}\\|j_1 - j|\leq 16}}}
{\mathfrak{D}}_{j_1}^{A_0} \cross {\mathfrak{D}}_{j}^{A_0}.
\end{equation*}\vspace{-2mm}
We can observe that
\begin{align*}
 (\xi_1,\eta_1) \times (\xi,\eta)
\in \bigcup_{64 \leq A \leq 2^{-10}A_0} \ \bigcup_{\tiny{\substack{(j_1,j) \in J_{A}^{\mathcal{I}_1}\\ 16 \leq |j_1 - j|\leq 32}}}
{\mathfrak{D}}_{j_1}^A \cross {\mathfrak{D}}_{j}^A
& \Longrightarrow |(\xi_1+\xi, \eta_1+\eta)| \geq 2^3 N_2,\\
 (\xi_1,\eta_1) \times (\xi,\eta)
\in \bigcup_{\tiny{\substack{(j_1,j) \in J_{A_0}^{\mathcal{I}_1}\\|j_1 - j|\leq 16}}}
{\mathfrak{D}}_{j_1}^{A_0} \cross {\mathfrak{D}}_{j}^{A_0} \ \ \textnormal{and} \ \
 |(\xi_1+ & \xi,  \eta_1+\eta)|  \geq  N_2 \\[-8pt]
&  \Longrightarrow |\xi_1+\xi|  \geq 2^{-10} N_2.
\end{align*}\vspace{-2mm}
Therefore, here we may assume that
\begin{equation*}
\operatorname{supp}  \ha{u}_{N_1, L_1} \times \operatorname{supp}   \ha{w}_{N_0, L_0}
\subset \bigcup_{2^{-10}A_0 \leq A \leq A_0} \ \bigcup_{\tiny{\substack{(j_1,j) \in J_{A}^{\mathcal{I}_1}\\ 16 \leq |j_1 - j|\leq 32}}}
{\tilde{\mathfrak{D}}}_{j_1}^A \cross \tilde{{\mathfrak{D}}}_{j}^A.
\end{equation*}
By Propositions \ref{prop3.19} and \ref{prop3.20}, we get
\begin{align*}
& \textnormal{(LHS) of \eqref{prop3.17-1}} \\
\leq & \sum_{2^{-10}A_0 \leq A \leq A_0} \sum_{\tiny{\substack{(j_1,j) \in J_{A}^{\mathcal{I}_1}\\ 16 \leq  |j_1 - j|\leq 32}}}
\Bigl|\int_{**}{  \ha{v}_{{N_2, L_2}}(\tau_2, \xi_2, \eta_2)
\ha{u}_{N_1, L_1}|_{\tilde{\mathfrak{D}}_{j_1}^A}(\tau_1, \xi_1, \eta_1)
\ha{w}_{N_0, L_0}|_{\tilde{\mathfrak{D}}_{j}^A}(\tau, \xi, \eta)
}
d\sigma_1 d\sigma \Bigr|\\
 \lesssim & \sum_{2^{-10}A_0 \leq A \leq A_0}
\sum_{\tiny{\substack{(j_1,j) \in J_{A}^{\mathcal{I}_1}\\ 16 \leq  |j_1 - j|\leq 32}}}
A^{\frac{1}{4}} N_1^{-\frac{5}{4}} L_0^{\frac{1}{4}} (L_1 L_2)^{\frac{1}{2}} \|\ha{u}_{N_1, L_1}|_{\tilde{\mathfrak{D}}_{j_1}^A} \|_{L^2}
\| \ha{v}_{N_2, L_2}\|_{L^2}
\|\ha{w}_{{N_0, L_0}}|_{\tilde{\mathfrak{D}}_{j}^A}  \|_{L^2}\\
\lesssim & \sum_{2^{-10}A_0 \leq A \leq A_0}
A^{\frac{1}{4}} N_1^{-\frac{5}{4}} L_0^{\frac{1}{4}} (L_1 L_2)^{\frac{1}{2}} \|\ha{u}_{N_1, L_1} \|_{L^2}
\| \ha{v}_{N_2, L_2} \|_{L^2}
\|\ha{w}_{{N_0, L_0}} \|_{L^2}\\
\lesssim & \ \textnormal{(RHS) of \eqref{prop3.17-1}} .
\end{align*}
This completes the proof.
\end{proof}
Lastly, we establish \eqref{desired-est-12-23} for the case $\min(N_0, N_1, N_2) = N_0$.
Similarly to the case $\min(N_0, N_1, N_2) = N_2$, we divide the proof of \eqref{desired-est-12-23} into the three cases.\\
(I)' $(\xi_1,\eta_1)\times (\xi_2,\eta_2) \in \mathcal{I}_1 $, $ \ $
\textnormal{(I \hspace{-0.15cm}I)'} $(\xi_1,\eta_1)\times (\xi_2,\eta_2) \in \mathcal{I}_2$, $ \ $
\textnormal{(I \hspace{-0.16cm}I \hspace{-0.16cm}I)'}
$(\xi_1,\eta_1)\times (\xi_2,\eta_2) \in \mathcal{I}_3$.

Note that the cases (I)' and \textnormal{(I \hspace{-0.16cm}I \hspace{-0.16cm}I)'} can be treated in the same manners as that for the case $\min(N_0, N_1, N_2) = N_2$.
Therefore, we omit the proofs for the cases (I)' and \\
\textnormal{(I \hspace{-0.16cm}I \hspace{-0.16cm}I)'},
and focus on the case \textnormal{(I \hspace{-0.16cm}I)'}. The estimate \eqref{desired-est-12-23}
for the case $\min(N_0, N_1, N_2) = N_0$ can be obtained by the following proposition.
\begin{prop}\label{prop3.25}
Assume \textnormal{(i)}, \textnormal{(ii)'} in \textit{Case} \textnormal{\ref{case-2}}
and $\min(N_0, N_1, N_2) = N_0$. Then we have
\begin{align}
&
\Bigl|\int_{*}{ |\xi+\eta|  \ha{w}_{N_0, L_0}(\tau, \xi, \eta)  \chi_{\mathcal{I}_2}((\xi_1,\eta_1), (\xi_2,\eta_2))
\ha{u}_{N_1, L_1}(\tau_1, \xi_1, \eta_1)
 \ha{v}_{{N_2, L_2}}(\tau_2, \xi_2, \eta_2)
}
d\sigma_1 d\sigma_2 \Bigr| \notag \\
& \qquad \qquad \qquad \qquad
\lesssim N_0^{\frac{1}{4}} N_1^{-\frac{1}{2}} (L_0 L_1 L_2)^{\frac{1}{2}} \|\ha{u}_{N_1, L_1}\|_{L^2}
\| \ha{v}_{N_2, L_2}\|_{L^2}
\|\ha{w}_{{N_0, L_0}}\|_{L^2},\label{prop3.25-1}
\end{align}
where $d \sigma_j = d\tau_j d \xi_j d \eta_j$ and $*$ denotes $(\tau, \xi, \eta) = (\tau_1 + \tau_2, \xi_1+ \xi_2, \eta_1 + \eta_2).$
\end{prop}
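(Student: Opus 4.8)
The plan is to follow the proof of Proposition~\ref{prop3.14} almost verbatim, with the three characteristic surfaces permuted so that $G_{N_0,L_0}$ is now the ``output'' surface onto which a convolution of the two input pieces is restricted. First I would record the geometric setup. As in the remark preceding Lemma~\ref{lemma3.11}, since $(\xi_1,\eta_1)$ and $(\xi_2,\eta_2)$ both lie in $\mathfrak{D}_{2^9\times3}^{2^{11}}$ while their sum $(\xi,\eta)$ has length $\sim N_0\le N_1$, the two input frequencies must be \emph{nearly antiparallel} with $N_1\sim N_2$, and both sit at angular distance $O(2^{-11})$ from the line $\xi+\eta=0$; hence $(\xi,\eta)$ is also $O(2^{-11})$-close to that line, so that $|\xi+\eta|\lesssim N_0$, and more precisely $|\xi+\eta|\lesssim\min(M^{-1}N_1,N_0)$ on the piece $\mathcal{I}_2^M$. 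This is the source of the extra smallness reflected by the exponent $N_0^{1/4}$: the derivative loss can be absorbed either into $N_0$ or into $M^{-1}N_1$, whichever is more favourable.

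Next I would carry out the same two-parameter decomposition as for Proposition~\ref{prop3.14}: write $\mathcal{I}_2=\bigcup_{2^{11}\le M<M_0}\mathcal{I}_2^M\cup(\textnormal{terminal piece})$, and inside each $\mathcal{I}_2^M$ perform the Whitney-type angular decomposition into products $\mathfrak{D}_{j_1}^A\times\mathfrak{D}_{j_2}^A$ with $16\le|j_1-j_2|\le32$ for $64\le A\le A_0$, together with a diagonal remainder at scale $A_0$, now in the variables $(\xi_1,\eta_1)\times(\xi_2,\eta_2)$. On the off-diagonal pieces the two inputs are transverse with $|\sin\angle((\xi_1,\eta_1),(\xi_2,\eta_2))|\sim A^{-1}$ (modulo $\pi$); on the diagonal one falls back on bilinear Strichartz.

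The analytic ingredients are the $\min=N_0$ analogues of Propositions~\ref{prop3.12} and~\ref{prop3.15} — bilinear Strichartz estimates obtained by exactly the Jacobian/measure counting of Proposition~\ref{prop3.2}, this time estimating the exceptional set over $G_{N_0,L_0}$ — and the nonlinear Loomis--Whitney estimate of the type of Propositions~\ref{prop3.13} and~\ref{prop3.16}. For the latter I would rescale by $(\tau,\xi,\eta)\mapsto(N_1^3\tau,N_1\xi,N_1\eta)$, subdivide harmlessly into tiles, and apply Proposition~\ref{prop2.7} to $S_1\leftrightarrow\ha{u}_{N_1,L_1}$, $S_2\leftrightarrow\ha{v}_{N_2,L_2}$, $S_3\leftrightarrow\ha{w}_{N_0,L_0}$. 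The only step that is not purely formal is the transversality condition. As in Proposition~\ref{prop3.4} one has
\[
|\textnormal{det}\,N(\lambda_1',\lambda_2',\lambda_3')|\ \gtrsim\ \frac{|\xi_1'\eta_2'-\xi_2'\eta_1'|}{\LR{(\xi_1',\eta_1')}^2\LR{(\xi_2',\eta_2')}^2}\,\bigl|\xi_1'\eta_2'+\xi_2'\eta_1'+2(\xi_1'\eta_1'+\xi_2'\eta_2')\bigr|,
\]
and I must verify that the second factor does not collapse even though both inputs hug $\xi+\eta=0$. Here the antiparallel configuration with $|(\xi_i',\eta_i')|\sim N_1$ is decisive: with $\xi_i'\approx-\eta_i'$ and $\eta_1'$, $\eta_2'$ of opposite sign and comparable size, the second factor equals $2|{\eta_1'}^2+\eta_1'\eta_2'+{\eta_2'}^2|+O(2^{-11}N_1^2)$, which is bounded \emph{below} by $\sim N_1^2$ (by $\sim1$ after rescaling). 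Together with the angular transversality $\sim A^{-1}$ and $\textnormal{diam}(S_i)\lesssim A^{-1}$, Proposition~\ref{prop2.7} then applies with $d\sim A^{-1}$, exactly as in Propositions~\ref{prop3.13} and~\ref{prop3.16}.

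Finally I would assemble the pieces. Each dyadic block contributes $\lesssim(\textnormal{LW factor})\,N_1^{-2}(L_0L_1L_2)^{1/2}\|\ha{u}_{N_1,L_1}\|_{L^2}\|\ha{v}_{N_2,L_2}\|_{L^2}\|\ha{w}_{N_0,L_0}\|_{L^2}$; multiplying by $|\xi+\eta|\lesssim\min(M^{-1}N_1,N_0)$, using the Lemma~\ref{lemma3.6}-type almost-orthogonality of tiles and sectors to sum, and choosing $A_0$ (and $M_0$) as the scale at which the modulation term takes over — precisely as in the proofs of Propositions~\ref{prop3.14} and~\ref{prop3.16} — yields \eqref{prop3.25-1}. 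I expect the main obstacle to be organizing the Whitney decomposition so that \emph{every} surviving piece genuinely has the antiparallel-with-comparable-radii structure needed to keep the null factor $\gtrsim N_1^2$, and then controlling the counting loss in the Loomis--Whitney step: since $G_{N_0,L_0}$ is not localized to the radial sub-tiles of the input surfaces, a power of the number of sub-tiles is lost there, and this loss must be balanced against the $N_0$-gain so as to land exactly on the exponent $N_0^{1/4}N_1^{-1/2}$ rather than the $N_1^{-1/4}$ of Proposition~\ref{prop3.14}.
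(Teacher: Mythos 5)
Your plan coincides with the paper's proof: the paper reduces \eqref{prop3.25-1} to a bound with $|\xi+\eta|^{3/4}$ by writing $|\xi+\eta|\le N_0^{1/4}|\xi+\eta|^{3/4}$ (your ``take the more favourable of $N_0$ and $M^{-1}N_1$'' is an equivalent packaging), then runs exactly the $M$- and $A$-Whitney decomposition of Proposition \ref{prop3.14} with the permuted analogues of Propositions \ref{prop3.15} and \ref{prop3.16} (stated as Propositions \ref{prop3.26} and \ref{prop3.27}) and the choice $A_0\sim N_1^{3/2}$. Your identification of the non-degenerate null factor $|\xi_1'\eta_2'+\xi_2'\eta_1'+2(\xi_1'\eta_1'+\xi_2'\eta_2')|\sim N_1^2$ in the antiparallel configuration and of the $M^{1/2}$ tiling loss in the Loomis--Whitney step matches the mechanism behind the $(AM)^{1/2}N_1^{-2}$ bound of Proposition \ref{prop3.27}, so the proposal is correct and essentially the paper's argument.
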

We can show Proposition \eqref{prop3.25} by the same argument as that for the proof of Proposition \ref{prop3.14}.
We introduce the two propositions which are easily verified in the same ways as that for Propositions
\ref{prop3.15} and \ref{prop3.16}, respectively. Thus, we omit the proofs.
\begin{prop}\label{prop3.26}
Assume \textnormal{(i)}, \textnormal{(ii)'} in \textit{Case} \textnormal{\ref{case-2}}
and $\min(N_0, N_1, N_2) = N_0.$ Let $M\geq 2^{12}$ and $A \geq 2^{20} M$ be dyadic, $|j_1-j_2| \leq 32$ and
\begin{equation*}
\left( {\mathfrak{D}}_{j_1}^A \times {\mathfrak{D}}_{j_2}^A \right) \subset \mathcal{I}_{2}^M.
\end{equation*}
If we assume that $||(\xi_1, \eta_1)|- |(\xi-\xi_1, \eta-\eta_1)|| \geq 2^{-3} N_1$, then we have
\begin{align}
& \Bigl\| \chi_{G_{N_0,L_0} } \int  \ha{u}_{N_1, L_1}|_{\tilde{\mathfrak{D}}_{j_1}^A} (\tau_1, \xi_1, \eta_1)
\ha{v}_{N_2, L_2}|_{\tilde{\mathfrak{D}}_{j_2}^A} (\tau- \tau_1, \xi - \xi_1, \eta - \eta_1)
d \sigma_1 \Bigr\|_{L_{\xi, \eta, \tau}^2} \notag \\
& \qquad \qquad \qquad \qquad
\lesssim A^{-\frac{1}{2}} M^{\frac{1}{2}} N_1^{-\frac{1}{2}} (L_1 L_2)^{\frac{1}{2}}
\|\ha{u}_{N_1, L_1}|_{\tilde{\mathfrak{D}}_{j_1}^A} \|_{L^2}
\|\ha{v}_{N_2, L_2}|_{\tilde{\mathfrak{D}}_{j_2}^A} \|_{L^2}
.\label{bilinearStrichartz-prop3.26-01}
\end{align}
Assume that $||(\xi_1, \eta_1)|- |(\xi_1+\xi_2, \eta_1+ \eta_2)|| \geq 2^{-3} N_1$.
Then we have
\begin{align}
& \Bigl\| \chi_{G_{N_2, L_2} \cap \tilde{\mathfrak{D}}_{j_2}^A}
 \int \ha{u}_{N_1, L_1}|_{\tilde{\mathfrak{D}}_{j_1}^A} (\tau_1, \xi_1, \eta_1) \ha{w}_{N_0, L_0}
(\tau_1+\tau_2, \xi_1+\xi_2, \eta_1+ \eta_2)) d\sigma_1
\Bigr\|_{L_{\xi_2, \eta_2, \tau_2}^2} \notag \\
& \qquad \qquad \qquad \qquad
\lesssim A^{-\frac{1}{2}} M^{\frac{1}{2}} N_1^{-\frac{1}{2}} (L_0 L_1)^{\frac{1}{2}} \|\ha{u}_{N_1, L_1}|_{\tilde{\mathfrak{D}}_{j_1}^A}\|_{L^2}
\|\ha{w}_{N_0, L_0} \|_{L^2}.\label{bilinearStrichartz-prop3.26-02}
\end{align}
Similarly, if we assume that $||(\xi_2, \eta_2)|- |(\xi_1+\xi_2, \eta_1+\eta_2)|| \geq 2^{-3} N_1$,
then we have
\begin{align}
& \Bigl\| \chi_{G_{N_1, L_1} \cap \tilde{\mathfrak{D}}_{j_1}^A} \int
\ha{v}_{N_2, L_2}|_{\tilde{\mathfrak{D}}_{j_2}^A} (\tau_2, \xi_2, \eta_2)
\ha{w}_{N_0, L_0}
(\tau_1+\tau_2, \xi_1+\xi_2, \eta_1+ \eta_2)   d\sigma_2 \Bigr\|_{L_{\xi_1, \eta_1, \tau_1}^2}
\notag \\
& \qquad \qquad \qquad \qquad
\lesssim A^{-\frac{1}{2}} M^{\frac{1}{2}} N_1^{-\frac{1}{2}} (L_0 L_2)^{\frac{1}{2}}
\|\ha{v}_{N_2, L_2}|_{\tilde{\mathfrak{D}}_{j_2}^A}\|_{L^2}
\|\ha{w}_{N_0, L_0} \|_{L^2} . \label{bilinearStrichartz-prop3.26-03}
\end{align}
\end{prop}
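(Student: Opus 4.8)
The plan is to prove Proposition \ref{prop3.26} exactly as Proposition \ref{prop3.15} was proved, of which it is the analogue for the configuration $\min(N_0,N_1,N_2)=N_0$. First note that $(\xi,\eta)=(\xi_1,\eta_1)+(\xi_2,\eta_2)$ together with $N_0=\min(N_0,N_1,N_2)$ forces $N_1\sim N_2$, so in the right-hand sides of \eqref{bilinearStrichartz-prop3.26-01}--\eqref{bilinearStrichartz-prop3.26-03} the factors $N_1^{-1/2}$ and $N_2^{-1/2}$ are interchangeable. By density, duality and the Cauchy--Schwarz argument used in the proof of Proposition \ref{prop3.2} for the genuine convolution \eqref{bilinearStrichartz-prop3.26-01}, and in the proof of Proposition \ref{prop3.12} for the correlation-type expressions \eqref{bilinearStrichartz-prop3.26-02} and \eqref{bilinearStrichartz-prop3.26-03}, each of the three estimates reduces to a single measure bound $\sup|E|\lesssim A^{-1}MN_1^{-1}L_aL_b$, where $E$ is the corresponding resonant set and $\{a,b\}$ is $\{1,2\}$, $\{0,1\}$, $\{0,2\}$ respectively; for instance \eqref{bilinearStrichartz-prop3.26-02} amounts to showing $\sup_{(\tau_2,\xi_2,\eta_2)\in G_{N_2,L_2}\cap\tilde{\mathfrak{D}}_{j_2}^A}|E(\tau_2,\xi_2,\eta_2)|\lesssim A^{-1}MN_1^{-1}L_0L_1$ with $E(\tau_2,\xi_2,\eta_2)=\{(\tau_1,\xi_1,\eta_1)\in G_{N_1,L_1}\cap\tilde{\mathfrak{D}}_{j_1}^A \,|\, (\tau_1+\tau_2,\xi_1+\xi_2,\eta_1+\eta_2)\in G_{N_0,L_0}\}$.

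To bound such an $E$ I would slice it as in the proof of Proposition \ref{prop3.12}. Fixing the spatial frequency over which one integrates, say $(\xi_1,\eta_1)$, the two modulation restrictions with indices in $\{a,b\}$ confine the time variable $\tau_1$ to an interval, so the $\tau_1$-section has measure $\lesssim\min(L_a,L_b)$. Passing to polar coordinates $(\xi_1,\eta_1)=r_1(\cos\theta_1,\sin\theta_1)$, the angular localization $(\xi_1,\eta_1)\in\mathfrak{D}_{j_1}^A$ confines $\theta_1$ to a set of measure $\sim A^{-1}$, and for fixed $\theta_1$ it remains to confine the radial variable $r_1$. Writing $\Phi$ for the resonance function of the three frequencies involved (as in Lemma \ref{lemma3.11}), the modulation at the fixed evaluation point being a constant, the signed sum of the two remaining modulations equals $3\Phi$ plus that constant, so $\Phi$ is pinned to an interval of length $\lesssim\max(L_a,L_b)$ on $E$; once the lower bound $|\partial_{r_1}\Phi|\gtrsim M^{-1}N_1^2$ is established on $E$, the radial section therefore has measure $\lesssim M\max(L_a,L_b)N_1^{-2}$. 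Combining the three slices with the polar Jacobian $r_1\sim N_1$ yields $|E|\lesssim N_1\cdot A^{-1}\cdot M\max(L_a,L_b)N_1^{-2}\cdot\min(L_a,L_b)=A^{-1}MN_1^{-1}L_aL_b$, as required.

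Everything thus reduces to the radial derivative bound $|\partial_{r_1}\Phi|\gtrsim M^{-1}N_1^2$ on $E$, and this is the heart of the matter and the step I expect to be the main obstacle. As in the proof of Proposition \ref{prop3.15}, the hypothesis $(\mathfrak{D}_{j_1}^A\times\mathfrak{D}_{j_2}^A)\subset\mathcal{I}_2^M$ allows us to assume $(\xi_1,\eta_1)\in\mathfrak{D}_{2^{-2}M\times3}^M\setminus\mathfrak{D}_{2^{-1}M\times3}^{2M}$, so that $|\cos\theta_1+\sin\theta_1|\sim M^{-1}$ and hence $|\cos^3\theta_1+\sin^3\theta_1|=|\cos\theta_1+\sin\theta_1|\bigl(1-\tfrac{1}{2}\sin 2\theta_1\bigr)\sim M^{-1}$; moreover $|j_1-j_2|\le32$ together with $A\ge2^{20}M$ makes the line-directions of $(\xi_1,\eta_1)$, $(\xi_2,\eta_2)$ and $(\xi,\eta)=(\xi_1,\eta_1)+(\xi_2,\eta_2)$ agree to within $\lesssim A^{-1}$. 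Plugging this into the explicit formula for $\partial_{r_1}\Phi$ and discarding an additive error of size $\lesssim A^{-1}N_1^2$ (absorbed since $A\ge2^{20}M$), one finds that $|\partial_{r_1}\Phi|$ is $\sim M^{-1}$ times a scalar factor: for \eqref{bilinearStrichartz-prop3.26-01} this factor equals $|r_1^2-r_2^2|=(r_1+r_2)|r_1-r_2|$ with $r_j=|(\xi_j,\eta_j)|\sim N_1$, so the hypothesis $|r_1-r_2|\ge2^{-3}N_1$ at once gives a factor $\gtrsim N_1^2$; for \eqref{bilinearStrichartz-prop3.26-02} the factor is $r_2\,|2r_1+\epsilon r_2|$ with $\epsilon=+1$ in the parallel and $\epsilon=-1$ in the anti-parallel sub-configuration, which is automatically $\gtrsim N_1^2$ except when $\epsilon=-1$ and $r_2$ is close to $2r_1$ --- precisely the configuration excluded by the magnitude-gap hypothesis $\bigl|\,|(\xi_1,\eta_1)|-|(\xi_1+\xi_2,\eta_1+\eta_2)|\,\bigr|\ge2^{-3}N_1$, which in that sub-configuration reads exactly $|2r_1-r_2|\ge2^{-3}N_1$; finally \eqref{bilinearStrichartz-prop3.26-03} is treated identically after exchanging the roles of $(\xi_1,\eta_1)$ and $(\xi_2,\eta_2)$, using its own magnitude-gap hypothesis. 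The remaining work --- bookkeeping the two orientation sub-configurations and checking that all errors of size $\lesssim A^{-1}N_1^2$ are dominated by the $\sim M^{-1}N_1^2$ main term --- is routine.
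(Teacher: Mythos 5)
Your proposal is correct and follows exactly the route the paper intends: the paper omits the proof of this proposition, stating it is verified "in the same way as Proposition \ref{prop3.15}", and you reconstruct that argument faithfully — reduction to a measure bound on the resonant set via Cauchy--Schwarz, the $\tau$-slice of size $\min(L_a,L_b)$, the angular slice of size $A^{-1}$, and the radial derivative bound $|\partial_{r}\Phi|\gtrsim M^{-1}N_1^2$ obtained from $|\cos^3\theta+\sin^3\theta|\sim M^{-1}$ on $\mathcal{I}_2^M$ together with the appropriate magnitude-gap hypothesis. Your identification of which gap hypothesis rescues which derivative factor ($(r_1+r_2)|r_1-r_2|$ for the first estimate, $r_2|2r_1+\epsilon r_2|$ for the second, and its symmetric counterpart for the third) is the right bookkeeping.
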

\begin{prop}\label{prop3.27}
Assume \textnormal{(i)}, \textnormal{(ii)'} in \textit{Case} \textnormal{\ref{case-2}}
and $\min(N_0, N_1, N_2) = N_0.$ Let $A$ and $M$ be dyadic numbers such that $2^{12} \leq M \leq N_1$ and $A \geq \max (2^{25}, M)$. Suppose that
$j_1$, $j_2$ satisfy $16 \leq |j_1-j_2| \leq 32$ and
\begin{equation*}
( {\mathfrak{D}}_{j_1}^A \times {\mathfrak{D}}_{j_2}^A ) \subset \mathcal{I}_{2}^M.
\end{equation*}
Then we have
\begin{equation}
\begin{split}
&
\Bigl|\int_{*}{ \ha{w}_{N_0, L_0}(\tau, \xi, \eta)
\ha{u}_{N_1, L_1}|_{\tilde{\mathfrak{D}}_{j_1}^A}(\tau_1, \xi_1, \eta_1)
\ha{v}_{{N_2, L_2}}|_{\tilde{\mathfrak{D}}_{j_2}^A} (\tau_2, \xi_2, \eta_2)
}
d\sigma_1 d\sigma_2 \Bigr|\\
& \qquad \qquad  \lesssim  (AM)^{\frac{1}{2}} N_1^{-2} (L_0 L_1 L_2)^{\frac{1}{2}} \|\ha{u}_{N_1, L_1}|_{\tilde{\mathfrak{D}}_{j_1}^A} \|_{L^2}
\| \ha{v}_{{N_2, L_2}}|_{\tilde{\mathfrak{D}}_{j_2}^A}\|_{L^2}
\|\ha{w}_{{N_0, L_0}}  \|_{L^2},\label{prop3.27-1}
\end{split}
\end{equation}
where $d \sigma_j = d\tau_j d \xi_j d \eta_j$ and $*$ denotes $(\tau, \xi, \eta) = (\tau_1 + \tau_2, \xi_1+ \xi_2, \eta_1 + \eta_2).$
\end{prop}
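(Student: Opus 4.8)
The plan is to follow the proof of Proposition \ref{prop3.16} almost verbatim, after interchanging the roles of the ``small'' frequency (now $(\xi,\eta)$, of size $N_0$) and one of the ``large'' input frequencies. Note first that, because of the convolution constraint $(\tau,\xi,\eta)=(\tau_1+\tau_2,\xi_1+\xi_2,\eta_1+\eta_2)$, the relevant resonance function is $\Phi(\xi_1,\eta_1,\xi_2,\eta_2)=\xi_1\xi_2(\xi_1+\xi_2)+\eta_1\eta_2(\eta_1+\eta_2)$, so $3|\Phi|\le L_0+L_1+L_2$. If $|\Phi|\gtrsim A^{-1}N_1^3$, then $L_{012}^{\max}\gtrsim A^{-1}N_1^3$ and, exactly as at the beginning of the proof of Proposition \ref{prop3.16}, combining the bilinear Strichartz estimates \eqref{bilinearStrichartz-prop3.26-01}--\eqref{bilinearStrichartz-prop3.26-03} of Proposition \ref{prop3.26} with the Strichartz estimate \eqref{Strichartz-1} (with $p=q=4$) applied to the remaining factor carrying $L_{012}^{\max}$ yields \eqref{prop3.27-1}. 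Hence it remains to treat the near-resonant regime $|\Phi(\xi_1,\eta_1,\xi_2,\eta_2)|\le 2^{-20}A^{-1}N_1^3$.

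In that regime I would first establish the radial separation bound
\[
\bigl|\,|(\xi_1,\eta_1)|-|(\xi_2,\eta_2)|\,\bigr|\lesssim A^{-1}MN_1,
\]
the analogue of \eqref{est01-prop3.16}. Writing $(\xi_i,\eta_i)=r_i(\cos\theta_i,\sin\theta_i)$, membership in $\mathcal{I}_2^M$ forces $|\cos\theta_i+\sin\theta_i|\gtrsim M^{-1}$, and since $\min(N_0,N_1,N_2)=N_0$ the two vectors must be nearly anti-parallel, so $16\le|j_1-j_2|\le 32$ gives $(\cos\theta_2,\sin\theta_2)=-(\cos\theta_1,\sin\theta_1)+O(A^{-1})$. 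A polar-coordinate expansion of $\Phi$ then yields $|\Phi|\gtrsim r_1r_2|r_1-r_2|\,\bigl(1-\frac{1}{2}\sin 2\theta_1\bigr)\,|\cos\theta_1+\sin\theta_1|\gtrsim N_1^2M^{-1}|r_1-r_2|$ up to errors $O(A^{-1}N_1^3)$ coming from the angular spreads, which gives the claim since $A\ge M$. By the almost orthogonality I may then assume that $r_1=|(\xi_1,\eta_1)|$ and $r_2=|(\xi_2,\eta_2)|$ are each restricted to an interval of length $\sim A^{-1}MN_1$.

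Next I would cover each of $\operatorname{supp}\ha{u}_{N_1,L_1}|_{\tilde{\mathfrak{D}}_{j_1}^A}$ and $\operatorname{supp}\ha{v}_{N_2,L_2}|_{\tilde{\mathfrak{D}}_{j_2}^A}$ (now contained in a prism over a rectangle of short side $\sim A^{-1}N_1$ and long side $\sim A^{-1}MN_1$) by $\sim M$ tiles $\tilde{\mathcal{T}}_k^{A'}$, $A'=2^{-100}A$, and on each pair of tiles reduce \eqref{prop3.27-1} to a single nonlinear Loomis--Whitney estimate exactly as \eqref{prop3.16-1} was reduced to \eqref{est02-prop3.16}: after the scaling $(\tau,\xi,\eta)\to(N_1^3\tau,N_1\xi,N_1\eta)$ it suffices to prove $\|\tilde f|_{S_1}*\tilde g|_{S_2}\|_{L^2(S_3)}\lesssim A^{1/2}\|\tilde f\|_{L^2(S_1)}\|\tilde g\|_{L^2(S_2)}$ for the associated hypersurfaces. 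The diameter bound $\operatorname{diam}(S_i)\lesssim A^{-1}$ holds, and for the transversality I would use that $16\le|j_1-j_2|\le 32$ gives $|\xi_1\eta_2-\xi_2\eta_1|\gtrsim A^{-1}N_1^2$, while membership in $\mathcal{I}_2$ gives $\eta_i\approx-\xi_i$ and $\xi_2\approx-\xi_1$, so $|\xi_1\eta_2+\xi_2\eta_1+2(\xi_1\eta_1+\xi_2\eta_2)|\approx 2|\xi_1^2+\xi_1\xi_2+\xi_2^2|\gtrsim N_1^2$; the determinant in Proposition \ref{prop2.7} is therefore $\gtrsim A^{-1}\sim\operatorname{diam}(S_i)$, so Proposition \ref{prop2.7} applies with $d\sim A^{-1}$. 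Summing over the $\sim M$ pairs of tiles, using Cauchy--Schwarz (which produces the factor $M^{1/2}$) together with the almost one-to-one correspondence of the tiles enforced by $|\xi+\eta|\lesssim M^{-1}N_1$, then gives the bound $(AM)^{1/2}N_1^{-2}(L_0L_1L_2)^{1/2}$, i.e. \eqref{prop3.27-1}.

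The main point to check --- and the only place the argument could genuinely differ from Proposition \ref{prop3.16} --- is that, in contrast to Propositions \ref{prop3.19} and \ref{prop3.22}, no additional anisotropic rescaling of the $\eta$-variable is needed here: this is precisely because the factor $|\xi_1\eta_2+\xi_2\eta_1+2(\xi_1\eta_1+\xi_2\eta_2)|$ is bounded below by $\sim N_1^2$ throughout $\mathcal{I}_2$ (both frequencies lying near the line $\xi+\eta=0$), so the transversality $\sim A^{-1}$ is already comparable to the tile diameter. One must also verify in the radial separation estimate that the $O(A^{-1}N_1^3)$ error terms from the angular spreads remain negligible against the main term $\sim N_1^2M^{-1}|r_1-r_2|$, which is exactly the hypothesis $A\ge M$ in the statement. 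All the remaining computations are identical to those already carried out in the proofs of Propositions \ref{prop3.4}, \ref{prop3.15} and \ref{prop3.16}.
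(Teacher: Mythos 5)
Your proposal is correct and follows essentially the same route as the paper: the paper omits the proof of Proposition \ref{prop3.27} altogether, stating only that it is verified in the same way as Proposition \ref{prop3.16}, and your argument is precisely that adaptation (dichotomy on $|\Phi|$, radial localization to scale $A^{-1}MN_1$, covering by $\sim M$ tiles, and the nonlinear Loomis--Whitney inequality with transversality $d\sim A^{-1}$ coming from $|\xi_1\eta_2-\xi_2\eta_1|\gtrsim A^{-1}N_1^2$ together with $|F|\gtrsim N_1^2$ on $\mathcal{I}_2$). Your observation that no anisotropic rescaling of the $\eta$-variable is needed here, in contrast with Propositions \ref{prop3.19} and \ref{prop3.22}, correctly identifies why the argument of Proposition \ref{prop3.16} carries over verbatim.
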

\begin{proof}[Proof of Proposition \ref{prop3.25}]
First, since $|\xi+\eta| \lesssim N_0$, it suffices to show
\begin{align}
&
\Bigl|\int_{*}{ |\xi+\eta|^{\frac{3}{4}}  \ha{w}_{N_0, L_0}(\tau, \xi, \eta)  \chi_{\mathcal{I}_2}((\xi_1,\eta_1), (\xi_2,\eta_2))
\ha{u}_{N_1, L_1}(\tau_1, \xi_1, \eta_1)
 \ha{v}_{{N_2, L_2}}(\tau_2, \xi_2, \eta_2)
}
d\sigma_1 d\sigma_2 \Bigr| \notag \\
& \qquad \qquad \qquad \qquad
\lesssim  N_1^{-\frac{1}{2}} (L_0 L_1 L_2)^{\frac{1}{2}} \|\ha{u}_{N_1, L_1}\|_{L^2}
\| \ha{v}_{N_2, L_2}\|_{L^2}
\|\ha{w}_{{N_0, L_0}}\|_{L^2}.\label{prop3.25-2}
\end{align}
Let $A_0 \geq 2^{25}$ and $M_0 \geq 2^{11}$ be dyadic which will be chosen later. Recall
\begin{equation*}
J_{A}^{\mathcal{I}_2^M} = \{ (j_1, j_2) \, | \, 0 \leq j_1,j_2 \leq A -1, \ \left( {\mathfrak{D}}_{j_1}^A \times {\mathfrak{D}}_{j_2}^A \right) \subset \mathcal{I}_2^M\}
\end{equation*}
and the decomposition of $\mathcal{I}_{2}^M$
\begin{equation*}
\mathcal{I}_2^M =   \bigcup_{64 \leq A \leq A_0} \ \bigcup_{\tiny{\substack{(j_1,j_2) \in J_{A}^{\mathcal{I}_2^M}\\ 16 \leq |j_1 - j_2|\leq 32}}}
{\mathfrak{D}}_{j_1}^A \cross {\mathfrak{D}}_{j_2}^A \cup
\bigcup_{\tiny{\substack{(j_1,j_2) \in J_{A_0}^{\mathcal{I}_2^M}\\|j_1 - j_2|\leq 16}}}
{\mathfrak{D}}_{j_1}^{A_0} \cross {\mathfrak{D}}_{j_2}^{A_0},
\end{equation*}
and the decompositionof $\mathcal{I}_{2}$
\begin{equation*}
\mathcal{I}_{2} = \bigcup_{2^{11} \leq M < M_0} \mathcal{I}_{2}^M \, \cup \,
\bigl( {\mathfrak{D}}_{{2^{-2}M_0 \times 3}}^{M_0} \times {\mathfrak{D}}_{2^{-2}M_0 \times 3}^{M_0} \bigr).
\end{equation*}
By using the notation
\begin{equation*}
I_A:= \Bigl|\int_{*}{ \ha{w}_{N_0, L_0}(\tau, \xi, \eta)
\ha{u}_{N_1, L_1}|_{\tilde{\mathfrak{D}}_{j_1}^A}(\tau_1, \xi_1, \eta_1)
\ha{v}_{{N_2, L_2}}|_{\tilde{\mathfrak{D}}_{j_2}^A} (\tau_2, \xi_2, \eta_2)
}
d\sigma_1 d\sigma_2 \Bigr|,
\end{equation*}
we observe that
\begin{align*}
& \textnormal{(LHS) of \eqref{prop3.25-2}} \\
\lesssim & \sum_{2^{11} \leq M \leq M_0}  \sum_{2^{25} \leq A \leq A_0}
\sum_{\tiny{\substack{(j_1,j_2) \in J_{A}^{\mathcal{I}_2^M}\\ 16 \leq |j_1 - j_2|\leq 32}}}
N_1^{\frac{3}{4}} M^{-\frac{3}{4}} I_A
+ \sum_{2^{11} \leq M \leq M_0} \sum_{\tiny{\substack{(j_1,j_2) \in J_{A_0}^{\mathcal{I}_2^M}\\
|j_1 - j_2|\leq 16}}}
N_1^{\frac{3}{4}} M^{-\frac{3}{4}} I_{A_0}  \\
 + & N_1^{\frac{3}{4}} M_0^{-\frac{3}{4}}
\Bigl|\int_{*}{ \ha{w}_{N_0, L_0}(\tau, \xi, \eta)
\ha{u}_{N_1, L_1}|_{\tilde{\mathfrak{D}}_{{3M_0/4}}^{M_0} }(\tau_1, \xi_1, \eta_1)
\ha{v}_{{N_2, L_2}}|_{\tilde{\mathfrak{D}}_{{3M_0/4}}^{M_0} } (\tau_2, \xi_2, \eta_2)
}
d\sigma_1 d\sigma_2 \Bigr|.
\end{align*}
Here we used the inequality $|\xi+\eta| \lesssim M^{-1} N_1$. For the first term, by Proposition \ref{prop3.27}, we get
\begin{align*}
& \sum_{2^{25} \leq A \leq A_0}
\sum_{\tiny{\substack{(j_1,j_2) \in J_{A}^{\mathcal{I}_2^M}\\ 16 \leq |j_1 - j_2|\leq 32}}}
N_1^{\frac{3}{4}} M^{-\frac{3}{4}} I_A \\
\lesssim & \sum_{2^{25} \leq A \leq A_0}
\sum_{\tiny{\substack{(j_1,j_2) \in J_{A}^{\mathcal{I}_2^M}\\ 16 \leq |j_1 - j_2|\leq 32}}}
A^{\frac{1}{2}} M^{-\frac{1}{4}}N_1^{-\frac{5}{4}} (L_0 L_1 L_2)^{\frac{1}{2}} \|\ha{u}_{N_1, L_1}|_{\tilde{\mathfrak{D}}_{j_1}^A} \|_{L^2}
\| \ha{v}_{{N_2, L_2}}|_{\tilde{\mathfrak{D}}_{j_2}^A}\|_{L^2}
\|\ha{w}_{{N_0, L_0}}  \|_{L^2}\\
\lesssim & \ {A_0}^{\frac{1}{2}} M^{-\frac{1}{4}}N_1^{-\frac{5}{4}}
(L_0 L_1 L_2)^{\frac{1}{2}} \|\ha{u}_{N_1, L_1} \|_{L^2}
\| \ha{v}_{N_2, L_2}\|_{L^2}
\|\ha{w}_{{N_0, L_0}} \|_{L^2}.
\end{align*}
Thus, if we define $A_0$ as the minimal dyadic number which is greater than $N_1^{3/2}$, the first term is bounded by (RHS) of \eqref{prop3.25-2}. Next, we easily see
\begin{equation*}
\max(| \, |(\xi_1,\eta_1)| - |(\xi_2,  \eta_2)| \, |, \,
| \, |(\xi_1,\eta_1)| - |(\xi, \eta)| \, |, \,
| \, |(\xi,\eta)| - |(\xi_2, \eta_2)| \, |) \geq 2^{-3} N_1,
\end{equation*}
where $(\xi,\eta)=(\xi_1+\xi_2, \eta_1+\eta_2)$ and $(\xi_1,\eta_1)\times (\xi_2,\eta_2) \in
{\mathfrak{D}}_{j_1}^{A_0} \cross {\mathfrak{D}}_{j_2}^{A_0}$ with $|j_1-j_2| \leq 16$.
Thus, by using one of \eqref{bilinearStrichartz-prop3.26-01}-\eqref{bilinearStrichartz-prop3.26-03}
in Proposition \ref{prop3.26}, we get
\begin{align*}
& \sum_{2^{11} \leq M \leq M_0} \sum_{\tiny{\substack{(j_1,j_2) \in J_{A_0}^{\mathcal{I}_2^M}\\|j_1 - j_2|\leq 16}}}
N_1^{\frac{3}{4}} M^{-\frac{3}{4}} I_{A_0}  \\
\lesssim &\sum_{2^{11} \leq M \leq M_0} \sum_{\tiny{\substack{(j_1,j_2) \in J_{A_0}^{\mathcal{I}_2^M}\\
|j_1 - j_2|\leq 16}}}
{A_0}^{-\frac{1}{2}} M^{-\frac{1}{4}}N_1^{\frac{1}{4}} (L_0 L_1 L_2)^{\frac{1}{2}} \|\ha{u}_{N_1, L_1}|_{\tilde{\mathfrak{D}}_{j_1}^A} \|_{L^2}
\| \ha{v}_{{N_2, L_2}}|_{\tilde{\mathfrak{D}}_{j_2}^A}\|_{L^2}
\|\ha{w}_{{N_0, L_0}}  \|_{L^2}\\
\lesssim & \ \textnormal{(RHS) of \eqref{prop3.25-2}}.
\end{align*}
Let $M_0 = N_1$. By the Strichartz estimate \eqref{Strichartz-1} with $p=q=4$, we can find that the last term is bounded by \textnormal{(RHS) of \eqref{prop3.25-2}}.
\end{proof}
\section{Negative Result}
In this section, we show the following negative result.
\begin{thm}\label{not-c2-re}
Let $s < - \frac{1}{4}$. Then for any $T>0$, the data-to-solution map
$ u_0 \mapsto u$
of \eqref{ZK'}, as a map from the unit ball in
$H^s(\R^2)$ to
$C([0,T]; H^{s})$ fails to be $C^2$.
\end{thm}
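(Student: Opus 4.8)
The plan is to run Bourgain's second-iteration argument. If the data-to-solution map $u_0\mapsto u$ for \eqref{ZK'} were $C^2$ on a neighbourhood of $0$, its second derivative at $0$ would be a bounded symmetric bilinear map $H^s(\R^2)\times H^s(\R^2)\to C([0,T];H^s)$; differentiating the Duhamel formulation twice at $u_0=0$ identifies it, up to a fixed nonzero constant, with the second Picard iterate
\[
\mathcal{A}(\phi,\phi)(t):=\int_0^t e^{-(t-t')(\partial_x^3+\partial_y^3)}(\partial_x+\partial_y)\bigl((e^{-t'(\partial_x^3+\partial_y^3)}\phi)^2\bigr)\,dt'.
\]
So it is enough to produce a family $\{\phi_N\}_{N\gg1}$ with $\|\phi_N\|_{H^s}\to0$ yet $\|\mathcal{A}(\phi_N,\phi_N)\|_{C([0,T];H^s)}/\|\phi_N\|_{H^s}^2\to\infty$. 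Performing the $t'$-integral, with $\xi=\xi_1+\xi_2$, $\eta=\eta_1+\eta_2$ and the resonance function $\Phi(\xi_1,\eta_1,\xi_2,\eta_2)=\xi_1\xi_2(\xi_1+\xi_2)+\eta_1\eta_2(\eta_1+\eta_2)$ of Section 3,
\[
\F_{x,y}\mathcal{A}(\phi,\phi)(t)(\xi,\eta)=c\,(\xi+\eta)\,e^{it(\xi^3+\eta^3)}\int_{\substack{\xi=\xi_1+\xi_2\\\eta=\eta_1+\eta_2}}\frac{e^{-3it\Phi}-1}{-3i\Phi}\,\ha{\phi}(\xi_1,\eta_1)\ha{\phi}(\xi_2,\eta_2)\,d\xi_1\,d\eta_1,
\]
which displays the mechanism: I want the quadratic interaction to be (near-)resonant, $|\Phi|\lesssim1$, so the multiplier is $\approx t$, while the derivative loss $|\xi+\eta|$ stays as large as possible on the output frequencies.

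I would take $\ha{\phi_N}=\chi_{R_1}+\chi_{R_2}$ (symmetrised so that $\phi_N$ is real), where $R_1,R_2\subset\R^2$ are congruent rectangles at frequency scale $N$, of thickness $\sim N^{-2}$ in the direction of $\nabla_{(\xi_1,\eta_1)}\Phi$ and length $\sim N^{-1/2}$ in the transverse direction, with centres placed along one of the degenerate ray configurations identified in \textit{Case} \ref{ass-1} --- those responsible for the exceptional set $\mathcal{K}\cup\mathcal{K}'$, where the two factors $|\xi_1\eta_2-\xi_2\eta_1|$ and $|\xi_1\eta_2+\xi_2\eta_1+2(\xi_1\eta_1+\xi_2\eta_2)|$ controlling the transversality degenerate at once (the tangency picture of \textit{Remark} \ref{pairs-of-tiles}, Figure 5, read at the level of full rays). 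On such a configuration I expect to be able to arrange that: (i) $\Phi=O(1)$ on all of $R_1\times R_2$ --- here the anisotropy is essential, the first-order variation of $\Phi$ being absorbed by the thin $\sim N^{-2}$ direction and the second-order variation by the long $\sim N^{-1/2}$ one; (ii) $R_1$ and $R_2$ are (nearly) parallel, so $R_0:=R_1+R_2$ is again a rectangle of area $\sim N^{-5/2}$ on which $\chi_{R_1}*\chi_{R_2}$ has height $\sim|R_1|\sim N^{-5/2}$ on a set of comparable measure; (iii) $|\xi+\eta|\sim N$ on $R_0$.

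Granting (i)--(iii), $\|\phi_N\|_{H^s}^2\sim N^{2s}|R_1|\sim N^{2s-5/2}$, while on $R_0$ one has $|\F_{x,y}\mathcal{A}(\phi_N,\phi_N)(t)|\gtrsim t\,|\xi+\eta|\,\chi_{R_1}*\chi_{R_2}\gtrsim tN\cdot N^{-5/2}$, hence $\|\mathcal{A}(\phi_N,\phi_N)(t)\|_{H^s}^2\gtrsim N^{2s}\bigl(tN\cdot N^{-5/2}\bigr)^2|R_0|\sim t^2N^{2s-11/2}$. Evaluating at $t=T/2$,
\[
\frac{\|\mathcal{A}(\phi_N,\phi_N)\|_{C([0,T];H^s)}}{\|\phi_N\|_{H^s}^2}\gtrsim T\,\frac{N^{s-11/4}}{N^{2s-5/2}}=T\,N^{-s-\frac14}\xrightarrow[N\to\infty]{}\infty\qquad(s<-\tfrac14),
\]
contradicting continuity of the second derivative. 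I expect the genuinely hard part to be the construction in (i)--(iii): exhibiting an explicit resonant, transversality-degenerate frequency configuration on which all three hold simultaneously, and then verifying by a second-order Taylor expansion of $\Phi$ that over the anisotropic support of $\ha{\phi_N}(\xi_1,\eta_1)\ha{\phi_N}(\xi_2,\eta_2)$ the oscillatory factor $(e^{-3it\Phi}-1)/(-3i\Phi)$ neither oscillates nor cancels, so that it may legitimately be replaced by a quantity $\sim t$ in the lower bound.
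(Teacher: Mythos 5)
Your plan reconstructs the paper's argument essentially exactly: the second Picard iterate as the obstruction, the anisotropic $N^{-2}\times N^{-1/2}$ rectangles with thin side along $\nabla_{(\xi_1,\eta_1)}\Phi$, the observation that simultaneous vanishing of $\Phi$ and $F=\xi_1\eta_2+\xi_2\eta_1+2(\xi_1\eta_1+\xi_2\eta_2)$ is what makes $\nabla_{(\xi_1,\eta_1)}\Phi$ parallel to $\nabla_{(\xi_2,\eta_2)}\Phi$ and hence aligns the two rectangles, and the final exponent $N^{-s-1/4}$. The item you have left open, and yourself flag as ``the genuinely hard part,'' is however a genuine gap: you must actually exhibit a pair $(\alpha,\beta)$ at scale $N$ with $\Phi(\alpha,\beta)=F(\alpha,\beta)=0$ and with $\gamma=\alpha+\beta$ satisfying $|\gamma_\xi+\gamma_\eta|\sim N$, and then verify by a Taylor expansion that $|\Phi|\lesssim1$ on the product of the rectangles so built. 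Without this, your (i)--(iii) remain assertions. The paper's resolution is purely explicit algebra: it takes $\alpha=(\sqrt[3]{2}\,N,\sqrt[3]{75}\,N)$, $\beta=(-3\sqrt[3]{2}\,N,-\tfrac{1}{5}\sqrt[3]{75}\,N)$ and the direction vectors $\vec v=(3\sqrt[3]{9},\sqrt[3]{100})$, $\vec v^{\bot}=(-\sqrt[3]{100},3\sqrt[3]{9})$; direct substitution gives $\Phi(\alpha,\beta)=F(\alpha,\beta)=0$, one checks that $\vec v^{\bot}$ is parallel to $\nabla_{(\xi_1,\eta_1)}\Phi(\alpha,\beta)$, and $\gamma=(-2\sqrt[3]{2}\,N,\tfrac45\sqrt[3]{75}\,N)$ indeed has $|\gamma_\xi+\gamma_\eta|\sim N$, after which (i)--(iii) follow.

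Two smaller corrections. First, ``place the centres along the rays generating $\mathcal{K}\cup\mathcal{K}'$'' is not quite the right prescription: the strips $\mathcal{K}_i$ are one narrow, angle-specific manifestation of the tangency phenomenon, and the paper's $\alpha,\beta$ do not in fact lie in them. The correct target is the full variety $\{\Phi=F=0\}$, which is two-dimensional in $(\alpha,\beta)$ modulo scaling, so solutions abound; one should solve those two equations directly rather than reuse the $\mathcal{K}$-strips. Second, rather than arguing that the Duhamel multiplier ``may be replaced by $\sim t$,'' the paper simply shows $\Re\,e^{-it\Phi}\ge\tfrac12$ uniformly on the support for all $t\le T$ with $T$ small, which gives a clean lower bound and suffices because failure of $C^2$ on a short time interval implies failure on any longer one.
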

\begin{proof}
Let $N \geq 1$ be a dyadic number.
We define the two vectors $\vec{v}$, $\vec{v}^{\bot}$ in $\R^2$ as
\begin{equation*}
\vec{v} =(3 \sqrt[3]{9}, \sqrt[3]{100}), \quad \vec{v}^{\bot} =(-\sqrt[3]{100}, 3 \sqrt[3]{9}),
\end{equation*}
and the three points $\alpha$, $\beta$, $\gamma \in \R^2$ as
\begin{equation*}
\alpha = ( \sqrt[3]{2} N, \, \sqrt[3]{75} N ), \quad \beta = \Bigl( - 3 \sqrt[3]{2} N, \, - \frac{\sqrt[3]{75}}{5} N \Bigr), \quad \gamma = \alpha + \beta.
\end{equation*}
Clearly, $\vec{v} \, \bot \, \vec{v}^{\bot}$.
The rectangle sets $A$, $B$ and $C \subset \R^2$ are defined as follows:
\begin{align*}
A & = \bigl\{ (\xi,\eta) \in \R^2 \,
| \, (\xi , \eta)= \alpha +  a N^{- \frac{1}{2}} \vec{v}
+   b N^{-2}  \vec{v}^{\bot}, \, -1 < a, b< 1 \bigr\}, \\
B & = \bigl\{ (\xi,\eta) \in \R^2 \,
| \, (\xi , \eta)= \beta + a N^{- \frac{1}{2}} \vec{v}
+ b N^{-2}   \vec{v}^{\bot}, \, -1 < a, b< 1 \bigr\}, \\
C & = \bigl\{ (\xi,\eta) \in \R^2 \,
| \, (\xi , \eta)= \gamma +  a N^{- \frac{1}{2}} \vec{v}
+b N^{-2}   \vec{v}^{\bot}  , \, -2^{-2} < a, b< 2^{-2} \bigr\}.
\end{align*}
We easily see $\mu(A) = \mu(B) \sim \mu(C) \sim N^{-5/2}$ where $\mu$ is the usual Lebesgue measure.
We set $\varphi$ as
\begin{equation*}
 (\mathcal{F}_{x,y} \varphi) (\xi, \eta) := N^{-s+\frac{5}{4}} (\chi_{A} (\xi, \eta) + \chi_A (-\xi, -\eta)  + \chi_{B} (\xi, \eta) + \chi_B (-\xi, -\eta)).
\end{equation*}
Note that $\varphi$ is a real-valued function and satisfies $\| \varphi\|_{H^s} \sim 1$. Indeed, since $\mathcal{F}_{x,y}\varphi(\xi,\eta) = \mathcal{F}_{x,y}\varphi(-\xi,-\eta)$ and $\mathcal{F}_{x,y}\varphi$ is real, we get
\begin{equation*}
\overline{\varphi}(x,y) = \overline{\mathcal{F}^{-1}_{\xi,\eta} ((\mathcal{F}_{x,y}\varphi)(\xi,\eta)) (x,y)} =
\mathcal{F}^{-1}_{\xi,\eta} ((\mathcal{F}_{x,y}\varphi)(-\xi,-\eta)) (x,y) = \varphi(x,y).
\end{equation*}
Let $\xi$, $\eta$, $\xi_1$, $\eta_1 \in \mathbb{R}$ satisfy $(\xi_1, \eta_1) \in A$ and
$(\xi-\xi_1, \eta-\eta_1) \in B$. By a simple calculation, we observe that
$|\Phi(\xi_1,\eta_1, -\xi, -\eta)| = |\xi \xi_1 (\xi - \xi_1) + \eta \eta_1 (\eta -  \eta_1)| \lesssim 1$. Thus,
if we assume that $T>0$ is sufficient small, then for any $0< t \leq T$, we have
\begin{equation*}
\Re (e^{-it \Phi(\xi_1,\eta_1, -\xi, -\eta) } )\geq 1/2 \quad \textnormal{for any} \
(\xi_1, \eta_1) \times (\xi-\xi_1, \eta-\eta_1) \in A \times B,
\end{equation*}
where $\Re (z)$ denotes the real part of $z \in \C$.
Consequently,
by Plancherel's theorem, we obtain
\begin{align*}
&  \Bigl\| \int_0^t{e^{-(t-t')(\partial_\xi^3 + \partial_\eta^3)}  (\partial_1 + \partial_2) \Bigl(
(e^{-t'(\partial_\xi^3 + \partial_\eta^3)}\varphi) \, (e^{-t'(\partial_\xi^3 + \partial_\eta^3)} \varphi) \Bigr) }
d t'\Bigr\|_{H^{s}}\\
\gtrsim & N^{-  s + \frac{7}{2}}
\Bigl\|  \int_0^t \int_{\R^2}{e^{-3it'\Phi(\xi_1,\eta_1, -\xi, -\eta)}
\mathcal{F}_{x,y} \varphi (\xi_1, \eta_1) \mathcal{F}_{x,y} \varphi (\xi-\xi_1, \eta-\eta_1)  d\xi_1 d\eta_1 } d t'\Bigr\|_{L^2_{\xi,\eta}}\\
\gtrsim & N^{-  s + \frac{7}{2}}
\Bigl\| \chi_C (\xi,\eta) \int_0^t \int_{\R^2} {e^{-3it'\Phi(\xi_1,\eta_1, -\xi, -\eta)}
\chi_A(\xi_1, \eta_1) \chi_B(\xi-\xi_1, \eta-\eta_1)  d\xi_1 d\eta_1 } d t'\Bigr\|_{L^2_{\xi,\eta}}\\
\sim & N^{-  s + \frac{7}{2}}
\Bigl\| \chi_C (\xi,\eta) \int_0^t N^{-\frac{5}{2}} d t'\Bigr\|_{L^2_{\xi,\eta}}\\
\sim & N^{-s - \frac{1}{4}}.
\end{align*}
Here we used $\mu(A) = \mu(B) \sim \mu(C) \sim N^{-5/2}$.
The above estimates imply that if $s<-1/4$ the data-to-solution map fails to be $C^2$.
See Section 6 of \cite{Bo97} and Section 6 of \cite{Ho07} for the details of ``not $C^2$''.
\end{proof}
\begin{rem}
It should be noted how to find the rectangle sets $A$, $B$ in the above proof.
For the above $\alpha$, $\beta$, we can observe that
\begin{equation*}
\Phi (\alpha, \beta) = F(\alpha, \beta) = 0,
\end{equation*}
where the functions $\Phi$ and $F$ are the same as in Section 3.
The condition $\Phi (\alpha, \beta)=0$ is natural and necessary to have $|\xi \xi_1 (\xi - \xi_1) + \eta \eta_1 (\eta -  \eta_1)| \lesssim 1$.
In the proof, the key point is that the short side of rectangle $A$ is parallel to that of rectangle $B$, which is actually given by the latter condition $F(\alpha, \beta) = 0$. To see this, define $\Phi_\beta(\xi_1,\eta_1) = \Phi(\xi_1, \eta_1, \beta)$,
$\Phi_\alpha (\xi_2,\eta_2) = \Phi(\alpha, \xi_2, \eta_2)$.
Since $\Phi_\beta (\alpha)=0$, we may find a rectangle set $R_\alpha$ which satisfies $\alpha \in R_\alpha$ and $|\Phi_\beta (\xi_1, \eta_1)| \lesssim 1$ for any $(\xi_1, \eta_1)  \in R_\alpha$
whose short-side and long-side lengths are $\sim N^{-2}$ and $\sim N^{-1/2}$, respectively.
Clearly, the direction of short side of $R_\alpha$ correspond with that of $\nabla \Phi_\beta(\alpha)$. Similarly, if we define
$R_\beta$ as a rectangle set which satisfies $\beta \in R_\alpha$ and $|\Phi_\alpha (\xi_2, \eta_2)| \lesssim 1$ for any $(\xi_2, \eta_2)  \in R_\beta$, the short side of $R_\beta$ is parallel to $\nabla \Phi_\alpha(\beta)$.
Hence, we get
\begin{equation*}
\textnormal{The short side of $R_\alpha$ is parallel to that of $R_\beta$} \iff
\nabla \Phi_\beta(\alpha) \ \textnormal{ is parallel to } \nabla \Phi_\alpha(\beta).
\end{equation*}
Let $\varphi(\xi, \eta) = \xi^3+ \eta^3$. We easily see
\begin{equation*}
3 \Phi(\xi_1, \eta_1, \xi_2,\eta_2) =
\varphi (\xi_1+\xi_2 ,\eta_1+ \eta_2) - \varphi (\xi_1, \eta_1) - \varphi(\xi_2,\eta_2).
\end{equation*}
We observe that
\begin{align*}
 \begin{pmatrix}
- 3 \partial_2 \Phi_\beta (\alpha) \\
3 \partial_1 \Phi_\beta (\alpha) \\
\end{pmatrix}
\cdot &
\begin{pmatrix}
3 \partial_1 \Phi_\alpha (\beta) \\
3 \partial_2 \Phi_\alpha (\beta) \\
\end{pmatrix}
=  \mathrm{det} \begin{pmatrix}
3 \partial_1 \Phi_\beta (\alpha) & 3 \partial_1 \Phi_\alpha (\beta) \\
3 \partial_2 \Phi_\beta (\alpha) & 3 \partial_2 \Phi_\alpha (\beta) \\
\end{pmatrix}\\
& =  \
\mathrm{det} \begin{pmatrix}
0 & 0 & -1\\
-3\partial_1 \Phi_\beta (\alpha) & -3\partial_1 \Phi_\alpha (\beta) & \partial_1 \varphi(\alpha+\beta)\\
-3\partial_2 \Phi_\beta (\alpha) &  -3\partial_2 \Phi_\alpha (\beta) & \partial_2 \varphi(\alpha+\beta) \\
\end{pmatrix}\\
& =  \
\mathrm{det} \begin{pmatrix}
-1 & -1 & -1\\
\partial_1 \varphi (\alpha) & \partial_1 \varphi (\beta) & \partial_1 \varphi(\alpha+\beta)\\
\partial_2 \varphi (\alpha) &  \partial_2 \varphi (\beta) & \partial_2 \varphi(\alpha+\beta) \\
\end{pmatrix}.
\end{align*}
As a result, since we already saw in Section 3 that
\begin{equation*}
F(\alpha, \beta) = 0 \Longrightarrow \mathrm{det} \begin{pmatrix}
-1 & -1 & -1\\
\partial_1 \varphi (\alpha) & \partial_1 \varphi (\beta) & \partial_1 \varphi(\alpha+\beta)\\
\partial_2 \varphi (\alpha) &  \partial_2 \varphi (\beta) & \partial_2 \varphi(\alpha+\beta) \\
\end{pmatrix} =0,
\end{equation*}
we establish $F(\alpha, \beta)=0 \Rightarrow \nabla \Phi_\beta(\alpha) \parallel \nabla \Phi_\alpha(\beta)$.
\end{rem}
\section{Appendix (Proof of Lemma \ref{lemma3.8})}
We now show Lemma \ref{lemma3.8}. For convenience, we restate the lemma.
\begin{lem}\label{lemma3.8a}
Let $i=1,2$. For fixed $m \in \N \times \Z$, the number of $k \in \Z^2$ such that
$(m, k) \in \widehat{Z}_{A,i}$ is less than $2^{1000}$. On the other hand, for
fixed $k \in \Z^2$, the number of $m \in \N \times \Z$ such that
$(m, k) \in \widehat{Z}_{A,i}$ is less than $2^{1000}$.
Furthermore, the claim holds true whether we replace $\widehat{Z}_{A,i}$ by $\overline{Z}_{A,i}$ in
the above statements.
\end{lem}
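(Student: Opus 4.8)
The plan is to run the scheme of the proof of Lemma \ref{lemma3.6}, replacing the square tile $\mathcal{T}_{k_1}^A$ by the anisotropic region $\mathcal{R}_{A,m,i}$. The new feature is that $\mathcal{R}_{A,m,i}$ is not a square but a parallelogram whose two side lengths are comparable to $A^{-1}N_1$ and to $a_{A,n+1}-a_{A,n}=N_1/\sqrt{(n+1)A}$, so that its aspect ratio ranges from $\sim\!\sqrt{A}$ (at $n=1$) down to $\sim\!1$; these regions were designed precisely so that near the locus $\mathcal{K}_1\cup\mathcal{K}_2$ — where, as explained in \emph{Remark} \ref{pairs-of-tiles}, the level curves of $\tilde{\Phi}$ and $\tilde{F}$ become tangent and the resonant set degenerates into a ``banana'' of extent $\sim\!A^{-1/2}N_1$ along the common tangent and $\sim\!A^{-1}N_1$ transversally — the resonant set can be covered economically. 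By the $(\xi_j,\eta_j)\mapsto(\eta_j,\xi_j)$ symmetry it suffices to treat $i=1$, and as in Lemma \ref{lemma3.6} we may assume $A\ge 2^{300}$ and $N_2\le N_1$. Note that, unlike in Lemma \ref{lemma3.6}, the two factors are geometrically different, so the ``fixed $m$'' and ``fixed $k$'' statements must both be argued.

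First I would record, exactly as in the passage to the parent scale in the proof of Lemma \ref{lemma3.6}, that $(m,k)\in\widehat{Z}_{A,1}$ forces the existence of points in the predecessors of $\mathcal{R}_{A,m,1}$ and $\mathcal{T}_k^A$ with $|\Phi|\lesssim A^{-1}N_1^3$, together with a second such pair with $|F|\lesssim A^{-1}N_1^2$; here one uses that $(\xi_1,\eta_1)$ lies near the line defining $\mathcal{K}_1$ and $(\xi_2,\eta_2)$ near the line defining $\mathcal{K}_0$, which is what makes the tangency location explicit. Fixing $m$, the goal becomes: the union of all tiles $\mathcal{T}_k^A$ resonant (in this sense) against $\mathcal{R}_{A,m,1}$ is covered by $O(1)$ tiles. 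After the shear $\xi_2'=\xi_2+\xi_1'/2$, $\eta_2'=\eta_2+\eta_1'/2$ with $(\xi_1',\eta_1')$ a reference point of $\mathcal{R}_{A,m,1}$ at distance $\sim a_{A,n}$ from the $\mathcal{K}_1$-line, the two constraints become $|\tilde{\Phi}(\xi_2',\eta_2')|\lesssim A^{-1}N_1^3$ and $|\tilde{F}(\xi_2',\eta_2')|\lesssim A^{-1}N_1^2$, with $\tilde{\Phi},\tilde{F}$ as in \eqref{info03-lemma3.6}, \eqref{info04-lemma3.6}, now depending on the parameters of $\mathcal{R}_{A,m,1}$.

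The core of the argument is a lower bound for the appropriate first derivative on the stratum labelled by $n$: refining the computation that led to \eqref{est005-lemma3.6}, \eqref{est006-lemma3.6}, one shows that along the curve on which $|\tilde{\Phi}|$ is small the function $\tilde{F}$ has slope $\gtrsim a_{A,n}N_1$ (this quantifies the quadratic contact of $\tilde\Phi=0$ and $\tilde F=0$ at the tangency, which occurs exactly at $n=1$, $a_{A,1}=0$, the case handled by the maximal aspect ratio). Hence the set satisfying both constraints is confined, in the direction transversal to the $\mathcal{K}_1$-line, to a slab of width $\lesssim A^{-1}N_1^2/(a_{A,n}N_1)\cdot\,(\text{something})$, and one checks — using the identity $a_{A,n}^2\sim nN_1^2/A$, which is exactly what makes the gain from the derivative cancel the loss from the stratum width $N_1/\sqrt{(n+1)A}$ — that its intersection with $\mathcal{R}_{A,m,1}$ meets only $O(1)$ tiles $\mathcal{T}_k^A$ in each of the two lattice directions; summing over the boundedly many relevant values of $z$ gives $O(1)$. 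The symmetric statement, bounding the number of regions $\mathcal{R}_{A,m,1}$ resonant against a fixed $\mathcal{T}_k^A$, follows by reading the same derivative bounds in the $(\xi_1',\eta_1')$-variables (the roles of $\Phi$, $F$ being symmetric in the two arguments up to the shear): the resonant set in $(\xi_1',\eta_1')$ is again a banana sitting on the $\mathcal{K}_1$-line with aspect ratio $\sim\!\sqrt{A}$, and since the cells $\mathcal{R}_{A,m,1}$ exactly partition a neighbourhood of that line with matching anisotropy and the cell sizes are monotone in $n$, the banana overlaps only $O(1)$ consecutive values of $(n,z)$. Finally the claim for $\overline{Z}_{A,1}$ is obtained in the same way, as noted in Lemma \ref{lemma3.6}, once \eqref{info03-lemma3.6}–\eqref{info04-lemma3.6} hold with the constants $2^4$ replaced by a larger absolute constant.

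I expect the main obstacle to be the derivative lower bound $\gtrsim a_{A,n}N_1$ on the generic stratum together with the verification that $N_1/\sqrt{(n+1)A}$ is the correct stratification rate — that is, making rigorous the heuristic that the resonant banana near $\mathcal{K}_1$ has aspect ratio $\sqrt{A}$, so that after cutting along $\mathcal{R}_{A,m,1}$ every piece meets $O(1)$ complementary pieces in both directions and at every distance scale from $\mathcal{K}_1$. The Whitney parent/child bookkeeping is routine given Lemma \ref{lemma3.6}, but it must be carried through with care because the shape of $\mathcal{R}_{A,m,i}$ itself depends on $A$.
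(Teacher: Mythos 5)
Your outline has the right skeleton --- mimic the Whitney bookkeeping of Lemma~\ref{lemma3.6} with the anisotropic cells $\mathcal{R}_{A,m,i}$ shaped to match the degenerate ``banana'' along $\mathcal{K}_1\cup\mathcal{K}_2$ --- and that is exactly how the paper's Appendix proceeds. But the two steps you wave through are where the proof actually lives, and one reduction you invoke is not valid.

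The central gap is the shear to a single reference point $(\xi_1',\eta_1')\in\mathcal{R}_{A,m,1}$. Unlike in Lemma~\ref{lemma3.6}, that cell has diameter $\sim N_1/\sqrt{nA}$, which at $n=1$ is $\sim A^{-1/2}N_1\gg A^{-1}N_1$, so the constraints $|\Phi|\lesssim A^{-1}N_1^3$, $|F|\lesssim A^{-1}N_1^2$ are not inherited by a reference point merely by enlarging the constant: a priori $\Phi$ can vary by $\sim A^{-1/2}N_1^3$ across the cell. What makes the design work --- and what your sketch never states or verifies --- is that the long axis of $\mathcal{R}_{A,m,i}$, the direction of constant $\eta-(\sqrt{2}+1)^{2/3}\xi$, is to leading order at the tangency a common annihilating direction for $\partial_{(\xi_1,\eta_1)}\Phi$ and $\partial_{(\xi_1,\eta_1)}F$, these directional derivatives growing only like $a_{A,n}$ off the tangency; the long-axis length $N_1/\sqrt{(n+1)A}$ shrinks at exactly the rate that keeps the variation $\sim A^{-1}N_1^3$. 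The paper makes this rigorous by parametrizing $\mathcal{R}_{A,m,2}$ by $(c,c')$ and carrying the algebra with the specific constants through \eqref{lemma3.9-estee02}--\eqref{lemma3.9-estee05}, checking that the resonant tile in $(\xi,\eta)$ moves by only $O(A^{-1}N)$ as $(c,c')$ ranges over the cell. (Your stated slope bound ``$\gtrsim a_{A,n}N_1$'' is also dimensionally off by a factor of $N_1$; the directional derivative of $F$, homogeneous of degree two, along a unit vector should be $\gtrsim a_{A,n}$.) Finally, the claim that the reverse count ``follows by reading the same derivative bounds in the $(\xi_1',\eta_1')$-variables'' is not justified by the symmetry of $\Phi$ and $F$ alone: the two factors of $\R^2\times\R^2$ are tiled by objects of entirely different geometry ($\mathcal{R}_{A,m,i}$ versus $\mathcal{T}_k^A$), so bounding the $k$-count for fixed $m$ does not give the $m$-count for fixed $k$. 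The paper runs them as two independent arguments, and the one you would obtain ``by symmetry'' has its own case split according to the distance of $\mathcal{T}_k^A$ from the $\mathcal{K}_0$-line.
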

\begin{proof}
Clearly, we may assume $A \geq 2^{300}$.
We first show that the number of $m \in \N \times \Z$ such that
$(m, k) \in \widehat{Z}_{A,i}$ is less than $2^{1000}$ for
fixed $k \in \Z^2$.
Let $2^{20} \leq M \leq 2^{-80} A$ be dyadic and $\ell$ denotes the line $\eta=(\sqrt{2}-1)^{4/3}\xi$. We decompose the proof into the two cases:\\
(I) $\displaystyle{\inf_{(\xi,\eta) \in \mathcal{T}_{k}^A}}
\textnormal{dist} ((\xi,\eta), \, \ell)  \leq 2^{80} A^{-1}N_1.$ $\quad$
\textnormal{(I \hspace{-0.15cm}I)} $\displaystyle{\inf_{(\xi,\eta) \in \mathcal{T}_{k}^A}}
\textnormal{dist} ((\xi,\eta), \, \ell)  \geq M^{-1}N_1.$\\
\underline{Case (I)}

Let $A'= 2^{-80}A$. There exists $N >0$ which satisfies
$2^{-1}N_1 \leq N \leq 2 N_1$ and
\begin{equation*}
\displaystyle{\inf_{(\xi,\eta) \in \mathcal{T}_{k}^A}}
| (\xi,\eta) - (N, (\sqrt{2} - 1)^{\frac{4}{3}}N) | \leq  {A'}^{-1} N_1.
\end{equation*}
Thus, by the same argument as in  the proof of Lemma \ref{lemma3.6}, it suffices to show that there exist
$m_1$, $m_2 \in \N \times \Z$ such that $(m_i, k_i) \in \widehat{Z}_{{2^{-100}A'},i} \cup \overline{Z}_{2^{-100}A,i}$ $(i=1,2)$ for some $k_1$, $k_2 \in \Z^2$ and the set $\mathcal{R}_{2^{-100}{A'},m_1,1} \cup \mathcal{R}_{2^{-100}{A'},m_2,2}$ contains
\begin{equation*}
\left\{ (\xi_2, \eta_2)  \in \widehat{\mathcal{K}}_1 \cup \widehat{\mathcal{K}}_2 \, \left| \,
\begin{aligned} & |\Phi(N,  (\sqrt{2} - 1)^{\frac{4}{3}}N , \xi_2, \eta_2) |\leq 2^7 {A'}^{-1} N_1^3, \\
& |F (N,  (\sqrt{2} - 1)^{\frac{4}{3}}N, \xi_2 ,\eta_2)| \leq 2^7 {A'}^{-1} N_1^2.
 \end{aligned} \right.
\right\}
\end{equation*}
The sets $\widehat{\mathcal{K}}_1$ and $\widehat{\mathcal{K}}_2$ are defined as
\begin{align*}
\widehat{\mathcal{K}}_1 & = \bigl\{ (\xi, \eta) \in \R^2 \, | \, \bigl|
\eta - ( \sqrt{2}+ 1 )^{\frac{2}{3}} (\sqrt{2} + \sqrt{3} ) \xi \bigr|
\leq 2^{-15} N_1 \bigr\},\\
\widehat{\mathcal{K}}_2 & = \bigl\{ (\xi, \eta) \in \R^2 \, | \, \bigl|
\eta + ( \sqrt{2}+ 1 )^{\frac{2}{3}} (\sqrt{3} - \sqrt{2} ) \xi \bigr|
\leq 2^{-15} N_1 \bigr\}.
\end{align*}
Let $\xi_2' = \xi_2 + N/2$, $\eta_2' = \eta_2 + (\sqrt{2} - 1)^{\frac{4}{3}}N/2$. We observe that
\begin{align}
& |\Phi(N,  (\sqrt{2} - 1)^{\frac{4}{3}}N , \xi_2, \eta_2) |\leq 2^7 {A'}^{-1} N_1^3\notag \\
\iff & | N \xi_2 (\xi_2 + N)+ (\sqrt{2} - 1)^{\frac{4}{3}}N \eta_2 ((\sqrt{2} - 1)^{\frac{4}{3}}N+\eta_2)  |\leq 2^7 {A'}^{-1} N_1^3, \notag \\
\Longrightarrow & \Bigl| {\xi_2'}^2 + (\sqrt{2} - 1)^{\frac{4}{3}} {\eta_2'}^2 - \frac{3}{2} (\sqrt{2}-1)^2 N^2  \Bigr|
\leq 2^{10} {A'}^{-1} N^2,\label{lemma3.9-est01}
\end{align}
and
\begin{align}
& |F (N,  (\sqrt{2} - 1)^{\frac{4}{3}}N, \xi_2 ,\eta_2)| \leq 2^7 {A'}^{-1} N_1^3\notag \\
\iff & | N \eta_2 + (\sqrt{2} - 1)^{\frac{4}{3}}N \xi_2  + 2((\sqrt{2} - 1)^{\frac{4}{3}}N^2+\xi_2
\eta_2)  |\leq 2^7 {A'}^{-1} N_1^3, \notag \\
\Longrightarrow & \Bigl| 2{\xi_2'} \eta_2' + \frac{3}{2} (\sqrt{2}-1)^{\frac{4}{3}} N^2  \Bigr|
\leq 2^{10} {A'}^{-1} N^2.\label{lemma3.9-est02}
\end{align}
\eqref{lemma3.9-est01} and \eqref{lemma3.9-est02} yield $|\xi_2'| \geq 2^{-4} N$ and $|\eta_2'| \geq 2^{-4} N$. Thus, it follows from \eqref{lemma3.9-est02} that
\begin{equation}
\Bigl|  \eta_2' + \frac{3}{4} (\sqrt{2}-1)^{\frac{4}{3}} \frac{N^2}{\xi_2'}  \Bigr|
\leq 2^{14} {A'}^{-1}N.\label{lemma3.9-est03}
\end{equation}
From \eqref{lemma3.9-est01} and \eqref{lemma3.9-est03}, we get
\begin{align}
& \Bigl| {\xi_2'}^2 + \frac{9}{16} (\sqrt{2} - 1)^{4}\frac{N^4}{{\xi_2'}^2} - \frac{3}{2} (\sqrt{2}-1)^2 N^2  \Bigr|
\leq 2^{15} {A'}^{-1} N^2,\notag \\
\iff & {\xi_2'}^{-2} \Bigl( {\xi_2'}^2 - \frac{3}{4} (\sqrt{2}-1)^2 N^2\Bigr)^2
\leq 2^{15} {A'}^{-1} N^2,\notag \\
\Longrightarrow \, &  \Bigl| {\xi_2'}^2 - \frac{3}{4} (\sqrt{2}-1)^2 N^2\Bigr|
\leq 2^{10} {A'}^{-\frac{1}{2}} N^2,\notag \\
\Longrightarrow \, &  \min_{j= \pm1} \Bigl| {\xi_2'} + j \frac{\sqrt{3}}{2} (\sqrt{2}-1) N\Bigr|
\leq 2^{12} {A'}^{-\frac{1}{2}} N.\label{lemma3.9-est04}
\end{align}
\eqref{lemma3.9-est04} gives
\begin{equation}
\min_{j= \pm1} \Bigl| \frac{3}{4}  (\sqrt{2}-1)^{\frac{4}{3}} \frac{N^2}{\xi_2'}
+ (\sqrt{2}+1)^{\frac{2}{3}} \xi_2' -j \sqrt{3} (\sqrt{2}-1)^{\frac{1}{3}} N\Bigr|
\leq 2^{23}{A'}^{-1}N.\label{lemma3.9-est05}
\end{equation}
\eqref{lemma3.9-est05}, combined with \eqref{lemma3.9-est03}, provides
\begin{align*}
& \min_{j= \pm1} |\eta_2' - (\sqrt{2}+1)^{\frac{2}{3}} \xi_2' +j \sqrt{3} (\sqrt{2}-1)^{\frac{1}{3}} N|\\
\leq & \Bigl| \eta_2' + \frac{3}{4} (\sqrt{2}-1)^{\frac{4}{3}} \frac{N^2}{\xi_2'}  \Bigr| + \min_{j= \pm1}
\Bigl| \frac{3}{4}  (\sqrt{2}-1)^{\frac{4}{3}} \frac{N^2}{\xi_2'}
+ (\sqrt{2}+1)^{\frac{2}{3}} \xi_2' -j \sqrt{3} (\sqrt{2}-1)^{\frac{1}{3}} N\Bigr| \\
\leq & \, 2^{25} {A'}^{-1}N.
\end{align*}
Consequently, $|\Phi(N,  (\sqrt{2} - 1)^{\frac{4}{3}}N , \xi_2, \eta_2) |\leq 2^7 {A'}^{-1} N_1^3$ and
$|F (N,  (\sqrt{2} - 1)^{\frac{4}{3}}N, \xi_2 ,\eta_2)| \leq 2^7 {A'}^{-1} N_1^3$ yield one of the following two cases.
\begin{align}
& \begin{cases}
\bigl| {\xi_2'} - \frac{\sqrt{3}}{2} (\sqrt{2}-1) N\bigr|
\leq 2^{12} {A'}^{-\frac{1}{2}} N,\\
|\eta_2' - (\sqrt{2}+1)^{\frac{2}{3}} \xi_2' - \sqrt{3} (\sqrt{2}-1)^{\frac{1}{3}} N| \leq 2^{25} {A'}^{-1} N.
\end{cases}\label{lemma3.9-est06}\\
& \begin{cases}
\bigl| {\xi_2'} + \frac{\sqrt{3}}{2} (\sqrt{2}-1) N\bigr|
\leq 2^{12} {A'}^{-\frac{1}{2}} N,\\
|\eta_2' - (\sqrt{2}+1)^{\frac{2}{3}} \xi_2' + \sqrt{3} (\sqrt{2}-1)^{\frac{1}{3}} N| \leq 2^{25} {A'}^{-1} N.
\end{cases}\label{lemma3.9-est07}
\end{align}
Since $\xi_2' = \xi_2 + N/2$, $\eta_2' = \eta_2 + (\sqrt{2} - 1)^{\frac{4}{3}}N/2$, \eqref{lemma3.9-est06} and
\eqref{lemma3.9-est07} imply
\begin{equation}
 \begin{cases}
\bigl| {\xi_2} + \frac{1}{2} N - \frac{\sqrt{3}}{2} (\sqrt{2}-1) N\bigr|
\leq 2^{12} {A'}^{-\frac{1}{2}} N,\\
|\eta_2 - (\sqrt{2}+1)^{\frac{2}{3}} \xi_2 + (\sqrt{3}-1) (\sqrt{2}-1)^{\frac{1}{3}} N| \leq 2^{25} {A'}^{-1} N,
\end{cases}\label{lemma3.9-est08}
\end{equation}
and
\begin{equation}
 \begin{cases}
\bigl| {\xi_2} + \frac{1}{2} N  + \frac{\sqrt{3}}{2} (\sqrt{2}-1) N\bigr|
\leq 2^{12} {A'}^{-\frac{1}{2}} N,\\
|\eta_2 - (\sqrt{2}+1)^{\frac{2}{3}} \xi_2 + (\sqrt{3}+1) (\sqrt{2}-1)^{\frac{1}{3}} N| \leq 2^{25} {A'}^{-1} N,
\end{cases}\label{lemma3.9-est09}
\end{equation}
respectively. We calculate that \eqref{lemma3.9-est08} and \eqref{lemma3.9-est09} provided
\begin{equation*}
 | \eta_2-   ( \sqrt{2}+ 1 )^{\frac{2}{3}} (\sqrt{2} + \sqrt{3} ) \xi_2 | \leq 2^{30} {A'}^{-\frac{1}{2}} N,
\end{equation*}
and
 \begin{equation*}
 | \eta_2+   ( \sqrt{2}+ 1 )^{\frac{2}{3}} ( \sqrt{3}-\sqrt{2} ) \xi_2 | \leq 2^{30} {A'}^{-\frac{1}{2}} N,
\end{equation*}
respectively. This completes the proof for Case (I).\\
\underline{Case (I \hspace{-0.15cm}I)}

Similarly to Case (I), we may find $N>0$, $M_0 \in \R$ which satisfies
$2^{-1}N_1 \leq N \leq 2 N_1$, $2^{-1}M \leq |M_0| \leq 2M$ and
\begin{equation*}
\displaystyle{\inf_{(\xi_1,\eta) \in \mathcal{T}_{k}^A}}
| (\xi,\eta) - (N, (\sqrt{2} - 1)^{\frac{4}{3}}N)+ (0, M_0^{-1} N)  | \leq 2^{2} A^{-1} N_1,
\end{equation*}
and we will show that there exist
$m_1$, $m_2 \in \N \times \Z$ such that $(m_i, k_i) \in \widehat{Z}_{2^{-100}A,i} \cup \overline{Z}_{2^{-100}A,i}$ for some $k_1$, $k_2 \in \Z^2$ and  the set $\mathcal{R}_{2^{-100}A,m_1,1} \cup \mathcal{R}_{2^{-100}A,m_2,2}$ contains
\begin{equation*}
\left\{ (\xi_2, \eta_2)  \in \widehat{\mathcal{K}}_1 \cup \widehat{\mathcal{K}}_2 \, \left| \,
\begin{aligned} & |\Phi(N,  (\sqrt{2} - 1)^{\frac{4}{3}}N -M_0^{-1} N , \xi_2, \eta_2) |\leq 2^7 A^{-1} N_1^3, \\
& |F (N,  (\sqrt{2} - 1)^{\frac{4}{3}}N - M_0^{-1} N, \xi_2 ,\eta_2)| \leq 2^7 A^{-1} N_1^2.
 \end{aligned} \right.
\right\}
\end{equation*}
By using $\tilde{\xi}_2 = \xi_2 + N/2$,
$\tilde{\eta}_2 = \eta_2 + (\sqrt{2} - 1)^{\frac{4}{3}}N/2 -  M_0^{-1} N$, we calculate that
\begin{align}
& |\Phi(N,  (\sqrt{2} - 1)^{\frac{4}{3}}N  -M_0^{-1} N, \xi_2, \eta_2) |\leq 2^7 A^{-1} N_1^3\notag \\
\Longrightarrow & \Bigl| {\tilde{\xi}_2}^2 + ( (\sqrt{2} - 1)^{\frac{4}{3}} - M_0^{-1}) {\tilde{\eta}_2}^2
- \frac{3}{2} (\sqrt{2}-1)^2 N^2 +\frac{3}{4} (\sqrt{2}-1)^{\frac{8}{3}}M_0^{-1} N^2 \notag \\
& \qquad \qquad \qquad \qquad \quad -
\frac{3}{4} (\sqrt{2}-1)^{\frac{4}{3}} M_0^{-2}N^2 +\frac{M_0^{-3}}{4} N^2 \Bigr|
\leq 2^{10} A^{-1} N^2,\label{lemma3.9-est101}
\end{align}
and
\begin{align}
& |F (N,  (\sqrt{2} - 1)^{\frac{4}{3}}N -M_0^{-1} N, \xi_2 ,\eta_2)| \leq 2^7 A^{-1} N_1^3\notag \\
\Longrightarrow & \Bigl| 2{\tilde{\xi}_2} \tilde{\eta}_2 + \frac{3}{2} \bigl( (\sqrt{2}-1)^{\frac{4}{3}}
- M_0^{-1} \bigr) N^2  \Bigr|
\leq 2^{10} A^{-1} N^2.\label{lemma3.9-est102}
\end{align}
We deduce from $2^{19} \leq |M_0|$, \eqref{lemma3.9-est101} and \eqref{lemma3.9-est102} that $|\tilde{\xi}_2| \geq 2^{-4} N$ and $|\tilde{\eta}_2| \geq 2^{-4} N$. Thus, from \eqref{lemma3.9-est102}, we get
\begin{equation}
\Bigl|  \tilde{\eta}_2 + \frac{3}{4\tilde{\xi}_2} \bigl( (\sqrt{2}-1)^{\frac{4}{3}}
- M_0^{-1} \bigr)  N^2 \Bigr|
\leq 2^{14} A^{-1}N.\label{lemma3.9-est103}
\end{equation}
By a simple calculation, it follows from \eqref{lemma3.9-est101} and \eqref{lemma3.9-est103} that
\begin{align}
& \Bigl| {\tilde{\xi}_2}^2 +  \frac{9}{16}  \bigl( (\sqrt{2}-1)^{\frac{4}{3}}
- M_0^{-1} \bigr)^3 \frac{N^4}{{\tilde{\xi}_2}^2}
- \frac{3}{2} (\sqrt{2}-1)^2 N^2  \notag \\
& \qquad   +\frac{3}{4} (\sqrt{2}-1)^{\frac{8}{3}}M_0^{-1} N^2 -
\frac{3}{4} (\sqrt{2}-1)^{\frac{4}{3}} M_0^{-2}N^2 +\frac{M_0^{-3}}{4} N^2 \Bigr|
\leq 2^{15} A^{-1} N^2 \notag \\
\iff & {\tilde{\xi}_2}^{-2} \Bigl| \Bigl( {\tilde{\xi}_2}^2 - \frac{3}{4} (\sqrt{2}-1)^2 N^2\Bigr)^2
 -\frac{1}{4}   C(M_0) M_0^{-1} \Bigl( \frac{9}{4}N^2 - \tilde{\xi}_2^2 \Bigr) N^2 \Bigr|
\leq 2^{15} A^{-1} N^2,\label{lemma3.9-este03}
\end{align}
where $C(M_0):= 3 (\sqrt{2}-1)^{\frac{8}{3}} -3 (\sqrt{2}-1)^{\frac{4}{3}}M_0^{-1}
+ M_0^{-2}$. From \eqref{lemma3.9-este03} and $|M_0| \geq 2^{19}$, we easily observe that
$9 N^2/4 - \tilde{\xi}_2^2 \geq 2^{-4} N^2$. Therefore, $M_0$ must be positive to satisfy \eqref{lemma3.9-este03} since $|M_0| \leq 2^{-79}A$. We see that \eqref{lemma3.9-este03} implies
\begin{align*}
& \Bigl| {\tilde{\xi}_2}^2 - \frac{3}{4} (\sqrt{2}-1)^2 N^2
+\frac{1}{2} \sqrt{C(M_0)  \Bigl( \frac{9}{4} - \frac{\tilde{\xi}_2^2}{N^2} \Bigr)} M_0^{-\frac{1}{2}} N^2\Bigr| \notag\\
\times & \Bigl| {\tilde{\xi}_2}^2 - \frac{3}{4} (\sqrt{2}-1)^2 N^2
-\frac{1}{2} \sqrt{C(M_0)  \Bigl( \frac{9}{4} - \frac{\tilde{\xi}_2^2}{N^2} \Bigr)} M_0^{-\frac{1}{2}} N^2\Bigr|\leq 2^{17} A^{-1} N^4,
\end{align*}
which gives
\begin{equation}
\Bigl| {\tilde{\xi}_2}^2  - \frac{3}{4}  (\sqrt{2}-1)^2 N^2
\pm \frac{1}{2} \sqrt{C(M_0)  \Bigl( \frac{9}{4} - \frac{\tilde{\xi}_2^2}{N^2} \Bigr)} M_0^{-\frac{1}{2}} N^2 \Bigr|\leq 2^{20} A^{-1} M_0^{\frac{1}{2}} N^2.\label{lemma3.9-est104}
\end{equation}
Since $M_0>2^{19}$, we can find $\alpha>0$ and $\beta >0$ such that
\begin{align*}
& {\alpha}^2  - \frac{3}{4}  (\sqrt{2}-1)^2 N^2
+ \frac{1}{2} \sqrt{C(M_0)  \left( \frac{9}{4} - \frac{\alpha^2}{N^2} \right)} M_0^{-\frac{1}{2}} N^2 =0,\\
& {\beta}^2  - \frac{3}{4}  (\sqrt{2}-1)^2 N^2
- \frac{1}{2} \sqrt{C(M_0)  \left( \frac{9}{4} - \frac{\beta^2}{N^2} \right)} M_0^{-\frac{1}{2}} N^2 =0.
\end{align*}
Clearly, $\alpha$, $\beta$ satisfy
\begin{equation}
 \Bigl| {\alpha} - \frac{\sqrt{3}}{2} (\sqrt{2}-1) N\Bigr|
\leq 2^{10} M_0^{-\frac{1}{2}} N, \quad
 \Bigl| {\beta} - \frac{\sqrt{3}}{2} (\sqrt{2}-1) N\Bigr|
\leq 2^{10} M_0^{-\frac{1}{2}} N.\label{lemma3.9-este04}
\end{equation}
We observe that \eqref{lemma3.9-est104} and $M_0>2^{19}$ give
\begin{equation*}
\Bigl| \partial_{\tilde{\xi}_2} \Bigl( {\tilde{\xi}_2}^2  - \frac{3}{4} (\sqrt{2}-1)^2 N^2
\pm \frac{1}{2} \sqrt{C(M_0)  \Bigl( \frac{9}{4} - \frac{\tilde{\xi}_2^2}{N^2} \Bigr)} M_0^{-\frac{1}{2}} N^2
\Bigr) \Bigr|\geq 2^{-3} N.
\end{equation*}
Therefore, from \eqref{lemma3.9-est104}, it holds that
\begin{equation*}
|\tilde{\xi}_2 \pm \alpha| \leq 2^{30}A^{-1} M_0^{\frac{1}{2}} N \quad \textnormal{or} \quad
|\tilde{\xi}_2 \pm \beta| \leq 2^{30}A^{-1} M_0^{\frac{1}{2}} N.
\end{equation*}
To avoid redundancy, we here only treat the case
\begin{equation}
|\tilde{\xi}_2 - \alpha| \leq 2^{30}A^{-1} M_0^{\frac{1}{2}} N.\label{lemma3.9-este05}
\end{equation}
The other cases can be treated similarly.
\eqref{lemma3.9-este05} gives
\begin{equation*}
\Bigl| \frac{1}{\tilde{\xi}_2}-\frac{1}{\alpha} + \frac{1}{\alpha^2}( \tilde{\xi}_2 - \alpha) \Bigr|
\leq A^{-1}N^{-1},
\end{equation*}
which yields
\begin{equation}
\Bigl| \frac{3}{4} \bigl( (\sqrt{2}-1)^{\frac{4}{3}}
- M_0^{-1} \bigr) \Bigl( \frac{N^2}{\tilde{\xi}_2}- \frac{N^2}{\alpha}\Bigr)
+ (\sqrt{2}+1)^{\frac{2}{3}} (\tilde{\xi}_2 -\alpha)\Bigr|
\leq 2^{50}A^{-1}N.\label{lemma3.9-est105}
\end{equation}
Here we used the inequality $|1/\tilde{\xi}_2 - 1/ \alpha| \leq 2^{35}A^{-1} M_0^{1/2} N^{-1}$
which is given by \eqref{lemma3.9-este05}.
It follows from \eqref{lemma3.9-est103} and \eqref{lemma3.9-est105} that
\begin{align*}
& \Bigl|\tilde{\eta}_2 - (\sqrt{2}+1)^{\frac{2}{3}} \tilde{\xi}_2 +  (\sqrt{2}+1)^{\frac{2}{3}} \alpha +
\frac{3N^2}{4 \alpha} \bigl( (\sqrt{2}-1)^{\frac{4}{3}}
- M_0^{-1} \bigr)  \Bigr|\\
\leq & \Bigl| \tilde{\eta}_2 + \frac{3}{4}\bigl( (\sqrt{2}-1)^{\frac{4}{3}}
- M_0^{-1} \bigr)  \frac{N^2}{\tilde{\xi}_2}  \Bigr| \\
& \qquad  +
\Bigl| \frac{3}{4} \bigl( (\sqrt{2}-1)^{\frac{4}{3}}
- M_0^{-1} \bigr) \Bigl( \frac{N^2}{\tilde{\xi}_2}- \frac{N^2}{\alpha}\Bigr)
+ (\sqrt{2}+1)^{\frac{2}{3}} (\tilde{\xi}_2 -\alpha)\Bigr| \\
\leq & \, 2^{55} A^{-1}N.
\end{align*}
Since  $\tilde{\xi}_2 = \xi_2 + N/2$ and
$\tilde{\eta}_2 = \eta_2 + (\sqrt{2} - 1)^{\frac{4}{3}}N/2 -  M_0^{-1} N$, this and
\eqref{lemma3.9-este05} imply
\begin{equation}
 \begin{cases}
|\xi_2 + \frac{N}{2} - \alpha| \leq 2^{30}A^{-1} M_0^{\frac{1}{2}} N,\\
|\eta_2 - (\sqrt{2}+1)^{\frac{2}{3}} \xi_2+ C'(\alpha, M_0,N) | \leq 2^{55} A^{-1} N,
\end{cases}\label{lemma3.9-est108}
\end{equation}
where
\begin{equation*}
C'(\alpha, M_0,N) =  (\sqrt{2}+1)^{\frac{2}{3}} \alpha +
\frac{3N^2}{4 \alpha} \bigl( (\sqrt{2}-1)^{\frac{4}{3}}
- M_0^{-1} \bigr) - (\sqrt{2}-1)^{\frac{1}{3}} N - M_0^{-1} N.
\end{equation*}
We deduce from \eqref{lemma3.9-est108} and \eqref{lemma3.9-este04} that
\begin{equation*}
 | \eta_2-   ( \sqrt{2}+ 1 )^{\frac{2}{3}} (\sqrt{2} + \sqrt{3} ) \xi_2 | \leq 2^{15} M_0^{-\frac{1}{2}} N.
\end{equation*}
This completes the proof for Case (I \hspace{-0.15cm}I).

Next we prove that the number of $k \in \Z^2$ such that
$(m, k) \in \widehat{Z}_{A,i}$ $(i=1,2)$ is less than $2^{1000}$ for
fixed $m \in \N \times \Z$. Here we only consider the case $(m, k) \in \widehat{Z}_{A,2}$. The case
$(m, k) \in \widehat{Z}_{A,1}$ can be treated in a similar way.
For any $\mathcal{R}_{A,m,2}$ which satisfies $(m, k) \in \widehat{Z}_{A,2}$, we can find $N$ and $M$ which satisfy $2^{-1}N_1 \leq N \leq 2 N_1$ and $2^{15} \leq |M| \leq 2^{-5} \sqrt{A}$ such that $\mathcal{R}_{A,m,2}$ is contained in the following rectangle set.
\begin{equation*}
\left\{
\begin{aligned} (\xi, \eta) = & \bigl(1, -  ( \sqrt{2}+ 1 )^{\frac{2}{3}} (\sqrt{3} - \sqrt{2} )
\bigr)N +  (1,( \sqrt{2}+ 1 )^{\frac{2}{3}}) M^{-1}
N \\
+  (1, & ( \sqrt{2}+ 1 )^{\frac{2}{3}}) A^{-1} MN c
+ (-1 , ( \sqrt{2}- 1 )^{\frac{2}{3}}) A^{-1}N c', \ \ -2^{10} \leq c, c' \leq 2^{10}.
 \end{aligned}
\right\}
\end{equation*}
Define
\begin{align*}
 \xi_{N,M}= \xi_{N,M} (c,c') & =
\left(1 + M^{-1}+ A^{-1} M c-A^{-1} c' \right)N,\\
 \eta_{N,M}= \eta_{N,M} (c,c') & = \Bigl(-  ( \sqrt{2}+ 1 )^{\frac{2}{3}} (\sqrt{3} - \sqrt{2} )
+ ( \sqrt{2}+ 1 )^{\frac{2}{3}}M^{-1}\\
& \qquad + ( \sqrt{2}+ 1 )^{\frac{2}{3}}A^{-1} M c-
( \sqrt{2}- 1 )^{\frac{2}{3}}A^{-1} c' \Bigr)N.
\end{align*}
By the same argument as in  the proof of Lemma \ref{lemma3.6}, it suffices to show that there exists
$k \in \Z^2$ such that $\mathcal{T}_{k}^{2^{-100}A} \cap \mathcal{K}_0 \not= \emptyset$ and
for all $c$, $c'$ such that $-2^{20} \leq c, c' \leq 2^{20}$, the set
$\mathcal{T}_{k}^{2^{-100}A}$ contains
\begin{equation*}
\left\{ (\xi, \eta)  \in \widehat{\mathcal{K}}_0 \, \left| \,
\begin{aligned} & |\Phi(\xi_{N,M} (c,c'),  \eta_{N,M} (c,c') , \xi, \eta) |\leq 2^7 A^{-1} N^3, \\
& |F (\xi_{N,M} (c,c'),  \eta_{N,M} (c,c'), \xi, \eta)| \leq 2^7 A^{-1} N^2.
 \end{aligned} \right.
\right\}
\end{equation*}
The set $\widehat{\mathcal{K}}_0$ is defined as
\begin{equation*}
\widehat{\mathcal{K}}_0 = \bigl\{ (\xi, \eta) \in \R^2 \, | \, \bigl|
\eta -(\sqrt{2} - 1)^{\frac{4}{3}} \xi \bigr|
\leq 2^{-15} N \bigr\}.
\end{equation*}
Let $\xi'=\xi+\xi_{N,M}/2$ and $\eta'=\eta + \eta_{N,M}/2$ and
\begin{align*}
\tilde{\Phi}(\xi',\eta') & = \xi_{N,M} {\xi'}^2 + \eta_{N,M} {\eta'}^2 - \frac{{\xi_{N,M}}^3 +
{\eta_{N,M}}^3}{4},\\
\tilde{F}(\xi',\eta') & = 2 \xi' \eta' + \frac{3}{2} \, \xi_{N,M} \, \eta_{N,M}.
\end{align*}
Recall that
\begin{align*}
|\Phi(\xi_{N,M},  \eta_{N,M} , \xi, \eta) |\leq 2^7 A^{-1} N_1^3 &
\iff |\tilde{\Phi}(\xi',\eta')| \leq 2^7 A^{-1} N_1^3,\\
 |F (\xi_{N,M},  \eta_{N,M} , \xi, \eta)| \leq 2^7 A^{-1} N_1^2 &
\iff |\tilde{F}(\xi',\eta')| \leq 2^7 A^{-1} N_1^2.
\end{align*}
It is easy to see $|\xi'| \geq 2^{-4} N$ and $|\eta'| \geq 2^{-4} N$ if we assume $|\tilde{\Phi}(\xi',\eta')| \leq 2^7 A^{-1} N_1^3$ and $|\tilde{F}(\xi',\eta')| \leq 2^7 A^{-1} N_1^2.$  In the same manner as in the proof of
Lemma \ref{lemma3.6}, we observe that
\begin{align}
& |\tilde{\Phi}(\xi',\eta')| \leq 2^7 A^{-1} N_1^3 \ \, \textnormal{and} \ \, |\tilde{F}(\xi',\eta')| \leq 2^7 A^{-1} N_1^2
\notag \\
\Longrightarrow & \Bigl| \xi_{N,M} {\xi'}^2 + \eta_{N,M} \frac{9 {\xi_{N,M}}^2 {\eta_{N,M}}^2}{16 {\xi'}^2} - \frac{{\xi_{N,M}}^3 + {\eta_{N,M}}^3}{4} \Bigr|
\leq 2^{15} A^{-1} N^3 \notag \\
\Longrightarrow & \Bigl| \xi_{N,M} {\xi'}^4 - \frac{{\xi_{N,M}}^3 + {\eta_{N,M}}^3}{4} {\xi'}^2 +
\frac{9}{16} {\xi_{N,M}}^2 {\eta_{N,M}}^3  \Bigr|
\leq 2^{20} A^{-1} N^5.
\label{lemma3.9-estee01}
\end{align}
Define $\tilde{\xi} = \xi + N/2 = \xi'-N/2( M^{-1}+ A^{-1} M c-A^{-1} c')$.
It is observed that \eqref{lemma3.9-estee01} gives
\begin{align}
\Bigl| \Bigl( \tilde{\xi}^2- \frac{3}{4}(\sqrt{2}+1)^2(\sqrt{3}-\sqrt{2})^2 N^2 \Bigr)
\Bigl( \tilde{\xi}^2+ \frac{3}{4}(\sqrt{3}-\sqrt{2}) N^2\Bigr) & \notag \\
+ \Bigl( \tilde{\xi}+ \frac{\sqrt{3}}{2}(\sqrt{2}+1)(\sqrt{3}-\sqrt{2}) N \Bigr) H_1(\tilde{\xi})
( M^{-1} + & A^{-1} M c)
 + H_2(\tilde{\xi}) M^{-2} \Bigr| \notag \\
& \leq 2^{20} A^{-1} N^4,\label{lemma3.9-estee02}
\end{align}
where we used the function
\begin{align*}
H_1(\tilde{\xi}) = \tilde{\xi}^3 + \frac{1}{2}(1- 3 \sqrt{2} + 2 \sqrt{3} & + \sqrt{6}) \tilde{\xi}^2
+ \frac{\sqrt{3}}{4}(8 +4 \sqrt{2}-5 \sqrt{3}-4 \sqrt{6})\tilde{\xi}\\
& +\frac{3 \sqrt{3}}{8} (\sqrt{2}+1)(\sqrt{3}-\sqrt{2})(3 + 2 \sqrt{2} - 2 \sqrt{3}),
\end{align*}
and the function $H_2 (\tilde{\xi})$ which satisfies $|H_2(\tilde{\xi})| \leq 2^{3} (\tilde{\xi}^4 + N^4)$ and does not depend on $c$ and $c'$. We omit the detail of $H_2(\tilde{\xi})$ since it is long and complicated.
We deduce from \eqref{lemma3.9-estee02} that
\begin{equation*}
\Bigl| \tilde{\xi}- \frac{\sqrt{3}}{2}(\sqrt{2}+1)(\sqrt{3}-\sqrt{2}) N \Bigr| \leq 2^{-10} N \ \,
\textnormal{or} \ \, \Bigl| \tilde{\xi} + \frac{\sqrt{3}}{2}(\sqrt{2}+1)(\sqrt{3}-\sqrt{2}) N \Bigr| \leq 2^{-10} N.
\end{equation*}
First, we assume that $|\tilde{\xi}- \sqrt{3} (\sqrt{2}+1)(\sqrt{3}-\sqrt{2}) N/2 | \leq 2^{-10} N$. It follows from $ |F (\xi_{N,M},  \eta_{N,M} , \xi, \eta)| \leq 2^7 A^{-1} N_1^2$ that
\begin{equation*}
\Bigl| \frac{\eta}{\xi} - (\sqrt{2} + 1)^{\frac{2}{3}} (\sqrt{2} + \sqrt{3}) \Bigr| \leq 2^{-5},
\end{equation*}
which implies $(\xi,\eta) \notin \widehat{\mathcal{K}}_0$. Next we assume that
 $|\tilde{\xi} + \sqrt{3} (\sqrt{2}+1)(\sqrt{3}-\sqrt{2}) N/2 | \leq 2^{-10} N$.
In this case, we easily get
\begin{equation*}
\Bigl| \partial_{\tilde{\xi}} \Bigl( \bigl( \tilde{\xi}^2- \frac{3}{4}(\sqrt{2}+1)^2(\sqrt{3}-\sqrt{2})^2 N^2 \bigr)
\bigl( \tilde{\xi}^2+ \frac{3}{4}(\sqrt{3}-\sqrt{2}) N^2\bigr) \Bigr) \Bigr| \geq 2^{-2} N^3.
\end{equation*}
Hence, from \eqref{lemma3.9-estee02}, we can find $\alpha_0 \in \R$ such that
$|\alpha_0| \leq 2^5$ and
\begin{equation}
\Bigl| \tilde{\xi} + \frac{\sqrt{3}}{2}(\sqrt{2}+1)(\sqrt{3}-\sqrt{2}) N + \alpha_0 M^{-2} N\Bigr| \leq 2^{30}A^{-1} N.
\label{lemma3.9-estee03}
\end{equation}
Since $\tilde{\xi} = \xi + N/2$, \eqref{lemma3.9-estee03} means
\begin{equation}
\Bigl| \xi + \frac{\sqrt{3}}{2}(\sqrt{2}+1)(\sqrt{3}-\sqrt{2}) N + \frac{N}{2}+ \alpha_0 M^{-2} N\Bigr| \leq 2^{40}A^{-1} N.
\label{lemma3.9-estee04}
\end{equation}
This and  $|F (\xi_{N,M},  \eta_{N,M} , \xi, \eta)| \leq 2^7 A^{-1} N_1^2$ yield
\begin{align}
\Bigl| \bigl( \sqrt{3}(\sqrt{2}+1)(\sqrt{3}-\sqrt{2}) - M^{-1} - A^{-1}Mc \bigr)
\bigl( & \eta+(\sqrt{2}+1)^{\frac{2}{3}}  (\sqrt{2}+\sqrt{6} -2 \sqrt{3}) N \bigr) \notag \\
& - H_3(\eta) M^{-2} N \Bigr| \leq 2^{50} A^{-1} N,\label{lemma3.9-estee05}
\end{align}
where
\begin{equation*}
H_3(\eta)= 2 \alpha_0 \eta - (\sqrt{2}+1)^{\frac{2}{3}} \bigl( (\sqrt{3}- \sqrt{2})\alpha_0 + 2 \bigr) +
(\sqrt{2}+1)^{\frac{2}{3}} \alpha_0 M^{-1}.
\end{equation*}
Since $|M| \geq 2^{15}$, it follows from \eqref{lemma3.9-estee05} that there exists $\beta_0 \in \R$ such that $|\beta_0| \leq 2^5$ and
\begin{equation*}
\bigl|  \eta + (\sqrt{2}+1)^{\frac{2}{3}}  (\sqrt{2}+\sqrt{6} -2 \sqrt{3}) N  + \beta_0 M^{-2} N \bigr| \leq 2^{50}
A^{-1} N.
\end{equation*}
This, combined with \eqref{lemma3.9-estee04}, completes the proof.
\end{proof}
\section*{Acknowledgement}
The author would like to express his appreciation to Professor Sebastian Herr for a lot of helpful suggestions and encouragement. 
He is also grateful to the anonymous referee and Professor Felipe Linares for useful comments. 
This work was supported by the DFG through the CRC 1283 ``Taming uncertainty and profiting from randomness and low regularity in analysis, stochastics and their applications.''

\end{document}